\documentclass[12pt,a4paper,reqno]{amsart}
\usepackage{amsmath,amsthm,verbatim,amssymb,amsfonts,amscd, graphicx, esint}
\usepackage{xcolor}
\usepackage{geometry}
\usepackage{graphicx}
\usepackage{hyperref}
\usepackage{enumitem}
\usepackage{cancel}
\usepackage[english,capitalize]{cleveref}
\definecolor{darkblue}{rgb}{0,0,0.3}
\definecolor{urlblue}{rgb}{0,0,0.7}
\hypersetup{
	colorlinks=true,
	citecolor=blue,
	linkcolor=darkblue,
	urlcolor=urlblue,
}

\newtheorem{theorem}{Theorem}[section]
\newtheorem{lemma}[theorem]{Lemma}
\newtheorem{prop}[theorem]{Proposition}
\newtheorem{question}[theorem]{Question}
\newtheorem{conjecture}[theorem]{Conjecture}
\newtheorem{setting}[theorem]{Setting}
\newtheorem{assumptions}[theorem]{Assumption}
\newtheorem{corollary}[theorem]{Corollary}
\newtheorem{definition}[theorem]{Definition}
\newtheorem{remark}[theorem]{Remark}

\renewcommand{\leq}{\leqslant}
\renewcommand{\geq}{\geqslant}

\newcommand{\R}{\mathbb R}

\renewcommand{\d}{\ensuremath{\mathrm d}}

\DeclareMathOperator{\vol}{vol}

\renewcommand{\tilde}{\widetilde}
\renewcommand{\bar}{\overline}

\renewcommand{\epsilon}{\varepsilon}
\newcommand{\m}{\mathfrak m}
\newcommand{\Hn}{\mathcal H^n}
\newcommand{\TestF}{\mathrm{TestF}}
\newcommand{\TestV}{\mathrm{TestV}}
\newcommand{\Hess}{\operatorname{Hess}}
\newcommand{\loc}{\mathrm{loc}}
\newcommand{\supp}{\operatorname{supp}}
\newcommand{\dist}{\operatorname{dist}}
\newcommand{\id}{\operatorname{id}}
\newcommand{\tr}{\operatorname{tr}}

\newcommand{\TODO}[1]{}

\usepackage[maxbibnames=99,backend=biber, sorting=nyt, doi, url=false]{biblatex}
\title[Removability and RCD]{Removability of non-isolated singularities for Einstein metrics and RCD spaces}
\author{Gioacchino Antonelli, Gábor Székelyhidi}

\begin{document}

\begin{abstract}
    In this paper we establish removable singularities results for Einstein metrics and for metrics with Ricci curvature bounded below.

Let $n\geq 2$. On a closed $n$-manifold, we show that an $L^\infty$-Riemannian metric whose Ricci curvature is bounded below outside a singular set of codimension $> 3- \frac{1}{n-1}$ canonically extends to an $\mathrm{RCD}$ space.

As a consequence, using a new removable singularity theorem for Einstein metrics, we prove that in dimension $4$ any Einstein metric with $L^\infty$ singularities of codimension $>3-\frac{1}{3}$ extends smoothly across the singular set, possibly after changing the smooth structure. 
In higher dimensions, we construct a $C^{1,\alpha}$-Riemannian manifold structure on the regular set of a non-collapsed $\mathrm{RCD}$ space that is a Riemannian manifold with bounded $|\mathrm{Ric}|$ outside a set of codimension $>2$. 

Our results can be used to give a proof of Schoen's conjecture on scalar curvature singularities for metrics that are either continuous, or $L^\infty$ and sufficiently close to a smooth background metric.

\end{abstract}
\maketitle

\vspace{12pt}

\tableofcontents

\section{Introduction}
The question of removability of singularities is a classical topic in PDE, and in analysis more broadly. In this paper we consider removable singularities of Einstein metrics, and more generally metrics with Ricci curvature bounded below. Our setting will be a metric space $M$, which has the structure of a smooth Riemannian manifold $(M \setminus S, g)$ outside of a suitable subset $S\subset M$. We will be interested in settings in which various properties of $g$, such as having Ricci curvature bounded below, or being Einstein, can be extended across the set $S$. We recall that a manifold $M$ is said to be \textit{closed} if it is compact without boundary.

We will need to assume some structure across the set $S$. For our first results, we suppose that $M$ itself is a smooth manifold, and $g$ extends across $S$ as an $L^\infty$ metric. More precisely, suppose that $(M^n, g)$ is a closed $n$-dimensional $L^\infty$ Riemannian manifold. This means that $M^n$ admits a smooth Riemannian metric $h$, and $g$ is a measurable symmetric 2-tensor such that for a constant $\Lambda > 1$ we have 
\begin{equation}\label{eqn:LambdaDef}
\Lambda^{-1} h < g < \Lambda h, \quad
\text{almost everywhere}.
\end{equation}
We always assume that $M\setminus S$ is open and dense in $M$. 
\smallskip

First, suppose that $g$ has Ricci curvature bounded from below on $M\setminus S$. A natural question is whether the metric completion of the length metric space defined by $(M\setminus S, g)$ has Ricci curvature bounded below in the sense of RCD spaces. Recently this was considered by Dai--Wang--Wang--Wei~\cite[Theorem 1.5]{DWWW}, as well as Honda--Sun~\cite[Theorem 1.2]{HondaSun} in a more general setting. They showed that if $S$ has codimension at least four, then the metric completion of $(M\setminus S,g)$ is an RCD space.

Our first aim is to generalize \cite[Theorem 1.5]{DWWW}: it is enough that $S$ has upper Minkowski codimension strictly greater than $3-\frac{1}{n-1}$; see \cref{rem:MinkowskiVsAssouad} and \cref{def:Codimension}.
\begin{theorem}\label{thm:RCDcodim3}
 Let $n\geq 2$ be an integer and $K\in\mathbb R$. Suppose that $(M^n, g)$ is a closed $L^\infty$ Riemannian manifold, and $S\subset M$ is a closed subset of upper Minkowski codimension greater than $3-\frac{1}{n-1}$. Suppose that $\mathrm{Ric}(g) \geq Kg$ on $M\setminus S$. Then the metric completion of $(M\setminus S, g)$ is an $RCD(K,n)$-space. 
\end{theorem}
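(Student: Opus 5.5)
We will follow the strategy of Dai--Wang--Wang--Wei and Honda--Sun: we verify the hypotheses of a characterization of RCD spaces via a Bakry--\'Emery/Bochner inequality on a suitable Dirichlet form, with volume doubling and a Poincar\'e inequality. Let $X$ be the metric completion of $(M\setminus S,g)$.

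\textbf{Step 1: geometry of $X$.} Since $\Lambda^{-1}h<g<\Lambda h$ and $S$ has codimension $>1$, hence measure zero and does not disconnect, we first show that $X$ is bi-Lipschitz to $(M,d_h)$. Concretely, the length metric $d_g$ on $M\setminus S$ is comparable to $d_h$. Curves can be perturbed off $S$ at small length cost because $S$ has codimension $>1$, via a Fubini-type argument over parallel families of segments in charts. Hence $X$ is homeomorphic to $M$. The measure $\m=\vol_g$ extends by zero on $S$, and $(X,d,\m)$ is Ahlfors $n$-regular, so it satisfies volume doubling and a local $(1,1)$-Poincar\'e inequality, transferred from $h$. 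The Cheeger energy is the quadratic form $\int|\nabla u|_g^2\,d\vol_g$ on $W^{1,2}(M)$, so $X$ is infinitesimally Hilbertian. The Sobolev-to-Lipschitz property also follows from the bi-Lipschitz comparison together with the length structure off $S$.

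\textbf{Step 2: weak Bochner inequality.} By the Ambrosio--Gigli--Savar\'e / Erbar--Kuwada--Sturm theory, it suffices to prove
\[
\frac12\int \Delta\phi\,|\nabla u|^2 - \int\phi\,\langle\nabla u,\nabla\Delta u\rangle \geq K\int\phi|\nabla u|^2+\frac1n\int\phi(\Delta u)^2
\]
for nonnegative test $\phi$ and $u$ in a dense class, for instance eigenfunction combinations or heat-flow regularizations. On $M\setminus S$ the metric is smooth, so Bochner's formula holds pointwise. We then introduce cutoffs $\chi_\epsilon$ equal to $1$ outside the $2\epsilon$-neighborhood of $S$, vanishing on the $\epsilon$-neighborhood, with $|\nabla\chi_\epsilon|\lesssim\epsilon^{-1}$ and $|\nabla^2\chi_\epsilon|\lesssim \epsilon^{-2}$. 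We apply the Bochner identity with weight $\phi\chi_\epsilon$ and integrate by parts. The error terms are of the form
\[
\int |\nabla\chi_\epsilon|\,|\nabla u|\,|\nabla^2u|\,\phi \quad\text{and}\quad \int|\Delta\chi_\epsilon|\,|\nabla u|^2 .
\]
The upper Minkowski codimension $>\alpha$ gives $\vol(B_{2\epsilon}(S))\lesssim\epsilon^{\alpha'}$ with $\alpha'>3-\frac1{n-1}$. This requires, in turn, a priori control on $|\nabla u|$ and $|\nabla^2u|$ near $S$.

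\textbf{Step 3: the main obstacle, integrability of $\nabla u$ and $\nabla^2 u$ near $S$.} Using only $L^\infty$ metrics we cannot expect $\nabla u\in L^\infty$. Instead we would use the Bochner inequality on $M\setminus S$ itself to get improved integrability. The function $|\nabla u|$ is a subsolution, since $\Delta|\nabla u|\geq K|\nabla u| - |\nabla\Delta u|$, away from $S$. Combining this with a cutoff argument and De Giorgi--Nash--Moser for the uniformly elliptic $L^\infty$ operator, we aim to show $|\nabla u|\in L^p$ for all $p<\infty$, or $L^\infty$. The hope is to first handle codimension $>2$, so that capacity arguments apply with $|\nabla\chi_\epsilon|\in L^2$ small, and get $|\nabla u|$ bounded. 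Then integrating Bochner against $\chi_\epsilon^2$ gives a Caccioppoli bound $\int_{B_{2\epsilon}}|\nabla^2u|^2\lesssim \epsilon^{-2}\vol(B_{2\epsilon}(S))+\dots$, that is, $\nabla^2 u\in L^2$ with a quantitative decay near $S$. Plugging this into the error terms gives $\epsilon^{-1}\cdot\epsilon^{\alpha'/2}\cdot(\epsilon^{\alpha'-2})^{1/2}=\epsilon^{\alpha'-2}$, which is insufficient on its own. Reaching the threshold $3-\frac1{n-1}$ therefore needs a sharper estimate. I would attempt a refined Kato inequality $|\nabla|\nabla u||^2\le \frac{n-1}{n}|\nabla^2u|^2$ (up to the $\Delta u$ term) applied to $|\nabla u|^{\beta}$ for a suitable exponent $\beta$ depending on $n$. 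This improves the integrability of $|\nabla u|$ and of $|\nabla u|^{\beta-1}|\nabla^2u|$, and the exponent $\frac1{n-1}$ in the threshold should arise from optimizing $\beta$. Once the error terms vanish as $\epsilon\to0$, the weak Bochner inequality holds, and with Step 1 the RCD$(K,n)$ characterization concludes the proof.
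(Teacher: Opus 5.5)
\textbf{There is a genuine gap in Step 3: the boundedness of $|\nabla u|$ near $S$, which is the entire content of the theorem, is never established.} Your framework is sound up to that point. Bi-Lipschitz comparison with $h$, Sobolev-to-Lipschitz, and cutting off near $S$ to get the weak Bochner inequality on a dense class such as eigenfunction combinations are essentially what the paper packages into the Honda--Sz\'ekelyhidi criterion, Theorem~\ref{thm:ToRCD}. That criterion reduces everything to showing that eigenfunctions are Lipschitz.

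\begin{itemize}
\item Your own cutoff estimate gives $\epsilon^{\alpha'-2}\to 0$ whenever $\alpha'>2$. So once $|\nabla u|\in L^\infty$, the errors already vanish for any codimension $>2$, and ``insufficient'' is wrong. The threshold $3-\frac1{n-1}$ is needed to obtain the gradient bound, not to kill the cutoff errors.
\item Contrary to your remark, $\nabla u\in L^\infty$ for eigenfunctions is forced, since it follows from the RCD conclusion.
\item Your proposed route to the bound, capacity plus De~Giorgi--Nash--Moser for the subsolution $|\nabla u|$ under codimension $>2$, is circular. Extending the subsolution inequality across $S$, or running Moser iteration with cutoffs near $S$, needs a priori control of $|\nabla u|$ or of $\nabla|\nabla u|$ (i.e.\ the Hessian) near $S$, which is exactly what is unknown. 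If it worked, it would answer Question~\ref{quest:RCD-if-affirmative}, which the paper leaves open.
\end{itemize}

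The missing ingredient is a pointwise growth bound combined with a removable-singularity principle for subsolutions of controlled growth. For an eigenfunction $f$, the paper argues as follows.
\begin{enumerate}
\item $f\in L^\infty$ by Moser iteration with the global Sobolev inequality (Lemma~\ref{lem:Step1}).
\item Cheng--Yau on balls of radius comparable to $d(x,S)$ inside $M\setminus S$ gives $|\nabla f|\le C\,d(x,S)^{-1}$.
\item The improved Kato inequality makes $u=(1+|\nabla f|^2)^\gamma$ satisfy $\Delta u\ge -cu$ on $M\setminus S$ for every $\gamma>\frac{n-2}{2(n-1)}$. This is where your refined-Kato idea really enters: it lowers the power of the gradient that is a subsolution, rather than improving integrability in the cutoff errors.
\item A barrier argument (Proposition~\ref{LinftyResultForThm1}) shows that a nonnegative $L^1$ function with $\Delta u\ge-\alpha u$ off $S$ and $u\le\alpha\, d(\cdot,S)^{-2\gamma}$, $2\gamma<n-c-2$, is bounded.
\end{enumerate}

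The barrier in step 4 is $v=\int_S G(\cdot,y)\,\mathrm{d}\nu(y)$. Here $G$ is the Dirichlet Green function, which has two-sided bounds $\approx d^{2-n}$ even for $L^\infty$ metrics. The measure $\nu$ is a weighted sum of Dirac masses on the dyadic Minkowski covers of $S$, chosen so that $v\in L^1$ and $v\gtrsim d(\cdot,S)^{-a}$ with $2\gamma<a<n-c-2$. Then $\max\{0,u-\vartheta v\}$ vanishes near $S$ and is a global $W^{1,2}$ subsolution. The $L^\infty$--$L^1$ mean-value bound is uniform as $\vartheta\to 0$, which gives boundedness of $u$.

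The codimension threshold is exactly the compatibility condition $\frac{n-2}{2(n-1)}<\gamma<\frac{n-c-2}{2}$.
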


Note that \cref{thm:RCDcodim3} implies removability of smooth submanifolds of codimension three. Moreover, the codimension three condition is sharp for the removability of smooth submanifolds, since a 2d cone over a circle of length greater than $2\pi$ is not an RCD space; 
see, e.g., \cite[Remark 4.14]{HondaSun}. We remark that \cite{HondaSun} considers the removability of more general types of subsets in a more general setting than the one of closed $L^\infty$-Riemannian manifolds; see also the recent \cite[Section 6]{BZ26}. At the same time our method differs from \cite{DWWW} and \cite{HondaSun} in that we use a comparison argument, rather than integration by parts, to prove the crucial gradient estimate for eigenfunctions of the Laplacian. For further comments on \cref{thm:RCDcodim3}, and in particular on the codimension threshold $3-\frac{1}{n-1}$, see \cref{rem:Better-on-Lambda}, \cref{rem:PossiblyMoreGeneralResults}. 

In particular, in \cref{rem:Better-on-Lambda} we observe that \cref{thm:RCDcodim3} can be slightly improved: it is enough that $S\subset M$ is a closed subset of codimension greater than $3-\frac{1}{n-1}-\xi(n,\Lambda)$, where $0<\xi(n,\Lambda)\leq \frac{n-2}{n-1}$ is a positive threshold only depending on the dimension $n$ and the constant $\Lambda$ in \eqref{eqn:LambdaDef}. Moreover, we notice that $3-\frac{1}{n-1}-\xi(n,\Lambda)\to 2$ as $\Lambda\to 1^+$. For related RCD extension theorems in the spirit of \cref{thm:RCDcodim3}, see Bertrand--Ketterer--Mondello--Richard \cite{BertrandKettererMondelloRichard}.
\smallskip

Our motivation for proving \cref{thm:RCDcodim3} is to study removability of singularities of $L^\infty$ Einstein metrics. In dimension four we obtain the following result.
\begin{theorem}\label{thm:4dEinstein}
    Let $(M^4, g)$ be a four-dimensional closed $L^\infty$ manifold, and $S\subset M$ be a closed subset of Assouad codimension greater than $3-\frac{1}{3}$; see \cref{def:Codimension}. Suppose that $\mathrm{Ric}(g) = Kg$ on $M\setminus S$ for some $K\in \mathbb{R}$. Then $g$ extends as a smooth Einstein metric on $M$, perhaps for a different smooth structure on $M$. 
\end{theorem}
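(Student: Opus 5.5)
The plan is to first apply \cref{thm:RCDcodim3} with $n=4$. Assouad codimension dominates upper Minkowski codimension, so $S$ has upper Minkowski codimension $>3-\frac13$. Since $\mathrm{Ric}(g)=Kg\geq Kg$ on $M\setminus S$, the metric completion $X$ of $(M\setminus S,g)$ is an $\mathrm{RCD}(K,4)$ space. The $L^\infty$ bound \eqref{eqn:LambdaDef}, together with the fact that $S$ has measure zero, shows three things: $X$ is bi-Lipschitz to $(M,d_h)$, hence homeomorphic to $M$; $X$ is non-collapsed; and its measure is $\mathcal H^4$. Concretely, volumes of balls are comparable to Euclidean ones with constants depending only on $\Lambda$. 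So $X$ is a non-collapsed $\mathrm{RCD}(K,4)$ space that is topologically a closed manifold, and on the dense open set $M\setminus S$ it is a smooth Einstein manifold.

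The second step is to control the tangent cones at points of $S$. Every tangent cone is a non-collapsed $\mathrm{RCD}(0,4)$ metric cone $C(Z)$, and it is bi-Lipschitz to $\R^4$ with constant controlled by $\Lambda$. By the stratification theory for non-collapsed RCD spaces, the singular set of $X$ is the union of the strata $\mathcal S^k$ with $k\leq 2$. Here is where the Assouad codimension hypothesis enters, since it is scale-invariant. It passes to blow-ups: in a tangent cone, the limit of $S$ (in the Hausdorff sense along the rescalings) still has Assouad dimension $<4-(3-\frac13)=\frac43$. Away from this limit set the cone is a limit of Einstein manifolds with bounded Ricci curvature. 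The plan is to use the Einstein equation in harmonic coordinates on the regular part, via Anderson/Cheeger--Naber $\epsilon$-regularity, to show that the rescaled metrics converge smoothly there. As a result, off a set of dimension $<\frac43<2$ the tangent cone is a smooth Ricci-flat cone. A smooth Ricci-flat cone over a $3$-dimensional link that is smooth except at finitely many points must have a constant-curvature $1$ link. In dimension $4$ this means the cone is $\R^4/\Gamma$ or a cone with a lower-dimensional singular part. The stratum $\mathcal S^2$ would give a cone $\R^2\times C(S^1_\beta)$, whose singular set has dimension $2$. That contradicts the codimension bound, so only isolated orbifold-type points $\R^4/\Gamma$ and regular points remain. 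Because the space is homeomorphic to a manifold near each point, and cones bi-Lipschitz to $\R^4$ have link homeomorphic to $S^3$, we get $\Gamma$ trivial. Hence every tangent cone is $\R^4$.

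Finally, for a non-collapsed RCD space all of whose tangent cones are $\R^4$, Cheeger--Colding Reifenberg theory gives that $X$ is a topological manifold with $C^\alpha$ harmonic charts. Combined with the removable singularity theorem for Einstein metrics, presumably the paper's ``new removable singularity theorem'' (Anderson's $\epsilon$-regularity with the volume ratio close to Euclidean at all small scales), this yields harmonic coordinates near each point of $S$ in which $g$ solves the Einstein equation weakly, first as an $L^\infty$ weak solution across $S$, and then is smooth by elliptic regularity. The weak equation across $S$ needs the codimension $>2$ capacity argument. These harmonic charts define a smooth structure on $M$, possibly different from the original one, in which $g$ extends to a smooth Einstein metric.

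The main obstacle is the second step: showing that the stratum $\mathcal S^2$ and nontrivial orbifold points are excluded, and that the Einstein equation holds weakly across $S$ in harmonic coordinates. Both require transferring the scale-invariant Assouad codimension bound to every blow-up and ruling out the $2$-dimensional conical singular set. I would first try to treat this via volume monotonicity and the $\epsilon$-regularity theorem for Einstein metrics with codimension-$4$ singularities, of Cheeger--Naber type.
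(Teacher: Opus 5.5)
Your outer structure matches the paper: apply \cref{thm:RCDcodim3}, show every tangent cone is $\R^4$, then apply \cref{thm:extension} and bootstrap. However, your classification of tangent cones has a genuine gap. Since $\dim S<\frac43$, a tangent cone $C(Z)$ may a priori be singular along whole rays (dimension $1<\frac43$), i.e.\ the link $Z$ may itself have singular points. Your dimension count only kills the $\mathcal S^2$ stratum. Knowing that $Z$ is smooth with curvature $1$ off a closed set $A$ of dimension $<\frac13$ does not make $Z$ a space form $\mathbb S^3/\Gamma$. Removing $A$ is a removable-singularity problem of exactly the original type, one dimension lower, and you give no argument for it (nor for your claim that the singular set of the link is finite). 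A typical cone that must be excluded is $\R\times C(\mathbb{RP}^2)$, which is singular along a line.

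The paper handles this by a second blow-up. Let $S'$ be the limit of $S$ in $C(Z)$ and $A=\{z\in Z:[1/2,2]\times\{z\}\subset S'\}$. Then $A$ has codimension $>3-\frac13$ in the $3$-dimensional link, and $Z\setminus A$ is smooth with $\mathrm{Ric}=2$ (using Cheeger--Naber). Blowing up $Z$ at $z\in A$ gives a cone $C(Y)$ in which $Y$ must be a smooth surface of curvature $1$: a singular point of $Y$ would give a singular ray, which is too large compared with the blow-up of $A$. Hence $Y\in\{\mathbb S^2,\mathbb{RP}^2\}$. The case $\mathbb{RP}^2$ is excluded because $\R\times C(Y)$ is a tangent cone of $C(Z)$ at $(1,z)$, hence bi-Lipschitz to $\R^4$. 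So every point of $Z$ is regular. Now \cref{thm:extension} applies to the $3$-dimensional $\mathrm{RCD}(2,3)$ space $Z$, whose singular set $A$ has codimension $>2$; after bootstrapping, $Z$ is a smooth space form. It is simply connected because $C(Z)\setminus\{o\}$ is bi-Lipschitz to $\R^4\setminus\{0\}$, so $Z=\mathbb S^3$.

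Your final step also misidentifies the key input. All tangent cones being $\R^4$ gives only bi-H\"older Reifenberg charts, and by the Colding--Naber examples $X=X^{\mathrm{reg}}$ does not by itself yield a $C^{1,\alpha}$ structure. Anderson's $\epsilon$-regularity needs the Ricci bound on the whole ball, which fails across $S$. Moreover, for $g$ merely $L^\infty$ (or $C^\alpha$) in harmonic coordinates, the equation $\Delta_g g+\partial g\ast\partial g=2Kg$ has no weak meaning across $S$ and cannot be bootstrapped. What is needed is precisely \cref{thm:extension}, proved via \cref{thm:main} and \cref{propThreeAnnulus}. It gives $g\in C^{1,\alpha}$ in harmonic coordinates across $S$. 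After that, the divergence-form equation $\partial_a(\sqrt{\det g}\,g^{ab}\partial_b g_{ij})=H_{ij}$ with $H\in C^{0,\alpha}$ extends across $S$ by a cutoff argument (codimension $>1$ suffices) and bootstraps to $C^\infty$. The same theorem, in dimension $3$, is also what closes the tangent-cone step above.
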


Removability of isolated singularities has been considered in various works previously, in different contexts, such as Smith--Yang~\cite[Theorem 3.1]{SmithYang}, Dai--Wang--Wang--Wei~\cite[Theorem 1.6]{DWWW}, Bando--Kasue--Nakajima~\cite[Theorem 5.1]{BandoKasueNakajima}, and the work of Uhlenbeck on removability of singularities for solutions of Yang--Mills equations \cite[Theorem 4.1]{UhlenbeckYangMills}. Removability of non-isolated singularities has also been considered in the literature. E.g., in Shi--Tam~\cite[Theorem 6.1]{ShiTamPacific} the authors prove that the singularity of a Lipschitz continuous Riemannian metric which is Einstein outside a set of codimension $>1$ is removable. See also Lee--Tam~\cite[Corollary 1.2]{LeeTam}, where the authors prove that the singularity of a Riemannian metric in $W^{1,p}_{\mathrm{loc}}$ with $p>n$ which is Einstein outside a set of codimension $>2$ is removable. In \cite[Theorem 1.2]{DWWW} the authors remove possibly non-isolated singularities for manifolds of the form $T^n\#M$. See also the recent Wang--Wang--Xie~\cite[Theorem 1.5]{WWX26} for the case of $T^n$. As far as we know, Theorem~\ref{thm:4dEinstein} is the first general result (without additional topological assumptions on the underlying manifold) about removability of non-isolated singular sets for metrics that are not continuous a priori.
\smallskip

As we will explain below, our results can be applied to the following conjecture due to Schoen; see, e.g., Li--Mantoulidis~\cite[Conjecture 1.5]{LiMantoulidis}. 

\begin{conjecture}\label{conj:Schoen}
    Suppose that $(M^n, g)$ is an $L^\infty$-Riemannian manifold, and $S\subset M$ is a smooth closed embedded submanifold of codimension at least 3. Suppose that $g$ has non-negative scalar curvature on $M\setminus S$, and $M$ has non-positive Yamabe invariant. Then $g$ extends smoothly to $M$ as a metric with zero Ricci curvature. 
\end{conjecture}

The conjecture is known in dimension 3 by
Li--Mantoulidis~\cite{LiMantoulidis}. In higher dimensions some partial results were obtained; see, for instance, Dai--Wang--Wang--Wei~\cite{DWWW}, Kazaras~\cite{Kazaras}, Lee--Tam~\cite{LeeTam}; and the recent Wang--Wang--Xie~\cite{WWX26}, Khuri--Wang--Wang \cite{KWW26}, and Bi--Zhu \cite{BZ26}. In general the conjecture fails in high dimensions, as shown by Cecchini--Frenck--Zeidler~\cite{CecchiniFrenckZeidler}. The conjecture naturally breaks into two sub-problems:
\begin{itemize}
    \item[(i)] Show that under the assumptions of the conjecture we have $\mathrm{Ric}(g) = 0$ on $M\setminus S$.
    \item[(ii)] Show that if $\mathrm{Ric}(g)= 0$ on $M\setminus S$, then $g$ extends as a smooth Ricci flat metric on $M$ (perhaps with a different smooth structure). 
\end{itemize}
In fact, our Theorem~\ref{thm:4dEinstein} addresses (ii).

For manifolds $M$ of dimension $n\geq 6$ one cannot expect a removable singularities result such as in Theorem~\ref{thm:4dEinstein}, since there are non-standard Einstein metrics on the 5-sphere by \cite{Bohm}. Indeed, if $\mathrm{Ric}(g_N) = (n-2)g_N$ on an $(n-1)$-dimensional manifold $N$, then the spherical suspension $(0,\pi)\times N$ with metric $dt^2+\sin^2(t)g_N$ is an $L^\infty$-metric which is smooth Einstein outside the poles. Moreover, note that the proof of \cref{thm:4dEinstein} extends to dimension $5$ (the assumption, in this case, being ``Assouad $\mathrm{codimension}>3-\frac{1}{4}$") if and only if there are no non-round Einstein metrics on $\mathbb S^4$; see \cref{rem:ToDim5}. 

Nevertheless, many ingredients in the proof of Theorem~\ref{thm:4dEinstein} apply in higher dimensions, and even in the case when we only have a two-sided Ricci curvature bound. In fact Theorem~\ref{thm:4dEinstein} follows from the following general result, together with Theorem~\ref{thm:RCDcodim3}. 
\begin{theorem}\label{thm:extension}
    Let $n\geq 2$ be an integer, and let $K\geq 0$. Let $(X,\mathrm{d},\mathcal{H}^n)$ be an $\mathrm{RCD}(-K,n)$ space. Let us assume the following:
    \begin{enumerate}
        \item There is a closed set $\Sigma \subset X$ with Assouad codimension greater than $2$ (see \cref{def:Codimension}) such that $X\setminus\Sigma$ is a smooth Riemannian manifold;
        \item $|\mathrm{Ric}| \leq K$ on $X\setminus \Sigma$.
    \end{enumerate}
    Let $X^{\mathrm{reg}}$ be the set of regular points of $X$. Then $X^{\mathrm{reg}}$ is open, and has the structure of a $C^{1,\alpha}$-Riemannian manifold for any $\alpha < 1$.
\end{theorem}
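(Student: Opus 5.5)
The plan is to build harmonic coordinates near each regular point and show, in two stages, that the metric is $C^{1,\alpha}$ in them. First, since $X$ is a non-collapsed $\mathrm{RCD}(-K,n)$ space, a regular point $p$ has all tangent cones equal to $\R^n$. By volume convergence and the $\epsilon$-regularity theory for non-collapsed RCD spaces (Cheeger--Colding, De Philippis--Gigli), there is $r>0$ such that every ball $B_s(q)\subset B_r(p)$ is $\delta s$-Gromov--Hausdorff close to a Euclidean ball, with $\delta=\delta(n)$ as small as we like. This already gives a neighbourhood of regular points, so $X^{\mathrm{reg}}$ is open, once we know that the $\delta$-regular set coincides with $X^{\mathrm{reg}}$, which will follow from the regularity established below. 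On such a ball I would construct $\delta$-splitting maps $u=(u_1,\dots,u_n)$: harmonic functions, available on RCD spaces, with $\fint|\langle\nabla u_i,\nabla u_j\rangle-\delta_{ij}|\leq \delta$ and $|\nabla u_i|\leq 1+\delta$. Using the transformation theorem of Cheeger--Jiang--Naber, extended to the RCD setting by Bru\`e--Pasqualetto--Semola, $u$ is a bi-H\"older homeomorphism onto its image in $\R^n$. This gives a topological chart around $p$.

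Second, I need to upgrade this chart to a $C^{1,\alpha}$ structure. On $X\setminus\Sigma$ the functions $u_i$ are smooth harmonic, and in these coordinates $g_{ij}=\langle\nabla u_i,\nabla u_j\rangle$ satisfies the classical harmonic-coordinate equation
\[
\Delta g_{ij} = -2\mathrm{Ric}_{ij} + Q(g,\partial g).
\]
To handle this I would work at the level of the Bochner formula
\[
\tfrac12\Delta|\nabla u|^2=|\Hess u|^2+\mathrm{Ric}(\nabla u,\nabla u),
\]
and its polarizations, which are valid on $X\setminus\Sigma$. The goal is to show that $h_{ij}=\langle\nabla u_i,\nabla u_j\rangle$ is, on all of $B_r(p)$, a weak solution of an elliptic equation whose right-hand side involves the bounded term $\mathrm{Ric}$ and the term $|\Hess u|^2\in L^1$. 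This requires showing that $\Sigma$ is removable for this equation. Because $\Sigma$ has Assouad codimension $>2$, it has zero capacity in a quantitative way: there are cutoffs $\phi_\epsilon$ equal to $1$ off an $\epsilon$-neighbourhood of $\Sigma$ with $\int|\nabla\phi_\epsilon|^2\to0$, and the Assouad bound gives this uniformly at all scales and centres. Testing the Bochner identities against $\phi_\epsilon^2\psi$ and using $|\nabla u|\leq C$ then shows that no distributional mass concentrates on $\Sigma$.

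The third step is the key one, and I expect it to be the main obstacle: getting $L^p$ bounds on $\Hess u$ for large $p$ from only two-sided Ricci bounds off $\Sigma$. My approach would be a Campanato / $\epsilon$-regularity iteration. At each scale $s$ around each $q\in B_r(p)$, compare $u$ with the best affine approximation. The two-sided bound $|\mathrm{Ric}|\leq K$, rescaled, gives $s^2K\to0$, while the Hessian's $L^2$ excess is bounded via Bochner by the splitting defect. A harmonic-coordinates decay lemma then shows the excess decays like $s^{2\alpha}$ for any $\alpha<1$. The lemma is proved by contradiction and compactness: the limits are Ricci-flat off a codimension $>2$ set and Euclidean-close. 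Here codimension $>2$ (rather than $4$) should suffice, because the $\delta$-regularity forces the limit to be smooth Euclidean, and the removability of Step two passes to the limit. Morrey/Campanato then gives $h_{ij}\in C^{0,\alpha}$. A further elliptic estimate, $\Delta h_{ij}\in L^{p}$ for all $p$ once $\Hess u\in L^{2p}$, bootstraps via the equation above to $h_{ij}\in W^{2,p}\subset C^{1,\alpha}$.

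Finally, transition maps between two harmonic charts $u,v$ are given by harmonic functions $v\circ u^{-1}$ of a $C^{1,\alpha}$ metric, hence are $C^{2,\alpha}$ by Schauder theory. The atlas therefore defines a $C^{1,\alpha}$-Riemannian manifold structure on $X^{\mathrm{reg}}$ that agrees with the smooth structure on $X\setminus\Sigma$.
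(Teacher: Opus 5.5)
Your outline (harmonic splitting charts, the transformation theorem for the topology, a Campanato-type decay lemma by contradiction, then bootstrapping the harmonic-coordinate equation) matches the paper's architecture. However, the decay lemma in Step 3 has a genuine gap, and it sits exactly where the paper's main new ingredient is needed.

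In the contradiction argument you must normalize the defect, $H_k:=(G_{u_k}-I)/E_k$ with $E_k\to0$ the excess at the larger scale. What must be shown regular is the limit $h=\lim_k H_k$, not the limit space, which is trivially $\R^n$. Away from $\Sigma_\infty$ the $H_k$ converge locally in $C^{1,\alpha}$ to harmonic functions. Across $\Sigma_\infty$, however, you only know that $h\in L^1$ and that $\Delta h$ is an (asymptotically) nonnegative matrix-valued measure, possibly supported on $\Sigma_\infty$: the normalized Bochner measures $|\Hess u_k|^2/E_k$ have bounded mass and can concentrate there.

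Your Step 2 removability does not pass to the limit. It relies on $|\nabla u|\leq C$, i.e.\ $|G_u-I|\leq C$, which is not linear in the defect and gives nothing after dividing by $E_k$. Moreover, an $L^1$ function harmonic off a set of codimension $>2$ need not be harmonic; think of $|x|^{2-n}$ near an isolated point of $\Sigma_\infty$. For the same reason you cannot show that the part of the normalized excess at scale $r_0$ lying in $B_s(\Sigma_k)$ is small uniformly in $k$. You need this, since convergence only holds away from $\Sigma_\infty$.

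The missing idea is a scale-invariant bound that is linear in the defect: the subpower-loss estimate of \cref{thm:main}, $\fint_{B_r(y)}|G_u-I|\leq C r^{-\kappa}(D+\eta)$ for all $0<r<1/4$ and all centres. After normalization it gives $\fint_{B_r(y)}|h|\leq Cr^{-\kappa}$. With the Assouad covering this yields $\delta^{-2}\int_{B_{2\delta}(\Sigma_\infty)}|h|\lesssim \delta^{n-c-2-\kappa}\to0$ for $\kappa<n-c-2$, which makes $h$ harmonic across $\Sigma_\infty$. It also yields $\int_{B_{r_0}\cap B_s(\Sigma_k)}|G_{u_k}-I|\lesssim E_k s^{n-c-\kappa}$, which handles the excess near $\Sigma_k$.

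This estimate does not follow from $\epsilon$-regularity, which only says that some good splitting map exists at each scale, not that the same $u$ stays good with linear control. The paper proves it by a second blow-up:
\begin{enumerate}
\item the normalized defect satisfies $\Delta H=\omega_n\nu$ with $\nu\geq0$;
\item the harmonic-map stress--energy conservation law passes to the limit, giving $\operatorname{div}(H-\frac12(\tr H)I)=0$;
\item this yields a monotonicity formula for $\tr\nu$ that forces $\nu=0$.
\end{enumerate}

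A smaller gap: $s^{2\alpha}$ decay of the $L^2$ Hessian excess is only a Morrey bound, not $\Hess u\in L^{2p}$ for large $p$. The paper obtains the latter by interpolating the H\"older bound with Anderson's $C^{1,\alpha}$ estimate at scale $\dist(\cdot,\Sigma)$. This gives $|\partial g|\lesssim \dist(\cdot,\Sigma)^{-\beta}$, which lies in every $L^p$ by the codimension assumption.
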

The assumption on the codimension being greater than $2$ in \cref{thm:extension} is sharp; see \cref{rem:Codimension-Sharp}. Moreover we stress that the assumptions in item (1) and item (2) in \cref{thm:extension} are important in order to have a $C^{1,\alpha}$-Riemannian manifold structure on $X^{\mathrm{reg}}$. In fact, in \cite[Theorem 1.2]{ColdingNaberReifenberg} the authors construct non-collapsed Ricci limit spaces $X$ such that $X=X^{\mathrm{reg}}$ but, for any $0<\alpha<1$, there is no $C^{1,\alpha}$-Riemannian structure on $X=X^{\mathrm{reg}}$. A different criterion for manifold regularity was obtained recently by Honda--Zhang \cite[Theorem 1.1]{HondaZhang}.  
\smallskip

Let us now comment on the application of our results to \cref{conj:Schoen}. Notice that the following result is also implicit in the work of Lee--Tam \cite{LeeTam} under slightly stronger assumptions; see, e.g., \cite[Corollary 1.1]{LeeTam}.

\begin{theorem}\label{thm:highdimEinstein}
  Let $n\geq 2$ be an integer, and $a>0$. There exists $0<\varepsilon:=\varepsilon(n,a)<1$ such that the following holds.
  
  Let $(M^n, g)$ be a closed $L^\infty$-Riemannian manifold, and $S\subset M$ a closed subset of Assouad codimension greater than $2+a$; see \cref{def:Codimension}. Suppose that $\mathrm{Ric}(g) = Kg$ on $M\setminus S$ for some $K\in\mathbb R$, and in addition suppose that one of the following conditions holds:
  \begin{itemize}
      \item[(a)] Either $(1 - \epsilon)h \leq g \leq (1+\epsilon) h$ for a smooth metric $h$ on $M$,
      \item[(b)] Or $g$ is continuous. 
  \end{itemize}
    Then $g$ extends to $M$ as a smooth Einstein metric, perhaps for a different smooth structure.
\end{theorem}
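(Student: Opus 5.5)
The plan is to reduce to \cref{thm:extension} and then show that, under either (a) or (b), every point of $M$ is a regular point of the metric completion $X$ of $(M\setminus S,g)$. Since the Assouad codimension of $S$ exceeds $2+a$, it is at least as large as the upper Minkowski codimension. So once $a$ is arranged, or using \cref{rem:Better-on-Lambda} with $\Lambda$ close to $1$ in case (a), \cref{thm:RCDcodim3} applies and shows that $X$ is an $\mathrm{RCD}(K,n)$ space.

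In case (a), the threshold $3-\frac{1}{n-1}-\xi(n,\Lambda)$ tends to $2$ as $\Lambda\to1^+$, so for $\epsilon(n,a)$ small it is below $2+a$. In case (b), I would note that continuity makes $g$ locally $(1\pm\epsilon)$-close to a constant-coefficient metric in coordinates. The argument can then be localized, since the RCD property is local, and reduced to case (a) on small balls.

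Next I would identify $X$ with $(M,\mathrm{d}_g)$ measured by $\mathcal H^n$, which is the content of the construction behind \cref{thm:RCDcodim3}. This makes $X$ non-collapsed $\mathrm{RCD}(K,n)$, and $\Sigma=S$ satisfies hypotheses (1) and (2) of \cref{thm:extension}, because $|\mathrm{Ric}|=|K|$ on $M\setminus S$.

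The key step is to show $X^{\mathrm{reg}}=X$. At any $p\in M$, every tangent cone of $(M,\mathrm{d}_g)$ is $(1+\epsilon)$-bi-Lipschitz to $\mathbb R^n$. In case (b) the tangent cone is exactly $(\mathbb R^n,g(p))$, since $\mathrm{d}_g$ is then locally comparable to a fixed Euclidean norm with ratio tending to $1$. Hence the volume ratio of any tangent cone is at least $(1+\epsilon)^{-n}$. By volume convergence and the $\epsilon$-regularity/almost volume rigidity for non-collapsed RCD spaces (volume ratio close to $\omega_n$ forces the cone to be $\mathbb R^n$), choosing $\epsilon(n,a)$ small makes every point regular. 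The main obstacle I expect is precisely this step in case (a). Bi-Lipschitz closeness controls the volume of balls in a tangent cone only up to a factor $(1+\epsilon)^{\pm n}$, so I need the gap theorem for volume cones to hold with a dimensional constant independent of $p$. I would invoke the known gap: a non-collapsed RCD$(0,n)$ cone with density $>1-\delta(n)$ is $\mathbb R^n$.

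Then \cref{thm:extension} gives a $C^{1,\alpha}$ Riemannian structure on all of $X=M$ (as a topological manifold, possibly a different smooth structure) in which $g$ is $C^{\alpha}$. Finally I would bootstrap. In harmonic coordinates for this structure, the Einstein equation reads $-\frac12\Delta g_{ij}+Q(g,\partial g)=Kg_{ij}$. This holds weakly on all of $M$ because $S$ has codimension $>2$ and hence zero $W^{1,2}$-capacity, so the equation extends across $S$ by cutoff with $g\in C^{1,\alpha}$. Schauder estimates then give smoothness, and $g$ is a smooth Einstein metric on $M$.
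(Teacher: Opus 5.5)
\textbf{Verdict: there is a genuine gap in case (a).} Most of your outline is sound and matches the paper: the $\mathrm{RCD}$ step via \cref{rem:Better-on-Lambda}, case (b) (where every tangent cone is exactly $(\mathbb R^n,g(p))$), the application of \cref{thm:extension} once all points are regular, and the final bootstrap across $S$.

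The gap is the key step in case (a). The ``known gap'' you invoke, that a non-collapsed $\mathrm{RCD}(0,n)$ cone with density $>1-\delta(n)$ is $\mathbb R^n$, is false. Two families of counterexamples:
\begin{itemize}
\item the cones $\mathbb R^{n-2}\times C(S^1_\theta)$ with $\theta\uparrow 2\pi$;
\item the cones $C(\mathbb S^{n-1}_\rho)$ over round spheres of radius $\rho\uparrow 1$.
\end{itemize}
Both are non-collapsed $\mathrm{RCD}(0,n)$ cones with density arbitrarily close to $1$, and none of them is $\mathbb R^n$. Almost volume rigidity only gives Gromov--Hausdorff closeness to $\mathbb R^n$. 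Anderson-type $\epsilon$-regularity needs a two-sided Ricci bound on a smooth space, which you do not have across the blown-up singular set $S'$. The examples also show what any correct argument must use. The first family is flat off a codimension-$2$ set. The second (for $n\geq 3$) has an isolated singular point but is not Ricci-flat off it. So you must exploit both facts together: tangent cones are Ricci-flat, and only off a set of codimension $>2+a$.

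The paper does this by a reverse induction on iterated tangent cones $T^i_p$. Each $\Sigma\in T^i_p$ is $\mathbb R^i\times C(X')$. The limit singular set keeps codimension $>2+a$, and by Cheeger--Naber $\Sigma$ is smooth and Ricci-flat off it.
\begin{itemize}
\item For $i=n-3$, $X'$ is a surface, so codimension $>2$ forces its singular set to be empty. Hence $X'$ is a smooth surface of curvature $1$, and the volume bound $|\mathcal H^{n-1}(X)-\vol(\mathbb S^{n-1})|\leq\Psi(\varepsilon)$ rules out $\mathbb{RP}^2$.
\item For $i\leq n-4$, the inductive hypothesis $T^{i+1}_p=\{\mathbb R^n\}$ makes every point of $X'$ regular. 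Then \cref{thm:extension} plus bootstrapping turns $X'$ into a smooth Einstein manifold with $\mathrm{Ric}=n-i-2$ and volume close to that of the round sphere. \cref{lemEinstein} (isolation of the round metric among Einstein metrics) then forces $X'$ to be round.
\end{itemize}

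Your shortcut could alternatively be repaired using the $\epsilon$-regularity implicit in the proof of \cref{thm:extension}, namely $X_\epsilon=X^{\mathrm{reg}}$ for small $\epsilon$. However, that threshold comes from \cref{propThreeAnnulus} and depends on the covering constant $\mathcal K$ of $S$ in \cref{def:Codimension}. It would therefore give an $\varepsilon$ depending on $S$, not the required $\varepsilon(n,a)$. The induction avoids this because it uses \cref{thm:extension} only qualitatively; the only quantitative inputs are $\Psi(\varepsilon)$ and the constant of \cref{lemEinstein}.
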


As a consequence of \cref{thm:highdimEinstein}, using \cite[Corollary 1.3]{LeeTam} (see also \cite[Theorem 1.1]{BurkhardtGuim}) and the proof of \cite[Theorem 1.1]{LeeTam}, Conjecture~\ref{conj:Schoen} holds for metrics satisfying (a) or (b) in \cref{thm:highdimEinstein}. 
We stress that \cref{thm:highdimEinstein} is a consequence of our results discussed above. Indeed, the assumptions imply using Theorem~\ref{thm:RCDcodim3} (see, in particular, the slightly stronger statement in \cref{rem:Better-on-Lambda}) that the metric completion of $M\setminus S$ is an RCD space; then, the conditions (a) or (b) imply that all points are regular, so that Theorem~\ref{thm:extension} can be applied.

The necessity of allowing a different smooth structure can be seen from the existence of exotic structures on $S^7$ that are biLipschitz equivalent to the standard structure, with a map that is smooth away from a point (see Cecchini--Frenck--Zeidler~\cite[Theorem 2.7]{CecchiniFrenckZeidler}).

Notice that if we assume even more regularity for the metric, such as being in the Sobolev space $W^{1,p}$ for $p > n$, stronger results can be obtained using the Ricci flow; see Jiang--Sheng--Zhang~\cite{JiangShengZhang},  \cite[Theorem 1.1]{ShiTamPacific}, and \cite[Corollary 1.2]{LeeTam}. It is worth noticing that in  \cite[Theorem 4.2]{JiangShengZhang} the authors prove that if the metric is $C^0\cap W^{1,p}$ with $p>n$, $\mathcal{H}^{n-\frac{p}{p-1}}(\Sigma)<\infty$, and the metric is asymptotically flat and Ricci flat outside $\Sigma$, then the space is $\mathrm{RCD}$.
\smallskip

We conclude with a natural open question related to \cref{thm:RCDcodim3}.
\begin{question}\label{quest:RCD-if-affirmative}
    Let $n\geq 2$ be an integer. Suppose that $(M^n, g)$ is a closed $L^\infty$ Riemannian manifold, and $S\subset M$ is a closed subset of upper Minkowski codimension greater than $2$. Suppose that $\mathrm{Ric}(g) \geq Kg$ on $M\setminus S$ for some $K\in\mathbb R$. Is it true that the metric completion of $(M\setminus S, g)$ is an $RCD(K,n)$-space?
\end{question}
Notice that $\mathrm{codimension}>2$ cannot be further weakened in the above question: see, e.g., \cite[Remark 4.14]{HondaSun}. We refer the reader to \cref{rem:Better-on-Lambda} and \cref{rem:PossiblyMoreGeneralResults} for related discussions. Notice that if \cref{quest:RCD-if-affirmative} has a positive answer, one could immediately weaken the assumption of \cref{thm:4dEinstein} to ``$S\subset M$ has (Assouad) codimension greater than $2$". 
\smallskip

\textbf{Acknowledgments}. The authors were supported in part by NSF grants DMS-2550590 (G. A.) and DMS-2506325 (G. Sz.). The authors wish to thank Matt Gursky, Aaron Naber, and Daniele Semola for useful conversations around the topic of this paper. 
\smallskip 

We formulated the results of this paper and devised the overall proof strategy in 2025 and early 2026, when we also prepared an initial draft of this paper with all the proofs. After having identified a mistake in our proof of \cref{thm:main}, we used ChatGPT to assist us with its proof. This enabled us to complete our original strategy and finish the paper. We also used ChatGPT for routine review, and identifying potentially relevant literature. 

\section{Extending the RCD property across \texorpdfstring{$S$}{S}}

\subsection{Preliminaries}

Let us recall the definition of almost smooth compact metric measure space (see \cite{Honda, Gabor}). In our treatment we will stick to the compact case. Using the recent \cite[Corollary 5.4]{HondaSun} we believe that our results can be generalized to the non-compact weighted case with additional technicalities which are out of the scope of this note. We note that, while we were completing this paper, some variations of the arguments in this section have been used in the independent papers \cite{WWX26, KWW26, BZ26} to obtain similar results.

\begin{definition}[Almost smooth metric measure space]\label{def:AlmostSmooth}
A compact metric measure space $(Z, d, \mathfrak{m})$ is an \textit{$n$-dimensional almost smooth metric measure space}, if there is an open subset $\Omega \subset Z$ satisfying the following conditions:
\begin{enumerate}
    \item There is a smooth $n$-dimensional Riemannian manifold $(M, g)$ and a homeomorphism $\varphi : \Omega \to M$, such that $\varphi$ defines a local isometry between $(\Omega, d)$ and $(M, d_g)$.
    \item The restriction of the measure $\mathfrak{m}$ to $\Omega$ coincides with the $n$-dimensional Hausdorff measure.
    \item The complement $Z \setminus \Omega$ has measure zero, i.e., $\mathfrak{m}(Z \setminus \Omega) = 0$, and it has zero $2$-capacity in the following sense: there is a sequence of smooth functions $\varphi_i : \Omega \to [0,1]$ with compact support in $\Omega$ such that
    \begin{enumerate}
        \item For any compact $A \Subset \Omega$, we have $\varphi_i|_A = 1$ for sufficiently large $i$.
        \item We have
        \begin{equation}
            \lim_{i\to\infty} \int_{\Omega} |\nabla \varphi_i|^2 \, \mathrm{d}\mathcal{H}^n = 0.
        \end{equation}
    \end{enumerate}
\end{enumerate}
\end{definition}

\begin{setting}\label{Setting}
    Let $(M,d_g)$ be an $n$-dimensional incomplete Riemannian manifold equipped with the measure $\mathcal{H}^n$. Let $(Z,d)$ be the metric completion of $(M,d_g)$, and assume $(Z,d)$ to be compact. Define a measure $\mathfrak{m}$ on $Z$ so that $\mathfrak{m}|_M=\mathcal{H}^n|_M$, and $\mathfrak{m}|_{Z\setminus M}\equiv 0$. We call $S:=Z\setminus M$ the set of \textit{singular} points, and $M$ the regular points. Sometimes we denote $\mathcal{H}^n$ by $\mathrm{vol}$ on the regular part. 
\end{setting} 

Notice that in \cref{Setting}, we have that $(Z,d,\mathfrak{m})$ is infinitesimally Hilbertian; namely, the Cheeger energy is quadratic. Let $f\in W^{1,2}(Z,d,\mathfrak m)$ and $\lambda\geq 0$. When we write $\Delta f\geq -\lambda f$ we mean that 
\begin{equation}\label{eqn:WeakPDE}
\int_Z \langle \nabla f,\nabla g\rangle \mathrm{d}\mathfrak{m} \leq \lambda\int_Z fg \mathrm{d}\mathfrak{m}, \quad \text{for every $g\in \mathrm{Lip}_c(Z,d)$ with $g\geq 0$.}
\end{equation}
Moreover, with $\Delta f=-\lambda f$ we mean that \eqref{eqn:WeakPDE} holds with the equality sign, for every $g\in \mathrm{Lip}_c(Z,d)$.

\begin{assumptions}\label{assumptions}
    Let $n\geq 2$, and let $(Z,d,\mathfrak{m})$ be as in \cref{Setting}. Furthermore, we assume the following.
    \begin{enumerate}
        \item (Global Sobolev inequality) There exists $q>1$ and a constant $\mathcal{S}$ such that for every $u\in W^{1,2}(Z,d,\mathfrak{m})$ we have 
        \begin{equation}\label{eqn:GlobalSobolev}
            \|u\|_{2q}\leq \mathcal{S}\left(\|\nabla u\|_2+\|u\|_2\right).
        \end{equation}
        \item (Ricci Lower bound) On the regular part $(M,g)\subset Z$ we have 
        \begin{equation}\label{eqn:RicciControl}
        \mathrm{Ric}_{(M,g)} \geq Kg,
        \end{equation}
        for some $K\in\mathbb R$. 
    \end{enumerate}
\end{assumptions}
We say that a Radon measure $\mathfrak{m}$ on a metric space $(Z,d)$ is \textit{locally doubling} if, for every $R>0$, there exist $C(R)>0$ such that 
\[
\mathfrak{m}(B_{2r}(x))\leq C(R)\mathfrak{m}(B_r(x)), \qquad \text{for every $x\in Z$ and $0<r\leq R$.}
\]
In our terminology, a metric-measure space $(Z,d,\mathfrak{m})$ is said to be a \textit{PI space} if $\mathfrak{m}$ is locally doubling and supports a \textit{local} $1$-Poincaré inequality, see \cite[Section 8.1]{HKST15}.
\begin{remark}\label{rem:LinftyArePi}
    Notice that spaces as in \cref{thm:RCDcodim3} satisfy both \cref{Setting} and \cref{assumptions}. Indeed, since they are biLipschitz equivalent to a smooth manifold, with comparable measure, they are PI spaces. Hence the global Sobolev inequality follows from \cite[Theorem 5.1(1)]{HK00}.
\end{remark}

\begin{lemma}\label{lem:Step1}
    Let $(Z,d,\mathfrak{m})$ be as in \cref{Setting}, and let us assume it verifies the global Sobolev inequality \eqref{eqn:GlobalSobolev} in \cref{assumptions}. Then we have the following:
    \begin{enumerate}
        \item If $f\in W^{1,2}(Z,d,\mathfrak{m})$ verifies $\Delta f\geq -\lambda f$ for some $\lambda\geq 0$, then $f^+\in L^\infty(Z,\mathfrak{m})$. Actually, we have $f^+\leq C(q,\mathcal{S},\lambda)\|f\|_2$. Here $f^+$ is the positive part of $f$.
        \item If $f\in W^{1,2}(Z,d,\mathfrak{m})$ verifies $\Delta f = -\lambda f$ for some $\lambda\geq 0$, then $f\in L^\infty(Z,\mathfrak{m})$. Actually, we have $|f|\leq C(q,\mathcal{S},\lambda)\|f\|_2$.
    \end{enumerate}
\end{lemma}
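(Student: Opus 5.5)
The plan is a Moser iteration driven only by the global Sobolev inequality \eqref{eqn:GlobalSobolev}. Since $\mathfrak m(Z)<\infty$ and test functions are only required to be in $\mathrm{Lip}_c(Z,d)$, I first extend \eqref{eqn:WeakPDE} by density to nonnegative test functions in $W^{1,2}(Z,d,\mathfrak m)\cap L^\infty$. Lipschitz functions are dense in energy in $W^{1,2}$ on a compact space, and $Z$ is compact by \cref{Setting}, so every Lipschitz function is compactly supported. Item (2) follows from item (1) applied to $f$ and $-f$, both of which satisfy $\Delta(\pm f)\ge -\lambda(\pm f)$. So it suffices to prove (1).

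\textbf{Energy inequality.} Fix $k\ge 0$, let $u=(f-k)^+$, and take exponents $\beta\ge 1$. I test \eqref{eqn:WeakPDE} with the truncated powers $\phi=\min(u,L)^{2\beta-2}u$ (or $u^{2\beta-1}$ for bounded $u$, then let $L\to\infty$). These are admissible by the chain rule for the Cheeger energy, which is quadratic since the space is infinitesimally Hilbertian. The chain rule gives $\langle\nabla f,\nabla\phi\rangle\ge c\,\beta^{-1}|\nabla (u^\beta)|^2$ on $\{f>k\}$, and $\phi$ vanishes elsewhere. On the support of $\phi$ we have $f u^{2\beta-1}\le (u+k)u^{2\beta-1}$, so with $k=0$ the right-hand side is $\lambda\int u^{2\beta}$. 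This yields
\[
\int |\nabla(u^\beta)|^2\,\mathrm d\mathfrak m\le C\beta\lambda\int u^{2\beta}\,\mathrm d\mathfrak m .
\]
Taking $k=0$ suffices because $f^+$ itself is the quantity to bound.

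\textbf{Iteration.} Combining the energy inequality with \eqref{eqn:GlobalSobolev} applied to $u^\beta$ gives
\[
\|u\|_{2\beta q}^{\beta}\le \mathcal S\bigl((C\beta\lambda)^{1/2}+1\bigr)\|u\|_{2\beta}^{\beta},
\quad\text{i.e.}\quad \|u\|_{2\beta q}\le \bigl(C'\beta\bigr)^{1/(2\beta)}\|u\|_{2\beta},
\]
with $C'=C'(\mathcal S,\lambda)$. I iterate with $\beta_j=q^j$. The product $\prod_j (C'q^j)^{1/(2q^j)}$ converges because $\sum_j j q^{-j}<\infty$ when $q>1$. Hence $\|f^+\|_\infty\le C(q,\mathcal S,\lambda)\|f^+\|_2\le C\|f\|_2$. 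Finite volume is not even needed at this stage, because the right-hand side is already in $L^2$.

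\textbf{Main obstacle.} The delicate point is justifying the test functions at each step, since a priori $u\notin L^{2\beta}$ for large $\beta$. I handle this by working with the truncation $u_L=\min(u,L)$ and using $\phi=u_L^{2\beta-2}u$. This $\phi$ lies in $W^{1,2}\cap L^\infty$ whenever $u\in W^{1,2}$. The resulting inequality is
\[
\int|\nabla(u_L^{\beta-1}u)|^2\le C\beta\lambda\int u_L^{2\beta-2}u^2,
\]
where the chain-rule constants must be checked on $\{u<L\}$ and $\{u\ge L\}$ separately. I apply the Sobolev inequality to $u_L^{\beta-1}u$ and then let $L\to\infty$ by monotone convergence, inductively knowing $u\in L^{2\beta}$ from the previous step.
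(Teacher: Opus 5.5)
Your proposal is correct and is essentially the paper's argument: test \eqref{eqn:WeakPDE} with powers of $f^+$, combine with \eqref{eqn:GlobalSobolev}, and iterate along exponents $2q^k$ (your $\beta_j=q^j$), with your truncated test function $\min(u,L)^{2\beta-2}u$ rigorously justifying the step that the paper carries out only formally by testing with $(f^+)^p$. One small slip: that test function lies in $W^{1,2}$ but not necessarily in $L^\infty$; this is harmless, because \eqref{eqn:WeakPDE} extends by density (weak $W^{1,2}$ convergence of the positive parts of Lipschitz approximants suffices) to all nonnegative $W^{1,2}$ test functions.
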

\begin{proof}
    The proof is classical but we provide it here for the reader's ease. This comes from a classical application of the Moser iteration argument, and it follows from the global Sobolev inequality \eqref{eqn:GlobalSobolev}. Let us first sketch item (1). Let $p\geq 1$ be an arbitrary number, and denote $\alpha:=p+1\geq 2$. Testing the inequality $\Delta f\geq -\lambda f$ with $(f^+)^p$ (see \eqref{eqn:WeakPDE}) we get
    \begin{equation}\label{eqn:UpperEstimate}
    \|\nabla ((f^+)^{\alpha/2})\|_2 \leq \frac{\alpha\sqrt{\lambda}}{2\sqrt{\alpha-1}} \|f^+\|^{\alpha/2}_\alpha.
    \end{equation}
    Using the Sobolev inequality we get
    \[
    \|\nabla ((f^+)^{\alpha/2})\|_2 \geq \mathcal{S}^{-1}\|(f^+)^{\alpha/2}\|_{2q} - \|(f^+)^{\alpha/2}\|_{2} = \mathcal{S}^{-1}\|f^+\|_{q\alpha}^{\alpha/2} - \|f^+\|_{\alpha}^{\alpha/2}.
    \]
    Joining the last two inequalities we get 
    \[
    \|f^+\|_{q\alpha} \leq \mathcal{S}^{2/\alpha}\left(\frac{\alpha\sqrt{\lambda}}{2\sqrt{\alpha-1}}+1\right)^{2/\alpha}\|f^+\|_\alpha.
    \]
    Applying iteratively the last inequality with $\alpha_k:=2q^k$ for all $k\geq 0$ we get $\|f^+\|_\infty\leq C(q,\mathcal{S},\lambda)\|f\|_2$, i.e., the sought conclusion of item (1). To prove item (2), it is enough to apply item (1) to $-f$ as well.
\end{proof}
\begin{lemma}\label{lem:Step2}
    Let $(Z,d,\mathfrak{m})$ be as in \cref{Setting}, and let us assume it verifies the global Sobolev inequality \eqref{eqn:GlobalSobolev}, and the Ricci lower bound \eqref{eqn:RicciControl} in \cref{assumptions}. Let $f\in W^{1,2}(Z,d,\mathfrak{m})$ verify $\Delta f= -\lambda f$ for some $\lambda\geq 0$. Then there is a constant $C>0$ such that 
    \begin{equation}\label{eqn:GradEstimate}
        |\nabla f|(x)\leq C d(x,S)^{-1}, \qquad \forall x:d(x,S)\leq 1.
    \end{equation}
\end{lemma}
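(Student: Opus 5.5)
This is a purely local estimate on the smooth part, so the plan is to combine the $L^\infty$ bound of \cref{lem:Step1} with a local gradient estimate on balls that stay inside $M$. Fix $x\in M$ with $r:=d(x,S)\leq 1$. Since $Z$ is the metric completion of $(M,d_g)$, the open ball $B_{r}(x)$ in $Z$ does not meet $S$. Hence $B_{r/2}(x)$ is a smooth region of $(M,g)$. On this region $\mathrm{Ric}\geq Kg$ holds by \eqref{eqn:RicciControl}, and the exponential map from $x$ is defined up to radius $r$, because geodesics from $x$ cannot leave $M$ before distance $r$. Moreover $f$ is smooth there: it is a weak solution of $\Delta f=-\lambda f$ on $M$, since the test functions in \eqref{eqn:WeakPDE} include all Lipschitz functions compactly supported in $M$. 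Elliptic regularity therefore makes $f$ a classical eigenfunction on $M$.

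\textbf{Step 1 (global bound).} By \cref{lem:Step1}(2), $\|f\|_{L^\infty}\leq A:=C(q,\mathcal{S},\lambda)\|f\|_2$. Set $u:=f+2A+1$, so that $A+1\leq u\leq 3A+1$ on $M$ and $\Delta u=-\lambda f$.

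\textbf{Step 2 (local Cheng--Yau estimate).} I would run the classical Cheng--Yau/Li--Yau gradient estimate for $\log u$ on the geodesic ball $B_{r/2}(x)$. Only two ingredients are needed: Bochner's formula with $\mathrm{Ric}\geq K$, and a cutoff $\psi(d(x,\cdot)/r)$ together with the Laplacian comparison $\Delta d(x,\cdot)\leq (n-1)\sqrt{|K|}\coth(\sqrt{|K|}\,d)$. Calabi's trick handles the cut locus. Both ingredients are local and valid because minimizing geodesics from $x$ of length $<r$ stay in $M$. The bounded right-hand side $\lambda f$ enters the Bochner computation with the bound $|\lambda f|\leq \lambda A$. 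The result is
\[
|\nabla \log u|(x)\leq C(n)\Big(\frac1r+\sqrt{|K|}+\sqrt{\lambda}\Big)\cdot C',
\]
where $C'$ depends on $\sup u/\inf u\leq 3$ and on $\lambda A/\inf u\leq\lambda$. A simpler route to the same bound is a rescaling: on $B_{r/2}(x)$ rescaled to unit size, the curvature bound improves to $\mathrm{Ric}\geq -|K|r^2\geq -|K|$. One then applies a standard interior gradient estimate for $\Delta u=-\lambda f$ with $|u|,|\lambda f|\leq C$.

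\textbf{Step 3 (conclusion).} Multiplying by $u\leq 3A+1$ gives $|\nabla f|(x)\leq C(1/r+1)\leq 2C\,r^{-1}$ for $r\leq 1$. This proves \eqref{eqn:GradEstimate}.

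The main obstacle is Step 2 in the incomplete setting. The maximum-principle argument needs the cutoff to be supported where the manifold is "complete enough". That is, balls $B_\rho(y)$ with $y\in B_{r/2}(x)$ and $\rho<r/2$ must be relatively compact in $M$, and the distance function from $x$ must be realized by geodesics inside $M$ on $B_{r/2}(x)$. I would verify this carefully from compactness of $Z$ and the definition of $d$ as the completion distance. Any curve from $x$ of length $<r$ stays in $M$, so $\overline{B_{r/2}(x)}$ is a compact subset of $M$, and Hopf--Rinow-type arguments apply within $B_{r}(x)$. Once this is checked, the classical local estimate applies verbatim.
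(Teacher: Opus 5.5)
Your proposal is correct and follows the paper's route: the $L^\infty$ bound from \cref{lem:Step1}(2), followed by the classical local Cheng--Yau gradient estimate on balls $B_{r/2}(x)$ contained in the regular part. The paper simply defers that computation to \cite[Lemma 2.7 and Theorem 2.6]{DWWW}. Your additional checks on the incompleteness of $M$ are sound and are exactly what justifies applying the classical estimate; these checks are that geodesics from $x$ of length $<d(x,S)$ stay in $M$, that $\overline{B_{r/2}(x)}$ is compact in $M$, and that Laplacian comparison holds there.
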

\begin{proof}
    By item (2) of \cref{lem:Step1}, $\|f\|_\infty\leq C(q,\mathcal{S},\lambda)\|f\|_2$. Then, after having obtained the $L^\infty$-estimate on $f$, \eqref{eqn:GradEstimate} is a consequence of the latter estimate, and the classical Cheng--Yau~\cite{ChengYau} gradient estimate on the regular part of $M$. The detailed computations are worked out, e.g., in the proof of item (2) of \cite[Lemma 2.7]{DWWW}, and we omit them. In particular it is enough to use the first inequality on \cite[Page 10]{DWWW}, which is a consequence of the Riemannian computations in \cite[Theorem 2.6]{DWWW}.
\end{proof}

\begin{lemma}\label{lem:Bochner++}
    Let $n\geq 2$ be an integer and $K\in\mathbb R$. Let $U$ be an open set of a smooth $n$-dimensional Riemannian manifold $(M^n,g)$. Let us assume $\mathrm{Ric}\geq K$ on $U$. Let $f\in C^{\infty}(U)$ be a bounded function on $U$ such that $\Delta f=-\lambda f$ on $U$ for some $\lambda\geq 0$.
    
    For every $\frac{n-2}{2(n-1)}<\beta<1$ there is $c:=c(n,\beta,\lambda, K,\|f\|_{L^{\infty}(U)})>0$ such that, calling 
    \[
    u:=(1+|\nabla f|^2)^{\beta},
    \]
    we have 
    \begin{equation}\label{eqn:AlmostSubHarmonic}
    \Delta u \geq -cu, \qquad \text{on $U$}.
    \end{equation}
\end{lemma}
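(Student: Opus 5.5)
The plan is a direct pointwise computation on $U$, combining the Bochner formula with the refined Kato inequality for eigenfunctions. Write $w:=1+|\nabla f|^2$, so $u=w^\beta$. Then
\[
\Delta u=\beta w^{\beta-1}\Delta w+\beta(\beta-1)w^{\beta-2}|\nabla w|^2 .
\]
The Bochner formula together with $\Delta f=-\lambda f$ and $\mathrm{Ric}\geq K$ gives
\[
\tfrac12\Delta w=|\Hess f|^2+\langle\nabla f,\nabla\Delta f\rangle+\mathrm{Ric}(\nabla f,\nabla f)\geq |\Hess f|^2-(\lambda+|K|)|\nabla f|^2 .
\]
Moreover $|\nabla w|^2=4|\Hess f(\nabla f,\cdot)|^2\leq 4|\nabla f|^2|\nabla|\nabla f||^2$, where the inequality uses the definition of $|\nabla|\nabla f||$ wherever $\nabla f\neq0$.

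Since $\beta<1$, the term $\beta(\beta-1)w^{\beta-2}|\nabla w|^2$ is negative, and it has to be absorbed by $2\beta w^{\beta-1}|\Hess f|^2$. Because $|\nabla f|^2\leq w$, it suffices to show
\[
|\Hess f|^2\geq 2(1-\beta)|\nabla|\nabla f||^2-(\text{error}).
\]
The plain Kato inequality only gives the constant $1$, which would force $\beta\geq 1/2$. For that reason I would use the refined Kato-type estimate for functions with prescribed Laplacian. At a point with $\nabla f\neq0$, take an orthonormal frame with $e_1=\nabla f/|\nabla f|$. Then $|\nabla|\nabla f||^2=\sum_j f_{1j}^2$, and
\[
|\Hess f|^2\geq f_{11}^2+2\sum_{j\geq2}f_{1j}^2+\sum_{j\geq 2}f_{jj}^2 .
\]
By Cauchy--Schwarz, $\sum_{j\geq2}f_{jj}^2\geq \frac{(\Delta f-f_{11})^2}{n-1}=\frac{(\lambda f+f_{11})^2}{n-1}$. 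Using $(a+b)^2\geq(1-\delta)a^2-C_\delta b^2$, this gives
\[
|\Hess f|^2\geq \Big(\tfrac{n}{n-1}-\delta\Big)|\nabla|\nabla f||^2-C_\delta\lambda^2\|f\|_\infty^2 .
\]

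The key numerology is the condition $2(1-\beta)<\frac{n}{n-1}$, which is equivalent to $\beta>\frac{n-2}{2(n-1)}$, exactly the hypothesis. So I choose $\delta=\delta(n,\beta)>0$ small enough that $\frac{n}{n-1}-\delta\geq 2(1-\beta)$. Combining the pieces then yields
\[
\Delta u\geq -2\beta w^{\beta-1}\big((\lambda+|K|)|\nabla f|^2+C_\delta\lambda^2\|f\|_\infty^2\big)\geq -c\,w^\beta=-cu,
\]
where the second step uses $|\nabla f|^2\leq w$ and $1\leq w$. This holds with $c$ depending only on $n,\beta,\lambda,K,\|f\|_{L^\infty(U)}$. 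The bounded term $\lambda^2 f^2$ is exactly where the $L^\infty$ bound on $f$ and the "$1+$" in $w$ are used.

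The main obstacle is the set where $\nabla f=0$, since the frame computation above is unavailable there. At such points the bound $|\nabla w|^2=4|\Hess f(\nabla f,\cdot)|^2$ vanishes, so the negative term is simply $0$. The remaining inequality $\Delta u\geq \beta w^{\beta-1}\Delta w\geq -cu$ holds directly from the Bochner estimate, since $|\Hess f|^2\geq0$. Everything is smooth because $w\geq 1$, so the inequality holds pointwise everywhere on $U$ and no approximation is needed.
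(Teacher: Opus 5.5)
Your proposal is correct and follows essentially the paper's route. You combine Bochner's formula with the improved Kato inequality for eigenfunctions, $|\nabla^2 f|^2\geq (1+\kappa)|\nabla|\nabla f||^2-C(\lambda,\|f\|_{L^\infty})$ for any $\kappa<\frac{1}{n-1}$, to absorb the negative term $\beta(\beta-1)w^{\beta-2}|\nabla w|^2$, with the same numerology $2(1-\beta)<\frac{n}{n-1}\iff\beta>\frac{n-2}{2(n-1)}$. The only differences are that you derive the refined Kato estimate by hand in an adapted frame (and handle $\{\nabla f=0\}$ explicitly) where the paper quotes it from \cite{CCHSTT}, and that the paper retains a positive multiple of $w^{\beta-1}|\nabla^2 f|^2$ on the right-hand side, which the statement does not need.
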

\begin{proof}
    Using the improved Kato inequality for eigenfunctions as in the proof of \cite[Theorem 5.1]{CCHSTT}, for every $\kappa<\frac{1}{n-1}$, and for every $\beta\in (0,1)$, we have
\begin{equation}\label{eqn:PDENew}
\Delta (1 + |\nabla f|^2)^\beta \geq \beta\left(2-\frac{4(1-\beta)}{1+\kappa}\right) (1 + |\nabla f|^2)^{\beta-1} |\nabla^2 f|^2 - c (1+|\nabla f|^2)^\beta. 
\end{equation}
Here the constant $c$ depends on $\lambda, K,\beta,\kappa,||f||_{L^\infty(U)}$. There exists $\eta>0$ such that $\beta=\frac{n-2}{2(n-1)}+\frac{\eta}{2}$, and choosing $\frac{1}{n-1}-\eta<\kappa<\frac{1}{n-1}$ we have
\[
\beta\left(2-\frac{4(1-\beta)}{1+\kappa}\right) >0.
\]
Using this in \eqref{eqn:PDENew},
 we get \eqref{eqn:AlmostSubHarmonic}, as desired.
 \end{proof}

 To prove \cref{thm:RCDcodim3} we are going to use the following result, see \cite[Corollary 8]{Gabor}, whose proof is a slight modification of \cite[Corollary 3.10]{Honda}.

\begin{theorem}\label{thm:ToRCD}
Let $(Z,d,\mathfrak{m})$ be as in \cref{Setting}. Then $(Z,d,\mathfrak{m})$ is an \textit{RCD}$(K, n)$ space, where $K \in \mathbb{R}$, if it is an almost smooth compact metric measure space, and the following conditions hold.
\begin{enumerate}
    \item The Sobolev to Lipschitz property holds, that is any $f \in W^{1,2}(Z,d,\mathfrak{m})$, with $|\nabla f|(x) \leq 1$ for $\mathfrak{m}$-almost every $x$, has a 1-Lipschitz representative.
    \item The $L^2$-strong compactness condition holds, that is the inclusion $W^{1,2}(Z,d,\mathfrak{m}) \hookrightarrow L^2(Z,\mathfrak{m})$ is a compact operator.
    \item Any eigenfunction in $W^{1,2}(Z,d,\mathfrak{m})$ of the Laplacian on $Z$ is Lipschitz.
    \item $\mathrm{Ric}_{(M,g)} \geq Kg$ on the regular part $M\subset Z$.
\end{enumerate}
\end{theorem}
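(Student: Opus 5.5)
The plan is to verify the Eulerian characterization of $\mathrm{RCD}(K,n)$ due to Ambrosio--Gigli--Savaré and Erbar--Kuwada--Sturm. By that characterization, a metric measure space is $\mathrm{RCD}(K,n)$ if it satisfies four conditions: it is infinitesimally Hilbertian; it has the Sobolev to Lipschitz property; $\mathfrak m$ has finite mass (hence, by compactness of $Z$, the needed volume growth bound); and the weak Bakry--Émery inequality $\mathrm{BE}(K,n)$ holds. The weak $\mathrm{BE}(K,n)$ inequality asks that
\[
\frac12\int_Z \Delta\phi\,|\nabla f|^2\,\d\mathfrak m-\int_Z\phi\Big(\langle\nabla f,\nabla\Delta f\rangle+\frac{(\Delta f)^2}{n}+K|\nabla f|^2\Big)\d\mathfrak m\geq 0
\]
for all $f\in D(\Delta)$ with $\Delta f\in W^{1,2}$, and all $\phi\geq 0$ in $D(\Delta)\cap L^\infty$ with $\Delta\phi\in L^\infty$.

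Infinitesimal Hilbertianity holds in \cref{Setting}, as noted after it. Finite mass and full support are clear, since $M$ is dense and $Z$ is compact. Sobolev to Lipschitz is hypothesis (1). So the whole task reduces to proving $\mathrm{BE}(K,n)$.

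\textbf{Step 1: reduction to eigenfunctions.} By hypothesis (2) the Laplacian has discrete spectrum, with an $L^2$-orthonormal basis of eigenfunctions $\{e_j\}$. For $f$ as above, the truncations $f_N=\sum_{j\leq N}\langle f,e_j\rangle e_j$ satisfy $f_N\to f$, $\Delta f_N\to\Delta f$ in $W^{1,2}$, and $|\nabla f_N|^2\to|\nabla f|^2$ in $L^1$. Since $\Delta\phi$ and $\phi$ are bounded, every term of the inequality passes to the limit. It therefore suffices to prove $\mathrm{BE}(K,n)$ for finite linear combinations $f$ of eigenfunctions. By hypothesis (3) such $f$ are Lipschitz, so $|\nabla f|\in L^\infty$, and $f$ is smooth on $M$ by elliptic regularity.

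\textbf{Step 2: integrated Bochner on $M$ with cutoffs.} On $M$ the pointwise Bochner formula, the inequality $|\Hess f|^2\geq(\Delta f)^2/n$, and hypothesis (4) give
\[
\tfrac12\Delta|\nabla f|^2\geq |\Hess f|^2+\langle\nabla f,\nabla\Delta f\rangle+K|\nabla f|^2 .
\]
Let $\varphi_i$ be the capacity cutoffs from \cref{def:AlmostSmooth}. Multiplying the inequality by $\varphi_i^2$ and integrating by parts gives a bound of the form
\[
\int\varphi_i^2|\Hess f|^2\leq C\int|\nabla\varphi_i|\varphi_i|\nabla f||\Hess f|+C.
\]
Since $|\nabla f|\leq L$, Young's inequality together with $\|\nabla\varphi_i\|_2\to0$ yields $\Hess f\in L^2(M)$. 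Consequently $|\nabla f|^2$ is bounded with $\nabla|\nabla f|^2\in L^2(M)$. Because $Z\setminus M$ has zero capacity, $|\nabla f|^2\in W^{1,2}(Z)$: indeed $\varphi_i|\nabla f|^2$ is Cauchy in $W^{1,2}$.

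Next I test the Bochner inequality with $\phi\varphi_i$. Integrating by parts once on $M$ gives
\[
-\tfrac12\int\langle\nabla(\phi\varphi_i),\nabla|\nabla f|^2\rangle\geq\int\phi\varphi_i\Big(\langle\nabla f,\nabla\Delta f\rangle+\tfrac{(\Delta f)^2}{n}+K|\nabla f|^2\Big).
\]
The error term is $\int\phi|\nabla\varphi_i||\nabla|\nabla f|^2|$, which is at most $\|\phi\|_\infty\|\nabla\varphi_i\|_2\|\nabla|\nabla f|^2\|_2\to0$. Letting $i\to\infty$ and using that $|\nabla f|^2\in W^{1,2}(Z)$ and $\phi\in D(\Delta)$, the left side becomes $\frac12\int_Z\Delta\phi\,|\nabla f|^2\,\d\mathfrak m$. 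This is exactly $\mathrm{BE}(K,n)$.

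\textbf{Main obstacle.} The delicate point is Step 2, namely making the integrations by parts across $S$ legitimate. This requires the a priori $L^2$ bound on $\Hess f$ near $S$, and this bound depends crucially on the global Lipschitz bound from hypothesis (3). Without it, the boundary terms $\int|\nabla\varphi_i||\nabla f||\Hess f|$ cannot be controlled by the capacity condition alone. A secondary technical point is identifying the distributional Laplacian on $Z$ with the smooth one on $M$ for $\phi$ and $f$. This again uses zero capacity: smooth compactly supported functions on $M$ are dense in $W^{1,2}(Z)$.
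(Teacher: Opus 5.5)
Your proposal is correct and follows the same route as the argument the paper relies on. The paper gives no proof of its own and cites \cite[Corollary 8]{Gabor}, a modification of \cite[Corollary 3.10]{Honda}, whose argument likewise proves the Bochner inequality for finite combinations of eigenfunctions using the capacity cutoffs and the Lipschitz bound, extends $\mathrm{BE}(K,n)$ via the discrete spectral decomposition from $L^2$-compactness, and concludes with the Eulerian characterization plus the Sobolev-to-Lipschitz property; the identification of the Cheeger energy on $Z$ with the Riemannian Dirichlet energy on $M$, which you treat as a side point, is also established in Honda's paper from the same zero-capacity condition.
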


\subsection{The proof of \texorpdfstring{\cref{thm:RCDcodim3}}{Theorem 1.1}}

In the following result we construct a useful barrier function used in the proof of \cref{thm:RCDcodim3}. We start with a definition.
\begin{definition}\label{def:Codimension}
    Let $(X,d)$ be a metric space of Hausdorff dimension $n$, and $\Sigma\subset X$ be a subset. Let $0\leq k\leq n$. We say that $\Sigma$ {\rm has 
    (Assouad)
    codimension greater than $k$} if either $k=n$ and $\Sigma=\emptyset$; or $k<n$ and there is $0\leq c<n-k$, $\mathcal{K} > 0$, and $\rho_0>0$ such that the following holds. For every $x\in X$, every $0<r\leq \rho_0$, and every $0<s\leq 1$ we have that $\Sigma\cap B_r(x)$ can be covered by $\leq \mathcal{K} s^{-c}$ balls of radius $sr$. We stress that if $k=n$, we have $\Sigma=\emptyset$.
\end{definition}
 
 When $S$ is a smooth submanifold, the following lemma (and consequently \cref{LinftyResultForThm1} and \cref{thm:LipschitzRepresentativeThm1}) has a slightly simpler proof. Indeed, it would be enough to define $v(x):=\int_S G(x,y)\mathrm{d}\mathcal{H}^d(y)$ in \eqref{eqn:DefinitionNu}. Inspired by the recent work \cite[Proposition 3.2]{WWX26},  we can give a proof of the following Lemma in a more general setting.
 \begin{lemma}\label{lem:ConstructionvLinfty}
     Let $n\geq 2$, $(M^n,g)$ be a closed $L^\infty$-Riemannian manifold, and suppose that $S\subset M$ is closed with codimension greater than 2. Let $c$ be as in \cref{def:Codimension} and fix $0<a<n-c-2$. Assume that $g$ is smooth on $M\setminus S$.
     
     Let $\Omega\Subset M$ be a smooth connected open subset, with $\overline{\Omega}\neq M$, such that $S\Subset \Omega$. Then there is a function $v:\Omega\setminus S\to (0,\infty)$ such that the following hold:
     \begin{enumerate}
         \item $v$ is harmonic on $\Omega\setminus S$ and $v\in L^1(\Omega)$;
         \item For every $\tilde\Omega$ with $S\Subset \widetilde\Omega\Subset\Omega$ there exists $C>0$ and $\varepsilon>0$ such that:
         \begin{equation}\label{eqn:Asymptoticsv}
        v(x)\geq C\mathrm{d}(x,S)^{-a}, \qquad \text{for every $x\in \widetilde\Omega\setminus S$ with $\mathrm{d}(x,S)<\varepsilon$.}
         \end{equation}
     \end{enumerate}
 \end{lemma}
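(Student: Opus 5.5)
We plan to build $v$ as a Green potential of a measure supported near $S$, in the spirit of \cite[Proposition 3.2]{WWX26}. Let $G(x,y)$ be the Dirichlet Green's function of $\Omega$ for the $L^\infty$ metric $g$. Since $\Delta_g u=\frac{1}{\sqrt{\det g}}\partial_i(\sqrt{\det g}\,g^{ij}\partial_j u)$ is uniformly elliptic in divergence form with bounded measurable coefficients, comparable to $h$, Grüter--Widman and Littman--Stampacchia--Weinberger give two-sided bounds. For $x,y$ in a compact subset of $\Omega$ with $d(x,y)\le r_0$,
\[
C^{-1}d(x,y)^{2-n}\le G(x,y)\le C\,d(x,y)^{2-n}
\]
when $n\ge3$; in dimension $n=2$ the analogous logarithmic bounds hold, and the argument below adapts with minor changes. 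Moreover $G(\cdot,y)$ is $g$-harmonic on $\Omega\setminus\{y\}$, hence smooth on $\Omega\setminus(S\cup\{y\})$, because $g$ is smooth there.

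We define
\begin{equation}\label{eqn:DefinitionNu}
v(x):=\int_\Omega G(x,y)\,\mathrm{d}\mu(y).
\end{equation}
The measure $\mu$ is assembled dyadically. For each $k\ge k_0$, take a maximal $2^{-k}$-separated net $\{y_{k,j}\}_j$ of $S$. By \cref{def:Codimension} applied at scale $r=\rho_0$, this net has $N_k\le C2^{kc}$ points. Choose exponents with $0<a<n-c-2$ and set
\[
\mu:=\sum_{k\ge k_0}\sum_j 2^{-k(n-2-a)}\,\frac{\mathcal H^n\llcorner B_{2^{-k-2}}(y_{k,j})}{\mathcal H^n(B_{2^{-k-2}}(y_{k,j}))}.
\]
Each averaged bump lives in $\Omega\setminus S$ up to measure zero, since $\mathfrak m(S)=0$ by the codimension assumption. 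The total mass of $\mu$ is at most $\sum_k C2^{kc}2^{-k(n-2-a)}<\infty$, because $c<n-2-a$. Then $v\ge0$ and $v\in L^1(\Omega)$ by Fubini, since $\sup_y\int_\Omega G(x,y)\,\mathrm{d}x<\infty$. The same Green bound shows that $\int G(x,y)\,\mathrm{d}\mu(y)$ is finite for $x\notin S$.

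The main obstacle is harmonicity on $\Omega\setminus S$, because the bumps accumulate only at $S$ but do sit inside $\Omega\setminus S$, where they break harmonicity. We will try to remove this as follows. Instead of solid balls, use measures $\mu_{k,j}$ supported on $S\cap \bar B_{2^{-k}}(y_{k,j})$. We could take these to be normalized restrictions of a Frostman-type measure. Alternatively, take $\mu$ as a weak limit of the bump measures, with radii $2^{-k-m}$ and $m\to\infty$ for each fixed $k$. The limiting masses are still $2^{-k(n-2-a)}$ at centers in $S$. With $\mu$ supported on $S$, $v$ is $g$-harmonic on $\Omega\setminus S$ and hence smooth there. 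Lower semicontinuity and positivity are standard: $v>0$ since $G>0$ and $\mu\ne0$.

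For the lower bound \eqref{eqn:Asymptoticsv}, fix $x\in\widetilde\Omega\setminus S$ with $\rho:=d(x,S)<\varepsilon$. Choose $k$ with $2^{-k}\sim\rho$ and a net point $y_{k,j}$ with $d(x,y_{k,j})\le\rho+2^{-k}\le C2^{-k}$. The portion of $\mu$ at scale $k$ near $y_{k,j}$ has mass $2^{-k(n-2-a)}$, sits within distance $C2^{-k}$ of $x$, and lies in a compact subset of $\Omega$. The lower Green bound then gives
\[
v(x)\ge C^{-1}2^{-k(n-2-a)}(C2^{-k})^{2-n}\ge C'\rho^{-a}.
\]
Here $\varepsilon$ is chosen so that these points stay within $r_0$ and inside a fixed compact set between $\widetilde\Omega$ and $\Omega$. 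The condition $\overline\Omega\ne M$ is used only to guarantee a nondegenerate Dirichlet problem, hence a positive Green function.
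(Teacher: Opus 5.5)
Your proposal is correct and, once you pass to the weak limit of the shrinking bumps, it is essentially the paper's proof. The measure becomes exactly the paper's $\nu=\sum_{k,j}2^{-k(n-2-a)}\delta_{y_{k,j}}$ on an efficient $2^{-k}$-net of $S$, $v$ is its Dirichlet Green potential, $v\in L^1$ follows from the upper Green bound and $\nu(S)<\infty$, and the lower bound comes from the single atom at the scale $2^{-k}\sim d(x,S)$ via the lower Green bound; the solid-bump and Frostman detours are unnecessary (you can put Dirac masses at the net points from the start), and for $n=2$ the hypothesis forces $S=\emptyset$, so that case is trivial (take, e.g., $v\equiv 1$) and no logarithmic Green bounds are needed.
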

\begin{proof}
    When $n=2$, we have $S=\emptyset$ and one can easily check that the statement is true; hence, we will assume $n\geq 3$ from now on. In this proof the constant $C$ might change from line to line. Using \cref{def:Codimension}, for every integer $j\geq j_0$, where $j_0$ is large enough, we can choose points $\{y_{j,i}\}_{i=1}^{N_j}\subset S$ such that 
    \[
    S\subset \bigcup_{i=1}^{N_j} B_{2^{-j}}(y_{j,i}), \qquad N_j\leq C2^{jc}.
    \]
    Let us define the positive measure
    \[
    \nu:=\sum_{j\geq j_0}\sum_{i=1}^{N_j} 2^{-j(n-2-a)}\delta_{y_{j,i}}.
    \]
    Notice that $\nu(S)\leq C\sum_{j\geq j_0} 2^{-j(n-2-a-c)}<\infty$; hence $\nu$ is a positive and finite measure.
    
    Let $D$ be the diagonal in $\Omega\times \Omega$. Let $G:\Omega\times \Omega\setminus D\to (0,\infty)$ be the Green's function with Dirichlet boundary conditions on $\Omega$. Define
    \begin{equation}\label{eqn:DefinitionNu}
    v(x):=\int_{S} G(x,y)\mathrm{d}\nu(y), \quad \forall x\in \Omega\setminus S.
    \end{equation}
    Notice that $v$ is smooth, positive and harmonic on $\Omega\setminus S$. Moreover, for every $\Omega_1$ with $\widetilde\Omega\Subset \Omega_1\Subset\Omega$ there exists $C<1$ such that 
    \begin{equation}\label{eqn:EstG}
    C^{-1}d(x,y)^{2-n}\geq G(x,y)\geq Cd(x,y)^{2-n}, \qquad \text{for every $x,y\in \Omega_1\times\Omega_1\setminus D$}.
    \end{equation}
    The latter property is classical if $g$ is a smooth Riemannian metric, and follows from \cite[Theorem 7.1]{LittmanStampacchiaWeinberger} when $(M,g)$ is an $L^\infty$-Riemannian manifold. 
    
    We can prove the lower inequality in \eqref{eqn:Asymptoticsv} as follows. The number $\varepsilon$ will be chosen small enough during the proof. Let us fix $x\in B_{\varepsilon}(S)\setminus S\Subset\widetilde\Omega$. Let $\bar y\in S$ be such that $d(x,\bar y)=d(x,S)=:\bar r$. If $\varepsilon>0$ is small enough, there is $j\geq j_0$ so that $\bar r\leq 2^{-j}<2\bar r$. Choose $i$ such that $\bar y\in B_{2^{-j}}(y_{j,i})$. For $\varepsilon$ small enough we have $x,y_{j,i}\in\Omega_1$ and $d(x,y_{j,i})\leq 3\cdot 2^{-j}$.
    Thus
    \begin{equation}
    \begin{aligned}
        v(x)&=\int_S G(x,y)\mathrm{d}\nu(y) \geq 2^{-j(n-2-a)}G(x,y_{j,i}) \\
        &\geq C 2^{-j(n-2-a)}d(x,y_{j,i})^{2-n} \geq C2^{ja}\geq Cd(x,S)^{-a},
    \end{aligned}
    \end{equation}
    as desired. Finally notice that by the upper estimate in \eqref{eqn:EstG} and the fact that $\nu(S)<\infty$, one gets that $v\in L^1(\Omega)$, as desired.
\end{proof}
\begin{remark}\label{rem:MinkowskiVsAssouad}
    As it is readily seen from its proof, for \cref{lem:ConstructionvLinfty} (and thus for \cref{LinftyResultForThm1}, \cref{thm:LipschitzRepresentativeThm1}, and \cref{thm:RCDcodim3}) it is enough to assume that $\Sigma$ has upper Minkowski codimension greater than $2$, instead of the stronger ``(Assouad) codimension greater than $2$" in \cref{def:Codimension}. We say that a compact set $\Sigma\subset X$ {\rm has 
    upper Minkowski codimension greater than $k$} if either $k=n$ and $\Sigma=\emptyset$ (where $n$ is the Hausdorff dimension of $X$); or $k<n$ and there is $0\leq c<n-k$, $\mathcal{K} > 0$, and $\rho_0>0$ such that the following holds. For every $0<s\leq 1$ we have that $\Sigma$ can be covered by $\leq \mathcal{K} s^{-c}$ balls of radius $s\rho_0$. 
\end{remark}

\begin{prop}\label{LinftyResultForThm1}
    Let $n\geq 2$, $(M^n,g)$ be a closed $L^\infty$-Riemannian manifold, and suppose that $S\subset M$ is closed with codimension greater than 2. Let $c$ be as in \cref{def:Codimension}. Assume that $g$ is smooth on $M\setminus S$.
    
Let $r(x):= d(x,S)$. Assume $u:M\to [0,+\infty)$ is an $L^1$ function, which is smooth on the regular set $M\setminus S$, and such that there exists $\alpha>0$ and $\beta >2+c-n$ for which 
\[
\Delta u \geq -\alpha u, \qquad u \leq \alpha r^{\beta} \qquad \text{on the regular set $M\setminus S$}.
\]
Then $u\in L^\infty(M)$.
\end{prop}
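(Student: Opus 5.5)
Our plan is a maximum principle comparison between $u$ and a small multiple of the barrier $v$ from \cref{lem:ConstructionvLinfty}. Away from $S$ the function $u$ is smooth, so only a neighbourhood of $S$ matters.

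\textbf{Setup.} Since $\beta>2+c-n$, we choose $a$ with $0<a<n-c-2$ and $-a<\beta$; in fact any $a$ with $\max(0,-\beta)<a<n-c-2$ will do. If $\beta\geq 0$ there is nothing to prove, since $u\leq \alpha r^\beta$ is already bounded, so we assume $\beta<0$. Fix smooth $\Omega\Supset\tilde\Omega\Supset S$ with $\overline\Omega\neq M$, and let $v$ be given by \cref{lem:ConstructionvLinfty} for this $a$. Then $v$ is harmonic and positive on $\Omega\setminus S$, and $v\geq C r^{-a}$ near $S$. Since $u$ is smooth on $\overline{\tilde\Omega}\setminus B_\varepsilon(S)$, it is bounded there.

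\textbf{Removing the zeroth order term.} We cannot apply the maximum principle to $\Delta u\geq-\alpha u$ directly, so we localise to a region where the zeroth order term can be absorbed. Choose a small ball (or a thin neighbourhood $U=B_\delta(S)\cap\tilde\Omega$) whose first Dirichlet eigenvalue exceeds $\alpha$. On such a set there is a positive function $\phi$, bounded above and below, with $\Delta\phi+\alpha\phi\leq 0$: for instance the first eigenfunction of a slightly larger domain, or $\phi=1-A\rho^2$ in local coordinates. The $L^\infty$ coefficients are harmless because we only need $\phi$ on $U\setminus S$, where $g$ is smooth, but $\phi$ must be built uniformly up to $S$. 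A cleaner alternative is to cover $S$ by finitely many small balls $B$ of $(M,h)$ and take $\phi$ to be a positive solution of $\Delta\phi+\alpha\phi=0$ in a slightly larger domain. Such a $\phi$ exists with two-sided bounds by De Giorgi--Nash--Moser for small domains, and in the weak sense it is smooth on the regular part. Then $w:=u/\phi$ satisfies an equation $\Delta w+2\langle\nabla\log\phi,\nabla w\rangle\geq0$ on the regular part, and the comparison function $v/\phi$ satisfies the corresponding reverse inequality.

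\textbf{Comparison.} For $\eta>0$ consider $w_\eta:=u-\eta v-M_0\phi$ on $U\setminus S$, where $M_0$ is chosen so that $w_\eta\leq 0$ on $\partial U$; this is possible because $u$ is bounded on $\partial U$ away from $S$, and $\phi$ is bounded below there. Near $S$ we have
\[
u\leq\alpha r^\beta\quad\text{and}\quad \eta v\geq \eta C r^{-a},
\]
and since $-a<\beta$ we get $u-\eta v<0$ on $\{r<\sigma(\eta)\}$. Hence $\sup w_\eta$ over $U\setminus S$ is attained away from $S$. Dividing by $\phi$ and applying the classical maximum principle on the smooth region, for the operator $L w=\Delta(\phi w)+\alpha\phi w$, we get $w_\eta\leq0$. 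Letting $\eta\to0$ gives $u\leq M_0\phi\leq C$ on $U\setminus S$. Note that $\eta v$ is only a supersolution of $\Delta+\alpha$ up to the term $\alpha\eta v$. We handle this by comparing $u$ against $\eta v\psi$, where $\psi$ is chosen with $(\Delta+\alpha)(v\psi)\leq0$; it suffices to take $\psi=\phi$ and use $\Delta(v\phi)=v\Delta\phi+2\langle\nabla v,\nabla\phi\rangle$. The cross term is the real nuisance.

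\textbf{Main obstacle.} The main obstacle is handling the $-\alpha u$ term, together with the cross term, uniformly up to $S$ with only $L^\infty$ control of $g$. We would try first to avoid it entirely by an integral version. By \cref{lem:Step1}(1), any $W^{1,2}$ subsolution of $\Delta u\geq-\alpha u$ on $Z$ is bounded. So it suffices to show that $u\in W^{1,2}$ and that $\Delta u\geq-\alpha u$ holds weakly across $S$.

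To do this we use cutoffs. The domination of $u$ by $r^\beta$, together with the codimension estimate $|B_\rho(S)|\lesssim\rho^{n-c}$, lets us control integrals near $S$. Testing with $\chi_i^2 u$, where $\chi_i$ are cutoffs near $S$ with $|\nabla\chi_i|\lesssim \rho_i^{-1}$ on $B_{2\rho_i}(S)\setminus B_{\rho_i}(S)$, the error term is bounded by
\[
\int u^2|\nabla\chi_i|^2\lesssim\rho_i^{2\beta-2+n-c}.
\]
This need not vanish, which is exactly why the barrier $v$ is necessary. We therefore expect the pointwise comparison above to be the route. The absorption of $\alpha u$ will be done on small balls, where the first eigenvalue of $(M,g)$ exceeds $\alpha$ by the uniform ellipticity bound $\Lambda$.
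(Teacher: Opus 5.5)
Your argument has a genuine gap, and it is the step you yourself call the main obstacle. You never produce a comparison function that blows up at $S$ and is a \emph{super}solution of $\Delta+\alpha$.

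The harmonic barrier of \cref{lem:ConstructionvLinfty} satisfies $(\Delta+\alpha)v=\alpha v>0$. So $u-\eta v-M_0\phi$ is not a subsolution of $\Delta+\alpha$, and the maximum principle in your comparison paragraph cannot be applied. Neither remedy you suggest repairs this.

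Conjugating by $\phi$ changes nothing. Set $L_\phi z:=\Delta z+2\langle\nabla\log\phi,\nabla z\rangle$, so that $\Delta(\phi z)=\phi L_\phi z+z\Delta\phi$. Then for $\phi>0$ with $(\Delta+\alpha)\phi\leq 0$,
\[
L_\phi\Big(\frac{v}{\phi}\Big)=-\frac{v\,\Delta\phi}{\phi^2}\geq\frac{\alpha v}{\phi}>0 .
\]
Thus $v/\phi$ is a strict subsolution of $L_\phi$, not the supersolution you claim.

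Using $v\phi$ instead gives $(\Delta+\alpha)(v\phi)=v(\Delta\phi+\alpha\phi)+2\langle\nabla v,\nabla\phi\rangle$. The cross term has no sign and no reason to be dominated near $S$; heuristically it has size $v|\nabla\phi|/r$. You also have no gradient control on $v$ there, since nothing is assumed on $g$ near $S$. You correctly note that the cutoff/energy route fails as well, so the proposal ends without a proof.

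The missing idea is a way to remove the zeroth-order term before using the harmonic barrier. The paper passes to $I\times M$ with $\tilde u(t,x):=e^{\sqrt\alpha t}u(x)$, for which $\Delta_{I\times M}\tilde u=e^{\sqrt\alpha t}(\Delta u+\alpha u)\geq 0$. The singular set $I\times S$ has the same codimension, so a harmonic barrier $\tilde v$ with $r^{-a}$ blow-up is still available (a localized version of \cref{lem:ConstructionvLinfty}).

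The paper then avoids any boundary maximum principle. The function $g_\vartheta=\max\{0,\tilde u-\vartheta\tilde v\}$ vanishes near the singular set. Hence it is a nonnegative weakly subharmonic $W^{1,2}$ function across it, with $0\leq g_\vartheta\leq\tilde u$. The interior $L^\infty$--$L^1$ estimate for subharmonic functions on PI spaces bounds $g_\vartheta$ by a multiple of $\|u\|_{L^1}$, uniformly in $\vartheta$, and letting $\vartheta\to0$ concludes. This interior estimate is where $u\in L^1$ enters. It is also why no boundary data are needed on the ends $\{t=\pm T\}$ of the cylinder, where $\tilde u$ is a priori unbounded near $S$.

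If you want to keep your maximum-principle scheme on a thin neighbourhood $U$ of $S$ (where $\lambda_1(U)>\alpha$ does hold and your $\phi$ exists), the natural fix is a different barrier. Build it from the Dirichlet Green function of $\Delta+\alpha$ on $U$ rather than of $\Delta$, i.e.\ $V=\int G_\alpha(\cdot,y)\,\mathrm d\nu(y)$ with the same measure $\nu$, so that $(\Delta+\alpha)V=0$ off $S$. You would then have to prove two-sided bounds $G_\alpha\simeq d^{2-n}$ for the $L^\infty$ metric, which your proposal does not address.
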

\begin{proof}
When $n=2$, $S=\emptyset$, thus the statement is easily seen to be true. Let us assume $n\geq 3$ from now on. By arguing in a product $ I\times M$, where $I$ is an open interval, and considering $\tilde u(t,x):=e^{\sqrt{\alpha}t}u$ (and using a localized version of \cref{lem:ConstructionvLinfty} in the argument below), we can assume without loss of generality that $\Delta u\geq 0$. Since $u$ is smooth on $M\setminus S$, it is enough to show it is bounded in a neighborhood of $S$. 

    Choose $a$ such that
    \[
    \max\{-\beta,0\} < a < n-c-2.
    \]

    Let us take $v$ the function constructed in \cref{lem:ConstructionvLinfty} with $S\Subset \Omega\Subset B_{\xi}(S)$, for a small enough $\xi>0$. Fix $\Omega'$ with $S\Subset\Omega'\Subset\Omega$. We have that $v$ is harmonic on $\Omega\setminus S$ and, by using Item (2) of \cref{lem:ConstructionvLinfty}, by possibly shrinking $\Omega'$ there is $\tilde c$ such that
    \[
    v(x)\geq \tilde cr(x)^{-a} \quad \forall x\in \Omega'\setminus S.
    \]
    
    Let us now fix $\vartheta>0$, and consider the function $u_\vartheta:=u-\vartheta v$. Notice that
    \[
    u(x)-\vartheta v(x) \leq \alpha r(x)^{\beta}\left(1-\tilde c \alpha^{-1}\vartheta r(x)^{-a-\beta}\right), \quad \forall x\in \Omega'\setminus S.
    \]
    Since $-a-\beta<0$, we get that for every $\vartheta$, the function $u_\vartheta$ is $\leq 0$ in a sufficiently small neighborhood $B_{T_\vartheta}(S)$ of $S$. Since $\Delta u \geq 0$ on $M\setminus S$ and $v$ is harmonic on $\Omega\setminus S$, we get that 
    \[
    g_\vartheta:=\max\{0,u_\vartheta\}
    \]
    is a subharmonic function on $\Omega\setminus S$ which is constantly equal to zero in $B_{T_\vartheta}(S)\setminus S$. So it extends to a weakly sub-harmonic function on $\Omega$ which is zero on $B_{T_\vartheta}(S)$. Moreover, it is readily seen that $g_\vartheta\in W^{1,2}(\Omega)$ for every $\vartheta>0$. Indeed $u_\vartheta\in W^{1,2}(\Omega\setminus B_{\delta}(S))$ for every small $\delta>0$, and so $g_\vartheta\in W^{1,2}(\Omega\setminus B_{\delta}(S))$ for every small $\delta>0$ as well. But $g_\vartheta\equiv 0$ on $B_{T_\vartheta}(S)$, so $g_\vartheta\in W^{1,2}(\Omega)$. 
    
    Notice that $0\leq g_\vartheta\leq u$. Thus, by also using the $L^\infty$-$L^1$ bound for sub-harmonic functions in PI-spaces (recall \cref{rem:LinftyArePi}), see, e.g., \cite[Theorem A.4]{BenattiViolo}, for any $\tilde\Omega\Subset \Omega$ there is a constant $\tilde C>0$ such that
    \[
    \|g_\vartheta\|_{L^\infty(\tilde\Omega)} \leq \tilde C\|g_\vartheta\|_{L^1(\Omega)}\leq \tilde C\|u\|_{L^1(M)}<\infty.
    \]
    Notice that $\tilde C$ is independent of $\vartheta$.
    Then taking $\tilde\Omega\Subset \Omega$ to be a neighborhood of $S$, and $\vartheta\to 0$, the proof is concluded because the previous inequality shows that $u$ is bounded in a neighborhood of $S$, as desired.
\end{proof}

\begin{theorem}\label{thm:LipschitzRepresentativeThm1}
Let $n\geq 2$, $(M^n,g)$ be a closed $L^\infty$-Riemannian manifold, and suppose that $S\subset M$ is closed with codimension greater than $3-\frac{1}{n-1}$; see \cref{def:Codimension}. Assume that $g$ is smooth on $M\setminus S$ and $\mathrm{Ric}(g)\geq Kg$ on $M\setminus S$ for some $K\in\mathbb R$. Let $f\in W^{1,2}(M)$ satisfy $\Delta f = -\lambda f$ for some $\lambda \geq 0$. Then $f$ has a Lipschitz representative.
\end{theorem}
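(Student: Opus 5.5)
We apply \cref{LinftyResultForThm1} to $u:=(1+|\nabla f|^2)^\beta$ for a well-chosen exponent $\beta$. This yields $|\nabla f|\in L^\infty(M)$. Since $(M,g)$ is biLipschitz to a smooth closed manifold (a PI space with the Sobolev-to-Lipschitz property up to constants), and $M\setminus S$ is dense and $S$ has zero capacity, a $W^{1,2}$ function with bounded gradient has a Lipschitz representative.

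\emph{Step 1 (preliminary bounds).} By \cref{rem:LinftyArePi}, the space satisfies \cref{Setting} and \cref{assumptions}. By \cref{lem:Step1}, $\|f\|_\infty\le C\|f\|_2$. On the regular set $f$ is smooth by elliptic regularity, and \cref{lem:Step2} gives $|\nabla f|\le C r^{-1}$ with $r=d(\cdot,S)$ near $S$. Consequently
\[
u=(1+|\nabla f|^2)^\beta\le C r^{-2\beta}
\]
near $S$, and $u$ is bounded away from $S$, so $u\le \alpha r^{-2\beta}$ on $M\setminus S$ for some $\alpha$ (recall $M$ is compact). Since $\beta<1$ and $|\nabla f|\in L^2$, we also get $u\le 1+|\nabla f|^{2}$, hence $u\in L^1(M)$.

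\emph{Step 2 (subsolution).} Apply \cref{lem:Bochner++} with $U=M\setminus S$ and $\frac{n-2}{2(n-1)}<\beta<1$. The constant depends only on $n,\beta,\lambda,K$ and $\|f\|_\infty$, which is finite by Step 1. This gives $\Delta u\ge -cu$ on $M\setminus S$.

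\emph{Step 3 (choice of exponents).} \cref{LinftyResultForThm1} requires $-2\beta>2+c-n$, i.e. $\beta<\frac{n-2-c}{2}$. The hypothesis that $S$ has codimension greater than $3-\frac1{n-1}$ provides $c<n-3+\frac{1}{n-1}$, so
\[
\frac{n-2-c}{2}>\frac{1-\frac1{n-1}}{2}=\frac{n-2}{2(n-1)}.
\]
Hence we can pick $\beta$ with $\frac{n-2}{2(n-1)}<\beta<\min\{1,\frac{n-2-c}{2}\}$. The codimension assumption also exceeds $2$, so \cref{LinftyResultForThm1} applies, and it yields $u\in L^\infty$, i.e. $|\nabla f|\in L^\infty(M\setminus S)$. (For $n=2$, $S=\emptyset$ and the claim is classical.) This compatibility of exponents is exactly where the threshold $3-\frac1{n-1}$ enters, and it is the crux of the argument.

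\emph{Step 4 (Lipschitz representative).} Take $x,y\in M\setminus S$. I would show $|f(x)-f(y)|\le L\,d(x,y)$ by integrating along curves in $M\setminus S$ of length close to $d(x,y)$. Such curves exist because $S$ has codimension greater than $2$ (in particular greater than $1$), so $S$ does not disconnect and the length metric of $M\setminus S$ agrees with $d$. Alternatively, one can use that $f\in W^{1,2}(M)$ with $|\nabla f|\le L$ a.e. (since $S$ is null) and invoke the Sobolev-to-Lipschitz property for the $L^\infty$ metric through the metric completion. Extending $f$ by continuity to $S$ gives the Lipschitz representative. I expect this identification of distances to be the main technical point beyond Step 3.
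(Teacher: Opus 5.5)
Your proposal is correct and matches the paper's proof essentially step by step. The paper uses the same exponent window $\frac{n-2}{2(n-1)}<\beta<\min\{1,\frac{n-2-c}{2}\}$, \cref{lem:Bochner++} for the subsolution property, \cref{lem:Step2} for $u\le\alpha r^{-2\beta}$, $\beta<1$ for $u\in L^1$, and \cref{LinftyResultForThm1} to get $|\nabla f|\in L^\infty$. Your Step 4 spells out the passage from a bounded gradient to a Lipschitz representative, which the paper leaves implicit. In the paper's setting the distance is by definition the completion of the length metric of $(M\setminus S,g)$, biLipschitz to the background $d_h$, so near-optimal curves in $M\setminus S$ exist by definition and the identification of distances you flag is not a real obstacle.
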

\begin{proof}
    Let $c$ be as in \cref{def:Codimension}. By the beginning of the proof of \cref{lem:Step2} we have that $f\in L^{\infty}(M)$. Let us fix a $\gamma$ such that
    \[
    \frac{n-2}{2(n-1)} < \gamma < \min\left\{\frac{n-c-2}{2},1\right\}.
    \]
    This can be done because $c< n-\left(3-\frac{1}{n-1}\right)$. Thus, calling $u:=(1+|\nabla f|^2)^\gamma$, and using that $f\in L^{\infty}(M)$, and \cref{lem:Bochner++}, we have, for some $\alpha>0$, 
    \[
    \Delta u \geq -\alpha u, \qquad \text{on $M\setminus S$}.
    \]\
    Moreover, by \cref{lem:Step2} we have, on $M\setminus S$, up to possibly enlarging $\alpha$,
    \[
    u\leq \alpha r^{-2\gamma},
    \]
    where $-2\gamma>2+c-n$, and $r(x):=\mathrm{d}(x,S)$. Since $f\in W^{1,2}(M)$ and $\gamma<1$ we have $u\in L^1(M)$. Thus we can apply \cref{LinftyResultForThm1} and deduce that $|\nabla f|\in L^{\infty}(M)$, as desired.
\end{proof}

\begin{proof}[Proof of \cref{thm:RCDcodim3}]
    Under the assumptions in \cref{thm:RCDcodim3}: The item (1) in \cref{thm:ToRCD} readily holds on the metric completion of $M\setminus S$; the item (2) in \cref{thm:ToRCD} follows because $(M,g)$ is uniformly equivalent to a smooth background metric, for which Rellich theorem holds; the item (4) in \cref{thm:ToRCD} follows from the Ricci lower bound on the regular part $M\setminus S$. In addition, by arguing verbatim as in \cite[Lemma 2.2]{DWWW} (which hold whenever $S$ has codimension greater than 2), the metric completion of $(M\setminus S,g)$ is a compact almost smooth metric measure space because $S$ has zero 2-capacity in it.
    
    Hence, according to \cref{thm:ToRCD}, the proof of \cref{thm:RCDcodim3} is concluded as soon as item (3) in \cref{thm:ToRCD} is verified. Finally, item (3) in \cref{thm:ToRCD} is provided by \cref{thm:LipschitzRepresentativeThm1}, as desired.
\end{proof}

\begin{remark}\label{rem:Better-on-Lambda}
    Arguing as in \cite[Lemma 2.7]{DWWW} one can slightly improve \cref{lem:Step2}. Indeed, one has that there is $0<\beta:=\beta(n,\Lambda)<1$ such that $|\nabla f|(x)\leq Cd(x,S)^{-1+\beta}$ for every $x$ such that $d(x,S)\leq 1$. Moreover, as $\Lambda\to 1^+$, $\beta\to 1^-$ (as a consequence of \cite[Theorem 1.5]{ByunWang}).

    Thus, arguing verbatim as in the proof of \cref{thm:LipschitzRepresentativeThm1}, and using the notation therein, we have the following. Fix $\frac{n-2}{2(n-1)}<\gamma<1$. There exists $\alpha>0$ such that $\Delta u\geq -\alpha u$ on $M\setminus S$, and $u\leq \alpha r^{2\gamma(-1+\beta)}$, where $\beta:=\beta(n,\Lambda)$ is as above. Now notice that if 
    $$
    c<n-\left(3-\frac{1}{n-1}-\frac{n-2}{n-1}\beta\right),
    $$ 
    then we can choose $\gamma$ so that it also satisfies $2\gamma(-1+\beta)>2+c-n$. Hence, a small variation of the proof of \cref{thm:LipschitzRepresentativeThm1}, as described above, gives the following improvement of \cref{thm:RCDcodim3}. 
    
    Fix $\Lambda>1$ and $n\geq 3$ (the case $n=2$ is already optimal). There exists $0< \xi(\Lambda,n)\leq \frac{n-2}{n-1}$ such that the following holds. Let $(M^n,g)$ be a closed $L^\infty$-Riemannian manifold whose metric $g$ satisfies \eqref{eqn:LambdaDef}. Assume that $g$ is smooth on $M\setminus S$, $S\subset M$ is closed with codimension greater than 
    $$
    c_{n,\Lambda}:=3-\frac{1}{n-1}-\xi(\Lambda,n)\geq 2,
    $$
    and $\mathrm{Ric}\geq K$ on $M\setminus S$. Hence, the metric completion of $(M\setminus S,g)$ is an $\mathrm{RCD}(K,n)$ space. Notice that, as $\Lambda\to 1^+$, $c_{n,\Lambda}\to 2^+$.

    When the metric $g\in C^0$, the content of this Remark was already observed in \cite[Remark 2.8]{DWWW}.
\end{remark}
\begin{remark}\label{rem:PossiblyMoreGeneralResults}
   Note that  in the proof of \cref{lem:ConstructionvLinfty} we are using that $(M^n,g)$ is an $L^\infty$-Riemannian manifold only to get the estimates on the Green function in \eqref{eqn:EstG}. In particular, the upper estimate on $G$ is used to show that $v\in L^1(\Omega)$, while the lower estimate is used to prove the decay estimate \eqref{eqn:Asymptoticsv}. In the proof of \cref{LinftyResultForThm1} we are using that $(M^n,g)$ is an $L^\infty$-Riemannian manifold only to invoke \cref{lem:ConstructionvLinfty} and when we use the $L^\infty$-$L^1$ bound for sub-harmonic functions, which works for arbitrary infinitesimally Hilbertian PI spaces.
   
   Hence, the techniques of \cref{lem:ConstructionvLinfty} and \cref{LinftyResultForThm1} seem flexible enough to show similar results for metric measure spaces that are infinitesimally Hilbertian, PI spaces, for which one has well-behaved two-sided bounds for the Dirichlet Green kernel. 

   On the other hand, in \cref{thm:LipschitzRepresentativeThm1}, the assumption on the codimension being greater than $3-\frac{1}{n-1}$ cannot be weakened with the present proof for arbitrary $L^\infty$-metrics, apart from what is discussed in \cref{rem:Better-on-Lambda}. In fact, the obstruction comes from \cref{lem:Bochner++}, and in particular from \eqref{eqn:PDENew}, which, for $\beta\leq \frac{n-2}{2(n-1)}$, becomes useless; and also from  the upper gradient estimate in \cref{lem:Step2}. It would be interesting to understand whether:
   \begin{enumerate}
       \item with a different proof, one can show \cref{thm:LipschitzRepresentativeThm1} with the weaker assumption that $S$ has codimension greater than $2$, which would be optimal (see \cref{quest:RCD-if-affirmative});
       \item the techniques of this section can prove (variations of) \cref{thm:LipschitzRepresentativeThm1} in settings that are more general than that of an $L^\infty$-Riemannian manifold. E.g. for a properly chosen subclass of infinitesimally Hilbertian PI spaces that are almost smooth and for which one has good two-sided controls on the Dirichlet Green kernel. For some results in this direction, see \cite[Section 6]{BZ26}.
   \end{enumerate}
   Related to the second item above, it would be interesting to explore if the techniques in this section can extend some of the results in \cite{HondaSun}.
\end{remark}

\section{Subpower loss of top-dimensional splitting maps}\label{sec:gram-defect}
The aim of this section is to prove the following Theorem \ref{thm:main}, showing that top-dimensional splitting maps persist at smaller scales, with an arbitrarily small power loss. 
In this section all constants denoted by $C$ may change from line to line (occasionally,
dependence on other constants will be tracked). For a symmetric matrix $A$, $|A|$ denotes its Hilbert--Schmidt norm. The following result is trivial when $n=1$, so we will always assume that $n\geq 2$. 

\begin{theorem}\label{thm:main}
For every integer $n\geq 1$ and every $\kappa>0$, there are $
 \varepsilon=\varepsilon(n,\kappa)>0$ and $
 C=C(n,\kappa)>0$
with the following property. Let $(X,\d,\Hn,x)$ be a pointed
$\operatorname{RCD}(-\eta,n)$ space, where $0\leq\eta\leq1$, and let
\[
 u=(u^1,\ldots,u^n):B_2(x)\to \R^n
\]
have harmonic components. Denote by $I$ the $n\times n$ identity matrix and put
\begin{equation}\label{eq:Gudefn}
 G_u:=\left(\langle\nabla u^i,\nabla u^j\rangle\right)_{i,j=1}^n,
 \qquad
 D:=\fint_{B_1(x)}|G_u-I|\,\mathrm{d}\Hn.
\end{equation}
If $D+\eta\leq \varepsilon$, then
\begin{equation}\label{eq:main-matrix}
 \fint_{B_r(x)}|G_u-I|\,\mathrm{d}\Hn
 \leq Cr^{-\kappa}(D+\eta)
 \qquad\forall 0<r<\frac14.
\end{equation}
\end{theorem}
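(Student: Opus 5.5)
The plan is a one-scale improvement lemma proved by compactness and contradiction, iterated over dyadic scales. Throughout, $u$ stays fixed; only the ball is rescaled. Write $r_k:=2^{-k}$ and
\[
\delta_k:=\fint_{B_{r_k}(x)}|G_u-I|\,\mathrm d\Hn .
\]
Rescaling $B_{r_k}(x)$ to unit size turns $X$ into an $\mathrm{RCD}(-\eta r_k^2,n)$ space. The map $u$ (translated, and divided by $r_k$) still has harmonic components, and its defect on the unit ball is exactly $\delta_k$, since $G_u$ is scale invariant. Two facts make the iteration cheap.

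\emph{First, large defects are harmless.} By the Cheng--Yau gradient estimate for harmonic functions on $\mathrm{RCD}(-1,n)$ spaces, together with the bound $\fint_{B_1}|G_u|\leq n^{1/2}+\varepsilon$, we get $|\nabla u|\leq C(n)$ on $B_{1/2}(x)$. Hence $|G_u-I|\leq C(n)$ there. So the estimate \eqref{eq:main-matrix} is automatic at any scale where $Cr^{-\kappa}(D+\eta)\geq C(n)$. We only need to run the iteration while $\delta_k$ is below a threshold $\varepsilon_1(n,\kappa)$; once it exceeds $\varepsilon_1$, every smaller scale is covered trivially after enlarging $C$. No renormalization by matrices is needed.

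\emph{Second, the core claim is a one-step lemma.} Given $\theta>0$, there is $\varepsilon_1(n,\theta)$ such that whenever $\delta+\eta'\leq\varepsilon_1$ (with $\delta$ the unit-scale defect and $\eta'$ the curvature parameter),
\[
\fint_{B_{1/2}}|G_u-I|\leq (1+\theta)\,\delta+C(n,\theta)\,\eta'.
\]
Applying this with $\eta'=\eta r_k^2$ and iterating gives
\[
\delta_{k}\leq (1+\theta)^k\Bigl(D+C\eta\sum_j 4^{-j}\Bigr).
\]
Choosing $\theta$ with $\log_2(1+\theta)=\kappa$ yields $\delta_k\leq C r_k^{-\kappa}(D+\eta)$. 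Passing from dyadic to arbitrary $r$ costs only a factor $2^n$ from volume doubling.

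To prove the one-step lemma, argue by contradiction. Take a sequence with $\delta_i+\eta_i\to0$ and
\[
\fint_{B_{1/2}}|G_{u_i}-I|>(1+\theta)\delta_i+i\,\eta_i .
\]
Then $B_2$ converges in the pmGH sense to the Euclidean ball, by the splitting and volume-convergence theory, and $u_i$ converges to an isometry, which we may take to be the identity. Normalize $H_i:=(G_{u_i}-I)/\delta_i$, which has $\fint_{B_1}|H_i|=1$ and $\eta_i/\delta_i\to0$. We want a limit $H_\infty$ on the unit ball of $\R^n$, a symmetric-matrix-valued function, that satisfies two things. (a) The mass of $|H_i|$ on $B_{1/2}$ passes to the limit, which needs some uniform integrability of $H_i$. (b) $|H_\infty|$ is subharmonic. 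Then the mean value inequality for subharmonic functions gives
\[
\fint_{B_{1/2}}|H_\infty|\leq\fint_{B_1}|H_\infty|\leq 1,
\]
which contradicts the assumed strict inequality $>1+\theta$. The natural route to (b) is to linearize. Write $u_i=\mathrm{id}+\sqrt{\delta_i}\,w_i$ in harmonic coordinates. Then
\[
G_{u_i}-I=\sqrt{\delta_i}\,(\nabla w_i+\nabla w_i^{T})+\delta_i\,\nabla w_i^T\nabla w_i ,
\]
so $H_\infty$ should come from the symmetrized gradient of a limiting harmonic map, plus a quadratic term. Alternatively, use Bochner directly:
\[
\Delta\langle\nabla u^a,\nabla u^b\rangle=2\langle\Hess u^a,\Hess u^b\rangle+2\,\mathrm{Ric}(\nabla u^a,\nabla u^b)
\]
in the distributional (measure-valued) sense on RCD spaces. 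Then $\Delta H_i=2\,\delta_i^{-1}\langle\Hess u^a,\Hess u^b\rangle+O(\eta_i/\delta_i)$.

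The main obstacle is step (b), specifically the Hessian term. The standard integral estimate
\[
\int_{B_{3/2}}|\Hess u_i|^2\leq C(\delta_i+\eta_i)
\]
only makes $\delta_i^{-1}|\Hess u_i|^2$ bounded in $L^1$. It does not make it vanish. So the limit of $H_i$ is harmonic only up to a nonnegative matrix-valued measure $\mu$. The saving sign is that a positive-semidefinite source makes $\Delta H_\infty=\mu\geq0$ as a matrix, and $\Delta\tr H_\infty\geq0$. I would first try to show that $|H_\infty|$ is still subharmonic, or at least satisfies the mean value inequality from $B_{1/2}$ to $B_1$. One route is to test against the positive part of the largest-eigenvalue direction. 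The other is to work instead with a convex norm equivalent to Hilbert--Schmidt, for which $\Phi(H_\infty)$ is subharmonic, and then to absorb the norm-equivalence constant. That absorption is delicate because we need the constant $1$, not $C$. If neither works, the fallback is to prove the sharper bound $\int|\Hess u_i|^2=o(\delta_i)$. This could come from first improving $u_i$ by a lower-triangular renormalization, as in the transformation theorem, which kills the $\sqrt{\delta}$-order linear part of $G-I$ at each scale. In that fallback we would also have to track, in the iteration, that the accumulated matrix corrections stay within a factor $r^{-\kappa}$.
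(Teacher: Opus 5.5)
\paragraph{The one-step lemma is false.} Your reduction rests on the one-step lemma $\fint_{B_{1/2}}|G_u-I|\leq(1+\theta)\delta+C\eta'$, and that lemma fails for small $\theta$. So the obstacle you flag in step (b) cannot be overcome by any of the routes you suggest.

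Take the flat cone of angle $2\pi(1-\epsilon)$, which is an $\mathrm{RCD}(0,2)$ space. Put $\alpha:=(1-\epsilon)^{-1}$ and $u:=\alpha^{-1}r^{\alpha}(\cos\alpha\vartheta,\sin\alpha\vartheta)$, i.e.\ $z\mapsto z^{\alpha}/\alpha$ in a developing chart. Then $G_u=r^{2(\alpha-1)}I$ and
\[
\fint_{B_s}|G_u-I|\,\mathrm d\Hn=\sqrt2\Bigl(1-\frac{s^{2(\alpha-1)}}{\alpha}\Bigr).
\]
Hence $\delta=\sqrt2\,\epsilon\to0$ and $\eta=0$. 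Yet the ratio of the defect on $B_{1/2}$ to the defect on $B_1$ tends to $1+2\log 2$ as $\epsilon\to0$. More generally, on $B_s$ the ratio tends to $1+2\log(1/s)$.

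The total loss is therefore logarithmic, which is consistent with the theorem. But the ratio per halving is at most $1+\theta$ only once $\log(1/s)\gtrsim 1/\theta$. So no bound $(1+\theta)\delta$ can hold uniformly from an arbitrary starting scale when $\theta<2\log2$.

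This is exactly the scenario you feared. The normalized limit is $H_\infty=\sqrt2\,\log|x|\,I$, with $\Delta H_\infty$ a positive multiple of $\delta_0 I$, and $|H_\infty|$ is superharmonic. None of your proposed fixes can help:
\begin{itemize}
\item $H_\infty$ is a scalar multiple of a fixed matrix, so $\Phi(H_\infty)$ is superharmonic for every norm $\Phi$.
\item The Hessian energy $\fint_{B_1}\sum_i|\Hess u^i|^2=4\epsilon/(1-\epsilon)$ is comparable to $\delta$ (it concentrates at the tip), so it is not $o(\delta)$.
\item Replacing $u$ by $Tu$ with $T$ close to $I$ does not change this at leading order.
\end{itemize}
These are the same cones the paper cites to show that the $r^{-\kappa}$ loss is necessary.

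\paragraph{What is missing.} Information at a single scale cannot exclude a nonzero positive semidefinite measure $\nu$ in $\Delta H=\omega_n\nu$. Only the growth of $\nu$ at infinity can be controlled, and that requires additional structure. The paper never iterates a one-step lemma.

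Instead, arguing by contradiction, it blows up at a radius $r_k$ that almost maximizes $r^{\kappa}E_k(r)/(D_k+\eta_k)$. By this choice the rescaled defect is controlled at every larger scale. One therefore obtains an entire limit $H$ on $\R^n$ with
\[
\fint_{B_1}|H|=1,\qquad \fint_{B_R}|H|\leq 2R^{-\kappa},\qquad \tr\nu(B_R)\leq CR^{n-2-\kappa}\quad\text{for all } R\geq1.
\]

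Two ingredients absent from your proposal then kill $\nu$:
\begin{itemize}
\item The stress-energy conservation law of the harmonic maps passes to the limit as $\operatorname{div}\bigl(H-\frac12(\tr H)I\bigr)=0$. Hence $\operatorname{div}\bigl(\nu-\frac12(\tr\nu)I\bigr)=0$.
\item For positive semidefinite $\nu$, this identity gives monotonicity of $R^{2-n}\tr\nu(B_R)$. That is incompatible with growth $R^{n-2-\kappa}$ unless $\nu=0$.
\end{itemize}
Only then is $H$ harmonic, and the mean value inequality contradicts $\fint_{B_1}|H|=1$.

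In the cone example, $\tr\nu(B_R)$ is constant in $R$. That is consistent with monotonicity but not with the decay the maximizing scale provides, which is why such a limit cannot arise in the paper's blow-up but does destroy any uniform one-step improvement.
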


The result is new even in the case of smooth manifolds with nonnegative Ricci curvature. It is worth remarking that by Cheeger--Colding theory~\cite{ChCo0}, if $u$ is a sufficiently good splitting function at scale 1, i.e., $D+\eta$ above is small, then any small ball $B_r(x)$ is Gromov-Hausdorff close to the Euclidean ball, and thus it admits good splitting functions. The content of Theorem~\ref{thm:main} is to quantify the scales at which the same function $u$ can still serve as a good splitting function, even if it is not the optimal one. 
\smallskip

Notice that from \cref{thm:main} (and \cref{rem:HessianNotNecessary}) we immediately get the following corollary. For the basic definitions and properties related to splitting maps, we refer to the Appendix. 
\begin{corollary}\label{cor:Propagation-Splitting!}
    Let $n\geq 1$ be an integer. For every $\kappa>0$ there exist
$0<\varepsilon_0=\varepsilon_0(n,\kappa)<1$ and
$C=C(n,\kappa)>0$ with the following property. 

Let $0\leq \eta\leq \varepsilon\leq\varepsilon_0$. Suppose that
$(X,\mathrm d,\mathcal H^n)$ is an $\operatorname{RCD}(-\eta,n)$ space, and that
\[
u=(u_1,\ldots,u_n):B_2(x)\to\mathbb R^n
\]
is an $\varepsilon$-splitting map. Then $u\big|_{B_r(x)}:B_r(x)\to\mathbb R^n$
is a $Cr^{-\kappa}\varepsilon$-splitting map for every
$0<r<1/4$.
\end{corollary}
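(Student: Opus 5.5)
The plan is to reduce \eqref{eq:main-matrix} to a one-step decay estimate and then iterate it over geometric scales. Set $E(r):=\fint_{B_r(x)}|G_u-I|\,\d\Hn$. The target is the following statement. For every $\kappa>0$ there are $\theta=\theta(n,\kappa)\in(0,\tfrac14)$ and $\varepsilon_1=\varepsilon_1(n,\kappa)>0$ such that, whenever $B_r(y)$ lies in an $\operatorname{RCD}(-\eta,n)$ space, $v$ has harmonic components on $B_{2r}(y)$, and $E_{v,y}(r)+\eta r^2\leq\varepsilon_1$, we have
\begin{equation*}
 E_{v,y}(\theta r)\leq \theta^{-\kappa}\bigl(E_{v,y}(r)+\eta r^2\bigr).
\end{equation*}
Granting this at $y=x$ and $r_k=\theta^k$, induction gives $E(\theta^k)\leq \theta^{-k\kappa}(D+\eta)\sum_{j\geq0}\theta^{2j}\leq C\theta^{-k\kappa}(D+\eta)$. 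The smallness hypothesis needed to apply the step again at the next scale is guaranteed by $D+\eta\leq\varepsilon$, after slightly increasing $\kappa$ so that $\theta^{-k\kappa}\theta^{2k}$ stays summable and small. Intermediate radii $r\in(\theta^{k+1},\theta^k)$ are handled using $E(r)\leq (\theta^k/r)^n\cdot C\,E(\theta^k)$ together with volume doubling. The case of a general radius $r$ in the one-step estimate follows by rescaling, which only decreases $\eta$.

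I would prove the one-step estimate by contradiction and compactness. Suppose there are spaces $(X_i,\d_i,\Hn,x_i)$, maps $u_i$ and numbers $\delta_i:=E_i(1)+\eta_i\to0$ with $E_i(\theta)>\theta^{-\kappa}\delta_i$. Since $G_{u_i}\to I$ in $L^1$ and $\eta_i\to0$, Cheeger--Colding theory together with the appendix facts on splitting maps shows the following: $B_2(x_i)$ converges in the pmGH sense to the Euclidean ball, with volume convergence, and $u_i$ converges uniformly to an isometry $L$, which we may take to be the identity. Next I would consider the normalized errors $H_i:=(G_{u_i}-I)/\delta_i$, which have $L^1$ norm at most $1$ on $B_1$. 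The aim is to show that, after passing to a subsequence, $H_i$ converges strongly in $L^1(B_{1/2})$ to a matrix field $H_\infty$ on $B_{1/2}\subset\R^n$. The limit should be $H_\infty=\nabla w+(\nabla w)^T$, where $w$ is the limit of the harmonic vector fields $(u_i-\ell_i)/\delta_i$ for suitable affine corrections $\ell_i$. In particular, $H_\infty$ would have harmonic entries on $B_{1/2}$. For such a field the mean value property and interior derivative estimates give
\begin{equation*}
 \fint_{B_\theta}|H_\infty|\leq |H_\infty(0)|+C_n\theta\fint_{B_{1/2}}|H_\infty|\leq (C_n'+C_n\theta).
\end{equation*}
Choosing $\theta$ small so that $C_n'+C_n\theta<\theta^{-\kappa}$ contradicts $E_i(\theta)>\theta^{-\kappa}\delta_i$ after passing to the limit. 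The constant here is dimensional and does not degenerate, which is exactly why the loss can be made an arbitrarily small power.

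The main obstacle is the strong $L^1$ compactness of $H_i$ at the \emph{linear} scale $\delta_i$. A naive route goes through Bochner and the improved Kato inequality for $|\nabla u|^2$, which gives $\fint_{B_{1/2}}|\Hess u_i|^2\leq C\delta_i$, and then through Poincaré. That only controls $G_{u_i}-I$ at size $\sqrt{\delta_i}$, which destroys linearity. So the first thing I would try is to linearize. Write $u_i=\mathrm{id}+\delta_i w_i$ in harmonic (splitting) coordinates, so that $G_{u_i}-I=\delta_i(\nabla w_i+\nabla w_i^T)+\delta_i^2\nabla w_i\nabla w_i^T$. Then I would prove uniform $W^{1,2}$ bounds on $w_i$ directly from $E_i(1)\leq\delta_i$. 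For this I would use that each $\langle\nabla u^j,\nabla u^k\rangle-\delta_{jk}$ satisfies an elliptic equation whose source is quadratic in $\Hess u_i$ plus $\eta_i$. Using the $L^2$ Hessian bound, that source has size $O(\delta_i)$ in $L^1$, so it does not spoil the linear scale. Combining this with the De Giorgi / Moser theory on PI spaces (as in \cref{lem:Step1}) and with stability of harmonic functions under pmGH convergence should give the needed convergence of $H_i$. I expect this step to require the most care.
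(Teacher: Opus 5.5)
\textbf{Gap: the blow-up limit $H_\infty$ is not harmonic.} The reduction to a one-step decay estimate is a reasonable frame. The step meant to prove that estimate fails, because the limit $H_\infty$ of $H_i=(G_{u_i}-I)/\delta_i$ need not be harmonic.

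The linearization $u_i=\mathrm{id}+\delta_i w_i$, $H_\infty=\nabla w+(\nabla w)^T$, has no meaning on $X_i$. There is no flat background there, and in the coordinates $u_i$ one simply has $u_i=\mathrm{id}$. The quantity $G_{u_i}-I$ is the deviation of the (inverse) metric in those coordinates, so it carries the curvature of $X_i$.

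Your own compactness mechanism shows what actually happens. Under only a lower Ricci bound, the source $\frac1{\delta_i}\mathbf{\Delta}G_{u_i}$ is not ``quadratic in $\Hess u_i$ plus $\eta_i$''. It is a positive semidefinite matrix measure, which contains the Ricci term, minus the bounded term $\frac{2\eta_i}{\delta_i}G_{u_i}$. Testing against a cutoff shows its mass is $O(1)$. That is enough for $W^{1,q}$ compactness with $q<\frac{n}{n-1}$, via the measure-data estimate of \cref{lem:measure-data-rigorous} rather than De Giorgi--Moser. But the mass does not tend to zero, so in the limit $\Delta H_\infty$ equals $\omega_n\nu$ (up to a constant matrix) with $\nu\geq 0$.

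Concrete counterexample: take the flat cone of angle $2\pi\beta$ and $u=\beta\rho^{1/\beta}(\cos(\phi/\beta),\sin(\phi/\beta))$.
\begin{itemize}
\item One computes $G_u=\rho^{2(1/\beta-1)}I$ and $D\sim\sqrt2(1/\beta-1)$.
\item As $\beta\uparrow 1$, $H\to\sqrt2\log|z|\,I$, so $\Delta H_\infty$ is a nonzero multiple of $\delta_0 I$.
\item Moreover $\fint_{B_\theta}|H_\infty|=2\log(1/\theta)+1$, which contradicts your bound $C_n'+C_n\theta$.
\end{itemize}
If $H_\infty$ were always harmonic, you could remove $H_\infty(0)$ by a linear change of $u$ and obtain H\"older decay of $G_u$ on every almost Euclidean RCD space. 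That already fails on this cone, where $G_u(o)=0$. It is also why \cref{propThreeAnnulus} needs bounded $|\mathrm{Ric}|$ and codimension $>2$ to make its blow-up limit harmonic.

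\textbf{Missing ingredient: the conservation law.} What you need is a way to control $\nu$, and this is the heart of the paper's argument. The stress-energy tensor of the harmonic maps $u_i$ is divergence free (\cref{lem:RCDconserv}). After normalization this passes to the limit as $\operatorname{div}(H-\frac12(\tr H)I)=0$ (\cref{lem:StressIdentity}). The quadratic remainder and the Hessian term vanish in the limit because $\int|G-I|^2/\delta\to0$ while $\int|\Hess u|^2/\delta=O(1)$. Applying $\Delta$ gives the same identity for $\nu$, hence monotonicity of $s^{2-n}\tr\nu(B_s)$ (\cref{lem:Monotonicity}).

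The paper normalizes at the worst scale, which yields $\tr\nu(B_R)\lesssim R^{n-2-\kappa}$ for all $R\geq1$. Monotonicity then forces $\nu=0$, and Liouville gives $H=0$, contradicting $\fint_{B_1}|H|=1$.

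Your one-step scheme can also be closed with this ingredient:
\begin{itemize}
\item Monotonicity gives $\tr\nu(B_s)\lesssim s^{n-2}$ for $s\leq 1/4$.
\item Splitting $H_\infty$ into a Newtonian potential of $\nu$ plus a harmonic part gives $\fint_{B_\theta}|H_\infty|\leq C_n(1+\log(1/\theta))$.
\item This is $<\theta^{-\kappa}$ once $\theta$ is small.
\end{itemize}
That logarithm is exactly the cone behaviour, and it explains why the loss is subpower but cannot be removed.

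\textbf{Two smaller points.}
\begin{itemize}
\item Your justification that smallness persists along the iteration is wrong as written: $C\theta^{-k\kappa}(D+\eta)$ eventually exceeds $\varepsilon_1$ however $\kappa$ is adjusted. The fix is to stop at the first such $k$ and use $|G_u-I|\leq C(n)$ from \eqref{eq:positive-Gram-bound}. At all smaller scales this is $\leq C\varepsilon_1^{-1}r^{-\kappa}(D+\eta)$.
\item The gradient and Hessian items of the splitting definition still have to be deduced, via \cref{lem:PropertiesHarmonic} as in \cref{rem:HessianNotNecessary}.
\end{itemize}
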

The previous result should be compared to the geometric transformation theorem in \cite[Theorem 7.2]{ChJiNa} (see also \cite[Proposition 3.13]{BNSInv}). In our case, we are not modifying the splitting map with a triangular transformation $T_r$. Instead, we get a quantification on the constant $C(r)$ for which the same map is $C(r)$-splitting at scales $0<r<1/4$. Notice that the subpower loss $r^{-\kappa}$ in the statement above is necessary, as it can be readily seen considering flat cones whose cone angle tends to $2\pi$. Moreover, we stress that the linear dependence on $\varepsilon$, which is crucial in our proof, is also optimal; this can be seen by considering, e.g., the map $u(x):=\sqrt{1-\varepsilon}x$ on $\mathbb R^n$, for small $\varepsilon$.

\subsection{Preliminary results}

We start with some preliminary results which will be useful to prove Theorem \ref{thm:main}. Throughout this section we assume that the reader is familiar with Gigli's calculus \cite{GigliNDG}. We also recommend the self-contained exposition in \cite[Section 2.1 and 2.2]{GigliViolo}. Occasionally, we will refer to precise statements of \cite{GigliNDG, GigliViolo} for the reader's convenience. Test functions (resp., test vector fields) will be denoted by $\mathrm{TestF},\mathrm{TestV}$.

\subsubsection{Basic definitions}

On a metric measure space $(X,\mathrm{d},\mathfrak{m})$ we use
\[
 \TestF(X):=\bigl\{f\in D(\Delta)\cap L^\infty(\mathfrak m):
 |\nabla f|\in L^\infty(\mathfrak m),\
 \Delta f\in W^{1,2}(X)\bigr\},
\]
where $D(\Delta)$ denotes the $L^2$-domain of the Laplacian.  The
space of test vector fields is
\[
 \TestV(X):=\left\{\sum_{i=1}^{\ell}g_i\nabla f_i:
 \ell\in\mathbb N,\ f_i,g_i\in\TestF(X)\right\}.
\]
See \cite{GigliNDG}. Similar definitions can be given locally: see, e.g., \cite[Section 2]{GigliViolo}.
\medskip

Let $(X_i,\mathrm{d}_i,\mathfrak{m_i})\to (X,\mathrm{d},\mathfrak{m})$ in the pmGH sense. For $q>1$, we say that \emph{$f_i\to f$ strongly in $L^q$} iff $f_i\mathfrak{m}_i\hookrightarrow f\mathfrak{m}$ weakly-* in a realization of the pmGH convergence and $\|f_i\|_{L^q(\mathfrak{m}_i)}\to \|f\|_{L^q(\mathfrak{m})}$. With $q=1$, we say that \emph{$f_i\to f$ strongly in $L^1$} iff $\sigma\circ f_i\to \sigma\circ f$ in $L^2$-strong, where $\sigma(x):=\mathrm{sgn}(x)\sqrt{|x|}$. Strong convergence in $W^{1,2}$ means: strong convergence in $L^2$ and $\int|\nabla f_i|^2\mathfrak{m}_i\to \int |\nabla f|^2\mathfrak{m}$. See \cite[Section 1.3]{AmbrosioHonda}. Local definitions are given similarly, in the obvious way: see \cite[Section 4]{AmbrosioHondaSpectral}.

\subsubsection{From $\varepsilon$-splitting maps to volume control} Write $v_{K,n}(r)$ for the volume of the radius-$r$ ball in the
simply-connected $n$-dimensional model with Ricci curvature $K$. 
Notice that if $\eta_k\to 0$, then
\begin{equation}\label{eq:Phi-expansion}
\sup_{r\in (0,1]} \left|\frac{v_{-\eta_k,n}(r)}{\omega_n r^n}-1\right|\to 0.
\end{equation}
For an $\operatorname{RCD}(-\eta,n)$ space $(X,\mathrm{d},\mathcal{H}^n)$, Bishop--Gromov says that $
 r\mapsto\frac{\Hn(B_r(y))}{v_{-\eta,n}(r)}$
is nonincreasing on $(0,\infty)$.  Moreover the Bishop density is at most
one at every point \cite[Corollary~2.14]{DePhilippisGigli}.
Consequently, for $0<s\leq t$,
\begin{equation}\label{eq:negative-BG-exact}
 \frac{\mathcal{H}^n(B_s(y))}{\omega_n s^n}\geq \frac{\mathcal{H}^n(B_t(y))}{\omega_n t^n}
 \frac{\frac{v_{-\eta,n}(s)}{\omega_n s^n}}{\frac{v_{-\eta,n}(t)}{\omega_n t^n}},
 \qquad
 \frac{\mathcal{H}^n(B_s(y))}{\omega_n s^n}\leq \frac{v_{-\eta,n}(s)}{\omega_n s^n}.
\end{equation}

\begin{lemma}\label{lem:almost-splitting}
For each $n\geq2$, set $\rho:=1/(100n)$.  There is a function
$\Psi(\kappa)\to 0$ as $\kappa\to 0$ with the following
property.  Let $(Y,\mathrm{d}_Y,\m_Y,y)$ be $\operatorname{RCD}(-\kappa,n)$,
$0<\kappa<1$, and let $u=(u_1,\ldots,u_n)$. Let $I$ denote the $n\times n$ identity matrix.  Assume
\begin{align}
 &u\text{ is harmonic on }B_6(y),\qquad
 \max_i\|\nabla u_i\|_{L^\infty(B_{6}(y))}\leq C(n),\label{eq:as-lip}\\
 &\fint_{B_{6}(y)}|G_u-I|\,\mathrm{d}\m_Y
 +\fint_{B_{6}(y)}\sum_i|\Hess u_i|^2\,\mathrm{d}\m_Y\leq \kappa.
 \label{eq:as-errors}
\end{align}
Then
\begin{equation}\label{eq:as-gh}
 d_{GH}\left((\overline B_{2\rho}(y),y),
 (\overline B_{2\rho}(0^n),0^n)\right)\leq\Psi(\kappa).
\end{equation}
\end{lemma}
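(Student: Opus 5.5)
We argue by contradiction and compactness, which is the natural route since the conclusion is only a qualitative $\Psi(\kappa)$. Suppose the statement fails. Then there are $\delta>0$, $\kappa_k\to0$, pointed $\operatorname{RCD}(-\kappa_k,n)$ spaces $(Y_k,\mathrm d_k,\m_k,y_k)$ and maps $u^{(k)}$ satisfying \eqref{eq:as-lip} and \eqref{eq:as-errors} with $\kappa=\kappa_k$, such that
\[
 d_{GH}\bigl((\overline B_{2\rho}(y_k),y_k),(\overline B_{2\rho}(0^n),0^n)\bigr)\geq\delta .
\]
The measures $\m_k$ are not necessarily $\mathcal H^n$, so we first renormalize, replacing $\m_k$ by $\m_k/\m_k(B_1(y_k))$. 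Conditions \eqref{eq:as-errors} involve only averages, so they are unaffected. By Gromov precompactness and the stability of the RCD condition, a subsequence converges in the pmGH sense to a pointed $\operatorname{RCD}(0,n)$ space $(Y_\infty,\mathrm d_\infty,\m_\infty,y_\infty)$.

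Next we pass the maps to the limit. Subtract constants so that $u^{(k)}(y_k)=0$. By the uniform Lipschitz bound \eqref{eq:as-lip} and Arzelà--Ascoli along the GH convergence, $u^{(k)}\to u^{(\infty)}$ uniformly on $B_5$. Harmonicity together with Lipschitz bounds gives $W^{1,2}$-strong convergence, using the stability of harmonic functions under pmGH convergence from \cite{AmbrosioHonda}. We then aim for three properties of the limit map:
\begin{enumerate}
\item $u^{(\infty)}$ is harmonic on $B_5(y_\infty)$;
\item $\langle\nabla u^{(\infty)}_i,\nabla u^{(\infty)}_j\rangle=\delta_{ij}$ a.e.;
\item $\Hess u^{(\infty)}_i=0$.
\end{enumerate}
Property (2) follows from strong convergence of the products $\langle\nabla u_i,\nabla u_j\rangle$, which holds because $W^{1,2}$-strong convergence implies $L^1$-strong convergence of the Gram entries, combined with $\fint|G_u-I|\to0$. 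Property (3) follows from lower semicontinuity of the $L^2$ norm of the Hessian under $W^{1,2}$-strong convergence of harmonic functions, as in Ambrosio--Honda.

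A limit function with vanishing Hessian and $|\nabla u_i|=1$ gives a splitting of a line locally. Applying the splitting theorem on $\operatorname{RCD}(0,n)$ spaces in its local form (via the Gigli splitting argument on balls, or the local version in the Cheeger--Colding style), the $n$ orthonormal parallel gradients show that $B_{4}(y_\infty)$ is locally isometric to a product $\R^n\times Z$. Since the dimension is at most $n$, the factor $Z$ must be a point. Hence $u^{(\infty)}$ is a local isometry from $B_{4}(y_\infty)$ onto its image in $\R^n$, and on the small ball $B_{2\rho}$ with $\rho=1/(100n)$ it is an isometry onto $B_{2\rho}(0^n)$. This contradicts the GH lower bound $\delta$, because GH convergence of the balls is inherited from pmGH convergence. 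The smallness of $\rho$ serves exactly to keep away from boundary effects of the local splitting.

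The main obstacle is the local splitting step. The global splitting theorem needs a line, while we only have harmonic functions with parallel gradients on a ball. We would argue directly that the gradient flows of the $\nabla u^{(\infty)}_i$ are measure-preserving local isometries (Hessian zero implies the regular Lagrangian flow preserves distance infinitesimally, via \cite{GigliNDG}). From this we would deduce that $u^{(\infty)}$ is $1$-Lipschitz with a $1$-Lipschitz local inverse built from the composed flows, starting from $y_\infty$ and reaching $B_{2\rho}$ before leaving $B_4$. A further point to verify is that $u^{(\infty)}$ is injective, which requires $Z$ to be a point; this follows from the dimension bound $n$ and the essential dimension of RCD spaces.
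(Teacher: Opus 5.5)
Your compactness set-up is fine as far as it goes. After renormalizing $\m_k$ you get a pointed $\mathrm{RCD}(0,n)$ limit. The maps converge uniformly and strongly in $W^{1,2}_{\mathrm{loc}}$ to a harmonic $u^{(\infty)}$ with $G_{u^{(\infty)}}=I$ a.e.\ and $\Hess u^{(\infty)}_i=0$ on $B_5(y_\infty)$. Closed balls converge in the GH sense because the spaces are geodesic.

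The gap is the step that carries the whole content of the lemma: deducing that $u^{(\infty)}$ maps $\overline B_{2\rho}(y_\infty)$ isometrically onto $\overline B_{2\rho}(0^n)$ from properties you only have on a ball. You cannot invoke Gigli's splitting theorem \cite{GigliSplitting} here. It needs a line, i.e.\ a globally defined $b$ with $|\nabla b|=1$, $\Hess b=0$ and globally defined gradient flows. The assertion that $B_4(y_\infty)$ is locally isometric to $\R^n\times Z$ is exactly the local rigidity that remains to be proved.

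Your fallback via flows is only an outline. Its unwritten parts are that the regular Lagrangian flows of cut-off versions of $\nabla(a\cdot u^{(\infty)})$ have everywhere-defined representatives which are local isometries and which commute; this needs a localization of Gigli's argument. Even granting them, you get a map $\Phi$ on a Euclidean ball with $u^{(\infty)}\circ\Phi=\mathrm{id}$ and $\Phi$ $1$-Lipschitz, that is, an isometric \emph{embedding} of $B_{2\rho}(0^n)$ into $Y_\infty$. This does not show that $\Phi$ is onto $B_{2\rho}(y_\infty)$. Equivalently, it does not show $\mathrm{d}(p,q)\le|u^{(\infty)}(p)-u^{(\infty)}(q)|$, nor that the level sets of $u^{(\infty)}$ are locally single points.

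You attribute this last fact to ``the essential dimension''. But essential dimension is an a.e.\ statement about tangent cones and does not by itself rule out a nontrivial fibre, so a mechanism is needed. One that works:
\begin{enumerate}
\item The tangent module of an $\mathrm{RCD}(0,n)$ space has dimension at most $n$, so the orthonormal fields $\nabla u^{(\infty)}_i$ span it a.e.
\item Normalize $u^{(\infty)}(y_\infty)=0$ and let $\pi(x)$ be the image of $x$ under the flow by $-u^{(\infty)}(x)$. Then $\pi$ is Lipschitz, takes values in $\{u^{(\infty)}=0\}$, and is invariant under all the flows.
\item For $z$ in that fibre near $y_\infty$, the function $f(x):=\mathrm{d}(\pi(x),z)$ is therefore flow-invariant, so $\mathrm{d}f(\nabla u^{(\infty)}_i)=0$ for every $i$. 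By step 1, $|\nabla f|=0$, so $f$ is locally constant.
\item Comparing $f(y_\infty)=\mathrm{d}(y_\infty,z)$ with $f(z)=0$ gives $z=y_\infty$.
\end{enumerate}

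The paper sidesteps all of this. After normalizing the reference measures it simply quotes \cite[Theorem 3.8(ii)]{BNSInv} (see also \cite[Proposition 1.5]{BruePasqualettoSemola}): maps with small Gram and Hessian defect as in \eqref{eq:as-errors} force GH-closeness to Euclidean balls. Your contradiction--compactness scheme is a legitimate way to reprove such a statement, but as written its rigidity step is a plan, not a proof. Either cite that result, or carry out the localized flow construction together with an explicit fibre argument like the one above.
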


\begin{proof}
After normalizing
the reference measures by their masses on the unit balls this comes from
\cite[Theorem 3.8(ii)]{BNSInv}; see also 
\cite[Proposition 1.5]{BruePasqualettoSemola}.  
\end{proof}

\begin{lemma}\label{lem:volume-recovery}
Suppose $(X_k,\mathrm{d}_k,\Hn,x_k)$ is a sequence of pointed
$\operatorname{RCD}(-\eta_k,n)$ spaces, $n\geq2$, and $u_k$ satisfies
\eqref{eq:as-lip} on $B_6(x_k)$ and \eqref{eq:as-errors} with right-hand
side $\kappa_k$.
Assume $\kappa_k+\eta_k\to0$.  Then, for the fixed
$\rho$ in Lemma~\ref{lem:almost-splitting},
\begin{equation}\label{eq:uniform-volume-recovery}
 \sup_{0<s\leq\rho}
 \left|\frac{\Hn(B_s(x_k))}{\omega_ns^n}-1\right|\to 0.
\end{equation}
\end{lemma}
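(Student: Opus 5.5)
We argue by contradiction and compactness. Suppose the conclusion \eqref{eq:uniform-volume-recovery} fails. Then, after passing to a subsequence, there are $\delta>0$ and radii $s_k\in(0,\rho]$ with
\[
\Bigl|\frac{\Hn(B_{s_k}(x_k))}{\omega_n s_k^n}-1\Bigr|\geq\delta \qquad\text{for every }k.
\]
The upper bound is automatic. By \eqref{eq:negative-BG-exact} and \eqref{eq:Phi-expansion},
\[
\frac{\Hn(B_{s}(x_k))}{\omega_n s^n}\leq \frac{v_{-\eta_k,n}(s)}{\omega_n s^n}\leq 1+o(1)
\]
uniformly in $s\in(0,1]$. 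So only a lower deficit can occur, and it is enough to control it at a single scale. Indeed, by the first inequality in \eqref{eq:negative-BG-exact} with $t=\rho$,
\[
\frac{\Hn(B_s(x_k))}{\omega_n s^n}\geq \frac{\Hn(B_\rho(x_k))}{\omega_n\rho^n}\,(1-o(1))\qquad\text{uniformly in }s\in(0,\rho],
\]
again by \eqref{eq:Phi-expansion}. The problem therefore reduces to showing that $\Hn(B_\rho(x_k))\to\omega_n\rho^n$.

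For this we use Lemma~\ref{lem:almost-splitting}, applied with errors $\kappa_k+\eta_k$; note that an $\mathrm{RCD}(-\eta_k,n)$ space is $\mathrm{RCD}(-\max\{\eta_k,\kappa_k\},n)$. The lemma gives
\[
d_{GH}\bigl((\overline B_{2\rho}(x_k),x_k),(\overline B_{2\rho}(0^n),0^n)\bigr)\to0.
\]
We then invoke volume convergence for non-collapsed $\mathrm{RCD}(-\eta_k,n)$ spaces with reference measure $\Hn$ (De Philippis--Gigli, extending Colding's volume convergence). It states that if $B_{2\rho}(x_k)$ converges in GH to $B_{2\rho}(0^n)$, then $\Hn(B_\rho(x_k))\to\Hn(B_\rho(0^n))=\omega_n\rho^n$. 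A sequence of pointed $\mathrm{RCD}(-1,n)$ spaces is precompact in the pmGH sense once the measures are normalized, so we may pass to a limit. The volume convergence theorem then identifies the limit measure as $\Hn$ on the limit space, i.e.\ on $\R^n$ inside $B_{2\rho}$, provided the sequence is non-collapsed.

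The main obstacle is exactly this non-collapsing, since a priori $\Hn(B_1(x_k))$ could tend to $0$. We can handle it in either of two ways. First, the GH-closeness of $B_{2\rho}(x_k)$ to a Euclidean ball, together with the fact that the reference measure is $\Hn$ (so that $\dim_H=n$ and there are no collapsed limits with $\Hn$ measure), gives a uniform lower volume bound via the ``almost Euclidean balls are non-collapsed'' statement in De Philippis--Gigli. Alternatively, we can use the map $u_k$ directly. It is $C(n)$-Lipschitz by \eqref{eq:as-lip}, and by \eqref{eq:as-errors} it is close to an isometry onto $B_{2\rho}(0^n)$ in the GH sense. Its image therefore contains most of $B_{\rho}(0^n)$, and since Lipschitz maps decrease $\Hn$ by at most a factor $C(n)^n$, this yields
\[
\Hn(B_\rho(x_k))\geq c(n)>0.
\]
With non-collapsing in hand, volume convergence gives $\Hn(B_\rho(x_k))\to\omega_n\rho^n$, and the reduction in the first step gives \eqref{eq:uniform-volume-recovery}. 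This contradicts the choice of $\delta$ and $s_k$.
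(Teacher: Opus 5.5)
Your argument is the paper's: the two inequalities in \eqref{eq:negative-BG-exact} together with \eqref{eq:Phi-expansion} reduce everything to $\Hn(B_\rho(x_k))\to\omega_n\rho^n$. That limit follows from the GH-closeness given by Lemma~\ref{lem:almost-splitting} and De Philippis--Gigli volume convergence; its collapsed/non-collapsed dichotomy already excludes collapse because the limit ball is $n$-dimensional, which is your first option.

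Drop your second option for non-collapsing, though. An $\varepsilon$-GH approximation only has $\varepsilon$-dense image, which may have zero Lebesgue measure. So the Lipschitz bound on $u_k$ alone gives no lower bound on $\Hn(B_\rho(x_k))$ without an additional degree-type argument.
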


\begin{proof}
Apply Lemma~\ref{lem:almost-splitting} with
$\max\{\kappa_k,\eta_k\}$ in place of its parameter.  The centered closed
$\rho$-balls converge in pointed GH distance to the $\rho$-Euclidean ball. Volume convergence (\cite[Theorem 1.3]{DePhilippisGigli}) gives $
 \Hn(B_{\rho}(x_k))/\omega_n\rho^n\to 1$.
Using \eqref{eq:negative-BG-exact} and \eqref{eq:Phi-expansion}, the latter convergence to $1$ gives
\eqref{eq:uniform-volume-recovery}, as desired.
\end{proof}

\begin{lemma}
    The quadratic form $G_u$ in \eqref{eq:Gudefn} satisfies the estimate
    \begin{equation}\label{eq:positive-Gram-bound}
 0\leq G_u\leq(1+C(n)(D+\eta))I
 \qquad \Hn\text{-a.e. on }B_{7/8}(x).
\end{equation}
\end{lemma}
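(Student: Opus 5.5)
We need to prove $0\leq G_u\leq (1+C(D+\eta))I$ a.e. on $B_{7/8}(x)$. The lower bound $G_u\geq 0$ is immediate, since $G_u$ is a Gram matrix: for any $\xi\in\R^n$ we have $\xi^TG_u\xi=|\nabla(\xi\cdot u)|^2\geq0$ $\Hn$-a.e. So the content of the lemma is the upper bound. By the Courant--Fischer characterization of the largest eigenvalue, it suffices to show that for every unit vector $\xi\in\R^n$ the harmonic function $v_\xi:=\xi\cdot u$ satisfies
\[
|\nabla v_\xi|^2\leq 1+C(n)(D+\eta)\quad\text{a.e. on }B_{7/8}(x).
\]
A countable dense set of $\xi$ and continuity in $\xi$ then give the matrix inequality a.e.

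Fix such a unit $\xi$ and set $w:=|\nabla v_\xi|^2$. I will combine two facts.

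The first is the improved Bochner inequality on $\operatorname{RCD}(-\eta,n)$ spaces: $\Delta\frac{|\nabla v|^2}{2}\geq|\Hess v|^2-\eta|\nabla v|^2\geq -\eta|\nabla v|^2$ for harmonic $v$, understood in the weak/measure sense. This holds for harmonic functions on $B_2(x)$, whose gradients are locally bounded by the Cheng--Yau type estimates, so $w\in W^{1,2}_{\loc}\cap L^\infty_{\loc}$. In particular $\Delta w\geq -2\eta w$ weakly on $B_{3/2}(x)$.

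The second is a local mean-value (Moser) inequality for nonnegative subsolutions on $\operatorname{RCD}(-1,n)$ spaces, which are uniformly locally doubling with a Poincaré inequality. For balls $B_r(y)$ with $y\in B_{7/8}(x)$ and $r\leq 1/16$, it gives the $L^\infty$--$L^1$ bound
\[
\sup_{B_{r/2}(y)}w\leq (1+C\eta r^2)\fint_{B_r(y)}w.
\]
A plain Moser constant $C(n)$ in front would not suffice, since we need a leading constant exactly $1$ up to $O(D+\eta)$. So the cleaner route is a sharp mean value inequality. Using $\Delta w\geq -2\eta w$ together with the Laplacian comparison for the distance/Green-type function, Bishop--Gromov monotonicity gives that $r\mapsto \frac{1}{v_{-\eta,n}(r)}\int_{B_r(y)}w$ is almost nondecreasing up to a factor $1+C\eta$. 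Combined with the noncollapsing facts (Bishop density $\leq1$ by \eqref{eq:negative-BG-exact}, and volume ratio of $B_{1/8}(y)$ close to $1$), this yields
\[
w(y)\leq (1+C\eta)\frac{\Hn(B_1(x))}{\Hn(B_{1/8}(y))}\cdot\ldots
\]
This is still lossy, however, which points to an alternative approach.

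The alternative bounds the excess instead of $w$ itself. I consider $(w-1)^+$, or better $w-1$, which satisfies $\Delta(w-1)\geq -2\eta w\geq -2\eta(w-1)-2\eta$. Then the function $\phi:=(w-1+\eta')^+$ is a nonnegative subsolution of $\Delta\phi\geq -C\eta\phi$ after absorbing constants (choosing $\eta'\sim\eta$, or adding a bounded correction $\eta\psi$ with $\Delta\psi=-2$ locally). The ordinary Moser $L^\infty$--$L^1$ estimate (as in \cite[Theorem A.4]{BenattiViolo}), now with any constant $C(n)$, gives
\[
\sup_{B_{7/8}}\phi\leq C(n)\fint_{B_{15/16}}\phi\leq C(n)\Bigl(\fint_{B_1}|G_u-I|+C\eta\Bigr)\leq C(n)(D+\eta),
\]
using $|w-1|=|\xi^T(G_u-I)\xi|\leq|G_u-I|$ and doubling to pass between $B_{15/16}$ and $B_1$. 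This is the plan I will follow.

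The main technical obstacle is justifying the subsolution property of $\phi$ near the threshold. To handle it I will take $\psi$ to be the solution of $\Delta\psi=-2$ with bounded sup on $B_{15/16}(x)$, or simply use $\psi=\mathrm d(x,\cdot)^2$-type comparison, and use that the positive part of a subsolution is a subsolution in the RCD weak sense.
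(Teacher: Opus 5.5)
\textbf{The gap: your subsolution does not work as written.} The reduction to $w=|\nabla(\xi\cdot u)|^2$ is fine. So is your key idea, which is also the heart of the paper's argument: apply a non-sharp mean-value bound to the excess $w-1$ rather than to $w$, so that the constant $C(n)$ only multiplies $D+\eta$.

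For $\phi=(w-1+\eta')^+$, Bochner only gives $\mathbf{\Delta}\phi\geq-\bigl(2\eta\phi+2\eta(1-\eta')\bigr)\Hn$ on $\{\phi>0\}$. The constant term cannot be bounded below by $-C\eta\phi$ where $0<\phi\ll1$, which is exactly the near-threshold region you flag.

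Your proposed repairs do not close this:
\begin{enumerate}
\item Adding $\eta\psi$ with $\mathbf{\Delta}\psi=-2$ worsens the source to $-2\eta w-2\eta$. One needs instead $\psi\geq0$ bounded with $\mathbf{\Delta}\psi\geq2$, e.g.\ $\psi=\|\tau\|_{L^\infty}-\tau$ with $\tau$ the torsion function of $B_{15/16}(x)$. With that choice $(w-1+\eta\psi)^+$ is a genuine subsolution of $\mathbf{\Delta}\phi\geq-2\eta\phi\,\Hn$.
\item $\mathrm{d}(x,\cdot)^2$ is unusable, because on an $\mathrm{RCD}$ space Laplacian comparison bounds $\mathbf{\Delta}\mathrm{d}(x,\cdot)^2$ only from above.
\end{enumerate}

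\textbf{Simplest fix.} Add the constant \emph{after} taking the positive part: set $\bar\phi:=(w-1)^++\eta$. The Kato-type inequality for positive parts gives $\mathbf{\Delta}(w-1)^+\geq-\bigl(2\eta(w-1)^++2\eta\bigr)\Hn$. Since $\bar\phi\geq\eta$ and $\eta\leq1$, this yields $\mathbf{\Delta}\bar\phi\geq-4\bar\phi\,\Hn$.

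The zero-order coefficient is now $O(1)$ rather than $O(\eta)$, which is harmless. Indeed, $e^{2t}\bar\phi$ is subharmonic on $\R\times X$ (the trick from \cref{LinftyResultForThm1}). So the $L^\infty$--$L^1$ bound of \cite[Theorem A.4]{BenattiViolo}, together with Bishop--Gromov, gives $\mathop{\rm ess\,sup}_{B_{7/8}(x)}(w-1)\leq C(n)\bigl(\fint_{B_{15/16}(x)}(w-1)^++\eta\bigr)\leq C(n)(D+\eta)$.

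\textbf{Comparison with the paper.} With this repair your argument is a valid alternative to the paper's, which does not use Moser iteration. The paper runs a heat-flow argument directly on the signed excess $w_a=q_a-|a|^2$. For a cutoff $h$, the function $F_y(t)=\int h\,w_a\,p_t(y,\cdot)\,\mathrm{d}\Hn$ satisfies $F_y'+2\eta F_y\geq-C(n)|a|^2\bigl(\eta+D\,\omega(t)\bigr)$, with $\omega(t)=t^{-(n+1)/2}e^{-c(n)/t}\in L^1(0,1)$. Integrating from $t=1$ down to $t\to0$ then bounds $w_a$ at Lebesgue points.

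The two routes trade off as follows:
\begin{itemize}
\item In the paper's route the semigroup acts linearly on $h\,w_a$, so no positivity is needed: no positive parts, no Kato inequality, no constant shift. The source $-2\eta|a|^2$ is simply integrated in time. The price is importing the Gaussian and gradient heat-kernel bounds of \cite{JiangLiZhang} to make the cutoff error integrable in $t$.
\item Your route needs only the Bochner inequality and the uniform PI structure (doubling and Poincaré), so it is more elementary.
\end{itemize}
The paper itself remarks, in the proof of \cref{lem:PropertiesHarmonic}, that a Moser-iteration proof is possible.
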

\begin{proof}
For $a\in\R^n$, define
\[
 q_a:=a^TG_ua=|\nabla(a\cdot u)|^2.
\]
For $0\leq \eta\leq 1$,  the localized improved Bochner inequality \cite[Proposition~2.14]{GigliViolo} on an
$\operatorname{RCD}(-\eta,n)$ space gives, as measures on $B_1(x)$,
\begin{equation}\label{eqn:Boundqa}
 \frac12\mathbf{\Delta}q_a+\eta q_a\Hn
 \geq |\Hess(a\cdot u)|^2\Hn\geq0.
\end{equation}
Moreover we know that $q_a\in W^{1,2}_{\loc}$. We claim that we have

\begin{equation}\label{eqn:ToShowToShow}
 \mathop{\rm ess\,sup}_{B_{7/8}(x)}q_a
 \leq |a|^2(1+C(n)(D+\eta))
\end{equation}
Using rational vectors first, and then
density in $a$, \eqref{eqn:ToShowToShow} will imply the required estimate \eqref{eq:positive-Gram-bound}.

The estimate \eqref{eqn:ToShowToShow} can be obtained directly from heat kernel estimates, arguing verbatim as in the proof of Item (2) of \cref{lem:PropertiesHarmonic} in the Appendix. Let us sketch the argument here, referring to the proof of \cref{lem:PropertiesHarmonic} for complete details. Let $\varphi\in\TestF(X)$ such that
\[
 0\leq\varphi\leq1,\qquad
 \varphi\equiv1\ \text{on }B_{15/16}(x),\qquad
 \supp\varphi\Subset B_{31/32}(x),\qquad
 |\nabla\varphi|+|\Delta\varphi|\leq C(n),
\]
whose existence follows from \cite[Proposition~2.13]{GigliViolo} (see also \cite[Lemma 3.1]{MondinoNaber}), and put $h:=\varphi^2$ and $\mathcal A:=\supp(|\nabla h|+|\Delta h|)$. If $y\in B_{7/8}(x)$, then $\dist(y,\mathcal A)\geq1/16$. Let $p_t$ denote the heat kernel. 
Set $w_a:=q_a-|a|^2$. For every Lebesgue point $y\in B_{7/8}(x)$ of $h w_a$, define
\[
 F_y(t):=\int_X h(z)w_a(z)p_t(y,z)\,\mathrm{d}\Hn(z).
\]
We have, following verbatim the proof of Item (2) of \cref{lem:PropertiesHarmonic}, that for a.e. $t\in (0,1)$,
\[
 F_y'(t)+2\eta F_y(t)
 \geq-C(n)|a|^2\left(\eta+D\omega(t)\right),
\]
where $\omega(t)=t^{-(n+1)/2}e^{-c(n)/t}$, and $|F_y(1)|\leq C(n)|a|^2 D$. Therefore, integrating the preceding differential inequality from $1$ to $t$,
\[
 F_y(t)\leq e^{2\eta(1-t)}F_y(1)
 +C(n)|a|^2\int_t^1e^{2\eta(s-t)}
                 \left(\eta+D\omega(s)\right)\,\mathrm{d}s.
\]
Finally, $F_y(t)=P_t(hw_a)(y)\to h(y)w_a(y)=w_a(y)$ as $t\to 0$ at $\Hn$-a.e. such $y$. Since $\eta\leq1$ and $\omega\in L^1(0,1)$, it follows that
\[
 \mathop{\rm ess\,sup}_{B_{7/8}(x)}w_a
 \leq C(n)|a|^2(D+\eta),
\]
from which \eqref{eqn:ToShowToShow} follows, as desired.
\end{proof}

For $0<r\leq1$, Bishop--Gromov comparison and $\eta\leq1$
imply
\begin{equation}\label{eq:crude-volume-bound}
 E(r):=\fint_{B_r(x)}|G_u-I|\,\mathrm{d}\Hn
 \leq
 \frac{v_{-\eta,n}(1)}{v_{-\eta,n}(r)}D
 \leq C(n)r^{-n}D
 \leq C(n)r^{-n}(D+\eta).
\end{equation}
If $\kappa\geq n$, this already proves
\eqref{eq:main-matrix}.  We henceforth assume 
\begin{equation}\label{eqn:0-kappa-n}
    0<\kappa<n.
\end{equation}

\subsection{The proof of Theorem~\ref{thm:main}: Strategy and first reductions}
In this section we will begin the proof of Theorem~\ref{thm:main}. There will be a number of intermediate steps. We argue by contradiction, so suppose that the theorem is false for a fixed choice of $n\geq 2$ and $\kappa>0$.  Then we can choose a sequence of data
\begin{equation}\label{eqn:TheSequence}
 (X_k,\mathrm{d}_k,\Hn,x_k,u_k),\qquad
 b_k:=D_k+\eta_k\leq k^{-1},
\end{equation}
for which (recall the definition of $E_k$ as in \eqref{eq:crude-volume-bound})
\[
 \Lambda_k:=
 \sup_{0<r\leq 1/4}\frac{r^\kappa E_k(r)}{b_k}
 \to\infty,
\]
as $k\to\infty$. 
The case $b_k=0$ has $D_k=\eta_k=0$ and is immediate, so without loss of generality, the chosen
sequence has $b_k>0$. We give a rough sketch of how this will lead to a contradiction. On first reading it may be helpful to imagine that the $(X_k, \mathrm{d}_k)$ were smooth Riemannian manifolds, since in the RCD setting there are considerable additional technical complications. 
\begin{enumerate}
    \item In Section~\ref{sec:rescaling} we will rescale the metrics $\mathrm{d}_k$ by the maximizing radii in the definition of $\Lambda_k$, and show that these rescaled metric spaces converge to the Euclidean space, while suitable rescalings and rotation of the $u_k$ converge to the identity map $v$ on $\mathbb{R}^n$.
    \item After rescaling, the defect $|G_k - I|$ has average $a_k$ on the unit ball. We consider the normalized defect $H_k := a_k^{-1}(G_k - I)$, and show that $H_k \to H$ in $L^q_{\mathrm{loc}}$ for a symmetric matrix valued function $H$ on $\mathbb{R}^n$, which satisfies
    \begin{equation}\label{eq:Hprop}
\fint_{B_1(0)} |H|\, \mathrm{d}\mu_\infty = 1, \qquad \fint_{B_R(0)} |H| \, \mathrm{d}\mu_\infty \leq 2R^{-\kappa}, \quad \forall R\geq 1, 
    \end{equation}
    where $\mu_\infty=\omega_n^{-1}\mathcal{L}^n$ is the normalized Lebesgue measure on $\mathbb R^n$.
    A consequence of the Bochner formula is that $\Delta H = \omega_n \nu$ for a positive semidefinite matrix valued measure $\nu$. Because of our normalization using the ``worst'' radius $r_k$, we have 
    \begin{equation}\label{eq:3} \tr \nu(B_R(0)) \leq C(n, \kappa) R^{n-2-\kappa}, 
    \end{equation} 
    for all $R \geq 1$. 
    \item The crucial observation is that the $H_k$ satisfy a conservation law (coming from the fact that the stress-energy tensor of a harmonic map is divergence-free) which can be passed to the limit as $k\to \infty$. As a result $H$ satisfies the equation 
    \[ \mathrm{div}\left( H - \frac{1}{2}(\tr H)I\right) = 0, \]
    on $\mathbb{R}^n$ in the distribution sense. Consequently the matrix valued measure $\nu$ also satisfies the same equation. 
    \item Using this equation and the positive semidefiniteness of $\nu$ leads to a monotonicity formula for $\nu$:
    \[ R^{2-n} \tr \nu(B_R(0)) \text{ is increasing with }R. \]
    Combined with the bound \eqref{eq:3}, this implies that $\nu = 0$.
    \item It follows that the components of $H$ are harmonic functions, but this contradicts \eqref{eq:Hprop}. 
\end{enumerate}

\subsubsection{Rescaling by the maximizing radius}\label{sec:rescaling}

Take $G_k:=G_{u_k}$ and let
\[
 E_k(r)=\fint_{B_r(x_k)}|G_k-I|\,\mathrm{d}\Hn,
 \qquad
 \Lambda_k=\sup_{0<r\leq 1/4}
       \frac{r^\kappa E_k(r)}{b_k}.
\]
The supremum is finite because \eqref{eq:positive-Gram-bound} gives
$E_k(r)\leq C(n)$ for $0<r\leq1/4$. Choose $r_k\in(0,1/4]$ such that
\begin{equation}\label{eq:approx-maximizer}
 \frac{r_k^\kappa E_k(r_k)}{b_k}\geq\frac12\Lambda_k,
 \qquad \text{and define} \qquad  a_k:=E_k(r_k).
\end{equation}
We can assume $\Lambda_k>0$, so $a_k>0$, for all $k$. The bound \eqref{eq:crude-volume-bound} gives
$
 \frac12\Lambda_k\leq C(n)r_k^{\kappa-n},$
so $r_k\to0$, since we assumed $\kappa < n$, and we know that $\Lambda_k\to\infty$.

The definition of $\Lambda_k$ and
\eqref{eq:approx-maximizer} give
\begin{equation}\label{eqn:DecayEkRk}
 E_k(Rr_k)\leq 2a_kR^{-\kappa}
 \quad \forall R>0\text{ such that }Rr_k\leq\frac14,
\end{equation}
and
\begin{equation}\label{eq:b-over-a}
 \frac{b_k}{a_k}\leq\frac{2r_k^\kappa}{\Lambda_k}\to 0.
\end{equation}
For all large $k$, \eqref{eq:positive-Gram-bound} also gives
\begin{equation}\label{eqn:akBounded}
    a_k\leq C(n).
\end{equation}

We rescale the metric and the map by
\begin{equation}\label{eq:first-rescaling}
 \mathrm{d}'_k:=r_k^{-1}\mathrm{d}_k,
 \qquad
 \widehat\m_k:=r_k^{-n}\Hn=\mathcal H^n_{\mathrm{d}'_k},
 \qquad
 \mu_k:=\frac{\widehat\m_k}{\widehat\m_k(B_1^{\mathrm{d}'_k}(x_k))},
 \qquad
 v_k:=\frac{u_k-u_k(x_k)}{r_k}.
\end{equation}
Thus $\mu_k(B_1^{\mathrm{d}'_k}(x_k))=1$, and the rescaled space is
$\operatorname{RCD}(-\lambda_k,n)$ with
\begin{equation}\label{eq:lambda}
 \lambda_k:=\eta_kr_k^2\to 0.
\end{equation}
The $G_k$ matrix defined from the $v_k$'s with respect to the new metric coincides with the original one defined from the $u_k$'s, as it can be readily checked.  The map $v_k$ is defined and harmonic on
$B_{2/r_k}^{\mathrm{d}'_k}(x_k)$. 

\begin{lemma}
    The rescaled spaces converge in the pmGH sense:
    \[ (X_k, \mathrm{d}_k', \mu_k, x_k) \to (\mathbb{R}^n, \mathrm{d}_E, \mu_\infty, 0), \]
    where $\mathrm{d}_E$ is the Euclidean metric, and $\mu_\infty := \omega_n^{-1}\mathcal{L}^n$ is the normalized Lebesgue measure. 
\end{lemma}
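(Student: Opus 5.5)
Our plan is to show that the rescaled balls are almost Euclidean at every fixed scale $R$, and then to use compactness of RCD spaces together with volume rigidity to identify the limit. The key input will be Lemma~\ref{lem:volume-recovery}, applied at scales $R r_k$ in the original metric. Fix $R\geq 1$ and consider the original spaces $(X_k,\mathrm{d}_k)$ at the scale $s_k:=Rr_k/\rho$, where $\rho=1/(100n)$. Since $r_k\to 0$, we have $s_k\leq 1/8$ for large $k$. Rescaling $\mathrm{d}_k$ by $s_k^{-1}$ gives an $\operatorname{RCD}(-\eta_ks_k^2,n)$ space, and the rescaled map $(u_k-u_k(x_k))/s_k$ is harmonic on the rescaled ball of radius $6$, because $6s_k\leq 1$. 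In this rescaled picture we need to check the two hypotheses \eqref{eq:as-lip} and \eqref{eq:as-errors}.

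\textbf{Step 1: the Lipschitz hypothesis \eqref{eq:as-lip}.} The Gram matrix is scale invariant, so \eqref{eq:positive-Gram-bound} gives $|\nabla u_i|\leq 2$ on $B_{7/8}(x_k)$. This is exactly \eqref{eq:as-lip}.

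\textbf{Step 2: the Gram defect in \eqref{eq:as-errors}.} By \eqref{eqn:DecayEkRk} at radius $6s_k$ (that is, $R'=6R/\rho$), the Gram defect averaged over the ball of radius $6$ is at most $2a_k(6R/\rho)^{-\kappa}$. This is $\leq C(n)R^{-\kappa}$, but it is not obviously small. This is the main obstacle. We will get around it by using $r_k\to 0$ differently: instead of comparing with $a_k$, we compare with $b_k$ at a scale $t$. Crude bounds only give $E_k(t)\leq C t^{-n}b_k$, which is not uniformly small over scales $t\sim r_k$. So we will instead use the almost-maximality: \eqref{eqn:DecayEkRk} together with $a_k\to 0$. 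To get $a_k\to 0$, note that $a_k=E_k(r_k)\leq \Lambda_k b_k r_k^{-\kappa}$, which by itself is not enough. We therefore argue as follows. If $a_k\not\to 0$, then at scale $r_k$ there is a definite defect. However, $E_k(1/4)\leq C(n)D_k\to 0$, and \eqref{eqn:DecayEkRk} only bounds from above. Our fallback is therefore to pass to a subsequence and accept $a_k\leq C(n)$: for $R$ large, $2a_k(6R/\rho)^{-\kappa}$ is small uniformly in $k$. This gives almost-Euclidean balls at all large scales $R\geq R_0(\epsilon)$. By Bishop--Gromov monotonicity \eqref{eq:negative-BG-exact}, the volume ratio then stays close to $1$ at all smaller scales as well, including scale $1$.

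\textbf{Step 3: the Hessian term in \eqref{eq:as-errors}.} The Hessian part of \eqref{eq:as-errors} is controlled through the Bochner inequality \eqref{eqn:Boundqa}. Testing it against a cutoff bounds $\int|\Hess u|^2$ on a ball by the Gram defect on a doubled ball plus $\lambda_k$, both after rescaling. This is again small for $R$ large.

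\textbf{Step 4: identifying the limit.} Given $\epsilon>0$, we choose $R$ large; then Lemma~\ref{lem:volume-recovery} applied to the data at scale $s_k$ yields
\[
\Bigl|\frac{\mathcal H^n_{\mathrm{d}_k'}(B^{\mathrm{d}'_k}_t(x_k))}{\omega_n t^n}-1\Bigr|\leq\epsilon
\]
for all $t\leq R$ and all large $k$. Gromov precompactness for $\operatorname{RCD}(-\lambda_k,n)$ spaces with $\lambda_k\to 0$ then gives, up to a subsequence, a pointed limit that is $\operatorname{RCD}(0,n)$. Volume convergence \cite{DePhilippisGigli} shows that the limit is noncollapsed with measure $\mathcal H^n$, and that every centered ball has volume ratio $1$. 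By volume-cone rigidity, the limit is a metric cone with Euclidean density at its tip. Since the density equals $1$, the limit is $\mathbb R^n$. Finally, $\mu_k$ is $\widehat{\m}_k$ normalized by the unit-ball mass, which tends to $\omega_n$, so $\mu_k\to\omega_n^{-1}\mathcal L^n$. Uniqueness of this limit removes the need to pass to a subsequence.
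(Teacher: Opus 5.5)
Your proof is correct, but your source of the almost-Euclidean volume bound differs from the paper's.

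\textbf{The paper's route.} The paper applies Lemma~\ref{lem:volume-recovery} once, at the original unit scale, where everything is controlled by $b_k=D_k+\eta_k\to0$. The Gram defect on $B_{3/4}(x_k)$ is $O(D_k)$, and the Bochner inequality tested against a cutoff gives \eqref{eq:hessian-smallness-fixed-ball}. This yields \eqref{eq:volume-recovery-original} uniformly for $0<s\le\rho_0$. Since $Rr_k\to0$, one reads off \eqref{eq:all-fixed-volume} for every fixed $R$. The identification of the limit then proceeds exactly as in your Step~4.

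\textbf{Your route.} You work at the intermediate scales $Rr_k$. Smallness comes from the almost-maximality decay \eqref{eqn:DecayEkRk} together with $a_k\le C(n)$, which makes the errors $\le C(n)R^{-\kappa}$, i.e.\ small only for $R$ large. You then propagate downwards with Bishop--Gromov.

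Your own downward-propagation step shows that this detour is unnecessary. Bishop--Gromov can equally be started from the fixed scale $\rho_0$ of the original metric, where smallness is given by hypothesis and nothing about the radii $r_k$ is needed. The fact that $E_k(t)\le Ct^{-n}b_k$ is not small for $t\sim r_k$ is irrelevant, since smallness is needed at one scale only.

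\textbf{Two minor points in your version.}
First, at fixed $R$ your error parameter does not tend to zero with $k$. Lemma~\ref{lem:volume-recovery} concerns sequences with vanishing errors, so you must use either its quantitative $\epsilon$--$\delta$ form (a routine contradiction argument) or a diagonal choice $R_k\to\infty$ with $R_kr_k\to0$.
Second, you were right not to use $a_k\to0$. The paper derives it in \eqref{eq:a-to-zero} from this very lemma, so falling back on \eqref{eqn:akBounded} is what avoids circularity.

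\textbf{What each route buys.} Your route gives the extra information that the rescaled map $v_k$ is itself a $CR^{-\kappa}$-almost splitting map at rescaled scale $R$. This is essentially what the next lemma exploits (cf.\ \eqref{eq:v-outer-decay}). It is not needed for the convergence of the spaces, however, and for that the paper's route is shorter.
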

\begin{proof}
The proof follows from a standard application of Lemma \ref{lem:volume-recovery} and the volume convergence theorem. We spell out the details for the reader's convenience. 

We verify the hypotheses needed for
Lemma \ref{lem:volume-recovery}. We use the notation and setting explained before the statement of the Lemma. Choose a test cutoff $\chi_k$ compactly supported in $B_1^{\mathrm{d}_k}(x_k)$ and
satisfying
\[
 0\leq \chi_k\leq 1,\qquad
 \chi_k\equiv 1\quad\text{on }B_{7/8}^{\mathrm{d}_k}(x_k),\qquad
 |\nabla\chi_k|\leq C(n),\qquad
 |\Delta\chi_k|\leq C(n).
\]
Such cutoffs exist by \cite[Proposition~2.13]{GigliViolo} (see also
\cite[Lemma~3.1]{MondinoNaber}).  The localized improved Bochner
inequality \cite[Proposition~2.14]{GigliViolo}, tested against $\chi_k$, gives, for every $k$ and $i=1,\ldots,n$
\begin{align*}
 \int\chi_k|\Hess u_k^i|^2\,\mathrm{d}\Hn
 &\leq
 \frac12\int(G_{k,ii}-1)\Delta\chi_k\,\mathrm{d}\Hn
 +\eta_k\int\chi_kG_{k,ii}\,\mathrm{d}\Hn\\
 &\leq C(n)b_k\Hn(B_1^{\mathrm{d}_k}(x_k)).
\end{align*}
Bishop--Gromov
comparison therefore gives
\begin{equation}\label{eq:hessian-smallness-fixed-ball}
 \fint_{B_{3/4}^{\mathrm{d}_k}(x_k)}
 |\Hess u_k^i|^2\,\mathrm{d}\Hn
 \leq C(n)b_k\to 0.
\end{equation}
On the same ball $B_{3/4}^{\mathrm{d}_k}(x_k)$, $|G_{u_k}-I|$ has average at most $C(n)D_k$ and the
gradients are uniformly bounded by \eqref{eq:positive-Gram-bound}. 
Up to a harmless constant rescaling of the metrics $\mathrm{d}_k$, we can hence apply Lemma \ref{lem:volume-recovery}. Thus, there exists $\rho_0>0$ such that 
\begin{equation}\label{eq:volume-recovery-original}
 \sup_{0<s\leq\rho_0}
 \left|\frac{\Hn(B_s^{\mathrm{d}_k}(x_k))}{\omega_ns^n}-1\right|\to 0.
\end{equation}

For every $R>0$, since
$Rr_k\to0$, we have
\begin{equation}\label{eq:all-fixed-volume}
 \widehat\m_k(B_R^{\mathrm{d}'_k}(x_k))
 =r_k^{-n}\Hn(B_{Rr_k}^{\mathrm{d}_k}(x_k))\to\omega_nR^n.
\end{equation}
Now, pointed RCD compactness, stability of the RCD condition, and \cite[Theorem 1.2]{DePhilippisGigli} give
\[
 (X_k,\mathrm{d}'_k,\widehat\m_k,x_k)\to
 (X_\infty,\mathrm{d}_\infty,\widehat\m_\infty,x_\infty)
\]
in the pmGH sense, where the limit is a noncollapsed
$\operatorname{RCD}(0,n)$ space, namely $\widehat\m_\infty = \mathcal{H}^n_{\mathrm{d}_\infty}$.  Equation \eqref{eq:all-fixed-volume} and
volume convergence \cite[Theorem 1.3]{DePhilippisGigli} yield
\[
 \widehat \m_\infty(B_R^{\mathrm{d}_\infty}(x_\infty))=\omega_nR^n
 \qquad\text{for every }R>0.
\]
Finally, the rigidity in \cite[Corollary 1.7]{DePhilippisGigli}, gives
\begin{equation}\label{eq:Euclidean-pmGH}
 (X_k,\mathrm{d}'_k,\mu_k,x_k)\to
 \left(\R^n,\mathrm{d}_{\mathrm E},\mu_\infty,0\right),
 \qquad \text{where} \qquad 
 \mu_\infty:=\omega_n^{-1}\mathcal L^n.
\end{equation}
Indeed, \eqref{eq:all-fixed-volume} at $R=1$ gives
$\widehat\m_k(B_1(x_k))\to\omega_n$.  Dividing the unnormalized
noncollapsed convergence by these masses therefore produces precisely the limit measure
$\mu_\infty=\omega_n^{-1}\mathcal L^n$, as desired.
\end{proof}
\begin{remark}\label{rem:ExFootnote}
    Fix $L>0$.  For all large $k$, Bishop--Gromov comparison, and $\mu_k(B_1^{\mathrm{d}'_k}(x_k))=1$ imply that there are constants $0<c_L\le C_L<\infty$ such that
\[
c_Ls^n\le\mu_k(B_s^{\mathrm{d}'_k}(x))\le C_Ls^n,
\]
for every $x\in B_{4L}^{\mathrm{d}'_k}(x_k)$ and $0<s<L$.  Hence doubling/Ahlfors constants, the Poincaré inequality constant from \cite{Rajala} (and then all Poincaré--Sobolev inequalities constants derived from the local Poincaré inequality, as in \cite{HK00}), cutoff, mean-value, and Caccioppoli constants are uniform for sufficiently large $k$ on balls with bounded radii. 
Moreover pmGH convergence to Euclidean space gives a uniformly positive
measure for $B_{2L}^{\mathrm{d}'_k}(x_k)\setminus B_{3L/2}^{\mathrm{d}'_k}(x_k)$.  Therefore
the Dirichlet--Sobolev constants appearing in Lemma
\ref{lem:measure-data-rigorous} are uniform in $k$, for $k$ large, as well.
\end{remark}

We next consider the behavior of the rescaled harmonic functions $v_k$. 
\begin{lemma}
    Under the rescalings above the functions $v_k$ converge locally uniformly and strongly in $W^{1,2}_{\mathrm{loc}}$ to a map $v:\mathbb{R}^n \to \mathbb{R}^n$. We have $v(z) = Az$ for an orthogonal matrix $A$. 
\end{lemma}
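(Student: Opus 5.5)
We want to show that the rescaled maps $v_k$ converge, locally uniformly and strongly in $W^{1,2}_{\mathrm{loc}}$, to a linear isometry $v(z)=Az$ of $\mathbb R^n$. The plan is to get uniform bounds, extract a limit via stability of harmonic functions under pmGH convergence, and then identify the limit through its Gram matrix and Hessian.

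\textbf{Uniform bounds.} By \eqref{eq:positive-Gram-bound}, applied to $u_k$ on $B_{7/8}^{\mathrm d_k}(x_k)$, we have $|\nabla u_k^i|\leq 1+C(n)b_k$. This bound is scale invariant, so $|\nabla v_k^i|\leq 2$ on $B^{\mathrm d'_k}_{7/(8r_k)}(x_k)$, a ball whose radius tends to infinity. Since $v_k(x_k)=0$, the $v_k$ are uniformly Lipschitz with value $0$ at the base point. By the Arzelà--Ascoli theorem for pmGH-converging sequences, a subsequence converges locally uniformly to a $2$-Lipschitz map $v:\mathbb R^n\to\mathbb R^n$ with $v(0)=0$.

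\textbf{Passing to the limit.} Harmonicity is stable under pmGH convergence with uniform Lipschitz bounds: uniform convergence together with Caccioppoli, and the $W^{1,2}$ stability of harmonic functions due to Ambrosio--Honda \cite{AmbrosioHonda}. This gives that each $v^i$ is harmonic on $\mathbb R^n$ and that $v_k\to v$ strongly in $W^{1,2}_{\mathrm{loc}}$. In particular $\langle\nabla v_k^i,\nabla v_k^j\rangle\to\langle\nabla v^i,\nabla v^j\rangle$ strongly in $L^1_{\mathrm{loc}}$.

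\textbf{Identifying the Gram matrix.} By \eqref{eqn:DecayEkRk}, for fixed $R$,
\[
\fint_{B_R^{\mathrm d'_k}(x_k)}|G_k-I|\,\mathrm d\mu_k\leq 2a_kR^{-\kappa}.
\]
However, $a_k$ is only bounded by \eqref{eqn:akBounded}, not small. So this estimate alone does not force the limiting Gram matrix to be $I$, and we need a second ingredient.

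\textbf{Hessian smallness.} Rescaling \eqref{eq:hessian-smallness-fixed-ball}, using $\mu_k(B_1)=1$ and Bishop--Gromov, gives
\[
\int_{B_R^{\mathrm d'_k}(x_k)}|\Hess v_k^i|^2\,\mathrm d\mu_k\leq C r_k^{-n}\cdot r_k^{2}\cdot r_k^{n}\,b_k\to0.
\]
Here the Hessian scales like $r_k$, its square like $r_k^2$, and the mass normalization contributes $r_k^{n}\cdot r_k^{-n}$. By lower semicontinuity of the Hessian energy under $W^{1,2}$-strong convergence (Ambrosio--Honda), $\Hess v^i=0$. Hence $v$ is linear, $v(z)=Az$.

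\textbf{Orthogonality of $A$.} Since $G_v=A^TA$ is constant, $L^1$ lower semicontinuity gives $|A^TA-I|\leq\liminf_k E_k(Rr_k)\leq 2a_kR^{-\kappa}\leq C(n)R^{-\kappa}$ for every $R\geq 1$. Letting $R\to\infty$ yields $A^TA=I$. Finally, $A$ is uniquely determined by this argument only up to the choice of subsequence, and this is harmless since the paper works along subsequences anyway.

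\textbf{Main obstacle.} The hardest step is the lower semicontinuity of the Hessian energy, and the convergence of $G_k$, along a pmGH-converging sequence of RCD spaces. I would cite Ambrosio--Honda's results on the convergence of Hessians, together with strong $W^{1,2}$ convergence of harmonic functions with uniform gradient bounds.
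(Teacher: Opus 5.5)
Your argument breaks at the Hessian smallness step. The bound \eqref{eq:hessian-smallness-fixed-ball} is an average over the fixed ball $B^{\mathrm{d}_k}_{3/4}(x_k)$. Restricting to the much smaller ball $B^{\mathrm{d}_k}_{Rr_k}(x_k)$ therefore only gives $\int_{B^{\mathrm{d}_k}_{Rr_k}(x_k)}|\Hess u_k^i|^2\,\mathrm{d}\mathcal H^n\leq Cb_k$, not $Cr_k^nb_k$: a small average on a large ball says nothing about the average on a ball of radius $Rr_k$. Your scalings of the Hessian ($r_k^2$) and of the measure ($r_k^{-n}$) are correct, so the honest outcome is
\[
\int_{B_R^{\mathrm{d}'_k}(x_k)}|\Hess v_k^i|^2\,\mathrm{d}\mu_k\leq Cr_k^{2-n}b_k\leq C\,\frac{r_k^{\kappa-(n-2)}}{\Lambda_k},
\]
using \eqref{eq:b-over-a} and \eqref{eqn:akBounded}. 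This is only guaranteed to vanish when $\kappa\geq n-2$. For $n\geq 3$ and the small $\kappa$ that matter, nothing controls how fast $\Lambda_k\to\infty$ relative to $r_k\to0$.

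The scale-invariant alternative does not help either. Item (1) of \cref{lem:PropertiesHarmonic} at radius $Rr_k$, or equivalently \eqref{eq:Hess-Bochner-bound-RCD} with \eqref{eq:nuk-growth}, gives $\int_{B_R}|\Hess v_k^i|^2\,\mathrm{d}\mu_k\leq C(\lambda_kR^n+a_kR^{n-2-\kappa})$. This is small only if $a_k\to0$, but $a_k\to0$ is exactly what is deduced from this lemma (see \eqref{eq:a-to-zero}), so using it here would be circular. As written, you have not shown $\Hess v=0$, i.e.\ that $v$ is linear.

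The repair needs nothing new. You already know $v$ is Lipschitz and each $v^i$ is harmonic on all of $\mathbb R^n$. So each $\partial_jv^i$ is a bounded entire harmonic function, hence constant by Liouville, and $v(0)=0$ gives $v(z)=Az$. This is the paper's argument. After it, your orthogonality step works verbatim: use strong $L^1_{\mathrm{loc}}$ convergence of the Gram matrices and \eqref{eqn:DecayEkRk} with $a_k\leq C(n)$, then let $R\to\infty$. So, contrary to your remark, the decay estimate with merely bounded $a_k$ was enough once linearity is in hand.

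If you want to keep a Hessian argument, pass the bound $C(\lambda_kR^n+a_kR^{n-2-\kappa})$ with $a_k\leq C(n)$ to the limit. This gives $\fint_{B_R(z)}|\Hess v|^2\to0$ as $R\to\infty$, and subharmonicity of $|\Hess v|^2$ then yields $\Hess v=0$. That route, however, relies on lower semicontinuity of Hessian energies along pmGH convergence and is heavier than Liouville.

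A minor slip: for $v(z)=Az$ the Gram matrix is $G_v=AA^T$, not $A^TA$. This is harmless for square matrices.
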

\begin{proof}
Since $|\nabla u_k|$ is uniformly bounded on $B_{7/8}^{\mathrm{d}_k}(x_k)$ (in the un-rescaled metric), on every fixed rescaled ball we have uniform local
Lipschitz bounds for $v_k$.  Since $v_k(x_k)=0$, Arzelà--Ascoli first, and \cite[Theorem 4.4]{AmbrosioHondaSpectral} after,
provides a locally uniform and strongly $W^{1,2}_{\loc}$ convergent
subsequence
\begin{equation}\label{eq:v-limit}
 v_k\to v:\R^n\longrightarrow\R^n,
\end{equation}
and every component of $v$ is harmonic.  $W^{1,2}_{\mathrm{loc}}$-convergence of $v_k^{i}\pm v_k^j$, and the application of \cite[Theorem 4.4(3)]{AmbrosioHondaSpectral}, gives
\begin{equation}\label{eq:Gram-strong-first}
 \langle\nabla v_k^i,\nabla v_k^j\rangle
 \to
 \langle\nabla v^i,\nabla v^j\rangle
 \quad\text{strongly in }L^1_{\loc}.
\end{equation}

At every fixed
radius $R>0$, \eqref{eqn:DecayEkRk}, \eqref{eqn:akBounded},
\eqref{eq:Euclidean-pmGH}, and \eqref{eq:Gram-strong-first} give
\begin{equation}\label{eq:v-outer-decay}
 \fint_{B_R(0)}|G_v-I|\,\mathrm{d}\mu_\infty
 \leq C(n) R^{-\kappa}.
\end{equation}
Let us recall: since $|\nabla u_k|$ is uniformly bounded on $B_{7/8}^{\mathrm{d}_k}(x_k)$ (in the un-rescaled metric), on every fixed rescaled ball we have uniform local
Lipschitz bounds for $v_k$. Thus, the map $v$ is globally Lipschitz and harmonic on $\mathbb R^n$.
Therefore each first derivative of each component is a bounded entire
harmonic function, and Liouville's theorem gives $v(z)=Az+c$ for a
constant matrix $A$.  Since $v(0)=0$, $c=0$.  Letting $R\to\infty$ in
\eqref{eq:v-outer-decay} gives $AA^T=I$.  In particular $A\in O(n)$.
At radius one, \eqref{eq:Gram-strong-first} now implies
\begin{equation}\label{eq:a-to-zero}
 a_k=\fint_{B_1^{\mathrm{d}'_k}(x_k)}|G_{v_k}-I|\,\mathrm{d}\mu_k
 \to |AA^T-I|=0. \qedhere
\end{equation}
\end{proof}

We will compose all $v_k$ with the fixed orthogonal map $A^T$, so without loss of generality we can assume from now on that
\begin{equation}\label{eq:v-to-id}
 v_k\to \id_{\R^n}
 \quad\text{locally uniformly and strongly in }W^{1,2}_{\loc},
 \qquad a_k\to 0.
\end{equation}

\subsection{Strong \texorpdfstring{$L^q$}{Lq}
compactness of the defect and conclusion of the proof}\label{sec:poisson-compactness}

From now on, we will be working in the rescaled metrics $\mathrm{d}_k'$ defined in \eqref{eq:first-rescaling}. Hence, when we write $B_R(x_k)$ we mean $B_R^{\mathrm{d}_k'}(x_k)$.

\subsubsection{Limiting behavior of the defect}
Recall that we are working with the rescaled spaces as in \eqref{eq:first-rescaling}. Let us set
\begin{equation}\label{eq:def-Hk}
 H_k:=\frac{G_k-I}{a_k}.
\end{equation}
On every relatively compact ball on which the $v_k^i$ are harmonic, they belong to the local test class, so 
\cite[Proposition~3.1.3]{GigliNDG} shows that
$G_{k,ij}\in W^{1,2}_{\loc}$ and that its measure-valued Laplacian is
defined. We define the symmetric matrix-valued Radon measure $\nu_k$, which is defined on the exhausting balls $B_{2/r_k}^{\mathrm{d}_k'}(x_k)$,
entrywise by (recall \eqref{eq:lambda}, \eqref{eq:approx-maximizer} and \eqref{eq:first-rescaling} for the definitions of $\lambda_k,a_k,\mu_k$)
\begin{equation}\label{eq:def-nuk}
 (\nu_k)_{ij}:=\frac{1}{a_k}\mathbf{\Delta}(G_k)_{ij}
       +\frac{2\lambda_k}{a_k} (G_k)_{ij}\mu_k.
\end{equation}
The {localized improved Bochner inequality
\cite[Proposition~2.14]{GigliViolo}}
implies, for every
$c\in\mathbb R^n$,
\begin{equation}\label{eq:polarised-Bochner}
 c^T\nu_k c
 =\frac{1}{a_k}\mathbf{\Delta}|\nabla(c\cdot v_k)|^2+\frac{2\lambda_k}{a_k}|\nabla(c\cdot v_k)|^2\mu_k
 \geq\frac{2}{a_k} |\Hess(c\cdot v_k)|^2\mu_k.
\end{equation}
Therefore the $\nu_k$ are positive semidefinite as
matrix-valued measures.  In this section our goal is the following. 
\begin{lemma}\label{lem:EstimatesH}
There is a symmetric matrix valued function $H$, and a positive semidefinite matrix valued Radon measure $\nu$ on $\mathbb{R}^n$ such that we have 
\begin{equation}\label{eq:H-strong-L1}
 H_k\to H
 \qquad\hbox{strongly in }L^q_{\loc}\quad\text{for every } 1< q<\frac{n}{n-1},
\end{equation}
and 
\begin{equation}\label{eq:nu-weak}
 \nu_k\rightharpoonup\nu
 \qquad\hbox{locally weakly-* as measures}.
\end{equation}
These convergences are understood in a fixed proper metric space
into which the pointed measured Gromov--Hausdorff convergence is
isometrically realized. 

Moreover, we also have 
\[
\fint_{B_1(0)}|H|\,\mathrm{d}\mu_\infty=1,
\]
\[
\fint_{B_R(0)}|H|\,\mathrm{d}\mu_\infty\leq 2R^{-\kappa}, \qquad \tr\nu(B_R(0))\leq C(n,\kappa)R^{n-2-\kappa}
 \qquad\forall R\geq1,
\]
and the equality 
\begin{equation}\label{eqn:LimitingLaplacian}
   \Delta_{\mathbb R^n}H=\omega_n\nu
\end{equation}
holds distributionally on $\mathbb R^n$.
\end{lemma}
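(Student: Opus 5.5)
The plan is to first get uniform bounds on the masses of $\nu_k$, then use measure-data (Boccardo--Gallouët type) estimates to obtain compactness of $H_k$, and finally pass to the limit in the defining relation \eqref{eq:def-nuk}.

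\textbf{Step 1: mass bounds for $\nu_k$.} For $R\geq1$ fixed and $k$ large, take a cutoff $\chi$ adapted to $B_{2R}(x_k)\supset B_R(x_k)$ in the rescaled metric, with $|\nabla\chi|\leq C/R$ and $|\Delta\chi|\leq C/R^2$; such cutoffs exist by \cite[Proposition~2.13]{GigliViolo}. Testing \eqref{eq:def-nuk} traced against $\chi$, and using positivity of $\nu_k$, gives
\[
\tr\nu_k(B_R(x_k))\leq \frac{1}{a_k}\int |G_k-I|\,|\Delta\chi|\,\mathrm d\widehat\m_k+\frac{2\lambda_k}{a_k}\int\chi\tr G_k\,\mathrm d\mu_k .
\]
The first term is at most $C R^{n-2}\fint_{B_{2R}}|H_k|\leq C R^{n-2-\kappa}$ by \eqref{eqn:DecayEkRk}; this bound is uniform in $k$ because $\mu_k$ is comparable to $R^n$ on $B_{2R}$ by \cref{rem:ExFootnote}. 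For the second term, recall that the un-rescaled $\eta_k$ and the rescaled $\lambda_k=\eta_kr_k^2$ satisfy $\lambda_k/a_k\leq r_k^2 b_k/a_k\to0$ by \eqref{eq:b-over-a}, and $G_k$ is bounded by \eqref{eq:positive-Gram-bound}; so this term tends to $0$. Since $\nu_k$ is positive semidefinite, $|\nu_k|\leq C\tr\nu_k$. Hence all entries of $\nu_k$ have locally uniformly bounded mass, and after extracting a subsequence $\nu_k\rightharpoonup\nu$ locally weakly-*, with $\nu\geq0$ and $\tr\nu(B_R)\leq C(n,\kappa)R^{n-2-\kappa}$.

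\textbf{Step 2: compactness of $H_k$, the main obstacle.} Each $H_k$ solves $\mathbf\Delta H_k=\nu_k-\frac{2\lambda_k}{a_k}G_k\mu_k$ and is bounded in $L^1(B_{2R})$; the right-hand side is a measure with bounded mass. I would use the Dirichlet--Sobolev/measure-data estimate of Lemma~\ref{lem:measure-data-rigorous}, whose constants are uniform in $k$ by \cref{rem:ExFootnote}. Writing $H_k$ as a Green potential of the measure data plus a harmonic part, this yields uniform bounds $\|\nabla T_m H_k\|_{L^2}$ on truncations and $\|\nabla H_k\|_{L^p}$ for $p<n/(n-1)$, together with $\|H_k\|_{L^{q}}$ bounds for $q<n/(n-2)$. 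Strong $L^q_{\loc}$ compactness for $q<n/(n-1)$ would then follow from a Rellich-type theorem along pmGH convergence for Sobolev functions with bounded $W^{1,p}$ norm, applied either to truncations plus a uniform tail estimate or directly in $W^{1,p}$ with $p>1$. This is the delicate point, because $H_k$ is not in $W^{1,2}$ uniformly; I would first try passing via $T_m H_k$, which is bounded in $W^{1,2}$, and controlling the remainder $H_k-T_mH_k$ in $L^q$ uniformly in $k$ by the Marcinkiewicz bound.

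\textbf{Step 3: identifying the limit.} The normalization $\fint_{B_1}|H_k|\,\mathrm d\mu_k=1$ and the bounds $\fint_{B_R}|H_k|\leq 2R^{-\kappa}$ pass to the limit by strong $L^1$ convergence, using \eqref{eqn:DecayEkRk}. For the equation, take $\phi\in C_c^\infty(\R^n)$ and approximate it by test functions on $X_k$ with Laplacians converging. Then $\int H_k\Delta\phi_k\,\mathrm d\widehat\m_k=\int\phi_k\,\mathrm d(\nu_k-\tfrac{2\lambda_k}{a_k}G_k\mu_k)\,\widehat\m_k(B_1)$. The left side converges by the strong convergence of $H_k$ together with the strong convergence of $\Delta\phi_k$, and the right side converges by Step 1; since $\widehat\m_k(B_1)\to\omega_n$, this yields $\Delta H=\omega_n\nu$ distributionally.
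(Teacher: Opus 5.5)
Your Steps 1 and 2 follow the paper: the mass bound for $\tr\nu_k$ comes from a cutoff and positivity, and then Lemma~\ref{lem:measure-data-rigorous} applies with constants uniform by \cref{rem:ExFootnote}. The truncation detour in Step 2 is unnecessary. The paper gets strong $L^q_{\loc}$ convergence directly from the uniform $W^{1,q}(B_R(x_k))$ bounds, via the $W^{1,q}$ Rellich theorem along pmGH convergence \cite[Proposition 3.39]{HondaLp}.

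The genuine gap is in Step 3, in the sentence ``approximate it by test functions on $X_k$ with Laplacians converging''. This hides the main difficulty of the lemma.
\begin{itemize}
\item \emph{Wrong exponent.} $H_k\to H$ only strongly in $L^q$ with $q<\frac{n}{n-1}\leq 2$. Passing to the limit in $\int H_k\Delta\phi_k$ therefore needs $\Delta\phi_k$ to converge in the dual exponent $q'>n$, in practice with a uniform $L^\infty$ bound. For $n\geq4$ the available estimates do not even bound $H_k$ uniformly in $L^2$, so the usual approximation results giving $\Delta\phi_k\to\Delta\phi$ strongly in $L^2$ are not enough.
\item \emph{Support.} $\phi_k$ must be supported in a fixed ball, so that $\int H_k\Delta\phi_k\,\mathrm{d}\mu_k=\int\phi_k\,\mathrm{d}\mathbf{\Delta}H_k$ holds with no boundary terms. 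Recall $H_k$ is not compactly supported and is only defined on $B_{2/r_k}(x_k)$.
\item \emph{Uniform convergence.} $\phi_k$ must be equi-Lipschitz and converge uniformly to $\phi$; otherwise Step 1 does not give $\int\phi_k\,\mathrm{d}\nu_k\to\int\phi\,\mathrm{d}\nu$.
\end{itemize}
None of this is automatic. If you cut off a global approximant, you create the terms $\langle\nabla\zeta_k,\nabla\phi_k\rangle$ and $\phi_k\Delta\zeta_k$, and you must show their pairing with $H_k$ vanishes.

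The paper handles this with the harmonic maps $v_k\to\id_{\R^n}$. It tests with $\psi_k=\zeta_k\phi(v_k)$; for large $k$ the cutoff terms vanish identically, and $\Delta\psi_k=\Delta_{\R^n}\phi(v_k)+D^2\phi(v_k):(G_k-I)$, see \eqref{eq:chain-rule-test}. This Laplacian is bounded, and the main part converges to $\int H\Delta\phi\,\mathrm{d}\mu_\infty$. The error, however, contributes $\int (H_k)_{ij}\,[D^2\phi(v_k):(G_k-I)]\,\mathrm{d}\mu_k$. Since $|H_k|\,|G_k-I|=a_k|H_k|^2$, this term is not controlled by $L^q$ bounds with $q<2$.

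The extra ingredient is the quadratic-remainder estimate \eqref{eq:quadratic-remainderPre}, $\int_{B_L(x_k)}|H_k|\,|G_k-I|\,\mathrm{d}\mu_k\to0$. It uses the $L^\infty$ bound $|G_k-I|\leq C(n)$ from \eqref{eq:positive-Gram-bound}, the fact $a_k\to0$, and uniform integrability of $H_k$. To complete your proof you need one of two things:
\begin{itemize}
\item this construction together with that estimate; or
\item an explicit construction of compactly supported, uniformly Lipschitz approximants with $L^\infty$-bounded, strongly converging Laplacians, together with control of the resulting cutoff terms.
\end{itemize}
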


Before the proof we need some preliminary observations. Let $\beta_k=2\eta_k r_k^2/a_k$. 
The $H_k$'s satisfy
\begin{equation}\label{eq:corrected-Poisson}
 \ \mathbf{\Delta}H_k=\nu_k-\beta_kG_k\mu_k,
\end{equation}
and by \eqref{eq:b-over-a}, and the fact that $r_k\to 0$,
\begin{equation}\label{eq:beta-to-zero}
 0\leq\beta_k=\frac{2\eta_kr_k^2}{a_k}
 \leq2r_k^2\frac{b_k}{a_k}\to 0.
\end{equation}
As a consequence of \eqref{eq:polarised-Bochner}, we obtain the following inequality of scalar measures,
\begin{equation}\label{eq:entry-TV-by-trace}
 |(\nu_k)_{ij}|
 \leq\frac12\left((\nu_k)_{ii}+(\nu_k)_{jj}\right)
 \leq\tr\nu_k.
\end{equation}

We next obtain a mass upper bound for $\tr \nu_k$. Fix $R\geq1$.  For all sufficiently large $k$, the map
$v_k$ is defined on $B_{4R}(x_k)$.  Take
$\chi_{k,R}\in\TestF$ compactly supported in $B_{2R}(x_k)$
(\cite[Proposition~2.13]{GigliViolo}, see also
\cite[Lemma~3.1]{MondinoNaber}) such that
\begin{equation}\label{eq:Laplacian-cutoff}
 0\leq\chi_{k,R}\leq1,
 \qquad \chi_{k,R}=1\quad\hbox{on }B_R(x_k),
 \qquad |\Delta\chi_{k,R}|\leq C(n)R^{-2}.
\end{equation}
Here $k$ is large enough that $\lambda_kR^2\leq1$, so the cutoff
constant is uniform.
Using positivity of $\tr\nu_k$, integrating by parts, and
$|\tr(G_k-I)|\leq\sqrt n\,|G_k-I|$, we obtain
\begin{equation}\label{eq:mass-cutoff-step}
\begin{aligned}
 \tr\nu_k(B_R(x_k))
 &\leq\int\chi_{k,R}\,\mathrm{d}(\tr\nu_k)=\frac1{a_k}\int(\tr G_k-n)\Delta\chi_{k,R}\,\mathrm{d}\mu_k+\beta_k\int\chi_{k,R}\tr G_k\,\mathrm{d}\mu_k\\
 &\leq\frac{C(n)}{a_kR^2}
       \int_{B_{2R}(x_k)}|G_k-I|\,\mathrm{d}\mu_k
       +C(n)\beta_k\mu_k(B_{2R}(x_k)).
       \end{aligned}
\end{equation}
For every fixed $R$,
$\widehat\m_k(B_1(x_k))\to\omega_n$, together with the convergence to $\mathbb R^n$ gives
\begin{equation}\label{eq:normalised-volume-upper}
 \mu_k(B_{2R}(x_k))\leq C(n)R^n,
\end{equation}
for all sufficiently large $k$.  Combining
\eqref{eq:mass-cutoff-step}, \eqref{eq:normalised-volume-upper}, and
\eqref{eqn:DecayEkRk} at radius $2R$ gives (see also \eqref{eq:beta-to-zero})
\begin{equation}\label{eq:nuk-growth}
 \tr\nu_k(B_R(x_k))
 \leq C(n,\kappa)R^{n-2-\kappa}+C(n)\beta_kR^n,
 \qquad
 \limsup_{k\to\infty}\tr\nu_k(B_R(x_k))
 \leq C(n,\kappa)R^{n-2-\kappa}.
\end{equation}
In particular by \eqref{eq:entry-TV-by-trace} the measures $\nu_k$ have uniformly bounded total
variation on every fixed ball. Hence, by local weak-* compactness of Radon measures, there is a symmetric matrix-valued Radon measure $\nu$ on $\mathbb R^n$ such that $\nu_k\rightharpoonup \nu$ entrywise. By sending \eqref{eq:polarised-Bochner} to the limit, we get that $\nu$ is positive semidefinite too.   Also, taking the trace in
\eqref{eq:polarised-Bochner},
\begin{equation}\label{eq:Hess-Bochner-bound-RCD}
 \frac2{a_k}\sum_i|\Hess v_k^i|^2\mu_k\leq\tr\nu_k.
\end{equation}
We shall also repeatedly use the immediate consequence of
\eqref{eqn:DecayEkRk},
\begin{equation}\label{eq:Hk-L1-growth}
 \fint_{B_R(x_k)}|H_k|\,\mathrm{d}\mu_k\leq 2R^{-\kappa},
\end{equation}
which is true, fixed $R$, for all large $k$. We can now prove the Lemma.
\begin{proof}[Proof of Lemma \ref{lem:EstimatesH}]
Recall (see \eqref{eq:corrected-Poisson})
\[
 \nu_k-\beta_kG_k\mu_k=\mathbf{\Delta}H_k.
\]
On every fixed ball, \eqref{eq:positive-Gram-bound} bounds $G_k$,
while \eqref{eq:beta-to-zero}, \eqref{eq:entry-TV-by-trace}, and
\eqref{eq:nuk-growth} bound the total variation of every entry of
$\nu_k$.  Taking also into account \eqref{eq:Hk-L1-growth} and \cref{rem:ExFootnote}, Lemma \ref{lem:measure-data-rigorous} proved in the
Appendix (after a harmless renormalization of the measure) gives the
following: for every $1\leq q<\frac{n}{n-1}$ and $R>0$,
$\|H_k\|_{W^{1,q}(B_R(x_k))}$ is uniformly bounded above. The latter,
together with \cite[Proposition 3.39]{HondaLp} and a diagonal procedure,
immediately gives \eqref{eq:H-strong-L1}, as desired.  In addition,
cutoff localization and the liminf part of the Mosco convergence of
$q$-Cheeger energies \cite[Theorem 1.8.1]{AmbrosioHonda} give $H\in W^{1,q}_{\loc}(\mathbb R^n)$.

Strong $L^{q>1}_{\loc}$-convergence implies that
$\int_{B_R(x_k)}|H_k|\mathrm{d}\mu_k\to \int_{B_R(0)}|H|\mathrm{d}\mu_\infty$ for every
$R>0$. Indeed, fix $R$ and choose continuous radial cutoffs
$\chi^-_\varepsilon\le\mathbf1_{B_R}\le\chi^+_\varepsilon$ whose
difference is supported in the $\varepsilon$-annulus about
$\partial B_R$. By \cite[Corollary 3.28]{HondaLp} we have $|H_k|$
converges strongly in $L^q_{\mathrm{loc}}$ to $|H|$, with
$1<q<\frac{n}{n-1}$. This, together with
\cite[Proposition 3.27]{HondaLp}, gives
$\int |H_k|\chi_\varepsilon^{\pm}\mathrm{d}\mu_k \to
\int |H|\chi_{\varepsilon}^{\pm}\mathrm{d}\mu_\infty$.  The uniform $L^q$
bound gives uniform integrability, while pmGH convergence and
$\mu_\infty(\partial B_R)=0$ make the annular measures tend to zero
as $\varepsilon\downarrow0$.  Therefore, taking the limit as
$\varepsilon\to 0$, we have
\[
\int_{B_R(x_k)}|H_k|\,\mathrm{d}\mu_k\to
\int_{B_R(0)}|H|\,\mathrm{d}\mu_\infty,
\]
as claimed.

The latter, together with the convergence of the reference measures, allows us to pass
\eqref{eq:Hk-L1-growth} to the limit. In addition, since
$\mu_k(B_1(x_k))=1$ and, by definition of $a_k$, we have
 $\int_{B_1(x_k)}|H_k|\,\mathrm{d}\mu_k=1$,
we obtain
\begin{equation}\label{eq:H-limit-growth}
 \fint_{B_1(0)}|H|\,\mathrm{d}\mu_\infty=1,
 \qquad
 \fint_{B_R(0)}|H|\,\mathrm{d}\mu_\infty\leq 2R^{-\kappa}
 \quad \forall R>0.
\end{equation}
Using first radii $R$ with $\tr\nu(\partial B_R)=0$, and then outer
approximation by such radii,
\eqref{eq:nuk-growth} also gives (using also \eqref{eq:beta-to-zero})
\begin{equation}\label{eq:nu-limit-growth}
 \tr\nu(B_R(0))\leq C(n,\kappa)R^{n-2-\kappa},
 \qquad\forall R\geq1.
\end{equation}
It remains to prove \eqref{eqn:LimitingLaplacian}. We do this in two steps.
\smallskip

\textbf{Step 1.} Recall $
|G_k-I|=a_k|H_k|$.
The bound in \eqref{eq:positive-Gram-bound} gives the following. Fix $L>0$. For all sufficiently large $k$, on every ball
$B_L(x_k)$ we have $
 0\leq G_k\leq(1+C(n)b_k)I$. 
It follows that $\|G_k-I\|_{L^\infty(B_L(x_k))}\leq C(n)$ for $k$ large enough. We aim at showing that, for every $L>0$,
\begin{equation}\label{eq:quadratic-remainderPre}
 \int_{B_L(x_k)}\frac{|G_k-I|^2}{a_k}\,\mathrm{d}\mu_k
 =\int_{B_L(x_k)}|H_k||G_k-I|\,\mathrm{d}\mu_k
 \to 0.
\end{equation}

Fix $L,\epsilon>0$.  
First choose $s>0$
so small that
\begin{equation}\label{eqn:varepsilon}
 s\sup_k\int_{B_L(x_k)}|H_k|\,\mathrm{d}\mu_k<\epsilon/2,
\end{equation}
holds for every $k$ (up to passing to subsequences). This is possible due to \eqref{eq:Hk-L1-growth} and the fact that $\mu_k(B_L(x_k))$ is uniformly bounded above by Bishop volume bound. Now \eqref{eq:Hk-L1-growth}, Chebyshev's inequality, the fact that $\mu_k(B_L(x_k))$ is uniformly bounded above, and $a_k\to 0$ (see \eqref{eq:v-to-id})
give the following: for each $s>0$,
\begin{equation}\label{eq:F-in-measure}
 \mu_k\left(B_L(x_k)\cap\{|G_k-I|>s\}\right)
 \leq\frac{a_k}{s}\int_{B_L(x_k)}|H_k|\,\mathrm{d}\mu_k
 \to 0.
\end{equation}

Strong $L^q_{\mathrm{loc}}$ convergence with $q>1$ in \eqref{eq:H-strong-L1} gives, by definition, that $\int_{B_R(x_k)}|H_k|^q\mathrm{d}\mu_k \to \int_{B_R(0)}|H|^q\mathrm{d}\mu_\infty$ for every $R>0$.  Thus, using H\"older inequality, we have uniform integrability on $B_L(x_k)$; namely, for fixed $L>0$, there is $\delta>0$ such that if $A\subset B_L(x_k)$ satisfies
$\mu_k(A)<\delta$, then
$\int_A|H_k|\,\mathrm{d}\mu_k<\epsilon/(2C(n))$ for every $k$ (up to subsequences).  By
\eqref{eq:F-in-measure}, the set $A_k=\{|G_k-I|>s\}\cap B_L(x_k)$ has measure less
than $\delta$ for all sufficiently large $k$.  Splitting the integral below
over $A_k$ and its complement, using the latter uniform integrability and \eqref{eqn:varepsilon}, yields
\begin{equation}\label{eq:quadratic-remainder}
 \int_{B_L(x_k)}\frac{|G_k-I|^2}{a_k}\,\mathrm{d}\mu_k
 =\int_{B_L(x_k)}|H_k||G_k-I|\,\mathrm{d}\mu_k
 \leq\epsilon,
\end{equation}
for $k$ large enough. Given that $\epsilon>0$ is arbitrary we get \eqref{eq:quadratic-remainderPre}, as desired.
\medskip

\textbf{Step 2.} Let us finish the argument. Fix $\phi\in C_c^\infty(\mathbb R^n)$, say
$\supp\phi\subset B_S(0)$.  Choose $L>S+4$.  By the local uniform convergence \eqref{eq:v-to-id}, for all sufficiently large $k$ the
functions $\phi(v_k)$, $D\phi(v_k)$, and $D^2\phi(v_k)$ vanish throughout
the annulus $
 B_{2L}(x_k)\setminus B_L(x_k)$.

Choose $\zeta_k\in\TestF(X_k)$ such that
\[
 0\leq\zeta_k\leq1,\qquad
 \zeta_k=1\ \text{on }B_L(x_k),\qquad
 \supp\zeta_k\Subset B_{2L}(x_k),\qquad
 |\nabla\zeta_k|\leq C/L,
\]
as supplied by \cite[Proposition~2.13]{GigliViolo} (see also \cite[Lemma 3.1]{MondinoNaber}).  Choose also
$\chi_k\in\TestF(X_k)$ equal to one on $B_{2L}(x_k)$ and compactly
supported in $B_{3L}(x_k)$.  For $k$ sufficiently large,
$B_{3L}(x_k)$ is contained in the domain of harmonicity of $v_k$; thus,
extending the products by zero outside this domain, put $
 \widetilde v_k^j:=\chi_kv_k^j$.
Locality and the algebra properties (see, e.g., \cite[Equation (2.19)]{GigliViolo}) of $\TestF$ give
$\widetilde v_k^j\in\TestF(X_k)$.  Set
\[
 \Phi(z):=\phi(z)-\phi(0),\qquad
 \psi_k:=\zeta_k\Phi(\widetilde v_k)+\phi(0)\zeta_k.
\]
By
\cite[Lemma~3.1.4]{GigliNDG} we have
$\Phi(\widetilde v_k)\in\TestF(X_k)$, and consequently
$\psi_k\in\TestF(X_k)$.  Algebraically, $
 \psi_k=\zeta_k\phi(\widetilde v_k)$
and, since $\chi_k=1$ on a neighborhood of $\supp\zeta_k$, $
 \psi_k=\zeta_k\phi(v_k)$.
 
For all sufficiently large $k$, local uniform convergence of $v_k$ to
$\mathrm{id}_{\mathbb R^n}$ and the choice $L>S+4$ imply that
$\phi(v_k)$, $D\phi(v_k)$, and $D^2\phi(v_k)$ vanish on
$B_{2L}(x_k)\setminus B_L(x_k)$.  The product and chain rules (see \cite[Equation (2.3.24) and Lemma 3.1.4]{GigliNDG} and \cite[Propositions 4.1 and 4.3]{GigliViolo}) therefore
give, on $B_{2L}(x_k)$,
\[
 \begin{aligned}
 \Delta\psi_k
 &=\zeta_k\left[
      D^2\phi(v_k):G_k
      +\sum_{\alpha=1}^n
        \partial_\alpha\phi(v_k)\Delta v_k^\alpha
    \right]
   +\phi(v_k)\Delta\zeta_k 
   +2\left\langle\nabla\zeta_k,
       \sum_{\alpha=1}^n
       \partial_\alpha\phi(v_k)\nabla v_k^\alpha
     \right\rangle .
 \end{aligned}
\]
On $B_L(x_k)$ we have $\zeta_k=1$,
$\nabla\zeta_k=0$, and $\Delta\zeta_k=0$; on
$B_{2L}(x_k)\setminus B_L(x_k)$ the terms $\phi(v_k),D\phi(v_k),D^2\phi(v_k)$
vanish.  Since every $v_k^\alpha$ is harmonic, this proves the
identity
\begin{equation}\label{eq:chain-rule-test}
 \Delta\psi_k
 =D^2\phi(v_k):G_k
 =\Delta_{\mathbb R^n}\phi(v_k)
   +D^2\phi(v_k):(G_k-I)
 \qquad\text{on }B_{2L}(x_k).
\end{equation}
Since $\supp\psi_k\Subset B_{2L}(x_k)$, testing the $(i,j)$ entry of
$\mathbf{\Delta}H_k=\nu_k-\beta_kG_k\mu_k$ against $\psi_k$ produces no
boundary term and yields
\begin{equation}\label{eq:test-Hk-nuk}
\begin{aligned}
 \int\psi_k\,\mathrm{d}(\nu_k)_{ij}
 &=\int_{B_{2L}(x_k)}
      (H_k)_{ij}\Delta_{\mathbb R^n}\phi(v_k)\,\mathrm{d}\mu_k+
 \int_{B_{2L}(x_k)}
      (H_k)_{ij}
      \bigl[D^2\phi(v_k):(G_k-I)\bigr]\,\mathrm{d}\mu_k\\&\phantom{+}+
 \beta_k\int_{B_{2L}(x_k)}
      \psi_k(G_k)_{ij}\,\mathrm{d}\mu_k.
\end{aligned}
\end{equation}
The second integral in the right-hand-side above tends to zero by
\eqref{eq:quadratic-remainderPre}, while the last tends to zero by
\eqref{eq:beta-to-zero}, \eqref{eq:positive-Gram-bound}, and the fact that $\mu_k(B_{2L}(x_k))$ is uniformly bounded for $k$ large enough.  The first tends, by
\eqref{eq:H-strong-L1}, local uniform convergence of $v_k$ to $\mathrm{id}_{\mathbb R^n}$, and \cite[Proposition 3.27]{HondaLp} to
\[
 \int_{\mathbb R^n}H_{ij}(x)\Delta_{\mathbb R^n}\phi(x)
 \,\mathrm{d}\mu_\infty(x).
\]
Finally, the functions $\psi_k$ are equi-Lipschitz, have uniformly
bounded support, and converge uniformly in a common Gromov--Hausdorff
realisation to $x\mapsto\phi(x)$.  The local total-variation bound for
$(\nu_k)_{ij}$ (see \eqref{eq:entry-TV-by-trace} and \eqref{eq:nuk-growth}) and \eqref{eq:nu-weak} therefore imply
\[
 \int\psi_k\,\mathrm{d}(\nu_k)_{ij}\to 
 \int_{\mathbb R^n}\phi(x)\,\mathrm{d}\nu_{ij}(x).
\]
Passing to the limit in \eqref{eq:test-Hk-nuk}, for every $i,j$ and every
$\phi\in C_c^\infty(\mathbb R^n)$ (and recalling $\mu_\infty=\omega_n^{-1}\mathcal L^n$), proves
 $\Delta_{\mathbb R^n}H=\omega_n\nu$,
as desired.
\end{proof}

\subsubsection{Using the conservation law}
In this section we exploit the divergence free property of the stress-energy tensor of the harmonic maps $v_k$ to prove the corresponding conservation law for $H$. The goal is to prove the following.
\begin{lemma}\label{lem:StressIdentity}
With the notation as in Lemma \ref{lem:EstimatesH}, we have
    \begin{equation}
 \operatorname{div}\left(H-\frac12(\operatorname{tr}H)I\right)=0
 \quad\text{in }\mathcal D'(\mathbb R^n).
 \label{eq:linear-gauge}
\end{equation}
\end{lemma}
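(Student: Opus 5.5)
The plan is to prove a quantitative, $a_k$-normalized version of the conservation law for the stress-energy tensor of the harmonic maps $v_k$, and then pass it to the limit using Lemma~\ref{lem:EstimatesH} and Step~1 of its proof. For a harmonic map $v=(v^1,\ldots,v^n)$ the domain-side identity $\operatorname{div}\bigl(\tfrac12|\nabla v|^2 g - v^*g_{\mathbb R^n}\bigr)=0$ is, in the target-indexed form we need, only approximately true. Indeed, for fixed $j$ a direct computation (Leibniz rule together with $\Delta v^i=0$) gives
\[
\sum_i \operatorname{div}\bigl(G_{ij}\nabla v^i\bigr)-\tfrac12\operatorname{div}\bigl((\tr G)\nabla v^j\bigr)
=\sum_i\Hess v^j(\nabla v^i,\nabla v^i)=\Hess v^j:(A-g),
\]
where $A:=\sum_i\nabla v^i\otimes\nabla v^i$. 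Here we used $\Hess v^j : g=\Delta v^j=0$. In the noncollapsed $\operatorname{RCD}(-\lambda_k,n)$ setting the tangent module has dimension $n$ a.e., so $A=J^TJ$ and $G=JJ^T$ for an $n\times n$ matrix $J$ in an orthonormal frame. Hence $|A-g|=|G-I|$ pointwise a.e., and the error is bounded by $|\Hess v^j|\,|G-I|$.

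\textbf{Step 1: the identity on $X_k$.} Fix $\phi\in C^\infty_c(\mathbb R^n)$ and $j$. I would test the identity above, in weak form, against $\psi_k=\zeta_k\phi(v_k)$ constructed exactly as in Step~2 of the proof of Lemma~\ref{lem:EstimatesH}. In the RCD setting this means computing $\int G_{ij}\langle\nabla v^i,\nabla\psi_k\rangle$ and $\int \tr G\,\langle\nabla v^j,\nabla\psi_k\rangle$ via the calculus rules of \cite{GigliNDG}. These rules require $G_{ij}\in W^{1,2}_{\loc}$, the formula $\nabla G_{ij}=\Hess v^i(\nabla v^j,\cdot)+\Hess v^j(\nabla v^i,\cdot)$, and integration by parts against the harmonic $v^i$ (test functions $G_{ij}\psi_k$). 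Since $\nabla\psi_k=\sum_l\partial_l\phi(v_k)\nabla v_k^l$ on the relevant region, subtracting the same identity with $G$ replaced by $I$ and dividing by $a_k$ yields
\[
\Bigl|\int\Bigl(\sum_{i,l}(H_k)_{ij}\partial_l\phi(v_k)G_{k,il}-\tfrac12\tr H_k\sum_l\partial_l\phi(v_k)G_{k,jl}\Bigr)\mathrm d\mu_k\Bigr|
\le \frac{C}{a_k}\int_{B_{2L}(x_k)}|\Hess v_k^j|\,|G_k-I|\,\mathrm d\mu_k .
\]
The subtracted identity has zero left side in the appropriate sense: it is $\int\langle\nabla v^j,\nabla\psi_k\rangle=0$ together with the trace term.

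\textbf{Step 2: vanishing of the error.} By Cauchy--Schwarz the right-hand side of the estimate in Step~1 is at most
\[
C\Bigl(a_k^{-1}\int|\Hess v_k^j|^2\Bigr)^{1/2}\Bigl(a_k^{-1}\int|G_k-I|^2\Bigr)^{1/2}.
\]
The first factor is bounded by \eqref{eq:Hess-Bochner-bound-RCD} and \eqref{eq:nuk-growth}. The second factor tends to zero by \eqref{eq:quadratic-remainderPre}.

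\textbf{Step 3: passing to the limit.} On the left-hand side of the Step~1 estimate, write $G_{k,il}=\delta_{il}+a_k(H_k)_{il}$. The terms that are quadratic in $H_k$ carry a factor $a_k$, and they are controlled exactly as in \eqref{eq:quadratic-remainderPre}, because $a_k|H_k|^2=|H_k|\,|G_k-I|$. The linear terms converge by \eqref{eq:H-strong-L1}, the locally uniform convergence $v_k\to\id$, and \cite[Proposition 3.27]{HondaLp}, giving
\[
\int\Bigl(\sum_iH_{ij}\partial_i\phi-\tfrac12\tr H\,\partial_j\phi\Bigr)\mathrm d\mu_\infty=0.
\]
This is \eqref{eq:linear-gauge}.

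I expect the main obstacle to be Step~1 in the nonsmooth setting. It requires justifying the Leibniz/Hessian formula for $\nabla G_{ij}$, the divergence computation with only $W^{1,2}_{\loc}$ regularity of $G$, and the pointwise identity $|A-g|=|G-I|$, which uses that the tangent module has dimension $n$ on noncollapsed spaces. I would first try to derive the identity directly in weak form, using only \cite[Proposition~3.1.3]{GigliNDG} for $\nabla\langle\nabla v^i,\nabla v^j\rangle$ and the definition of the Hessian. This avoids ever taking the divergence of a vector field that is merely in $L^2$.
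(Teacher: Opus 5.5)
Your proposal is correct and follows the paper's argument. The paper first proves the exact weak conservation law $\int\mathcal S_k:(\nabla X)^\flat\,\mathrm d\mu_k=0$ (Lemma~\ref{lem:RCDconserv}) and then tests it with $Y_k=\sum_j\zeta_k\xi^j(v_k)\nabla v_k^j$. The $\mathrm dv_k\otimes\mathrm dv_k$ part gives your linear terms plus the quadratic remainder $R_k$, and the Hessian part $\xi^j(v_k)\,\mathcal S_k:\Hess v_k^j=-\xi^j(v_k)\Hess v_k^j:(A-g)$ is your error term; these are killed by the same Cauchy--Schwarz argument with \eqref{eq:Hess-Bochner-bound-RCD}, \eqref{eq:nuk-growth} and \eqref{eq:quadratic-remainderPre}. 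Your version contracts with $\nabla v_k^j$ before integrating and uses only the scalar test functions $\psi_k$, so it merely avoids the $\mathrm{TestV}$/covariant-derivative machinery.
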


Before giving the proof we give some intuition behind the result, and show some preliminary results. Let $v=(v^1,\ldots,v^n)$ be a harmonic map from an $n$-dimensional
Riemannian domain $(M^n,g)$ into $\R^n$.  Put
\[
 U:=\sum_{\alpha=1}^n \mathrm{d}v^\alpha\otimes \mathrm{d}v^\alpha,
 \qquad |\mathrm{d}v|^2=\tr_g U.
\]
The usual harmonic-map stress-energy tensor is
 $T_v:=\frac12|\mathrm{d}v|^2g-U$. 
For every compactly supported smooth vector field $X$, it satisfies
\begin{equation}\label{eq:stress-weak-smooth}
 \int T_v:\nabla X\,\mathrm{d}\vol_g=0.
\end{equation}
This conservation law goes back at least to
Baird--Eells~\cite[Theorem 2.9]{BairdEells}.  A close precedent for its
use in a blow-up argument, and for Lemma~\ref{lem:Monotonicity} below,
is \cite[\S1, (1.3), (1.5)--(1.7), and Lemma 1.6]{Lin1999}. In Lemma~\ref{lem:RCDconserv} below we are going to prove an analogue of \eqref{eq:stress-weak-smooth} in our metric measure spaces $(X_k,\mathrm{d}_k',\mu_k)$.
\smallskip

For the computations below we will heavily rely on the theory and results of Gigli's paper \cite{GigliNDG}. We denote by $g_k$ the canonical Riemannian tensor of
$(X_k,\mathrm{d}_k',\mu_k)$, namely the unique symmetric
$L^\infty(X_k,\mu_k)$-bilinear form on $L^2(TX_k)$ satisfying
\[
 g_k(\nabla f,\nabla h)
 =\langle\nabla f,\nabla h\rangle
 \qquad\mu_k\text{-a.e.},
\]
for every $f,h\in W^{1,2}(X_k,\mathrm{d}_k',\mu_k)$; see \cite[Proposition~3.2]{AHPT}. For a covariant $2$-tensor $T$,
we write
\[
 \tr T:=g_k:T,
\]
where $:$ denotes the pointwise Hilbert--Schmidt scalar product.
In particular, notice that
$
 g:(\alpha\otimes\beta)=\langle\alpha,\beta\rangle
$
for every $\alpha,\beta\in L^0(T^*X)$. Notice that for $A\in L^2((T^*)^{\otimes 2} X_k)$ the Hilbert--Schmidt norm is $|A|:=\sqrt{A:A}$. Since $X_k$ has essential
dimension $n$, \cite[Lemma~3.3]{AHPT} gives
$|g_k|^2=n$ almost everywhere. Thus $\tr g_k=n$ almost everywhere as well. Below, we will use the musical
isomorphisms to switch from tangent to cotangent tensors, and vice-versa.

Let us set
\[
 U_k:=\sum_{i=1}^n \mathrm{d}v_k^i\otimes \mathrm{d}v_k^i,
 \qquad
 \mathcal K_k:=g_k-U_k,
 \qquad
 \mathcal S_k:=\mathcal K_k-\frac12
       (\tr\mathcal K_k)g_k.
\]
In analogy to \eqref{eq:stress-weak-smooth} we claim the following.

\begin{lemma} \label{lem:RCDconserv} For every $X\in\TestV(X_k)$ compactly supported in the region where all
the components of $v_k$ are harmonic, we have
\begin{equation}
 \int\mathcal S_k:(\nabla X)^{\flat}\,\mathrm{d}\mu_k=0
 \label{eq:div-Sk}
\end{equation}
\end{lemma}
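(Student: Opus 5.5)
The plan is to reduce \eqref{eq:div-Sk} to the classical harmonic-map identity \eqref{eq:stress-weak-smooth}, one component at a time, and then check each step with Gigli's second-order calculus. First, expand $\mathcal S_k$. Since $\tr g_k=n$ and $\tr U_k=\sum_i|\nabla v_k^i|^2$ a.e., we have $\tr\mathcal K_k=n-\sum_i|\nabla v_k^i|^2$, and hence
\[
\mathcal S_k=\Bigl(1-\tfrac n2\Bigr)g_k+\sum_{i=1}^n\Bigl(\tfrac12|\nabla v_k^i|^2g_k-\mathrm dv_k^i\otimes\mathrm dv_k^i\Bigr).
\]
For $X\in\TestV(X_k)$ with compact support, $g_k:(\nabla X)^\flat=\operatorname{div}X$ (Gigli's identification of the divergence with the trace of the covariant derivative). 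By the definition of the divergence tested against a cutoff equal to $1$ on $\supp X$, we get $\int\operatorname{div}X\,\mathrm d\mu_k=0$. So the constant multiple of $g_k$ contributes nothing. It therefore suffices to show, for a single function $v=v_k^i$ harmonic on a neighborhood of $\supp X$, that
\[
\int\Bigl(\tfrac12|\nabla v|^2\operatorname{div}X-\nabla X(\nabla v,\nabla v)\Bigr)\,\mathrm d\mu_k=0 .
\]

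To make the calculus rigorous, replace $v$ by $\chi v$ as in the proof of Lemma~\ref{lem:EstimatesH}, with $\chi\in\TestF$ equal to $1$ near $\supp X$. Then $v$ may be taken in $\TestF(X_k)$ and harmonic on a neighborhood of $\supp X$, and by locality nothing changes on $\supp X$. The argument then has three steps.

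\emph{Step (a).} By \cite[Proposition~3.3.22]{GigliNDG}, $|\nabla v|^2\in W^{1,2}$ with $\mathrm d|\nabla v|^2=2\Hess v(\nabla v,\cdot)$. Integrating by parts against $\operatorname{div}X$ gives
\[
\tfrac12\int|\nabla v|^2\operatorname{div}X\,\mathrm d\mu_k=-\int\Hess v(\nabla v,X)\,\mathrm d\mu_k .
\]

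\emph{Step (b).} The function $f:=\langle X,\nabla v\rangle$ is in $W^{1,2}$ with compact support inside the harmonicity region. Compatibility of the Levi-Civita connection with the metric (the Leibniz rule for $\langle X,Y\rangle$ with $X\in W^{1,2}_C$ and $Y=\nabla v\in W^{1,2}_C$, \cite[Section~3.4]{GigliNDG}) gives
\[
\langle\nabla f,\nabla v\rangle=\nabla X(\nabla v,\nabla v)+\Hess v(X,\nabla v).
\]

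\emph{Step (c).} Since $\Delta v=0$ near $\supp f$, we have $\int\langle\nabla f,\nabla v\rangle\,\mathrm d\mu_k=0$. Combining this with (b) gives $\int\nabla X(\nabla v,\nabla v)\,\mathrm d\mu_k=-\int\Hess v(X,\nabla v)\,\mathrm d\mu_k$. Using the symmetry of $\Hess v$ and comparing with (a), the two terms cancel. Summing over $i$ yields \eqref{eq:div-Sk}.

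I expect the main obstacle to be the regularity bookkeeping in Step (b) rather than the algebra. One must check that $\nabla v\in H^{1,2}_C(TX_k)$ for the localized test function, that $X\in\TestV\subset H^{1,2}_C$, and that the metric-compatibility formula holds in the form above, with $\nabla X(\nabla v,\nabla v)$ matching $(\mathrm dv\otimes\mathrm dv):(\nabla X)^\flat$ under the musical isomorphisms. I would first verify this on $X=g\nabla h$ with $g,h\in\TestF$, where $\nabla X=\nabla g\otimes\nabla h+g\Hess h$ is explicit and everything reduces to \cite[Proposition~3.3.22]{GigliNDG}. Then I would extend to $\TestV$ by linearity. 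The same explicit form handles the identity $g_k:(\nabla X)^\flat=\operatorname{div}X$ used for the first term.
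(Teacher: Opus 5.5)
Your route is the paper's. You use the same splitting $\mathcal S_k=(1-\frac n2)g_k+\sum_i(\frac12|\nabla v_k^i|^2g_k-\mathrm dv_k^i\otimes\mathrm dv_k^i)$ and obtain the same componentwise stress--energy identity from metric compatibility plus harmonicity. Your Step (c) tests the weak harmonicity of $v$ directly against $f=\langle X,\nabla v\rangle\in W^{1,2}$, which has compact support in the harmonicity region. This is equivalent to the paper's integration of $\operatorname{div}(h\nabla F)$ via the divergence Leibniz rule, and slightly more direct. Steps (a)--(c) are fine.

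The gap is the identity $g_k:(\nabla X)^\flat=\operatorname{div}X$. You use it twice: to kill the $(1-\frac n2)g_k$ term, and to turn $\frac12|\nabla v|^2\,g_k:(\nabla X)^\flat$ into $\frac12|\nabla v|^2\operatorname{div}X$. It is not a general identification in Gigli's calculus. The divergence is defined by duality with respect to the reference measure $\mu_k$, whereas $g_k:(\nabla X)^\flat$ is a purely metric trace. On $X=a\nabla b$ the two sides are $\langle\nabla a,\nabla b\rangle+a\tr\Hess b$ and $\langle\nabla a,\nabla b\rangle+a\Delta b$. So the identity is equivalent to $\tr\Hess b=\Delta b$. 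This is not contained in \cite[Proposition~3.3.22]{GigliNDG}, so your proposed check on $X=a\nabla b$ would not close.

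It is in fact false on weighted spaces. On $(M,g,e^{-\phi}\mathrm{vol}_g)$ one has $\tr\Hess b-\Delta_{e^{-\phi}\mathrm{vol}_g}b=\langle\nabla\phi,\nabla b\rangle$. Correspondingly $\int g:\nabla X\,e^{-\phi}\,\mathrm d\mathrm{vol}_g=\int\langle\nabla\phi,X\rangle e^{-\phi}\,\mathrm d\mathrm{vol}_g$, which is nonzero in general, so the lemma itself fails there. Since your argument never uses anything that distinguishes $\mu_k$ from a general reference measure, it cannot be complete as written.

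The missing input is noncollapsing. Here $\mu_k$ is a constant multiple of $\mathcal H^n$ on an $n$-dimensional $\operatorname{RCD}$ space, so \cite[Theorem~1.12]{DePhilippisGigli} gives $\tr\Hess b=\Delta b$. This is exactly the paper's Step 2. With this inserted in place of ``Gigli's identification'', your proof goes through.
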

\begin{proof}
    We divide the proof into three steps. 
\medskip

\textit{Step 1.}
Let $f$ be harmonic on an open set $\Omega\subset X_k$, and let $X\in\TestV(X_k)$ be such that $\supp X\Subset\Omega$. We claim that
\begin{equation}\label{eq:component-stress-Pre}
 \int(\mathrm{d}f\otimes \mathrm{d}f):(\nabla X)^{\flat}\,\mathrm{d}\mu_k
 =\frac12\int|\nabla f|^2\operatorname{div}X\,\mathrm{d}\mu_k.
\end{equation}

\textit{Proof of Step 1.} Let $X\in\TestV(X_k)$ satisfy
$\supp X\Subset\Omega$. Using, e.g., \cite[Lemma 3.1]{MondinoNaber}, choose $\vartheta\in\TestF(X_k)$ such that
\[
 \vartheta=1\quad\text{on a neighborhood of }\supp X,
 \qquad \supp\vartheta\Subset\Omega,
\]
and set $
 F:=\vartheta f$, 
extending the product by zero outside $\Omega$. Put
\[
 h:=\langle\nabla F,X\rangle,
 \qquad Y:=h\nabla F.
\]
Since $F\in\TestF(X_k)$, one has (using \cite[Theorem 3.3.8]{GigliNDG})
\[
 \nabla F\in L^\infty(TX_k),\qquad
 \Hess F\in L^2((T^*)^{\otimes 2} X_k).
\]
Since $X\in\TestV(X_k)$, one has (using \cite[Theorem 3.4.2(v)]{GigliNDG} and recalling \cite[Definition 3.4.1]{GigliNDG})
\[
 X\in L^\infty(TX_k),\qquad
 \nabla X\in L^2(T^{\otimes 2} X_k).
\]
Compatibility in \cite[Proposition 3.4.6]{GigliNDG}, \cite[Equation (3.4.6)]{GigliNDG}, and
$\nabla(\nabla F)=(\Hess F)^\sharp$ (\cite[Theorem 3.4.2(iv)]{GigliNDG}), give, for every vector field $Z\in L^0(TX_k)$
\begin{equation} \label{eq:metric-compatibility}
\begin{aligned}
 \mathrm{d}h(Z)
 &=\mathrm{d}\langle\nabla F,X\rangle(Z)=\langle\nabla_Z\nabla F,X\rangle
   +\langle\nabla F,\nabla_ZX\rangle\\
   &=\Hess F(Z,X)+\nabla X:(Z\otimes\nabla F).
\end{aligned}
\end{equation}
Moreover, taking into account also \cite[Proposition 3.4.6(i)]{GigliNDG},
$
 h\in W^{1,2}(X_k)\cap L^\infty(X_k)$. 
Taking $Z=\nabla F$ in \eqref{eq:metric-compatibility} gives
\begin{equation}\label{eq:dh-gradF}
 \mathrm{d}h(\nabla F)
 =\Hess F(\nabla F,X)+(\mathrm{d}F\otimes \mathrm{d}F):(\nabla X)^{\flat}.
\end{equation}

The divergence Leibniz rule
\cite[Proposition~4.2(2)]{GigliViolo}, which we can apply since $h\in W^{1,2}(X_k)\cap L^\infty(X_k)$, gives
\begin{equation} \label{eq:div-product}
\begin{aligned}
 \operatorname{div}(h\nabla F)
 &=\mathrm{d}h(\nabla F)+h\operatorname{div}(\nabla F)=\mathrm{d}h(\nabla F)+h\Delta F.
\end{aligned}
\end{equation}
Combining \eqref{eq:dh-gradF} and \eqref{eq:div-product}, we obtain
\begin{equation}\label{eq:pointwise-divergence}
 \operatorname{div}\bigl(h\nabla F\bigr)
 =(\mathrm{d}F\otimes \mathrm{d}F):(\nabla X)^{\flat}
  +\Hess F(\nabla F,X)+(\Delta F)\langle \nabla F,X\rangle.
\end{equation}

The field $Y=h\nabla F$ has compact support contained in
$\supp X$. Thus,
from the definition of divergence \cite[Definition~2.3.11]{GigliNDG} and a cut-off argument, one can directly obtain
\begin{equation}\label{eq:int-divY-zero}
 \int\operatorname{div}Y\,\mathrm{d}\mu_k=0.
\end{equation}

Integrating \eqref{eq:pointwise-divergence}, using
\eqref{eq:int-divY-zero} and $\Delta f=0$ on $\Omega$, and noticing that $F=f$ on $\mathrm{supp}(X)$, gives
\begin{equation}\label{eq:before-harmonicity}
 \int(\mathrm{d}f\otimes \mathrm{d}f):(\nabla X)^{\flat}\,\mathrm{d}\mu_k
 =-\int\Hess f(\nabla f,X)\,\mathrm{d}\mu_k.
\end{equation}

The product rule for gradients
\cite[Proposition~3.3.22, (3.3.33)]{GigliNDG} gives
\[
 \mathrm{d}|\nabla f|^2=2\Hess f(\nabla f,\mathord\cdot), \quad \text{and hence} \quad  \Hess f(\nabla f,X)=\frac12\mathrm{d}|\nabla f|^2(X).
\]
Since $|\nabla f|^2\in W^{1,2}_{\mathrm{loc}}(X_k)$, the definition of
divergence gives
\[
 -\frac12\int \mathrm{d}|\nabla f|^2(X)\,\mathrm{d}\mu_k
 =\frac12\int|\nabla f|^2\operatorname{div}X\,\mathrm{d}\mu_k.
\]
Substituting this into \eqref{eq:before-harmonicity} proves the  identity \eqref{eq:component-stress-Pre}, as desired.
\medskip

\textit{Step 2.} We claim that for every $X\in\TestV(X_k)$ with compact support in $X_k$
\begin{equation}\label{eq:metric-divergence}
 g_k:(\nabla X)^{\flat}=\operatorname{div}X.
\end{equation}

\textit{Proof of Step 2.}
Write $
 X=\sum_r a_r\nabla b_r$, with $a_r,b_r\in\TestF(X_k)$.
The covariant-derivative Leibniz rule
\cite[Theorem~3.4.2(v)]{GigliNDG} gives
\[
 (\nabla X)^{\flat}
 =\sum_r\left(\mathrm{d}a_r\otimes \mathrm{d}b_r+a_r\Hess b_r\right).
\]
Taking the metric trace gives
\begin{equation}\label{eq:trace-nablaX}
 g_k:(\nabla X)^{\flat}
 =\sum_r\left(
 \langle\nabla a_r,\nabla b_r\rangle
 +a_r\tr\Hess b_r\right).
\end{equation}
On a weakly noncollapsed $\mathrm{RCD}(K,n)$ space,
$
 \tr\Hess b_r=\Delta b_r
$; see \cite[Theorem~1.12 and (1.10)]{DePhilippisGigli}.  The divergence
Leibniz rule \cite[Proposition~4.2(2)]{GigliViolo} also gives
\[
 \operatorname{div}X
 =\sum_r\left(
 \langle\nabla a_r,\nabla b_r\rangle+a_r\Delta b_r\right).
\]
Comparing the latter with \eqref{eq:trace-nablaX} proves
\eqref{eq:metric-divergence}, as desired.
\medskip

\textit{Step 3}. Notice that
$
 \tr U_k=\sum_{i=1}^n|\nabla v_k^i|^2$. 
Applying \eqref{eq:component-stress-Pre} to every component $v_k^i$ and
summing gives
\begin{equation}\label{eq:sum-components}
 \int U_k:(\nabla X)^{\flat}\,\mathrm{d}\mu_k
 =\frac12\int(\tr U_k)\operatorname{div}X\,\mathrm{d}\mu_k,
\end{equation}
for every $X\in \TestV(X_k)$ with compact support in the common region where all $v_k^i$ are harmonic.
Since $g_k:(\nabla X)^{\flat}=\operatorname{div}X$ by \eqref{eq:metric-divergence}, equation
\eqref{eq:sum-components} is equivalent to
\begin{equation}\label{eq:usual-stress}
 \int\left[\frac12(\tr U_k)g_k-U_k\right]:(\nabla X)^{\flat}\,\mathrm{d}\mu_k=0.
\end{equation}
The tensor in brackets is the usual harmonic-map stress-energy tensor. This is precisely the non-smooth version of \eqref{eq:stress-weak-smooth}. Since $\tr g_k=n$ almost everywhere, one can readily compute that 
\[
 \mathcal S_k
 =g_k-U_k-\frac12\bigl(n-\tr U_k\bigr)g_k=\left(1-\frac n2\right)g_k
   +\left[\frac12(\tr U_k)g_k-U_k\right].
\]
The second tensor in the right-hand-side above has zero weak divergence by
\eqref{eq:usual-stress}, and the first contributes, by using Step 2 above,
\[
 \left(1-\frac n2\right)
 \int g_k:(\nabla X)^{\flat}\,\mathrm{d}\mu_k
 =\left(1-\frac n2\right)
 \int\operatorname{div}X\,\mathrm{d}\mu_k=0.
\]
Therefore \eqref{eq:div-Sk} is true, as desired.
\end{proof}

Using this result we can prove Lemma~\ref{lem:StressIdentity}.

\begin{proof}[Proof of Lemma~\ref{lem:StressIdentity}]
The aim is to pass the result of Lemma~\ref{lem:RCDconserv} to the limit space. 
Fix
$\xi\in C_c^\infty(\mathbb R^n;\mathbb R^n)$, and choose $L$ such that
$\operatorname{supp}\xi\Subset B_L(0)$. Let $\zeta_k\in\TestF(X_k)$ be a cut-off satisfying
\[
 \zeta_k\equiv1\ \text{on }B_{L+2}(x_k),
 \qquad
 \operatorname{supp}\zeta_k\subset B_{L+3}(x_k).
\]
Such a cutoff is supplied by \cite[Lemma 3.1]{MondinoNaber}.
Choose a second such cutoff
$\chi_k$, equal to one on $B_{L+3}(x_k)$, supported in
$B_{L+4}(x_k)$, and set $\widetilde v_k^j:=\chi_kv_k^j$.  Since
$v_k^j\in\TestF_{\mathrm{loc}}(B_{2/r_k}^{\mathrm{d}'_k}(x_k))$, we have $\widetilde v_k^j\in\TestF(X_k)$: explicitly,
\[
 \Delta(\chi_kv_k^j)=v_k^j\Delta\chi_k
       +2\langle\nabla\chi_k,\nabla v_k^j\rangle,
\]
and the right-hand side is in $W^{1,2}$ by
\cite[Proposition 3.3.22]{GigliNDG}; boundedness and compact support give
the remaining test-function requirements.
Local uniform convergence $v_k\to\operatorname{id}_{\mathbb R^n}$ (see \eqref{eq:v-to-id}) implies
\[
 |v_k(x)|>L
 \quad\text{for }x\in
 B_{L+3}(x_k)\setminus B_{L+2}(x_k)
\]
once $k$ is sufficiently large. Hence $\xi(v_k)=0$ on the transition
region of $\zeta_k$, for $k$ large enough; the same is true for $D\xi(v_k)$.
For each $j$, write
\[
 F_k^j:=\zeta_k\left[\xi^j(\widetilde v_k)-\xi^j(0)\right]
          +\xi^j(0)\zeta_k.
\]
The condition that the bracketed smooth function vanishes at the origin,
the multivariate chain rule \cite[Lemma 3.1.4]{GigliNDG}, and the test
algebra property show that $F_k^j\in\TestF(X_k)$.  Therefore the compactly
supported vector field $
 Y_k:=\sum_{j=1}^nF_k^j\nabla\widetilde v_k^j$
belongs to $\TestV(X_k)$.  On $B_{L+3}(x_k)$ it is
$\zeta_k\sum_j\xi^j(v_k)\nabla v_k^j$, while all cutoff contributions coming from $\xi$
vanish where $\zeta_k$ varies, as noticed above.  Consequently, on $B_{L+3}(x_k)$, for all large $k$,
\begin{equation}
 (\nabla Y_k)^{\flat}
 =\sum_{j,\ell}(\partial_\ell\xi^j)(v_k)\,
       \mathrm{d}v_k^\ell\otimes \mathrm{d}v_k^j
   +\sum_j\xi^j(v_k)\operatorname{Hess}v_k^j.
 \label{eq:nabla-Xk}
\end{equation}

We compute the contraction of the first term in the right-hand-side of \eqref{eq:nabla-Xk}.  All the following identities are
identities of measurable functions, and hence hold $\mu_k$-almost
everywhere.  Recall that
$
 (G_k)_{ij}
 =\langle \mathrm{d}v_k^i,\mathrm{d}v_k^j\rangle
 =\langle\nabla v_k^i,\nabla v_k^j\rangle$. 
We have
\[
 U_k:(\mathrm{d}v_k^\ell\otimes \mathrm{d}v_k^j)
 =\sum_{i=1}^n
   \langle \mathrm{d}v_k^i,\mathrm{d}v_k^\ell\rangle
   \langle \mathrm{d}v_k^i,\mathrm{d}v_k^j\rangle
 =\sum_{i=1}^n(G_k)_{\ell i}(G_k)_{ij}
   =(G_k^2)_{\ell j}.
\]
Since
$\mathcal K_k=g_k-U_k$, it follows that
\[
 \mathcal K_k:(\mathrm{d}v_k^\ell\otimes \mathrm{d}v_k^j)
 =(G_k)_{\ell j}-(G_k^2)_{\ell j}.
\]
Moreover,
\[
 \tr\mathcal K_k
 =\tr g_k-\tr U_k
 =n-\sum_{i=1}^n|\mathrm{d}v_k^i|^2
 =n-\tr G_k.
\]
Therefore
\begin{equation}
 \mathcal S_k:(\mathrm{d}v_k^\ell\otimes \mathrm{d}v_k^j)
 =\left[G_k-G_k^2-\frac12
       (n-\tr G_k)G_k\right]_{\ell j}.
 \label{eq:stress-Gram-exact}
\end{equation}
Since $G_k=I+a_kH_k$, the right-hand side of
\eqref{eq:stress-Gram-exact}, divided by $a_k$,
is
\begin{equation}
 -H_k+\frac12(\operatorname{tr}H_k)I+R_k,
 \qquad
 R_k=a_k\left[-H_k^2+\frac12
       (\operatorname{tr}H_k)H_k\right].
 \label{eq:stress-expansion}
\end{equation}
In particular,
\begin{equation}\label{eq:stress-remainder}
 |R_k|\leq C(n)a_k|H_k|^2
 =C(n)\frac{|G_k-I|^2}{a_k},
\end{equation}
and thus, from \eqref{eq:quadratic-remainderPre} we have that, for every $R>0$,
\begin{equation}\label{eq:stress-remainderIntegral}
 \int_{B_R(x_k)}|R_k|\mathrm{d}\mu_k\to 0.
\end{equation}

It remains to control the contraction of the second term in the right-hand-side of \eqref{eq:nabla-Xk}.
We have
\[
 |\mathcal K_k|^2
 =|g_k-U_k|^2
 =|g_k|^2-2\,g_k:U_k+|U_k|^2
 =n-2\tr G_k+\sum_{i,j=1}^n(G_k)_{ij}^2
 =|I-G_k|^2.
\]
Thus
\[
 |\mathcal K_k|=|I-G_k|,
 \qquad
 |\tr\mathcal K_k|
 =|\tr(I-G_k)|
 \leq\sqrt n\,|I-G_k|,
\]
and consequently
\begin{equation}\label{eq:S-Gram-bound}
 |\mathcal S_k|\leq C(n)|G_k-I|.
\end{equation}

On the other hand, the measure inequality
\eqref{eq:Hess-Bochner-bound-RCD} gives, for every fixed $R>0$ and all
sufficiently large $k$,
\begin{equation}
 \int_{B_R(x_k)}\frac{|\operatorname{Hess}v_k|^2}{a_k}\mathrm{d}\mu_k
 \leq\frac12\operatorname{tr}\nu_k(B_R(x_k)),
 \label{eq:Hess-Bochner-bound}
\end{equation}
where $|\Hess v_k|^2:=\sum_{j=1}^n |\Hess v_k^j|^2$.
Combining \eqref{eq:quadratic-remainderPre}, \eqref{eq:S-Gram-bound},
\eqref{eq:Hess-Bochner-bound}, and \eqref{eq:nuk-growth} for every fixed $R>0$ we obtain
\begin{equation}
 \begin{split}
 \int_{B_R(x_k)}\frac{|\mathcal S_k|}{a_k}
       |\operatorname{Hess}v_k|\mathrm{d}\mu_k
 &\leq C(n)
 \left(\int_{B_R(x_k)}\frac{|G_k-I|^2}{a_k}\mathrm{d}\mu_k\right)^{\frac{1}{2}}\\
 &\phantom{++}\cdot \left(\int_{B_R(x_k)}\frac{|\operatorname{Hess}v_k|^2}{a_k}\mathrm{d}\mu_k\right)^{\frac{1}{2}}
 \to 0.
 \end{split}
 \label{eq:Hess-stress-vanish}
\end{equation}

Finally, since
$D\xi(v_k)\to D\xi(x)$ locally uniformly, and, since $H_k\to H$ strongly in
$L^q_{\mathrm{loc}}$ with $1<q<\frac{n}{n-1}$, we have, using \cite[Proposition 3.27]{HondaLp}, that
\begin{equation}\label{eqn:Final}
\int_{B_{L+3}(x_k)}
 \left[-H_k+\frac12(\operatorname{tr}H_k)I\right]:D\xi(v_k)
 \,\mathrm{d}\mu_k\to  \int_{\mathbb R^n}
 \left[-H+\frac12(\operatorname{tr}H)I\right]:D\xi
 \,\mathrm{d}\mu_\infty.
\end{equation}
Testing \eqref{eq:div-Sk} with $X=Y_k$, dividing by $a_k$, and using
\eqref{eq:nabla-Xk}--\eqref{eq:stress-remainderIntegral}, \eqref{eq:Hess-stress-vanish} and \eqref{eqn:Final} we may now pass to the limit.
We conclude that
\[
 \int_{\mathbb R^n}
 \left[-H+\frac12(\operatorname{tr}H)I\right]:D\xi
 \,\mathrm{d}\mu_\infty=0
 \qquad
 \text{for every }\xi\in C_c^\infty(\mathbb R^n;\mathbb R^n),
\]
as desired.
\end{proof}

We shall use the following monotonicity formula.

\begin{lemma}\label{lem:Monotonicity}
Let $n\geq2$, and let $\sigma$ be a locally finite, symmetric,
positive semidefinite matrix-valued Radon measure on $\mathbb R^n$. Put
$\mu:=\operatorname{tr}\sigma$ and $M(r):=\mu(B_r(0))$. If
\begin{equation}
 \operatorname{div}\left(\sigma-\frac12\mu I\right)=0,
 \label{eq:stationary-assumption}
\end{equation}
then
\begin{equation}
 \frac{M(s)}{s^{n-2}}\leq\frac{M(R)}{R^{n-2}}
 \qquad \forall 0<s<R.
 \label{eq:stationary-monotonicity}
\end{equation}
\end{lemma}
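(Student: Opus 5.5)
The plan is to test the conservation law \eqref{eq:stationary-assumption} against the radial vector field $X(x)=\varphi(|x|)\,x$, where $\varphi$ is a smooth cutoff, and then let $\varphi$ tend to the indicator of $[0,r]$. This is the standard derivation of monotonicity for stationary varifolds and harmonic maps, in the spirit of \cite[Lemma 1.6]{Lin1999}.

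Write $T:=\sigma-\frac12\mu I$. The distributional identity $\operatorname{div}T=0$ means $\int DX:\mathrm{d}T=0$ for every $X\in C^1_c(\mathbb R^n;\mathbb R^n)$. We use $C^1$ fields, which is allowed by density since $T$ is a locally finite measure.

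For $X=\varphi(|x|)x$ we have
\[
DX=\varphi(|x|)I+\varphi'(|x|)\,|x|\,\hat x\otimes\hat x,\qquad \hat x:=x/|x|.
\]
Contracting with $T$ and using $I:\sigma=\mu$, $I:I=n$, we get
\[
I:T=\mu-\tfrac n2\mu=\left(1-\tfrac n2\right)\mu,\qquad (\hat x\otimes\hat x):T=\sigma_{\hat x\hat x}-\tfrac12\mu,
\]
where $\sigma_{\hat x\hat x}:=\hat x^T\sigma\hat x$ is a nonnegative scalar measure because $\sigma\geq 0$. The identity $\int DX:\mathrm{d}T=0$ therefore becomes
\[
\left(1-\tfrac n2\right)\int\varphi\,\mathrm{d}\mu-\tfrac12\int\varphi'(|x|)\,|x|\,\mathrm{d}\mu+\int\varphi'(|x|)\,|x|\,\mathrm{d}\sigma_{\hat x\hat x}=0.
\]
The origin causes no trouble: $\varphi'$ vanishes near $0$, and $\hat x\otimes\hat x$ is bounded.

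Next choose $\varphi=\varphi_\epsilon$ nonincreasing, equal to $1$ on $[0,r]$, equal to $0$ on $[r+\epsilon,\infty)$. Then $\varphi'\leq0$, so the last integral is $\leq 0$ by positivity of $\sigma$. Hence
\[
(n-2)\int\varphi_\epsilon\,\mathrm{d}\mu\leq -\int\varphi_\epsilon'(|x|)\,|x|\,\mathrm{d}\mu.
\]
Letting $\epsilon\to0$ yields the inequality $(n-2)M(r)\leq rM'(r)$ in the distributional sense on $(0,\infty)$.

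To make this rigorous without differentiating $M$, I would use the integrated (Fubini) form. Write $\varphi_\epsilon(t)=\int_t^\infty \rho_\epsilon(s)\,\mathrm{d}s$ with $\rho_\epsilon:=-\varphi_\epsilon'\geq0$ a mollifier concentrating at $r$. The inequality above reads
\[
(n-2)\int\rho_\epsilon(s)M(s)\,\mathrm{d}s\leq \int \rho_\epsilon(s)\,s\,\mathrm{d}M(s)
\]
up to boundary terms. Equivalently, the measure $\frac{\mathrm{d}}{\mathrm{d}s}\bigl(s^{2-n}M(s)\bigr)$ is nonnegative on $(0,\infty)$. The product rule for the BV function $M$ times the smooth function $s^{2-n}$ gives this, since $\mathrm{d}(s^{2-n}M)=s^{2-n}\,\mathrm{d}M+(2-n)s^{1-n}M\,\mathrm{d}s$.

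A nonnegative distributional derivative implies that $s^{2-n}M(s)$ is nondecreasing at points of continuity. Because $M$ is left-continuous (the balls are open), the inequality \eqref{eq:stationary-monotonicity} then holds for all $0<s<R$.

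The main obstacle is the bookkeeping in this limit: the measure may charge spheres $\partial B_r$, and one must handle the jumps of $M$ carefully. I would avoid the issue by working directly with the inequality tested against arbitrary nonnegative $\rho\in C_c^\infty(0,\infty)$, concluding monotonicity of $s^{2-n}M(s)$ for a.e. $s$, and then extending to all $s$ by left-continuity. The case $n=2$ is covered by the same argument: the left-hand side vanishes, and the conclusion is simply that $M$ is nondecreasing, which is trivial.
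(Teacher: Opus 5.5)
Your proof is correct and is essentially the paper's argument: both pair the divergence-free tensor $\frac12\mu I-\sigma$ with the position field localized to $B_r$, use $\operatorname{tr}\bigl(\frac12\mu I-\sigma\bigr)=\frac{n-2}{2}\mu$, and then use the sign of the radial term $\sigma(e_r,e_r)\geq 0$. The only difference is where the regularization goes. The paper mollifies $\sigma$ (convolution preserves positivity and the divergence condition), runs the classical divergence-theorem computation to get $\frac{\mathrm{d}}{\mathrm{d}r}\bigl(r^{2-n}M_\varepsilon\bigr)\geq 0$, and passes to the limit at continuity radii. You instead keep $\sigma$, smooth the test field $\varphi(|x|)x$, and conclude from the distributional inequality for $s^{2-n}M(s)$ together with left-continuity of $M$; no boundary terms actually arise there, since $\rho\in C_c^\infty(0,\infty)$ makes $\varphi$ constant near $0$.
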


\begin{proof}
Let $\rho_\varepsilon$ be a standard nonnegative mollifier and set
$\sigma_\varepsilon:=\rho_\varepsilon*\sigma$ and
$\mu_\varepsilon:=\operatorname{tr}\sigma_\varepsilon$.
Convolution preserves positive semidefiniteness and
\eqref{eq:stationary-assumption}. Define
$
 T_\varepsilon:=\frac12\mu_\varepsilon I-
       \sigma_\varepsilon$.
Then $\operatorname{div}T_\varepsilon=0$ and
$\operatorname{tr}T_\varepsilon=(n-2)\mu_\varepsilon/2$. We apply the divergence
theorem to the vector field $T_\varepsilon x$ on $B_r$. Since $\operatorname{div} (T_\epsilon x) = \tr T_\epsilon$, this gives
\[
 \frac{n-2}{2}M_\varepsilon(r)
 =r\int_{\partial B_r}T_\varepsilon(e_r,e_r)
 =\frac r2M_\varepsilon'(r)
   -r\int_{\partial B_r}
       \sigma_\varepsilon(e_r,e_r),
\]
where $e_r$ is the unit radial vector field, and where $M_\varepsilon(r)=\mu_\epsilon(B_r(0))$. Thus
\begin{equation}
 rM_\varepsilon'(r)-(n-2)M_\varepsilon(r)
 =2r\int_{\partial B_r}
       \sigma_\varepsilon(e_r,e_r)\geq0.
 \label{eq:stationary-derivative}
\end{equation}
Equivalently, $
 \frac{\mathrm{d}}{\mathrm{d}r}\left(r^{2-n}M_\varepsilon(r)\right)\geq0$. 
It follows that \eqref{eq:stationary-monotonicity} holds with
$M_\varepsilon$ in place of $M$.
If $r$ is a continuity radius for $\mu$, weak convergence
$\mu_\varepsilon\rightharpoonup\mu$ gives
$M_\varepsilon(r)\to M(r)$. We may therefore pass to the limit first at
continuity radii and then at arbitrary radii by monotone approximation.
This proves \eqref{eq:stationary-monotonicity}. \end{proof}

\subsubsection{Conclusion of the proof of Theorem~\ref{thm:main}}
\label{sec:stationary-vanishing} 
 Lemma
\ref{lem:StressIdentity} and \eqref{eqn:LimitingLaplacian} give
\[
 0=\Delta\operatorname{div}\left(H-\frac12(\tr H)I\right)
  =\omega_n\operatorname{div}\left(\nu-\frac12(\tr\nu)I\right).
\]
Hence, we may apply Lemma
\ref{lem:Monotonicity} to $\sigma=\nu$, because we proved that $\nu$ is locally finite, symmetric, and positive semidefinite. With
$M(R):=\operatorname{tr}\nu(B_R)$, the estimate obtained in Lemma \ref{lem:EstimatesH} gives
\begin{equation}
 M(R)\leq C R^{n-2-\kappa},\qquad \forall R\geq1.
 \label{eq:nu-growth-final}
\end{equation}
For fixed $s>0$, \eqref{eq:stationary-monotonicity} and
\eqref{eq:nu-growth-final} give
\[
 \frac{M(s)}{s^{n-2}}\leq C R^{-\kappa}
 \qquad \forall R>\max\{s,1\}.
\]
Letting $R\to\infty$ shows that $M(s)=0$. Hence $\operatorname{tr}\nu=0$ as a measure.
Positive semidefiniteness of $\nu$ then implies $\nu=0$ as a measure.

Therefore, from Lemma \ref{lem:EstimatesH} we have $\Delta_{\mathbb R^n}H=\omega_n\nu=0$ distributionally. Thus, each entry of $H$ is
a smooth entire harmonic function. Moreover, by Lemma \ref{lem:EstimatesH} we also have
\begin{equation}
 \fint_{B_1(0)}|H|\mathrm{d}\mu_\infty=1,
 \qquad
 \fint_{B_R(0)}|H|\mathrm{d}\mu_\infty\leq 2R^{-\kappa}\qquad \forall R\geq1.
 \label{eq:H-growth-final}
\end{equation}
From the mean value inequality applied to each entry of $H$, and the inequality above (sending $R\to \infty$), we conclude $H=0$, reaching a contradiction with the equality above. This contradiction finally proves Theorem~\ref{thm:main}, as desired.

\section{The proof of \texorpdfstring{\cref{thm:extension}}{Theorem 1.5}}
Our main goal in this section is to prove Theorem~\ref{thm:extension}, and we will deduce Theorems~\ref{thm:4dEinstein} and \ref{thm:highdimEinstein} from it. The main ingredient is to show that in harmonic coordinates the metric tensor $g_{ij}$ is H\"older continuous, which we will prove in Lemma~\ref{lemmaholder}. This in turn relies on the decay estimate provided by Proposition~\ref{propThreeAnnulus}. The basic idea is to exploit that under the Ricci curvature bound, at least on the smooth locus, the metric components behave more and more like harmonic functions at smaller and smaller scales. At the same time Theorem~\ref{thm:main} can be used to control the behavior of the metric components near the singular locus.

\subsection{The three-annulus lemma and consequences}
Throughout this section, let $K\geq 0$ and let $(X,\mathrm{d},\mathcal{H}^n)$ be an $\mathrm{RCD}(-K,n)$ space.

\begin{remark}\label{remHausDim}
    Under the condition of Definition~\ref{def:Codimension}, we have the following:
    \begin{enumerate}
        \item By Bishop--Gromov monotonicity, for every $x\in X$, and every $s<\rho_0/4$
        \[
        \mathcal{H}^n(B_s(\Sigma)\cap B_{\rho_0/2}(x))\leq C(n,K,\mathcal{K}) s^{n-c}.
        \]
        \item From the definition of Hausdorff dimension,  we thus get
        \[
        \mathrm{dim}_{\mathrm{H}}(\Sigma) \leq c < n-2.
        \]
    \end{enumerate}
\end{remark}

The proof of \cref{thm:extension} will build on the following crucial three-annulus-type lemma.
\begin{prop}\label{propThreeAnnulus}
    For every $\beta>0$, every integer $n\geq 2$, and $\mathcal{K}>0$, $\rho_0>0$, $0\leq c<n-2$ there are $0<r_0<1$, $0<\delta<1$, and $C>0$ such that the following holds. 
    
    Let $(X,\mathrm{d},\mathcal{H}^n)$ be an $\mathrm{RCD}(-\eta,n)$ space, with $\eta\geq 0$ such that:
    \begin{enumerate}
        \item $\Sigma \subset X$ satisfies \cref{def:Codimension} with $k=2$ and the choices of $\mathcal{K},\rho_0,c$ above: in particular, $\Sigma$ has codimension greater than $2$. Moreover, $X\setminus\Sigma$ is (locally isometric to) a smooth Riemannian manifold;
        \item $|\mathrm{Ric}| \leq \eta$ on $X\setminus \Sigma$.
    \end{enumerate}
    Let $x\in X$, and assume $\{f_i\}_{i=1}^n$ are harmonic functions on $B_4(x)$. Denote
    \[ 
    D:=\sum_{i,j} \fint_{B_4(x)} |\nabla f_i \cdot \nabla f_j - \delta_{ij}|\mathrm{d}\mathcal{H}^n.
    \]
    Assume $D+\eta\leq \delta$. Then there is a symmetric matrix $A_{ij}$ with $\|A_{ij}\| < C(D+\eta^{1/2})$ such that $\tilde{f}_i := f_i + \sum A_{ij}f_j$ satisfy
    \[ \sum_{i,j} \fint_{B_{r_0}(x)} |\nabla \tilde{f}_i\cdot \nabla \tilde{f}_j - \delta_{ij}|\d\mathcal{H}^n \leq r_0^{1-\beta} (D+\eta^{1/2}). \]
\end{prop}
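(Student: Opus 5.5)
We argue by contradiction and compactness, in the spirit of the proof of \cref{thm:main}. Fix $\beta$, $n$, $\mathcal K$, $\rho_0$, $c$. If the statement fails, there are $r_0$ (chosen below), spaces $(X_k,\mathrm d_k,\mathcal H^n)$ that are $\mathrm{RCD}(-\eta_k,n)$ with singular sets $\Sigma_k$ as in the statement, and harmonic $f^k=(f^k_i)$ on $B_4(x_k)$, such that $D_k+\eta_k\to 0$. For these, no admissible symmetric $A$ achieves the claimed decay.

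Set $a_k:=D_k+\eta_k^{1/2}$, $G_k:=(\nabla f^k_i\cdot\nabla f^k_j)$ and $H_k:=a_k^{-1}(G_k-I)$. By \cref{lem:almost-splitting} and \cref{lem:volume-recovery}, $B_2(x_k)$ converges to the Euclidean ball and $f^k$ converges to a linear map, which we normalize to the identity as before. Exactly as in \cref{lem:EstimatesH}, we get $H_k\to H$ strongly in $L^q_{\loc}(B_2(0))$ with $\fint_{B_2}|H|\lesssim 1$. We also get $\Delta H=\omega_n\nu$, where $\nu$ is the weak limit of the psd measures $\nu_k=a_k^{-1}\mathbf\Delta G_k+O(\eta_k/a_k)$. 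The curvature term is harmless because $\eta_k/a_k\le\eta_k^{1/2}\to0$; this is why the statement uses $\eta^{1/2}$. Lemma~\ref{lem:StressIdentity} also passes over verbatim, giving $\operatorname{div}(H-\frac12(\tr H)I)=0$.

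The key new step is to show $\nu=0$ on $B_1(0)$, so that $H$ is harmonic. I would split this into two parts.

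\emph{(a) $\nu$ is concentrated on $\Sigma_\infty$.} Here $\Sigma_\infty$ is the Hausdorff limit of $\Sigma_k$, a set whose Assouad-type dimension is still $\le c$ by Remark~\ref{remHausDim}. Away from $\Sigma_k$, the $f^k$ are harmonic coordinates on a smooth manifold with $|\mathrm{Ric}|\le\eta_k$. The identity $\Delta G=2\,\Hess f_i:\Hess f_j+2\mathrm{Ric}(\nabla f_i,\nabla f_j)$ gives a Hessian term that is quadratic, of type $\Gamma*\Gamma\sim|\partial G|^2$. Interior elliptic estimates for $g_{ij}$ in harmonic coordinates, valid on a fixed-size ball around any point at distance $\ge\tau$ from $\Sigma_k$, give $\int|\partial G_k|^2\lesssim_\tau a_k^2$. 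Hence $|\Hess|^2/a_k\to0$ there, and $\nu$ has no mass off $\Sigma_\infty$.

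\emph{(b) A growth bound for $\nu$.} Apply \cref{thm:main} with a small $\kappa<n-2-c$ at every center $y\in B_1(x_k)$, not only at $x_k$. Together with the cutoff computation \eqref{eq:mass-cutoff-step}, this gives $\tr\nu_k(B_r(y))\le Cr^{n-2-\kappa}$ uniformly in $y$ and $r$. The same bound then holds for $\nu$. Covering $\Sigma_\infty\cap B_1$ by $\lesssim s^{-c}$ balls of radius $s$ gives $\tr\nu(B_1)\lesssim s^{n-2-\kappa-c}\to0$. So $\nu=0$ and $H$ is harmonic on $B_1$.

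Given harmonicity, we conclude as follows. Mean value estimates give $\fint_{B_{r}}|H-H(0)|\le C_1 r$ with $C_1=C_1(n)$. Choose $r_0$ with $C_1r_0\le\frac14 r_0^{1-\beta}$, and set $A_k:=-\frac{a_k}{2}H(0)$, which has size $\le Ca_k$. Then
\[
\tilde G_k=(I+A_k)G_k(I+A_k)=I+a_k\bigl(H_k-H(0)\bigr)+O\bigl(a_k^2+a_k|G_k-I|\bigr).
\]
By strong $L^1$ convergence and \eqref{eq:quadratic-remainderPre}, $\fint_{B_{r_0}}|\tilde G_k-I|\le\frac12 r_0^{1-\beta}a_k$ for large $k$, which is a contradiction.

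I expect the main obstacle to be step (a): getting quantitative smooth control of $G_k$ at uniform scale away from $\Sigma_k$, so that the Hessian energy is $o(a_k)$ rather than merely $O(a_k)$. This needs a harmonic-radius lower bound on the regular region of the RCD space near points far from $\Sigma_k$, which I would try to get from the $|\mathrm{Ric}|$ bound plus Anderson-type $\epsilon$-regularity. A second delicate point is the uniformity of the application of \cref{thm:main} at all centers in step (b).
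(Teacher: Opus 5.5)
Your proposal is correct, but it handles the singular set by a different route from the paper.

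\emph{The paper's route.} It never forms a global limit equation across $\Sigma_\infty$. It uses Anderson's harmonic-coordinate bounds and Schauder estimates for $\frac12\Delta_g(g-\delta)+\partial g\ast\partial g=\mathrm{Ric}$. These give $C^{1,\alpha}$ convergence of $(D_k+\eta_k^{1/2})^{-1}(g^{(k)}-\delta)$ to a function $h$ that is harmonic only on $B_2\setminus\Sigma_\infty$. It then applies Theorem~\ref{thm:main} at all centers to get $\fint_{B_r(y)}|h|\leq Cr^{-\varepsilon}$. The singularity is removed by testing against a cutoff $\phi_\delta$ with $|\Delta\phi_\delta|\lesssim\delta^{-2}$, built from the $\delta^{-c}$ covering. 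Because its convergence holds only away from $\Sigma_\infty$, the final step must split $B_{r_0}(x_k)$ into two parts. The part $B_s(\Sigma_k)$ is controlled again by Theorem~\ref{thm:main} and the volume of tubular neighbourhoods, with $s=r_0^K$; the complement is controlled by the smooth convergence.

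\emph{Your route.} You import the machinery of Lemma~\ref{lem:EstimatesH}: positivity of $\nu_k$ from the improved Bochner inequality, the measure-data $W^{1,q}$ bound of Lemma~\ref{lem:measure-data-rigorous}, and strong $L^q$ compactness across $\Sigma_k$. This yields $\Delta H=\nu\geq 0$ on the whole ball. The smooth estimates are then needed only in the weaker form $a_k^{-1}\int|\Hess f^k|^2\to 0$ off $\Sigma_\infty$. Theorem~\ref{thm:main} enters through the cutoff computation as the Morrey bound $\tr\nu(B_r(y))\lesssim r^{n-2-\kappa}$, uniformly in $y$. Removability then becomes the statement that such a measure cannot charge a set of dimension $c<n-2-\kappa$.

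\emph{What each buys.} Your route gives strong $L^1$ convergence on all of $B_{r_0}$, so the final step needs no separate treatment near $\Sigma_k$. The cost is heavier RCD compactness theory in place of smooth convergence plus a classical removable-singularity argument on $\mathbb R^n$. The codimension hypothesis is used the same way quantitatively in both: $n-2-c$ has to beat the subpower loss coming from Theorem~\ref{thm:main}.

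Two small points. The stress identity you mention plays no role here. The covering property you need for $\Sigma_\infty$ comes from stability of Definition~\ref{def:Codimension} under Hausdorff limits, not from Remark~\ref{remHausDim}. Alternatively, $\mathcal H^{n-2-\kappa}(\Sigma_\infty)=0$ already suffices for your growth argument.
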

\begin{proof}
    In this proof the constants $C$ will only depend on $n$, $\mathcal{K}$, $\rho_0$, $c$ (see \cref{def:Codimension}), and on an arbitrary $0<\varepsilon<n-c-2$ which we now fix once and for all. When we write $A\lesssim B$ we mean $A\leq CB$. We argue by contradiction.
    
    If the assertion is not true, we can find a contradicting sequence $(X_k,\mathrm{d}_k,\mathcal{H}^n,x_k)$ of pointed $\mathrm{RCD}(-\eta_k,n)$ spaces, with $\eta_k\geq 0$, for which the following holds. We have harmonic functions $f^{(k)}_i$ defined on $B_4(x_k)$, with uniformly bounded gradients (see item (2) of \cref{lem:PropertiesHarmonic}) on $B_3(x_k)$, such that if $g^{(k)}_{ij} := \nabla f^{(k)}_i\cdot \nabla f^{(k)}_j$, and 
    \[ D_k := \sum_{i,j} \fint_{B_4(x_k)} |g_{ij}^{(k)} - \delta_{ij}|\d\mathcal{H}^n_k, \]
    we have $D_k+\eta_k\to 0$;
    but, for any $k$ sufficiently large, we have no $0<r_0<1$ and $C>0$ for which the conclusion of the statement holds.
    
    First, taking into account item (1) of \cref{lem:PropertiesHarmonic}, we can apply Lemma~\ref{lem:almost-splitting} (up to a harmless reparametrization of the metric)  with
$C(D_k+\eta_k)$ in place of its parameter.
    We thus get $B_4(x_k) \to B_4^{\mathbb R^n}(0)$ in the pointed GH sense.

    By Blaschke's selection theorem, up to subsequences, $\Sigma_k\cap B_4(x_k) \to \Sigma_\infty\subset B_4^{\mathbb R^n}(0)$ in the Hausdorff sense in a realization of the GH convergence, where $\Sigma_\infty$ is a closed set. Since \cref{def:Codimension} is stable under GH convergence, we have that $\Sigma_\infty$ has codimension greater than $2$ as well. From now on, all convergences below must be understood possibly after passing to a subsequence.
    
    We claim that up to choosing a subsequence, we have $(D_k+\eta_k^{1/2})^{-1}(g_{ij}^{(k)} - \delta_{ij}) \to h_{ij}$ locally $C^{1,\alpha'}$, for every $0<\alpha'<1$, on $B_2^{\mathbb R^n}(0)\setminus \Sigma_\infty$, where the $h_{ij}$ are harmonic functions on $B_2^{\mathbb R^n}(0)\setminus\Sigma_{\infty}$. Indeed, let us take open sets $U\Subset V\Subset B_2^{\mathbb R^n}(0)\setminus \Sigma_\infty$, and let $U_k\Subset B_2^{X_k}(x_k)\setminus \Sigma_k$ be such that $\overline{U_k}\to \overline{V}$. Note that the $g^{(k)}_{ij}$ are the (inverse) metric components in the harmonic coordinates defined by  the $f^{(k)}_i$. 
    Since the metrics $g^{(k)}$ have uniformly bounded Ricci curvature outside $\Sigma_k$, by Anderson's theorem~\cite{Anderson90} together with volume convergence and Bishop--Gromov monotonicity, we have uniform $C^{1,\alpha}$ estimates for the metric components $g^{(k)}_{ij}$ on the $U_k$ in these harmonic coordinates. These estimates are independent of $k$. 
    
    Using the expression for the Ricci curvature in  harmonic coordinates, we have that on $\overline{U_k}$
    \begin{equation}\label{eqn:ToGiveSchauder}
    \frac{1}{2}\Delta_{g^{(k)}} (g^{(k)}_{ij}-\delta_{ij})+\partial g^{(k)}\ast \partial g^{(k)}=\mathrm{Ric}^{(k)}_{ij}.
    \end{equation}
    With $\partial g^{(k)}\ast \partial g^{(k)}$ we schematically denote a term that is quadratic in $\partial g^{(k)}$, but might contain interactions with $g$ as well. We can view this as a first order term in the equation for the quantity $g^{(k)}_{ij} - \delta_{ij}$, with coefficients depending on $g^{(k)}, \partial g^{(k)}$. By item (2) of \cref{lem:PropertiesHarmonic}, we have $|g_{ij}^{(k)}|\lesssim 1$ on $\overline{U_k}$, and at the same time we use the bound $|\mathrm{Ric}^{(k)}| \leq \eta_k$. Hence, by Schauder estimates for the system \eqref{eqn:ToGiveSchauder} we have, for $0<\alpha<1$,
    \[
    \| g^{(k)}_{ij}-\delta_{ij}\|_{C^{1,\alpha}(U_k)}\leq\mathcal{C}\left( \|g^{(k)}_{ij}-\delta_{ij}\|_{L^1(B_2^{X_k}(x_k))} + \|\mathrm{Ric}^{(k)}\|_{L^\infty(U_k)}\right) \leq C(D_k+\eta_k),
    \]
    where $C$ above might depend on $\alpha$. Using a diagonal procedure, it follows that $(D_k+\eta_k^{1/2})^{-1}(g_{ij}^{(k)}-\delta_{ij})$ converges to $h_{ij}$ in $C^{1,\alpha}$ away from $\Sigma_\infty$, up to subsequences, for every $0<\alpha<1$. We divide through \eqref{eqn:ToGiveSchauder} by $D_k + \eta_k^{1/2}$ (we use $\eta_k^{1/2}$ rather than $\eta_k$ in order to eliminate the Ricci term in the limit), and note that by the estimate above, we have 
    \[ (D_k + \eta_k^{1/2})^{-1}(\partial g^{(k)}\ast \partial g^{(k)}) \to 0, \qquad (D_k + \eta_k^{1/2})^{-1} \mathrm{Ric}^{(k)}_{ij} \to 0. \]
    It follows (multiplying the equation by a smooth test function and arguing in a weak sense) that $\Delta h_{ij} = 0$ away from $\Sigma_\infty$, as claimed.

    \smallskip
    We now claim that for every $\varepsilon>0$ there exists a constant $C>0$ such that the following holds. For every $x\in B_2^{\mathbb R^n}(0)$ and every $r<1/8$ we have that 
    \begin{equation}\label{eqn:Claim}
    \begin{aligned}
    \fint_{B_r(x)}|h_{ij}|\d\mathcal{H}^n&\leq Cr^{-\varepsilon}. \\
    \end{aligned}
    \end{equation}
    Indeed, using \cref{thm:main}, we have that for every $k$ large enough the following holds. For every $\tilde x\in B_2(x_k)$ and $r<1/8$ we have
    \begin{equation}\label{eqn:Prelimit}
    \fint_{B_r(\tilde x)}|g_{ij}^{(k)}-\delta_{ij}|\d\mathcal{H}^n_k\leq Cr^{-\varepsilon}(D_k+\eta_k).
    \end{equation}
    Now we have that $(D_k+\eta_k^{1/2})^{-1}(g_{ij}^{(k)}-\delta_{ij})$ converges to $h_{ij}$ smoothly away from $\Sigma_\infty$, and thus \eqref{eqn:Claim} follows from \eqref{eqn:Prelimit} and the volume convergence theorem.
    \smallskip
    
    We now claim that $h_{ij}$ extends smoothly across $\Sigma_\infty$. Let $\delta>0$ be small enough, and let $\phi_\delta$ be a standard cutoff function supported outside of the $\delta/2$-neighborhood $(\Sigma_\infty)_{\delta/2}$ of $\Sigma_\infty$, and equal to $1$ outside of $(\Sigma_\infty)_{2\delta}$; and let $\chi$ be any compactly supported smooth function on $B_2^{\mathbb R^n}(0)$. We can choose $\phi_\delta$ so that $|\nabla\phi_\delta| \lesssim \delta^{-1}$ and $|\Delta\phi_\delta| \lesssim \delta^{-2}$. Indeed, let us cover $\Sigma_\infty$ with $\lesssim \delta^{-c}$ balls $B(y_k, \delta/2)$, such that the $B(y_k, \delta/10)$ are disjoint, and $y_k\in \Sigma_\infty$. For each $k$ we have a standard cutoff $\psi_k$ such that $\psi_k=1$ on $B(y_k, \delta)$ and $\psi_k=0$ outside of $B(y_k, 2\delta)$. These satisfy $|\nabla\psi_k|\lesssim \delta^{-1}$ and $|\Delta\psi_k|\lesssim \delta^{-2}$. Define $\Phi = \sum\psi_k$, and $\phi_\delta(x) = 1 - \rho(\Phi(x))$, where $\rho$ is a smooth function with $\rho(0)=0$, $\rho(t)=1$ for $t > 9/10$, and $|\rho'|, |\rho''| < 10$. Note that
    \[ |\nabla\phi_\delta| \leq 10 |\nabla \Phi|, \qquad |\nabla^2 \phi_\delta|\leq 10( |\nabla \Phi|^2 + |\nabla^2\Phi|). \]

    If $x\in (\Sigma_\infty)_{\delta/2}$, then $\Phi(x)\geq 1$, so $\phi_\delta(x)=0$. If $x \not\in (\Sigma_\infty)_{2\delta}$, then $\Phi(x)=0$, so $\phi_\delta(x)=1$. For all other $x$ we have at most a controlled number $N(n)$ of balls $B(x_k, 2\delta)$ containing $x$, so we get bounds
    \[ \begin{aligned} 
    |\nabla\phi_\delta|\lesssim \delta^{-1}, \qquad |\Delta\phi_\delta|\lesssim \delta^{-2}, \qquad \text{as desired}. \end{aligned} \]
    
    Calling $h=h_{ij}$, it follows that
\begin{equation}\label{eqn:preapproxharmoni}
\begin{aligned}  
\int h \phi_\delta \Delta\chi\, \d\mathcal{H}^n &= \int \left(h \Delta(\phi_\delta \chi) - h \chi \Delta\phi_\delta - 2h\nabla\phi_\delta\cdot \nabla \chi\right)\d\mathcal{H}^n. 
\end{aligned}
\end{equation}
The first term vanishes using $\Delta h_{ij}=0$ away from $\Sigma_\infty$.  Since we have chosen and fixed one $0<\varepsilon < n-c-2$ at the beginning of the proof, the second term is estimated (using \eqref{eqn:Claim}) by
\[ \left|\int h \chi \Delta\phi_\delta\, \d\mathcal{H}^n \right| \lesssim \delta^{-2} \int_{\Sigma_{2\delta}} |h| \lesssim \delta^{-2} \delta^{-c} C\delta^{-\epsilon + n} \to 0,\]
as $\delta\to 0$ (note that this would not work if $\Sigma$ had codimension $\leq 2$, which is to be expected). The last term in \eqref{eqn:preapproxharmoni} similarly converges to zero. Sending $\delta\to 0$ in \eqref{eqn:preapproxharmoni}, it follows that $h$ satisfies $\Delta h=0$ on $B_2^{\mathbb R^n}(0)$ in the distributional sense, so it extends smoothly across $\Sigma_\infty$ and satisfies $\Delta h = \Delta h_{ij}=0$ on $B_2^{\mathbb R^n}(0)$.
\smallskip

    Now, each $h_{ij}$ can be decomposed as follows 
    \[ 
    h_{ij} = h_{ij}(0) + h_{ij}^{>0}:=B_{ij}+ h_{ij}^{>0}. 
    \]
    By the mean-value inequality we have 
    \begin{equation}\label{eqn:LinftyboundBij}
    B_{ij}=h_{ij}(0)\lesssim \int_{B_2^{\mathbb R^n}(0)}|h_{ij}|\d\mathcal{H}^n\lesssim 1.
    \end{equation}
    We now define $\tilde{f}^{(k)}_i = f^{(k)}_i - (D_k+\eta_k^{1/2}) \frac{1}{2}B_{ij} f^{(k)}_j$, and let $\tilde{g}_{ij}^{(k)} = \nabla \tilde{f}^{(k)}_i\cdot \nabla \tilde{f}^{(k)}_j$. Then we have that $(D_k+\eta_k^{1/2})^{-1}(\tilde{g}_{ij}^{(k)} - \delta_{ij}) \to h_{ij}^{>0}$ in $L^1(B_2^{\mathbb R^n}(0)\setminus\Sigma_\infty)$. {Indeed, the latter follows from the fact that
    \begin{equation}\label{eqn:tildegijk}
    \tilde g_{ij}^{(k)} = g_{ij}^{(k)}-\frac{1}{2}(D_k+\eta_k^{1/2})\left(B_{is}g_{js}^{(k)}+B_{jl}g_{il}^{(k)}\right)+O((D_k+\eta_k^{1/2})^2),
    \end{equation}
    and the fact that $g_{ij}^{(k)}$ converges to the Euclidean metric in  $L^1$, and that $(D_k+\eta_k^{1/2})^{-1}(g_{ij}^{(k)}-\delta_{ij})\to h_{ij}$ smoothly locally away from $\Sigma_\infty$.
    }
    \smallskip
    
    Since $h_{ij}^{>0}$ is harmonic, it follows that
    \begin{equation}\label{eqn:DecayL2NormHarmonic}
    \sum_{i,j} \fint_{B_{r_0}^{\mathbb R^n}(0)} |h_{ij}^{>0}|\d\mathcal{H}^n \lesssim r_0,
    \end{equation}
    for all $r_0 \leq 1$. {In addition, we also have
    \begin{equation}\label{eqn:CDktildeg}
    \fint_{B_2(x_k)} |\tilde{g}_{ij}^{(k)} - \delta_{ij}|\d\mathcal{H}^n_k < C(D_k+\eta_k^{1/2}), 
    \end{equation}
    for every $k$, up to subsequences. Indeed, this directly follows from the assumptions, \eqref{eqn:LinftyboundBij}, and \eqref{eqn:tildegijk}.}
    \smallskip

    Now, for small $s > 0$ we can cover the $s$-neighborhood of $\Sigma_k \subset B_2(x_k)$ by $\lesssim s^{-c}$ balls of radius $s$. Outside of this neighborhood $(D_k+\eta_k^{1/2})^{-1}(\tilde g_{ij}^{(k)}-\delta_{ij})$ converges in $L^1$ to $h_{ij}^{>0}$ as $k\to\infty$, so for sufficiently large $k$ we have the following. For $r_0<1/8$, using \eqref{eqn:DecayL2NormHarmonic}, and the volume convergence,
    \[ \int_{B_{r_0}(x_k)\setminus B_s(\Sigma_k)} |\tilde{g}_{ij}^{(k)} - \delta_{ij}|\d\mathcal{H}^n_k \leq C(D_k+\eta_k^{1/2})r_0 r_0^n.
    \]
    Moreover, using \eqref{eqn:CDktildeg} and \cref{thm:main},  we have that for every $\varepsilon>0$, up to choosing $k$ large enough, the following holds. For every $x\in B_2(x_k)$ and $r<1/8$ we have
    \begin{equation}\label{eqn:Claim2}
    \fint_{B_r(x)}|\tilde g_{ij}^{(k)}-\delta_{ij}|\d\mathcal{H}^n_k\leq C(D_k+\eta_k^{1/2})r^{-\varepsilon}.
    \end{equation}
     Using the estimate \eqref{eqn:Claim2}, and \cref{remHausDim}, we finally have, for $s$ small enough,
    \[ \int_{B_{r_0}(x_k)\cap B_s(\Sigma_k)} |\tilde{g}_{ij}^{(k)} - \delta_{ij}|\d\mathcal{H}^n_k \leq C (D_k+\eta_k^{1/2}) s^{n-c-\varepsilon}.
    \]
    \smallskip
    
    Combining the previous estimates, we have, for $r_0<1/8$ and all $s>0$ small enough,
     \[ \fint_{B_{r_0}(x_k)} |\tilde{g}_{ij}^{(k)} - \delta_{ij}|\d\mathcal{H}^n_k \leq  C(D_k+\eta_k^{1/2}) r_0 + C(D_k+\eta_k^{1/2}) s^{n-c-\varepsilon}r_0^{-n}. \]
    Since we have fixed once and for all $0<\varepsilon < n-c-2 < n-c$ at the beginning of the proof, it is clear that we can first choose $s = r_0^K$ for some large $K$, and then $r_0$ sufficiently small, so that, absorbing the constants $C$, we have 
    \[ \sum_{i,j}\fint_{B_{r_0}(x_k)} |\tilde{g}_{ij}^{(k)} - \delta_{ij}|\d\mathcal{H}^n_k \leq (D_k+\eta_k^{1/2}) r_0^{1-\beta}, \]
    for every $k$ large enough, thus finding a contradiction.
\end{proof}

Using the three-annulus lemma, we have the following H\"older estimate for the metric components in harmonic coordinates. 
\begin{lemma}\label{lemmaholder}
For every $0<\gamma<1$, integer $n\geq 2$, and $\mathcal{K}>0$, $\rho_0>0$, $0\leq c<n-2$ there is $0<\delta<1$ such that the following holds. 

    Let $(X,\mathrm{d},\mathcal{H}^n)$ be an $\mathrm{RCD}(-\eta,n)$ space, with $\eta\geq 0$ such that:
    \begin{enumerate}
        \item $\Sigma \subset X$ satisfies \cref{def:Codimension} with $k=2$ and the choices of $\mathcal{K},\rho_0,c$ above: in particular, $\Sigma$ has codimension greater than $2$. Moreover, $X\setminus\Sigma$ is (locally isometric to) a smooth Riemannian manifold;
        \item $|\mathrm{Ric}| \leq \eta$ on $M\setminus \Sigma$.
    \end{enumerate} 
    Let $x\in X$, and $\{f_i\}_{i=1}^n$ be harmonic functions on $B_2(x)$ and set 
    \[ 
    \sum_{i,j} \fint_{B_1(x)} |\nabla f_i \cdot \nabla f_j - \delta_{ij}|\mathrm{d}\mathcal{H}^n =: D.
    \]
    Assume $D+\eta^{1/2}\leq \delta$. Then 
    \[
    g^{ij}:=\nabla f_i\cdot \nabla f_j \in C^\gamma(B_{1/2}(x)),
    \]
    and in addition
    \[ |g^{ij} - \delta_{ij}| \leq C\delta \text{ on } B_{1/2}(x). \]
\end{lemma}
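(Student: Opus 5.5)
The plan is to iterate Proposition~\ref{propThreeAnnulus} at every point $y\in B_{1/2}(x)$ and every scale, as in a Campanato-type argument. Fix $\beta:=1-\gamma'$ with $\gamma<\gamma'<1$, and let $r_0,\delta_0,C_0$ be the constants given by Proposition~\ref{propThreeAnnulus} for this $\beta$. For $y\in B_{1/2}(x)$, the hypothesis on $B_1(x)$ and Bishop--Gromov give $\sum_{i,j}\fint_{B_{1/8}(y)}|\nabla f_i\cdot\nabla f_j-\delta_{ij}|\leq C(n)D$. Rescale $B_{1/8}(y)$ to unit size so that the relevant ball is $B_4$. The rescaled space is $\mathrm{RCD}(-\eta/64,n)$, satisfies $|\mathrm{Ric}|\leq\eta/64$ off $\Sigma$, and keeps the codimension constants of Definition~\ref{def:Codimension} up to a harmless change of $\rho_0,\mathcal K$. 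Set $D_0(y):=C(n)D$ and $\eta_0:=\eta/64$.

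\textbf{Step 1 (iteration).} Define inductively $F^{(0)}:=f$ and $F^{(m+1)}:=(I+A_m)F^{(m)}$, working on the ball $B_{4r_0^m}(y)$ rescaled to unit size, with $\eta_m:=r_0^{2m}\eta_0$. Let $D_m$ be the defect of $F^{(m)}$ on that ball. Proposition~\ref{propThreeAnnulus} gives $|A_m|\leq C_0(D_m+\eta_m^{1/2})$. It also gives a bound on the defect on the next ball (after one Bishop--Gromov factor to pass from $B_{r_0}$ to $B_{4r_0}$, which I absorb by applying the proposition at radius $4$ versus $4r_0$ or by shrinking $r_0$):
\[
D_{m+1}\leq r_0^{1-\beta}(D_m+\eta_m^{1/2})=r_0^{\gamma'}\bigl(D_m+r_0^m\eta_0^{1/2}\bigr).
\]
Since $\gamma'<1$, induction yields $D_m+\eta_m^{1/2}\leq C\,r_0^{m\gamma'}(D+\eta^{1/2})\leq C\delta\, r_0^{m\gamma'}$. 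The smallness hypothesis $D_m+\eta_m\leq\delta_0$ needed at each step is preserved provided $\delta$ is chosen small. Hence $\sum_m|A_m|\leq C(D+\eta^{1/2})$, so the products $P_m:=\prod_{l<m}(I+A_l)$ converge to an invertible matrix $P_\infty(y)$ with $|P_\infty(y)-I|\leq C\delta$ and $|P_m-P_\infty|\leq C\delta r_0^{m\gamma'}$.

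\textbf{Step 2 (Campanato estimate).} The Gram matrix of $F^{(m)}$ is $P_m G P_m^T$, where $G=(g^{ij})$. Its defect bound then gives
\[
\fint_{B_{r}(y)}|G-Q(y)|\leq C\delta\, r^{\gamma'},\qquad Q(y):=P_\infty(y)^{-1}P_\infty(y)^{-T},
\]
for all $r\leq 1/8$, using $r\sim r_0^m$ and replacing $P_m$ by $P_\infty$ at cost $C\delta r_0^{m\gamma'}$. This is a uniform Campanato bound at every $y\in B_{1/2}(x)$. By Lebesgue differentiation, $G=Q$ almost everywhere. Comparing two centers $y,y'$ with $r=2\mathrm d(y,y')$ via $B_r(y)\subset B_{2r}(y')$ and doubling gives $|Q(y)-Q(y')|\leq C\delta\,\mathrm d(y,y')^{\gamma'}$. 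So $G$ has a $C^{\gamma'}\subset C^\gamma$ representative, and $|G-I|\leq|Q-I|\leq C\delta$ on $B_{1/2}(x)$.

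\textbf{Main obstacle.} The bookkeeping in Step 1 is the delicate part. First, the proposition outputs a bound on $B_{r_0}$ while its input is on $B_4$; the radius mismatch must be absorbed into the choice of $r_0$ versus $r_0^{1-\beta}$, possibly by applying the proposition with a slightly smaller $\beta$. Second, the term $\eta^{1/2}$ rather than $\eta$ must decay geometrically under rescaling, which it does since $\eta_m^{1/2}=r_0^m\eta^{1/2}$. Third, I must verify that $F^{(m)}$ stays harmonic with a uniformly controlled defect, so the hypothesis $D_m+\eta_m\leq\delta_0$ holds at every step uniformly in $y$.
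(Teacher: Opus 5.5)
Your proposal is correct and follows the paper's argument. You iterate Proposition~\ref{propThreeAnnulus} at each $y\in B_{1/2}(x)$ with geometrically decaying corrections $A_m$, pass to the convergent product $P_\infty(y)$, and derive the Campanato bound $\fint_{B_r(y)}|G-Q(y)|\leq C\delta r^{\gamma'}$; the only difference is that you prove the Campanato-to-H\"older step by hand, comparing the centers $y,y'$, where the paper cites the Campanato theorem on PI spaces.

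One correction to your bookkeeping: Bishop--Gromov only transfers averages from larger balls to smaller ones, so it cannot fix the $B_{r_0}$ versus $B_{4r_0}$ mismatch. You must use your other option instead: rescale by $4/r_0$ at each step, and use that the proof of Proposition~\ref{propThreeAnnulus} allows any sufficiently small $r_0$, so that the resulting exponent $\gamma'\log(1/r_0)/\log(4/r_0)$ still exceeds $\gamma$. The paper's own footnote glosses over this same point.
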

\begin{proof}
    Let $\beta:=1-\gamma$. By iteratively\footnote{When we iterate \cref{propThreeAnnulus} we have that if the metric changes as $\tilde{\mathrm{d}}=r_0^{-1}\mathrm{d}$, then $(D,\eta)\rightarrow (r_0^{1-\beta}(D+\eta^{1/2}),r_0^2\eta)$. This allows to iterate the three-annulus-like lemma \cref{propThreeAnnulus} so as to obtain summable errors. Notice also that the hypothesis on the Assouad codimension being greater than 2, see \cref{def:Codimension}, is scale-invariant.} applying \cref{propThreeAnnulus} we obtain $r_0<1$ and $C$ such that the following holds. The constant $C$ might change from line to line. Call $s=r_0^{1-\beta}$. For every $y\in B_{1/2}(x)$, and with $k\geq 1$, there exist a sequence of functions $f^k:=(f_1^k,\ldots,f_n^k)$ (possibly depending on the point $y$) and matrices $A_k:=A_k(y)$ such that 
    \begin{itemize}
        \item $\|A_k\|\leq C(D+\eta^{1/2})s^{k-1}$;
        \item Setting $f^0:=(f_1,\ldots,f_n)$, we have $f^k=(I+A_k)f^{k-1}$ for every $k\geq 1$;
        \item We have 
        \[
        \sum_{i,j}\fint_{B_{r_0^k}(y)} |\nabla f^k_i\cdot \nabla f^k_j - \delta_{ij}|\d\mathcal{H}^n < s^{k}(D+\eta^{1/2}).
        \]
    \end{itemize}
    Notice that since $\sum_{k=1}^{\infty} \|A_k\|\leq \frac{C(D+\eta^{1/2})}{1-s}$, we have that, up to subsequence, $f^k\to \tilde f$ in $W^{1,2}(B_{r_0}(y))$. Moreover, since $|\nabla f|$ is bounded (see item (2) of \cref{lem:PropertiesHarmonic}), we have that for every $k$, $|\nabla f^k|\leq \prod_{m=1}^{\infty} (1+\|A_m\|)|\nabla f|$ is uniformly bounded above in $k$ as well. Furthermore, for every $k>\ell\geq 1$ we can estimate
    \[
    \begin{aligned}
        \fint_{B_{r_0^\ell}(y)} &|\nabla f^k_i\cdot\nabla f^k_j-\delta_{ij}|\d\mathcal{H}^n\leq \fint_{B_{r_0^\ell}(y)} |\nabla f^\ell_i\cdot\nabla f^\ell_j-\delta_{ij}|\d\mathcal{H}^n\\
        &+\sum_{m=\ell}^{k-1} |\nabla f^{m+1}_i\cdot\nabla f^{m+1}_j-\nabla f^m_i\cdot\nabla f^m_j| \\
        &\leq s^\ell (D+\eta^{1/2}) + C\sum_{m=\ell}^{k-1} \|(I+A_m)^T(I+A_m)-I\||\nabla f^m|^2 \\&\leq s^\ell(D+\eta^{1/2}) + C\sum_{m=\ell}^{k-1} \|A_m\| \leq s^\ell (D+\eta^{1/2}) + C(D+\eta^{1/2})\sum_{m=\ell}^{\infty} s^{m} \\
        &= (D+\eta^{1/2})\left(s^\ell+C\frac{s^\ell}{1-s}\right)\leq C(D+\eta^{1/2})s^\ell.
    \end{aligned}
    \]
    Passing to the limit as $k\to+\infty$ in the previous inequality we get that for every $\ell\geq 1$ we have 
    \[
    \fint_{B_{r_0^\ell}(y)} |\nabla \tilde f_i\cdot\nabla \tilde f_j-\delta_{ij}|\d\mathcal{H}^n\leq C(D+\eta^{1/2})s^\ell,
    \]
    and thus, interpolating and using Bishop--Gromov monotonicity we get that for every $r<r_0$
    \begin{equation}\label{eqnAlmsotDoneHolder}
    \fint_{B_{r}(y)} |\nabla \tilde f_i\cdot\nabla \tilde f_j-\delta_{ij}|\d\mathcal{H}^n\leq C(D+\eta^{1/2})r^{1-\beta}.
    \end{equation}
    Notice that $\tilde f = \prod_{k=1}(I+A_k) f=:(I+A)f$. If $\delta$ is small enough, $\|A\|$ is small and then $S:=(I+A)^T(I+A)$ is invertible with $\|S^{-1}\|\leq C$. Calling $\tilde A(y):=S^{-1}$ we thus have the following. Using \eqref{eqnAlmsotDoneHolder}, for every $y\in B_{1/2}(x)$ there is a matrix $\tilde A_{ij}(y)$ for which for every $r<r_0$ it holds
    \[
    \fint_{B_r(y)}|\nabla f_i\cdot \nabla f_j-\tilde A_{ij}(y)|\d\mathcal{H}^n \leq C(D+\eta^{1/2})r^{1-\beta}.
    \]
    Thus, calling $g^{ij}=\nabla f_i\cdot \nabla f_j$ we have, for all $r<r_0$,
    \[ \sum_{i,j} \fint_{B_{r}(y)} \left|g^{ij} - \fint_{B_{r}(y)} g^{ij}\d\mathcal{H}^n\right|\d\mathcal{H}^n\leq 2\sum_{i,j} \fint_{B_{r}(y)} |g^{ij} - \tilde A_{ij}(y)|\d\mathcal{H}^n \leq Cr^{1-\beta} (D+\eta^{1/2}).
    \]
    Thus by the Campanato embedding theorem, which holds on doubling metric measure spaces satisfying the Poincaré inequality, and thus a fortiori on $\mathrm{RCD}$ spaces, see \cite{CampanatoPI}, we conclude that $g^{ij}\in C^{1-\beta}(B_{1/2}(x))$, as desired. 
To see the bound for $g^{ij} - \delta_{ij}$, note that $\widetilde A_{ij}(y)=g^{ij}(y)$, and from the proof above we have $\Vert \widetilde{A}(y) - I\Vert \lesssim \delta$. 
\end{proof}

Using the bounded Ricci curvature assumption, the following result improves the H\"older estimate for the metric components to $C^{1,\alpha}$, which in turn will imply Theorem~\ref{thm:extension}. 
\begin{prop}\label{propCinfty}
    For every integer $n\geq 2$, $0<\alpha<1$,  and $\mathcal{K}>0$, $\rho_0>0$, $0\leq c<n-2$ there is $0<\delta<1$ such that the following holds. 

    Let $(X,\mathrm{d},\mathcal{H}^n)$ be an $\mathrm{RCD}(-\eta,n)$ space, with $\eta\geq 0$ such that:
    \begin{enumerate}
        \item $\Sigma \subset X$ satisfies \cref{def:Codimension} with $k=2$ and the choices of $\mathcal{K},\rho_0,c$ above: in particular, $\Sigma$ has codimension greater than $2$. Moreover, $X\setminus\Sigma$ is (locally isometric to) a smooth Riemannian manifold;
        \item $|\mathrm{Ric}| \leq \eta$ on $X\setminus \Sigma$.
    \end{enumerate}
    Let $x\in X$, and let $\{f_i\}_{i=1}^n$ be harmonic functions on $B_2(x)$. Set  
    \[ 
    D:=\sum_{i,j} \fint_{B_1(x)} |\nabla f_i \cdot \nabla f_j - \delta_{ij}|\mathrm{d}\mathcal{H}^n.
    \]
    Assume $D+\eta^{1/2}\leq \delta$. Then the following hold:
    \begin{enumerate}
        \item The vector-valued function $f:=(f_1,\ldots,f_n):B_{1/2}(x)\to \mathbb R^n$ is a biLipschitz homeomorphism onto its image, and $f(B_{1/2}(x))\subset\mathbb R^n$ is an open set.
        \item Let $g^{ij}:=\nabla f_i\cdot\nabla f_j$. Then
        \[
    g^{ij}\circ f^{-1}
    \]
    (a priori smooth only on $f(B_{1/2}(x)\setminus\Sigma)$), can be extended to a $C^{1,\alpha}$-function on the whole open set $f(B_{1/2}(x))$.
    \end{enumerate}
\end{prop}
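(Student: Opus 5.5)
We first use \cref{lemmaholder} with some fixed $\gamma$ to get, for $\delta$ small, that $g^{ij}=\nabla f_i\cdot\nabla f_j$ is $C^\gamma$ on $B_{1/2}(x)$ with $|g^{ij}-\delta_{ij}|\leq C\delta$ there.

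\emph{Item (1).} For $y,z\in B_{1/2}(x)$, join them by almost-minimizing curves. Since $|\nabla f_i|\leq C$, $f$ is Lipschitz. For the lower bound, we use that $|\nabla(a\cdot f)|^2=a^Tg a\geq (1-C\delta)|a|^2$ everywhere. Taking $a=(f(y)-f(z))/|f(y)-f(z)|$ only gives an upper bound, so instead we argue by blow-up. At every point $y$ the (continuous) Gram matrix is $C\delta$-close to $I$, so $f$ is an $\epsilon$-splitting map on every small ball $B_r(y)$, uniformly in $r$ (Corollary~\ref{cor:Propagation-Splitting!}, or directly from continuity). By \cref{lem:almost-splitting} every small ball is GH-close to Euclidean, and splitting maps are $\Psi$-GH isometries at every scale and location. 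A standard chaining argument over dyadic scales then gives $|f(y)-f(z)|\geq (1-\Psi(\delta))\mathrm{d}(y,z)$: compare at scale $r=\mathrm{d}(y,z)$ using the splitting map $f$ on $B_{2r}(y)$, which is a $\Psi r$-GH approximation. Hence $f$ is biLipschitz. Openness of the image follows from invariance of domain once we know $B_{1/2}(x)$ is a topological manifold; this holds because every point is Reifenberg-flat at all small scales (Cheeger--Colding), or alternatively by degree theory applied to the GH-approximation.

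\emph{Item (2).} In the coordinates $f$, the inverse metric $g^{ij}\circ f^{-1}$ is continuous (even $C^\gamma$) on the open set $U:=f(B_{1/2}(x))$, and it is smooth on $U\setminus f(\Sigma)$. There, by \eqref{eqn:ToGiveSchauder}, it solves
\[
\tfrac12 g^{kl}\partial_k\partial_l g^{ij}+Q(g,\partial g)=\mathrm{Ric}_{ij}\ast g,
\]
with $|\mathrm{Ric}|\leq\eta$. The key step is to show that this equation holds weakly across $f(\Sigma)$, which has Hausdorff dimension $<n-2$ and Assouad codimension $>2$ (biLipschitz invariant). Write the equation in divergence form: harmonic coordinates give $\partial_k(\sqrt{\det g}\,g^{kl}\partial_l g^{ij})=$ quadratic terms plus curvature terms. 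We then need an a priori $W^{1,2}_{\mathrm{loc}}$ bound on $g^{ij}$ across $\Sigma$. This comes from the Bochner inequality: $|\Hess f_i|^2\in L^1_{\mathrm{loc}}$ on RCD spaces, and $\nabla g^{ij}$ is controlled by $|\Hess f||\nabla f|$, so $g^{ij}\in W^{1,2}$. Moreover $\Sigma$ has zero $2$-capacity (cutoffs $\phi_\delta$ as in the proof of \cref{propThreeAnnulus}, with $\int|\nabla\phi_\delta|^2\lesssim\delta^{n-c-2}\to0$). Testing with $\phi_\delta\chi$, the error term $\int \sqrt{\det g}\,g^{kl}\partial_l g^{ij}\,\chi\,\partial_k\phi_\delta$ is bounded by $\|\nabla g\|_{L^2(\Sigma_{2\delta})}\|\nabla\phi_\delta\|_2\to0$. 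The quadratic term $|\partial g|^2\in L^1$ passes to the limit by dominated convergence. Thus $g^{ij}\circ f^{-1}\in W^{1,2}\cap C^\gamma$ is a weak solution on all of $U$ of a divergence-form system with continuous coefficients, an $L^\infty$ right-hand side, and a quadratic gradient nonlinearity.

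For regularity, we bootstrap. Because the coefficients are $C^\gamma$ and the solution is continuous and small, the standard theory for quasilinear systems with quadratic gradient growth and small oscillation (Morrey/Campanato iteration) gives $\nabla g\in L^p$ for all $p$, and hence $\nabla g\in C^{\alpha}$ by Schauder theory for divergence-form equations with $C^\gamma$ coefficients. We iterate $\gamma\to\alpha$ to reach any $\alpha<1$.

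The main obstacle is the removability step: justifying that the weak equation holds across $f(\Sigma)$ and obtaining the needed integrability of $\partial g$ near $\Sigma$. If the Bochner $W^{1,2}$ bound turns out to be too weak for the quadratic term, the fallback is the scale-invariant decay \eqref{eqnAlmsotDoneHolder}, combined with interior $C^{1,\alpha}$ estimates at distance $\mathrm{d}(\cdot,\Sigma)$. These give $|\partial g|\lesssim \mathrm{d}(\cdot,\Sigma)^{\gamma-1}$, which is in $L^2$ near $\Sigma$ since $\Sigma$ has codimension $>2$.
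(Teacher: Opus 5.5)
Your argument is correct, but it takes a different route from the paper's in both items.

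\textbf{Item (1).} The paper first applies the geometric transformation theorem, arguing as in Cheeger--Jiang--Naber, to get that $f$ is a bi-H\"older homeomorphism onto an open set. Only afterwards does it upgrade to biLipschitz, using the pointwise bound $|g^{ij}-\delta_{ij}|\lesssim\delta$ from \cref{lemmaholder}. You use that pointwise bound from the outset. It makes $f$ itself, with no triangular corrections, a $C\delta$-splitting map on every ball $B_r(y)$, hence a $\Psi(\delta)r$-GH approximation at every scale. This gives the lower Lipschitz bound directly at $r=\mathrm{d}(y,z)$, and openness follows from Reifenberg plus invariance of domain.

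This is more self-contained. What it costs is the compactness argument showing that top-dimensional splitting maps on noncollapsed spaces are GH-isometries. You also need some bookkeeping near $\partial B_{1/2}(x)$ and at scales comparable to $1$, for instance by applying \cref{lemmaholder} on a slightly larger ball. Note that the uniformity in $r$ comes from the pointwise bound, not from \cref{thm:main} or its corollary, which lose a factor $r^{-\kappa}$.

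\textbf{Item (2).} The paper proves $|\partial g|\lesssim \mathrm{d}(\cdot,\Sigma)^{-\beta}$ by interpolating the H\"older bound against Anderson's $C^{1,\alpha}$ bound at scale $\mathrm{d}(\cdot,\Sigma)$. This gives $\partial g\in L^p$ with $p>n$ immediately, justifies the equation across $\Sigma$, and lets linear $W^{2,p}$ theory finish the proof.

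Your main route only has $\partial g\in L^2$ from the Hessian bound. That is enough for removability, since $\Sigma$ has zero $2$-capacity. To go further you rely on the regularity theory for continuous weak solutions of diagonal quasilinear systems with quadratic gradient growth. There, smallness of $\operatorname{osc} g$ absorbs the $|\partial g|^2$ term in the Caccioppoli and Campanato iterations. That theory does apply here, so your argument closes.

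In short, you trade a geometric estimate near $\Sigma$ for a heavier nonlinear PDE input, whereas the paper stays within linear theory. Your stated fallback is essentially the paper's estimate.
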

\begin{proof}
    The item (1) is classical. We include a sketch of the proof for the reader's convenience. The constant $C$ below might change from line to line.
    
    For every $\varepsilon>0$, up to choosing $\delta$ small enough, we have the following. By \cref{lem:almost-splitting} (which we can apply after using item (1) of \cref{lem:PropertiesHarmonic}) and \cite[Theorem 4.3]{ChJiNa} we can assume $\mathrm{d}_{\mathrm{GH}}(B_r(y), B_r^{\mathbb R^n}(0))<\varepsilon r$ for every $y\in B_{3/2}(x)$ and $r<1/2$. Moreover, by \cref{rem:HessianNotNecessary}, $f=(f_1,\ldots,f_n)$ is a $C\delta$-splitting map. Then we are in a position to apply the geometric transformation theorem (see \cite[Proposition 3.13 \& Corollary 3.16]{BNSInv}, and the original reference for Ricci limit spaces \cite[Theorem 7.2]{ChJiNa}) to $f$. Hence, arguing verbatim as in the first part of the proof of \cite[Theorem 7.10]{ChJiNa}, using the geometric transformation theorem, we get that $f:B_{1/2}(x)\to\mathbb R^n$ is a biH\"older homeomorphism onto its image. Once we know that $f$ is a homeomorphism, it follows from the last claim in Lemma~\ref{lemmaholder} that $f$ is actually biLipschitz.
    \smallskip

    In order to prove item (2), we first claim the following: 
    
    \smallskip
    \noindent\emph{Claim.} For every $1\leq p<\infty$, if $\delta:=\delta(p,n)$ is chosen small enough, then $|\partial g|\in L^p(B_{1/2}(x))$. 

    \begin{proof}[Proof of Claim]
    Let us fix $\beta>0$, let $p\in X\setminus \Sigma$, and define $r = \frac{1}{2}d(p)$ where $d(p)$ is the distance to $\Sigma$.  Rescale the metric by the factor $r^{-2}$ so that the ball $B_r(p)\Subset X\setminus\Sigma$ becomes a unit ball. In the rescaled metric, we denote the corresponding quantities by capital letters. 
    
    For any fixed $\alpha<1$, if $\delta:=\delta(n,\alpha)$ is chosen small enough, then from the Hölder control obtained in \cref{lemmaholder}, we have
\[
|G(x)-G(y)| \lesssim d^\alpha |x-y|^\alpha,
\]
for every $x,y$ in the unit rescaled ball. Hence the Hölder seminorm satisfies $[G]_\alpha \lesssim d^\alpha$ in the unit rescaled ball. At the same time, Anderson's result~\cite{Anderson90} together with the uniform Ricci bound on $B_{2r}(p)$ implies that the rescaled metric $G$ has a uniform $C^{1,\alpha}$-bound on the unit rescaled ball. We interpolate between these H\"older bounds for $G$ and $\partial G$: 
\[
\|\partial G\|_{L^\infty}
\lesssim
[G]_\alpha^{\alpha}[\partial G]_\alpha^{1-\alpha} \lesssim d^{\alpha^2}.
\]
Choosing $\alpha=\alpha(\beta)$ close enough to $1$ we get
\[
\|\partial G\|_{L^\infty}
\lesssim
d^{1-\beta},
\]
so that scaling back to the original metric, this yields
\begin{equation}\label{eqn:ToUseAcross}
|\partial g| \lesssim d^{-\beta}.
\end{equation}

    Let us now go back to the claim. Fix $1\leq p<\infty$.
    Since the singular set has codimension $2+a$, with $a>0$, we have, using the latter estimate on $|\partial g|$, and choosing $\delta:=\delta(\beta,n)$ small enough,
    \begin{equation}\label{eqn:LpControlll}
    \int_{B_{1/2}(x) \setminus \Sigma} |\partial g|^p \lesssim \int_0^1 s^{-\beta p}\, s^{(2+a)-1}\, \mathrm{d}s \lesssim 1, \end{equation}
    as long as $\beta:=\beta(p)$ is small enough for the given $p$. This finishes the proof of our claim.
    \end{proof}
    
    We now write the Ricci curvature in the harmonic coordinates defined by the $f_i$: 
\[ 
\Delta_g g = 2\mathrm{Ric} + \partial g \ast \partial g.
\]
Since $f$ is biLipschitz, it follows that in the harmonic coordinates $\Sigma$ still has codimension greater than 2.
Next notice that the above equation holds across $\Sigma$ in the distributional sense (using \eqref{eqn:ToUseAcross} and arguing as at the end of \cref{thm:4dEinstein} below). If $\delta$ is small enough, the coefficients are $C^\alpha$ on $B_{1/2}(x)$ by \cref{lemmaholder}, and by \eqref{eqn:LpControlll} we can arrange that the right hand side is in $L^p(B_{1/2}(x))$ for some fixed $p > n$. Then it follows that $g^{ij} \in W^{2,p}$, i.e. $g^{ij} \in C^{1,\alpha}$ for some $\alpha < 1$, and we can take $\alpha$ arbitrarily close to $1$ by choosing $p$ large. This completes the proof of item (2) in the proposition. 
\end{proof}

We can now prove Theorem~\ref{thm:extension}.
\begin{proof}[Proof of \cref{thm:extension}]
    Let $\epsilon > 0$, and consider the set $X_\epsilon \subset X$ consisting of points $x$ for which $\lim_{r\to 0} r^{-n} \mathcal{H}^n(B_r(x)) > \omega_n - \epsilon$, where $\omega_n$ is the volume of the unit ball. Note that this set is open. If $x\in X_\epsilon$ for sufficiently small $\epsilon$, then after scaling up the metric (by a factor depending on $x$), the assumptions of Proposition \ref{propCinfty} hold on the rescaled ball $B_2(x)$; here we are using the standard fact that balls with volume close to the volume of Euclidean balls are GH-close to Euclidean balls, and then there are top-dimensional splitting harmonic functions with small $D$ defined as in \cref{propCinfty}. As a consequence, for a fixed given $0<\alpha<1$, up to choosing $\varepsilon$ small enough, and rescaling the metric, the harmonic coordinates on the rescaled $B_{1/2}(x)$ define the structure of a $C^{1,\alpha}$ Riemannian manifold. In particular it also follows that actually the tangent cone at $x$ is $\mathbb{R}^n$, so $x\in X^{\mathrm{reg}}$. This shows that if $\epsilon$ is chosen sufficiently small, then $X_\epsilon = X^{\mathrm{reg}}$, and so $X^{\mathrm{reg}}$ is open, and has the structure of a $C^{1,\alpha}$ Riemannian manifold for every $0<\alpha<1$, as required. 
\end{proof}

\begin{remark}\label{rem:Codimension-Sharp}
The assumption about the codimension of $S$ being greater than $2$ is sharp in \cref{thm:extension}. Indeed, one can consider the flat $\mathbb R^2$ where, for every $n\geq 1$, the metric is changed locally around $(2^{-n},0)$ so that it has a cone singularity of cone angle $2\pi(1-4^{-n})$ at $(2^{-n},0)$. Denote by $X$ the resulting space. If $\Sigma:=\{0\}\cup\{2^{-n}\}_{n\geq 1}$, $X$ is a $2$-dimensional Alexandrov space with curvature bounded from below, hence, endowed with $\mathcal{H}^2$, an $\mathrm{RCD}(-K,2)$ for some $K\geq 0$. Moreover $X\setminus \Sigma$ is smooth, and $|\mathrm{Ric}|\leq K$ on $X\setminus \Sigma$, possibly choosing a larger $K\geq 0$. Notice that $0\in X^{\mathrm{reg}}$, but $2^{-n}\notin X^{\mathrm{reg}}$, so that $X^{\mathrm{reg}}$ is not open. In fact, we cannot apply \cref{thm:extension} because $\Sigma$ has Assouad codimension $=2$.
\end{remark}

As a consequence we  prove \cref{thm:4dEinstein}.
\begin{proof}[Proof of \cref{thm:4dEinstein}]
    By \cref{thm:RCDcodim3} we have that $(M,\mathrm{d},\mathcal{H}^4)$ is an $\mathrm{RCD}(K,4)$ space. We claim that all points of $M$ are regular.  
    
    Let $p\in M$, and let $T_p$ be a tangent cone at $p$. 
    By the $L^\infty$ metric assumption and \cite{DePhilippisGigli} we have that $T_p=C(X)$ is biLipschitz equivalent to $\mathbb R^4$, where $C(X)$ is the cone over a metric space. Moreover $T_p\setminus\{o\}$ is biLipschitz equivalent to $\mathbb R^4\setminus\{0\}$, and hence $X$ is simply connected. Let $r_i\to 0$ be a sequence of radii realizing $T_p$. We have
    \[
    (M,r_i^{-1}\mathrm{d},p,S)\to (C(X),\mathrm{d}_{C(X)},o,S'),
    \]
    where $S'$ has codimension greater than $3-\frac{1}{3}$ in $C(X)$. Let $A:=\{x\in X:[1/2,2]\times \{x\}\subset S'\}$. Then $A\subset X$ is closed, and since $[1/2,2]\times A\subset S'$, $A$ has codimension greater than $3-\frac{1}{3}$ in $X$ as well. Moreover, since $M\setminus S$ is smooth and satisfies $\mathrm{Ric}=K$, we also have, using the codimension-four result in \cite{CNAnnMath}, that the convergence is smooth locally outside $S'\cup Z$, where $\mathrm{dim}_H(Z)\leq 0$. Hence $C(X)\setminus S'$ is smooth and Ricci flat outside $Z\subset C(X)$ with $\mathrm{dim}_H(Z)\leq 0$, and from this we can deduce that $X\setminus A$ is smooth and it satisfies $\mathrm{Ric}=2$. Now let us blow-up again at $x\in A$. Given a sequence $t_i\to 0$ we have, up to subsequences,
    \[
    (X,t_i^{-1}\mathrm{d}_X,x,A)\to (C(Y),\mathrm{d}_{C(Y)},o',S'').
    \]
    Notice that $S''$ has codimension greater than $3-\frac{1}{3}$ in $C(Y)$. Arguing as above we get that $C(Y)\setminus S''$ is smooth and Ricci flat, and, using that $Y$ has Hausdorff dimension 2 and the bound on the codimension of the singular set, we deduce that $Y$ must be a smooth closed surface of constant sectional curvature equal to $1$. Hence either $Y=\mathbb S^2$ or $Y=\mathbb{RP}^2$. 

    The tangent cone of $C(X)$ at $(1,x)$ is $\mathbb R\times C(Y)$ and is itself biLipschitz equivalent to $\mathbb R^4$. This forces $Y=\mathbb S^2$, and thus $C(Y)=\mathbb R^3$. Since the tangent cone $C(Y)$ was arbitrary, this implies that every point $x\in A$ is regular in $X$. Moreover, since $X\setminus A$ is smooth, we deduce that actually every point $x\in X$ is regular.
    
    Hence $X=X^{\mathrm{reg}}$ and by \cref{thm:extension} (recall that the set $A$ has codimension greater than $3-\frac{1}{3}>2$), and using Einstein equation to boot-strap the regularity of the metric (see the end of this proof for details), we have that $X$ has a smooth structure in which the metric is smooth and satisfies $\mathrm{Ric}=2$. Thus it has constant sectional curvature because we are in dimension $3$. But then, since $X$ is simply connected, $X$ must be isometric to $\mathbb S^3$. Thus every tangent cone at $p$ is isometric to $(\mathbb R^4,g_{\mathrm{eu}})$, as claimed. 
    
    As a consequence $M^{\mathrm{reg}}=M$, and thus \cref{thm:extension} applies, and we conclude that $M$ has the structure of a $C^{1,\alpha}$-Riemannian manifold. In particular, in harmonic coordinates the metric components are in $C^{1,\alpha}$. In a harmonic coordinate chart $B_1(x)$, writing the Einstein equation $\mathrm{Ric}=K$ we have
    \[ \Delta g + \partial g \ast \partial g = 2Kg, \]
    on $B_1(x) \setminus S$. 
    
    Hence, the equation satisfied by $g$ in coordinates is $\partial_a(\sqrt{\det g}g^{ab}\partial_b g_{ij})=H_{ij}$ where $g\in C^{1,\alpha}$ and $H\in C^{0,\alpha}$. Since the codimension of $S$ is $>2$ (actually, for this last argument, codimension $>1$ is enough), for every $U\Subset B_1(x)$ we can find a family of functions $\{\chi_\varepsilon\}_{0<\varepsilon<\varepsilon_0}$ such that $0\leq \chi_\varepsilon\leq 1$, $\chi_\varepsilon\equiv 1$ outside $B_{2\varepsilon}(S)$, $\chi_\varepsilon\equiv 0$ on $B_{\varepsilon}(S)\cap U$, and $\int_U |\nabla\chi_\varepsilon|\to 0$ as $\varepsilon\to 0$. Testing the equation with $\chi_\varepsilon\varphi$, for any $\varphi\in C^\infty_c(B_1(x))$, and sending $\varepsilon\to 0$, we get that $\partial_a(\sqrt{det}(g)g^{ab}\partial_b g_{ij})=H_{ij}$ holds in the distributional sense on $B_1(x)$. Thus, using classical bootstrap of regularity we get that $g\in C^\infty$, as desired.
    
\end{proof}

\begin{remark}\label{rem:ToDim5}
    The argument in the proof of \cref{thm:4dEinstein} extends to dimension $5$ (the assumption being ``Assouad codimension $>3-\frac{1}{4}$") as soon as we know that the round metric is the only Einstein metric with $\mathrm{Ric}=3$ on $\mathbb S^4$. Indeed, if the only Einstein metric with $\mathrm{Ric}=3$ on $\mathbb S^4$ is the standard one, in the reverse induction argument in the proof of \cref{thm:highdimEinstein} (see below) one can still prove $T_p=\{\mathbb R^5\}$ for every point $p\in M$. This is enough to apply \cref{thm:extension}, thus extending \cref{thm:4dEinstein} to dimension 5 as well.
\end{remark}

Before giving the proof of \cref{thm:highdimEinstein}, we need a Lemma. For the following statement, see also \cite[Theorem 4.3]{Carron}.
\begin{lemma}\label{lemEinstein}
    For every $n\geq 2$ there exists $\eta(n)$ for which the following holds. Let $g$ be a smooth metric on a closed connected manifold $X$ for which $\mathrm{Ric}_g=(n-1)g$. Assume 
    \[
    |\vol_g(X)-\vol(\mathbb S^n)|\leq \eta(n),
    \]
    where $\vol(\mathbb S^n)$ is the volume of the round $\mathbb S^n$. Then $(X,g)$ is isometric to the round $\mathbb S^n$.
\end{lemma}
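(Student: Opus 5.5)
Argue by contradiction with a compactness argument, using Bishop--Gromov rigidity and volume convergence on RCD spaces together with Anderson's $\epsilon$-regularity for Einstein metrics. Suppose there are closed connected Einstein manifolds $(X_k,g_k)$ with $\mathrm{Ric}_{g_k}=(n-1)g_k$, $\vol_{g_k}(X_k)\to\vol(\mathbb S^n)$, and none isometric to the round sphere. By Bishop--Gromov, $\vol_{g_k}(X_k)\leq\vol(\mathbb S^n)$, with equality only for the round sphere. Moreover the diameters are at most $\pi$ by Myers. Each $(X_k,\mathrm d_{g_k},\mathcal H^n)$ is an $\mathrm{RCD}(n-1,n)$ space. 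By Gromov precompactness, after passing to a subsequence, $X_k\to X_\infty$ in the GH sense. The volumes are bounded below, so the sequence is noncollapsed, and by volume convergence \cite[Theorem 1.3]{DePhilippisGigli} the limit $(X_\infty,\mathrm d_\infty,\mathcal H^n)$ is an $\mathrm{RCD}(n-1,n)$ space with $\mathcal H^n(X_\infty)=\vol(\mathbb S^n)$.

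Next I use the rigidity statement of \cite[Corollary 1.7]{DePhilippisGigli} (or maximal volume rigidity for noncollapsed $\mathrm{RCD}(n-1,n)$ spaces). With $\mathcal H^n(B_\pi(y))=\vol(\mathbb S^n)=v_{1,n}(\pi)$ for any $y$, this forces $X_\infty$ to be isometric to the round $\mathbb S^n$. An alternative route is local: every $y$ satisfies $\mathcal H^n(B_r(y))/v_{1,n}(r)\to 1$ as $r\to 0$ by monotonicity. Either way, all points of $X_\infty$ are regular with volume ratio equal to the model at every scale up to $\pi$.

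Now upgrade GH convergence to smooth convergence. By volume convergence and Bishop--Gromov, for $k$ large, every ball $B_r(p)\subset X_k$ with $r\leq r_0$ has volume ratio at least $(1-\epsilon)\omega_n$, where $\epsilon$ is Anderson's threshold. Anderson's $\epsilon$-regularity for Einstein metrics then gives a uniform harmonic radius and uniform $C^{m}$ bounds, for every $m$, on the curvature of $g_k$ (bootstrapping the Einstein equation in harmonic coordinates). Cheeger--Gromov compactness therefore gives diffeomorphisms $\phi_k:\mathbb S^n\to X_k$ with $\phi_k^*g_k\to g_{\mathbb S^n}$ in $C^\infty$.

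The main obstacle is the final step: being $C^\infty$-close to the round metric must imply being round. Here I use local rigidity of the round sphere among Einstein metrics. Since $\mathrm{Ric}=(n-1)g$, the metric $\phi_k^*g_k$ has sectional curvatures in $(1-\epsilon_k,1+\epsilon_k)$, hence it is $\epsilon_k$-pinched. Einstein metrics with sufficiently pinched curvature have constant curvature; this follows from the Bochner-type argument of Huisken/Tachibana for small traceless curvature, or from Brendle--Schoen and Böhm--Wilking, since the curvature operator is then positive. An alternative is infinitesimal rigidity: the Lichnerowicz Laplacian on trace-free divergence-free tensors on $\mathbb S^n$ has no kernel at the Einstein eigenvalue, so the round metric is isolated in the Einstein moduli space modulo diffeomorphisms and scaling, by Ebin slice plus an implicit function argument. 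Either way $g_k$ is round for large $k$, a contradiction.
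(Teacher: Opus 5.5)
Your proposal is correct and follows essentially the same route as the paper. Both argue by contradiction, use Anderson's $\epsilon$-regularity plus bootstrapping of the Einstein equation to get $\phi_k^*g_k\to g_{\mathbb S^n}$ in $C^\infty$ (the paper cites \cite[Theorem 1.2]{Anderson90} directly), and conclude from the isolation of the round metric in the Einstein moduli space \cite{BesseBook}, which is your second option. Your Tachibana/pinching option is an equally valid finish; note that the pinching comes from the $C^2$-convergence, not from $\mathrm{Ric}=(n-1)g$ itself. The detour through GH limits and RCD volume rigidity is harmless but unnecessary, since Bishop--Gromov already gives $\vol(B_r(p))/v_{1,n}(r)\geq \vol_{g_k}(X_k)/\vol(\mathbb S^n)\to 1$ uniformly in $p\in X_k$ and $r\leq\pi$.
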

\begin{proof}
    Let $g_{\mathrm{rd}}$ be the round metric on $\mathbb S^n$. Arguing by contradiction, and using Anderson's result \cite[Theorem 1.2]{Anderson90}, there are metrics $g_k$ on $\mathbb S^n$ with $\mathrm{Ric}_{g_k}=(n-1)g_k$ that are not isometric to the round metric, and such that $g_k\to g_{\mathrm{rd}}$ in the $C^{1,\alpha}$-topology. Now, by bootstrapping using the Einstein equation, we have that there are diffeomorphisms $\phi_k$ such that $\phi_k^*g_k\to g_{\mathrm{rd}}$  in the $C^{\infty}$-topology. By the results in \cite[Chapter 4 \& 12]{BesseBook}, we have that $g_{\mathrm{rd}}$ is isolated in the moduli space of Einstein metrics with $\mathrm{Ric}=n-1$ on the sphere $\mathbb S^n$. Then, for large $k$, $\phi_k^*g_k$ is isometric to $g_{\mathrm{rd}}$, and thus $(\mathbb S^n,g_k)$ is isometric to the round sphere, contradiction.
\end{proof}

\begin{proof}[Proof of \cref{thm:highdimEinstein}]
The case $n=2$ is trivial, so let us assume $n\geq 3$. 

First, by the slightly improved version of \cref{thm:RCDcodim3} discussed in \cref{rem:Better-on-Lambda}, we can choose $\varepsilon:=\varepsilon(n,a)>0$ small enough in (a) so that 
$(M^n,\mathrm{d},\mathcal{H}^n)$ is an $\mathrm{RCD}(K,n)$ space. For a continuous metric, the latter follows from \cite[Remark 2.8]{DWWW}. 

Let us fix $p\in M$. If $g$ is continuous, then by using \cite[Lemma 2.5]{AFNP} we have that the tangent cone at $p\in M$ is unique and isometric to $\mathbb R^n$. Thus the proof is concluded using \cref{thm:extension}, and the fact that the Einstein equation boot-straps the regularity from $C^{1,\alpha}$ to $C^\infty$. Let us now continue the proof under the assumptions of item (a).

If $g$ satisfies $(1-\varepsilon)h\leq g\leq (1+\varepsilon)h$, then every tangent cone $C(X)$ at $p$ satisfies 
\begin{equation}\label{eqn:VolumeControl}
|\mathcal{H}^{n-1}(X)-\vol(\mathbb S^{n-1})|\leq \Psi(\varepsilon),
\end{equation} 
where $\Psi(\varepsilon)\to 0$ as $\varepsilon \to 0$. Let $T_p$ be the family of tangent cones at $p$. We want to prove that $T_p=\{\mathbb R^n\}$. We define inductively
\[
T_p^0:=T_p,\quad \quad 
T_p^i:=\{T_x\Sigma:\Sigma=C(X)\in T_p^{i-1}, x\in X\hookrightarrow \Sigma\},\,\forall i\geq 1.
\]
Notice that if $\Sigma\in T_p^i$, then by using Gigli's splitting theorem \cite{GigliSplitting}, $\Sigma=\mathbb R^i\times C(X)$ where $X$ is $\mathrm{RCD}(n-i-2,n-i-1)$. 
We aim at proving, with a reversing induction procedure, that $T_p^i=\{\mathbb R^n\}$ for every $0\leq i\leq n-3$. We do this in two steps.
\begin{itemize}
    \item \textbf{For every $n\geq 3$, we show $T_p^{n-3}=\{\mathbb R^n\}$}. Indeed, if $\Sigma\in T_p^{n-3}$, then $\Sigma=\mathbb R^{n-3}\times C(X')$. The singular set $S$, after iterated blow-ups, converges to a singular set $S'\subset \Sigma$ with codimension greater than $2+a$. Using Cheeger--Naber \cite{CNAnnMath} we have that $\Sigma\setminus S'$ is a smooth Ricci flat manifold. Indeed, every point $x\in \Sigma\setminus S'$ must be a regular point by the codimension-four result in \cite{CNAnnMath}, and thus the convergence to the blow-up is smooth locally away from $S'$. 
    
    Since $X'$ is $2$-dimensional, the assumption on the codimension of $S'$ being greater than $2+a$, together with the fact that $\Sigma\setminus S'$ is smooth and Ricci flat, implies that $X'$ is in fact smooth and has constant sectional curvature equal to $1$. Thus $X'$ is isometric to either $\mathbb R\mathbb P^2$ or $\mathbb S^2$. The former is excluded for $\varepsilon$ small enough due to \eqref{eqn:VolumeControl}\footnote{For this step, it is enough to use that $g$ is $L^\infty$, compare the beginning of the proof of \cref{thm:4dEinstein}.}. Hence $X'\cong\mathbb S^2$ and thus $\Sigma=\mathbb R^n$, as desired.
    \item \textbf{For every $n\geq 4$, let $0\leq i \leq n-4$. We show that if $T_p^{i+1}=\{\mathbb R^n\}$, then $T_p^{i}=\{\mathbb R^n\}$}. Let $\Sigma\in T_p^i$. Then $\Sigma=\mathbb R^{i}\times C(X')$ where  $(X',\mathrm{d}_{X'},\mathcal{H}^{n-i-1}$) is an $\mathrm{RCD}(n-i-2,n-i-1)$ space. Take $x\in X'$. We aim at showing that $T_xX'\cong\mathbb R^{n-i-1}$. Indeed, $T_{(0,x)}\Sigma\cong\mathbb R^{i+1}\times T_xX'\in T_p^{i+1}$, so that, since $T_p^{i+1}=\{\mathbb R^n\}$, we deduce that $T_xX'\cong \mathbb R^{n-i-1}$. 

    In this iterated blow-up $\Sigma$ the singular set $S$ converges to a singular set $S'\subset \Sigma$ with codimension greater than $2+a$. Since we proved that every point $x\in \Sigma\setminus S'$ is regular, applying \cite{CNAnnMath} and using the assumption on the codimension as in the previous step, we get that there exists $S''\subset X'$ such that $X'\setminus S''$ is smooth with $\mathrm{Ric}=n-i-2$, and $S''$ has codimension greater than $2+a$ in $X'$. Hence, we can apply \cref{thm:extension} to $X'$, and, using the Einstein equation to boot-strap the regularity of the metric, we get that $X'$ has a smooth structure in which the metric is smooth with $\mathrm{Ric}=n-i-2$. Moreover, by \eqref{eqn:VolumeControl}, we have that, if $\varepsilon(n,a)$ is chosen small enough, 
    \[
    |\vol(X')-\vol(\mathbb S^{n-i-1})|\leq \eta(n-i-1).
    \]
    Thus by \cref{lemEinstein} we have that $X'$ is isometric to the round sphere $\mathbb S^{n-i-1}$ and thus $\Sigma\cong \mathbb R^n$. Hence $T_p^i=\{\mathbb R^n\}$, as desired.
\end{itemize}

Inductively, using the two items above, we have proved that $T_p=\{\mathbb R^n\}$. Now, to conclude the proof, we apply \cref{thm:extension} to $M$, and then use the Einstein equation to bootstrap the regularity, completing the proof.
\end{proof}

\appendix

\section{Splitting maps}\label{sec:Splitting}

We recall the following classical definition; see, e.g., \cite{CNAnnMath, ChJiNa, BPSJems, BruePasqualettoSemola, BNSInv}.
\begin{definition}\label{eqn:SplittingMap}
    Let $n\geq 2$ be an integer. Let $(X,\mathrm{d},\mathcal{H}^n)$ be an $\mathrm{RCD}(-1,n)$ metric measure space, and let $1\leq k\leq n$. Let $\delta>0$, $r>0$. We say that $u=(u_1,\ldots,u_k):B_r(x)\to \mathbb R^k$ is a {\rm $\delta$-splitting map} if there is a constant $C=C(n)$ such that the following hold:
    \begin{enumerate}
        \item\label{item1} $u_i$ is harmonic on $B_r(x)$ for every $i=1,\ldots,k$.
        \item\label{item2} $|\nabla u_i|\leq C$ for every $i=1,\ldots,k$.
        \item\label{item3} We have, for every $i,j=1,\ldots,k$,
        \[
        \fint_{B_r(x)} |\nabla u_i\cdot \nabla u_j-\delta_{ij}|\d\mathcal{H}^n \leq \delta.
        \]
        \item\label{item4} We have, for every $i=1,\ldots,k$ 
        \[
        r^2\fint_{B_r(x)} |\Hess u_i|^2\d\mathcal{H}^n \leq \delta
        \]
    \end{enumerate}
\end{definition}
The following lemma is well known; see, e.g.,
\cite[proof of Lemma 3.34, (3.42)--(3.46)]{CNAnnMath}
for the smooth setting and
\cite[proof of Lemma 4.3, (4.14)--(4.22)]{HondaPeng}
for its $\operatorname{RCD}$ implementation; we spell out its proof here for
the reader's convenience.
\begin{lemma}\label{lem:PropertiesHarmonic}
    Let $n\geq 2$ be an integer. Let $(X,\mathrm{d},\mathcal{H}^n)$ be an $\mathrm{RCD}(-K,n)$ space, with $0\leq K\leq 1$. 
    Let $u:B_3(x)\to \mathbb R$ be a harmonic function, and let 
    \[
    D:=\fint_{B_2(x)} ||\nabla u|^2-1|\mathrm{d}\mathcal{H}^n.
    \] 
    Then there exists $C:=C(n)$ such that:
    \begin{enumerate}
        \item For every $0<r<1$ we have that
        \begin{equation}\label{eqn:HessianDecay}
        r^2\fint_{B_r(x)}|\Hess u|^2\mathrm{d}\mathcal{H}^n \leq C(n)\left(Kr^2 + \fint_{B_{2r}(x)}||\nabla u|^2-1|\mathrm{d}\mathcal{H}^n\right);
        \end{equation}
        \item We have that for almost every $y\in B_1(x)$
        \begin{equation}\label{eqn:GradientEstimate}
        |\nabla u|^2(y)\leq 1+C(n)(D+K).
        \end{equation}
    \end{enumerate}
\end{lemma}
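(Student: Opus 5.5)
The plan is to use the localized improved Bochner inequality for the harmonic function $u$, namely $\frac12\mathbf{\Delta}|\nabla u|^2 + K|\nabla u|^2\mathcal H^n \geq |\Hess u|^2\mathcal H^n$ as measures on $B_3(x)$ (\cite[Proposition~2.14]{GigliViolo}). Both items follow from testing this against good cutoffs: item (1) uses a spatial cutoff, and item (2) uses the heat kernel together with a cutoff. Throughout we write $w:=|\nabla u|^2-1$, which lies in $W^{1,2}_{\loc}$ and satisfies $\frac12\mathbf{\Delta}w+K(w+1)\geq|\Hess u|^2$.

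For item (1), fix $0<r<1$ and take $\varphi\in\TestF(X)$ with $0\leq\varphi\leq1$, $\varphi\equiv1$ on $B_r(x)$, $\supp\varphi\Subset B_{2r}(x)$, and $|\Delta\varphi|\leq C(n)r^{-2}$. Such a $\varphi$ exists by \cite[Proposition~2.13]{GigliViolo} (see also \cite[Lemma 3.1]{MondinoNaber}); the constants are uniform because $Kr^2\leq1$. Testing the inequality with $\varphi$ and moving the Laplacian onto $\varphi$ gives
\[
\int_{B_r}|\Hess u|^2\leq \frac12\int w\,\Delta\varphi + K\int\varphi(w+1)\leq C r^{-2}\int_{B_{2r}}|w| + K\int_{B_{2r}}(|w|+1).
\]
Dividing by $\mathcal H^n(B_r(x))$ and using Bishop--Gromov doubling ($\mathcal H^n(B_{2r})\leq C(n)\mathcal H^n(B_r)$) yields \eqref{eqn:HessianDecay}. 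The term $K\fint|w|$ is absorbed because $Kr^2\leq 1$.

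For item (2), I follow the heat-flow argument sketched before \eqref{eqn:ToShowToShow}. Choose a cutoff $h=\varphi^2$ with $h\equiv1$ on $B_{3/2}(x)$, $\supp h\Subset B_2(x)$, and $|\nabla h|+|\Delta h|\leq C(n)$. For a Lebesgue point $y\in B_1(x)$, set $F_y(t):=\int h w\, p_t(y,\cdot)\,\mathrm d\mathcal H^n$. Differentiating,
\[
F_y'(t)=\int hw\,\Delta_z p_t=\int p_t\,\mathrm d\mathbf{\Delta}(hw)=\int p_t\bigl(h\,\mathrm d\mathbf\Delta w + w\Delta h\,\mathrm d\mathcal H^n+2\langle\nabla h,\nabla w\rangle\,\mathrm d\mathcal H^n\bigr).
\]
Bochner gives $h\,\mathbf\Delta w\geq -2K h(w+1)$, so the first term is at least $-2KF_y(t)-2K$. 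The remaining terms are supported in $\mathcal A:=\supp(|\nabla h|+|\Delta h|)$, which satisfies $\dist(y,\mathcal A)\geq 1/2$. There the Li--Yau type upper bounds on $p_t$ and $|\nabla p_t|$ in RCD spaces give $p_t(y,z)+|\nabla_z p_t(y,z)|\leq C\,\mathcal H^n(B_1)^{-1}\omega(t)$ with $\omega(t)=t^{-(n+1)/2}e^{-c/t}$. To handle the cross term, integrate by parts, $\int p_t\langle\nabla h,\nabla w\rangle=-\int w\,\mathrm{div}(p_t\nabla h)$, so that only $|w|$ on $B_2$ appears. This yields
\[
F_y'+2KF_y\geq -C(K+D\omega(t)),\qquad |F_y(1)|\leq CD,
\]
where the bound on $F_y(1)$ uses the Gaussian upper bound of the heat kernel on scale one. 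Integrating from $t$ to $1$ and letting $t\to0$ (so that $F_y(t)\to w(y)$ at Lebesgue points, since $\omega\in L^1(0,1)$) gives $w(y)\leq C(D+K)$, which is \eqref{eqn:GradientEstimate}.

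The main obstacle is making the heat-kernel computation rigorous in the nonsmooth setting. Specifically, one must justify differentiating $F_y$ when $\mathbf\Delta w$ is only a measure and $w$ is not a priori bounded; the remedy is to truncate $w$ or first establish local boundedness of $|\nabla u|$ via Cheng--Yau. One must also control the cross term $\langle\nabla h,\nabla w\rangle$ using only $L^1$ information on $w$, which is why the integration by parts above and uniform gradient bounds on the heat kernel away from the diagonal are needed.
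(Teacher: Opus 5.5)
Your proposal is correct and follows the paper's proof essentially step by step. Item (1) is obtained by testing the localized improved Bochner inequality against a cutoff with $r^2|\Delta\varphi_r|\leq C(n)$ and then applying Bishop--Gromov. Item (2) uses the same heat-kernel argument for $F_y(t)=\int hw\,p_t(y,\cdot)\,\mathrm d\mathcal H^n$, with the same off-diagonal bounds on $p_t$ and $|\nabla p_t|$ on $\mathcal A$, the same differential inequality $F_y'+2KF_y\geq -C(K+D\omega(t))$, and the same endpoint bound $|F_y(1)|\leq CD$. The only differences are your cutoff radii ($3/2,2$ instead of the paper's $1.1,1.2$) and your explicit remark on justifying the differentiation of $F_y$, which the paper leaves implicit.
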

\begin{proof}
 We assume that the reader is familiar with elements of calculus in $\mathrm{RCD}$ spaces \cite{GigliNDG}. For a detailed self-contained presentation of the results we will use, the reader might read \cite[Section 2]{GigliViolo}.

 Before starting the proof, notice that $|\nabla u|^2\in W^{1,2}_{\mathrm{loc}}(B_{3}(x))$ and $\mathbf{\Delta}|\nabla u|^2$ is a locally finite measure on $B_3(x)$: see \cite[Section 2.2.1]{GigliViolo}, and \cite[Corollary 3.3.9]{GigliNDG}. All the constants $C$ in the proof might change from line to line and will only depend on $n$.
 
 Let us prove item (1). Let $\varphi_r$ be a cut-off function such that $\varphi_r\equiv 1$ on $B_r(x)$, $\varphi_r\equiv 0$ on $X\setminus B_{2r}(x)$, and $r|\nabla\varphi_r|+r^2|\Delta\varphi_r|\leq C(n)$. The latter exists due to \cite[Lemma 3.1]{MondinoNaber}, and the bound is $\leq C(n)$ because we have $0\leq K\leq 1$. Using the localized improved Bochner inequality \cite[Proposition~2.14]{GigliViolo} (based on \cite[Theorem 3.3]{Bangxian}), and Bishop--Gromov monotonicity formula we have that for every $r<3/2$
    \[
    \begin{aligned}
    2r^2\fint_{B_r(x)}&|\Hess u|^2\d\mathcal{H}^n\leq \frac{r^2}{\mathcal{H}^n(B_r(x))}\int_{X} \varphi_r\left(\d\mathbf{\Delta}(|\nabla u|^2-1) +2K|\nabla u|^2\d\mathcal{H}^n\right)  \\
    &\leq C(n)\left(r^2\fint_{B_{2r}(x)}||\nabla u|^2-1||\Delta\varphi_r|\d\mathcal{H}^n + Kr^2\fint_{B_{2r}(x)}||\nabla u|^2-1|\d\mathcal{H}^n + Kr^2\right) \\
    &\leq C(n)\left(Kr^2 + \fint_{B_{2r}(x)}||\nabla u|^2-1|\d\mathcal{H}^n\right),
    \end{aligned}
    \]
    as desired. 
    
Let us now prove item (2) directly using heat kernel estimates. A different proof using Moser iteration is possible, but we do not pursue it here. Choose $\varphi\in\mathrm{Test}_{c}(X)$ such that
\[
 0\leq\varphi\leq1,\qquad
 \varphi\equiv1\ \text{on }B_{1.1}(x),\qquad
 \supp\varphi\Subset B_{1.2}(x),\qquad
 |\nabla\varphi|+|\Delta\varphi|\leq C(n),
\]
whose existence follows from \cite[Lemma 3.1]{MondinoNaber}, and put $h:=\varphi^2$ and $\mathcal A:=\supp(|\nabla h|+|\Delta h|)$. If $y\in B_{1}(x)$, then $\dist(y,\mathcal A)\geq 0.1$. Consequently, the Gaussian and spatial-gradient heat-kernel bounds \cite[(2.12)]{GigliViolo} (based on
\cite[Theorem 1.2 and Corollary 1.2]{JiangLiZhang}), together with Bishop--Gromov comparison, and $0\leq K\leq 1$ give
\[
 p_t(y,z)+|\nabla_zp_t(y,\cdot)|(z)
 \leq\frac{C(n)\omega(t)}{\Hn(B_1(y))}
 \quad\text{for $\Hn$-a.e. }z\in\mathcal A,\quad 0<t\leq1,
\]
where $\omega(t):=t^{-(n+1)/2}e^{-c(n)/t}$, and thus $\int_0^1\omega(t)\,dt\leq C(n)$. Set $w:=|\nabla u|^2-1$. By the localized improved Bochner inequality used above we have, in the sense of measures,
\begin{equation}\label{eqn:ImprovedBochnerw}
\mathbf{\Delta}w +2K(w+1)\mathcal{H}^n\geq 0.
\end{equation}
For every Lebesgue point $y\in B_{1}(x)$ of $h w$, define
\[
 F_y(t):=\int_X h(z)w(z)p_t(y,z)\,\mathrm{d}\Hn(z).
\]
Writing $p_t=p_t(y,\cdot)$, for a.e. $t\in(0,1)$ we have using \eqref{eqn:ImprovedBochnerw} and Leibniz rule for the Laplacian,
\[
 \begin{aligned}
 F_y'(t)
 &=\int_X hp_t\,\mathrm{d}\mathbf{\Delta}w
   -\int_Xwp_t\Delta h\,\mathrm{d}\Hn
   -2\int_Xw\langle\nabla h,\nabla_zp_t\rangle\,\mathrm{d}\Hn\\
 &\geq-2K F_y(t)-2K
   -C(n)\int_{\mathcal A}|w|
          \left(p_t+|\nabla_zp_t|\right)\,\mathrm{d}\Hn.
 \end{aligned}
\]
Since by Bishop--Gromov monotonicity and $0\leq K\leq 1$ we have
\[
\int_{\mathcal{A}}|w|\d\mathcal{H}^n\leq \int_{B_{2}(x)}|w|\,\mathrm{d}\Hn
 \leq D\,\Hn(B_{2}(x)),
 \qquad
 \frac{\Hn(B_{2}(x))}{\Hn(B_1(y))}\leq C(n),
\]
we obtain
\[
 F_y'(t)+2K F_y(t)
 \geq-C(n)\left(K+D\omega(t)\right).
\]
The heat-kernel upper bound at time one also gives $|F_y(1)|\leq C(n) D$. Therefore, integrating the preceding differential inequality from $1$ to $t>0$,
\[
 F_y(t)\leq e^{2 K(1-t)}F_y(1)
 +C(n)\int_t^1e^{2K(s-t)}
                 \left(K+D\omega(s)\right)\mathrm{d}s.
\]
Finally, $F_y(t)=P_t(hw)(y)\to h(y)w(y)=w(y)$ as $t\to 0$ at $\Hn$-a.e.\! $y$. Since $0\leq K\leq1$ and $\omega\in L^1(0,1)$, it follows that
\[
 \mathop{\rm ess\,sup}_{B_{1}(x)}w
 \leq C(n)(K+D),
\]
concluding the proof of item (2).
\end{proof}

The previous \cref{lem:PropertiesHarmonic} implies that \cref{item2} and \cref{item4} in the definition of \cref{eqn:SplittingMap} are often redundant.

\begin{remark}\label{rem:HessianNotNecessary}
    Let $n\geq 2$ be an integer, and let $1\leq k\leq n$ be an integer. Let $0\leq \delta\leq 1$ and $0<r\leq 1$. Let us assume:
    \begin{enumerate}
        \item $(X,\mathrm{d},\mathcal{H}^n)$ is an $\mathrm{RCD}(-\delta,n)$ metric measure space;
        \item For every $i=1,\ldots, k$, $u_i$ is defined and harmonic on $B_{3r}(x)\subset X$,
        \item We have, for every $i,j=1,\ldots,k$,
        \[
        \fint_{B_{2r}(x)} |\nabla u_i\cdot \nabla u_j-\delta_{ij}|\d\mathcal{H}^n \leq \delta.
        \]
    \end{enumerate}
    Then, up to possibly replacing $\delta$ with $C(n)\delta$, we have item (4) in \cref{eqn:SplittingMap}. Indeed, this follows from item (1) in \cref{lem:PropertiesHarmonic}. Hence, in the setting of the present remark, item (4) in \cref{eqn:SplittingMap} (up to changing $\delta$ with $C(n)\delta$) is a consequence of item (1), and item (3) in \cref{eqn:SplittingMap}. 
    
    Finally, notice the gradient estimate in item (2) in \cref{eqn:SplittingMap} comes from item (2) in \cref{lem:PropertiesHarmonic}. Actually, in the setting of the present remark, we have the following self-improved estimate on $B_r(x)$ for every $i=1,\ldots,k$:
    \[
    |\nabla u_i|^2\leq 1+C(n)\delta.
    \]
    Hence, in the setting of the present remark, item (2) in \cref{eqn:SplittingMap} is a consequence of item (1), and item (3) in \cref{eqn:SplittingMap}. 
\end{remark}

\section{From measure bounds on the Laplacian to \texorpdfstring{$W^{1,q}$}{W1q}-bounds}

In \cref{sec:gram-defect} we used the following result, which we prove here in full detail for the reader's convenience. The following proof is an adaptation to the RCD setting of classical results on $\mathbb R^n$; see, e.g, \cite[Theorem 9.1]{Stampacchia} and \cite[Proposition 4.1 and Section 4.2]{Ponce}.
\begin{lemma}\label{lem:measure-data-rigorous}
Let $(Y,\mathrm{d},\mathcal{H}^n,y)$ be a pointed
$\operatorname{RCD}(K,n)$ space, with an integer $n\geq 2$, and $K\in\mathbb R$. Let $R>0$, and suppose that $\overline{B}_{\frac{3}{2}R}(y)\neq B_{2R}(y)$, and
\[
 f\in W^{1,2}_{\mathrm{loc}}(B_{2R}(y)),\qquad
 \mathbf{\Delta} f=\sigma,
\]
where $\sigma$ is a finite signed Radon measure on $B_{2R}(y)$.  Then,
for every $1\leq q<n/(n-1)$,
\begin{equation}\label{eq:measure-data-estimate}
 \|f\|_{W^{1,q}(B_R(y))}
 \leq C\left(\|f\|_{L^1(B_{2R}(y))}+|\sigma|(B_{2R}(y))\right).
\end{equation}
The constant $C$ depends on $q,K,n,R$ and geometric data on $B_{2R}(y)$\footnote{From the proof, one can track down precisely the constants on which this $C$ depends on: those are the constants appearing in \eqref{eq:P-Lq}, \eqref{eq:gradient-Lq}, \eqref{eqn:LinftyL1}, \eqref{eqn:Caccioppoli}, and \eqref{eqn:Ebbasta}.} (upper/lower bounds of volumes of balls of radius $\sim R$, upper/lower bounds of Ahlfors constant on $B_{2R}(y)$, Poincaré/Dirichlet inequality constants...).
\end{lemma}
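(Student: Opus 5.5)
The proof adapts the classical Stampacchia/Boccardo--Gallouët argument to the RCD setting. I split $f$ into a Green potential of $\sigma$ and a harmonic remainder. Since the RCD space is PI and Ahlfors regular on $B_{2R}(y)$ with the listed constants, the local tools are available there: Poincaré--Sobolev, Caccioppoli, and the $L^\infty$--$L^1$ mean value inequality for sub/superharmonic functions.

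\emph{Step 1: reduction to compact support.} Let $\chi$ be a cutoff from \cite[Lemma 3.1]{MondinoNaber} with $\chi\equiv1$ on $B_{5R/4}(y)$, support in $B_{3R/2}(y)$, and $|\nabla\chi|+|\Delta\chi|\le C$. Then $F:=\chi f$ satisfies
\[
\mathbf{\Delta}F=\chi\sigma+2\langle\nabla\chi,\nabla f\rangle\mathcal H^n+f\Delta\chi\,\mathcal H^n .
\]
The term $f\Delta\chi$ is controlled in $L^1$ by $\|f\|_{L^1}$. The gradient term lives on the annulus $B_{3R/2}\setminus B_{5R/4}$, so I need a preliminary bound of $\|\nabla f\|_{L^1}$ there. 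A Caccioppoli-type estimate for measure data does not hold in $L^2$, so instead I would obtain this bound by the same truncation argument of Step 2 run on a slightly larger annulus. Alternatively, I would order the steps so that Step 2 is proved first in a form with a cutoff inside.

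\emph{Step 2: truncation estimates (the main step).} For $t>0$, test the equation $\mathbf{\Delta}f=\sigma$ against $\phi^2T_t(f)$, where $T_t(s)=\max(-t,\min(t,s))$ and $\phi$ is a cutoff. This is legitimate because $T_t(f)\in W^{1,2}_{\rm loc}\cap L^\infty$, and it needs a quasi-continuous representative so that $\int T_t(f)\,d\sigma$ makes sense and is bounded by $t|\sigma|$; I expect to use the fact that $\sigma$ charges no sets of zero capacity, since $\mathbf{\Delta}f$ with $f\in W^{1,2}$ lies in the dual of $W^{1,2}$. This gives
\[
\int_{\{|f|<t\}}\phi^2|\nabla f|^2\le t\,|\sigma|(B_{2R})+C\int |f|\,|\nabla\phi|^2\ \text{(plus a lower-order term)} .
\]
Using this on level sets $\{k\le|f|<k+1\}$ (test with $T_1(f-T_k f)$) together with the Sobolev inequality (Ahlfors dimension $n$, exponent $n/(n-2)$, or any exponent for $n=2$), I get the Boccardo--Gallouët weak bounds
\[
|f|\in L^{\frac{n}{n-2},\infty},\qquad |\nabla f|\in L^{\frac{n}{n-1},\infty}\quad\text{on }B_R,
\]
with constants linear in $\|f\|_{L^1(B_{2R})}+|\sigma|(B_{2R})$. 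To make the estimate linear, I normalize so that this sum equals $1$. Weak $L^{n/(n-1)}$ then gives strong $L^q$ for all $q<n/(n-1)$ on a set of finite measure, and $L^q$ of $f$ follows from the Poincaré inequality.

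\emph{Step 3: assembling the constants.} The $L^1$ norm of $f$ that enters via the cutoff and Poincaré terms is controlled by the hypothesis. The condition $\overline B_{3R/2}\neq B_{2R}$ ensures a nontrivial annulus, so a Dirichlet-type Sobolev inequality holds for functions supported in $B_{3R/2}$; this is where the constant in \eqref{eqn:Ebbasta} enters. The main obstacle I foresee is Step 2 in the nonsmooth setting: justifying the test against a truncation of a measure-data solution, namely quasi-continuity and the pairing of $\sigma$ with $T_t(f)$. I would handle this by approximating $\sigma$ with $L^\infty$ densities through heat-flow regularization, proving the estimates for the approximations, and passing to the limit using lower semicontinuity of the $q$-energy.
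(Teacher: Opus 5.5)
The real obstacle is not the pairing of $\sigma$ with $T_t(f)$, which you single out; the paper also tests the measure equation directly with a truncation. The real obstacle is the cutoff term in Step 2, which you call ``lower-order''. Testing $\mathbf{\Delta}f=\sigma$ with $\phi^2T_t(f)$ gives
\[
\int_{\{|f|<t\}}\phi^2|\nabla f|^2\,\mathrm{d}\mathcal H^n=-\int\phi^2T_t(f)\,\mathrm{d}\sigma-2\int\phi\,T_t(f)\langle\nabla\phi,\nabla f\rangle\,\mathrm{d}\mathcal H^n .
\]
On $\{|f|<t\}$ the last integral is handled by Young's inequality. On $\{|f|\geq t\}$, however, it equals $2t\int_{\{|f|\geq t\}}\operatorname{sgn}(f)\,\phi\langle\nabla\phi,\nabla f\rangle$, which is bounded only by $2t\|\nabla f\|_{L^1(\operatorname{supp}\nabla\phi)}$. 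This is a first-order quantity, and it is exactly the one the lemma is supposed to control by $\|f\|_{L^1}+|\sigma|$. Nothing in your argument puts a small factor in front of it that would allow absorption. So your displayed truncation inequality does not follow as written. Your remedy for Step 1, rerunning Step 2 on a larger annulus, only pushes the same unknown $\|\nabla f\|_{L^1}$ further out, so the scheme is circular. Step 1 is in any case superfluous once Step 2 is localized.

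What is missing is a device that keeps $\nabla f$ out of the cutoff error. The paper uses harmonic replacement. Take $h$ harmonic in $B_{\frac{4}{3}R}(y)$ with $h-f\in W^{1,2}_0(B_{\frac{4}{3}R}(y))$, and set $P:=f-h$. Then $\mathbf{\Delta}P=\sigma$ and $P$ vanishes on the boundary, so it can be tested with $T_s(P)$ without any cutoff, giving $\int_{\{|P|<s\}}|\nabla P|^2\leq s\,|\sigma|(B_{2R}(y))$. The Dirichlet Sobolev inequality on $B_{\frac{4}{3}R}(y)$ (this is where $\overline B_{\frac{3}{2}R}(y)\neq B_{2R}(y)$ is used) then yields the weak-type bounds for $P$ and $\nabla P$ exactly as in your plan. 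The harmonic part $h$ is controlled by the $L^\infty$--$L^1$ estimate and Caccioppoli, using $\|h\|_{L^1}\leq\|f\|_{L^1}+\|P\|_{L^1}$.

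Alternatively, you can keep your local route by integrating the cross term by parts. Write $T_t(f)\nabla f=\nabla\Gamma_t(f)$ with $\Gamma_t(s):=\int_0^sT_t(r)\,\mathrm{d}r$, so that $0\leq\Gamma_t(s)\leq t|s|$. Then the last term in the display equals $\int\Gamma_t(f)\,\Delta(\phi^2)\,\mathrm{d}\mathcal H^n$. For a Mondino--Naber cutoff with bounded Laplacian this is at most $Ct\|f\|_{L^1(B_{2R}(y))}$. This gives $\int_{\{|f|<t\}}\phi^2|\nabla f|^2\leq t\bigl(|\sigma|(B_{2R}(y))+C\|f\|_{L^1(B_{2R}(y))}\bigr)$. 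From there, Sobolev applied to $\phi T_t(f)$, the Chebyshev splitting of $\{|\nabla f|>\lambda\}$ at level $t$, and the layer-cake bound go through as you intend.
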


\begin{proof}
Let $h$ be the harmonic replacement of $f$ in $B_{\frac{4}{3}R}(y)$:
\[
 h-f\in W^{1,2}_0(B_{\frac{4}{3}R}(y)),\qquad
 \int_{B_{\frac{4}{3}R}(y)}\langle \nabla h,\nabla\varphi\rangle\,\mathrm{d}\mathcal{H}^n=0
 \quad\forall \varphi\in W^{1,2}_0(B_{\frac{4}{3}R}(y)).
\]
Set $P=f-h\in W^{1,2}_0(B_{\frac{4}{3}R}(y))$.  Then
$\mathbf{\Delta} P=\sigma$ in $B_{\frac{4}{3}R}(y)$. 

Let $T_s(t)=\max\{-s,\min\{t,s\}\}$ and
$M:=|\sigma|(B_{2R}(y))$. If $M=0$, $f$ is harmonic and one can jump to \eqref{eqn:LinftyL1} and conclude the proof from there. Hence, we assume $M>0$.  Testing $\mathbf{\Delta} P=\sigma$ with $T_s(P)$ gives
\begin{equation}\label{eq:truncation-energy}
 \int_{\{|P|<s\}}|\nabla P|^2\,\mathrm{d}\mathcal{H}^n
 \leq sM.
\end{equation}
We record explicitly an elementary calculation.
\smallskip

\textbf{Claim.}
Let $F$ be an $\mathbb R$-valued measurable map defined on a measurable space $(\mathcal{X},\mathcal{B},\mu)$,  where $0<\mu(\mathcal{X})\leq V<\infty$.  Suppose that,
for some $A>0$, $m>0$, and $\theta>0$,
\[
 \mu(\{|F|>t\})\leq A\left(\frac mt\right)^\theta
 \qquad\text{for every }t>0.
\]
Then, for every $1\leq r<\theta$,
\begin{equation}\label{eq:weak-to-strong}
 \|F\|_{L^r}
 \leq
 \left(\frac{\theta}{\theta-r}\right)^{1/r}
 A^{1/\theta}V^{1/r-1/\theta}m.
\end{equation}

\begin{proof}[Proof of the claim]
Since $\mu(\{|F|>t\})\leq \mu(\mathcal{X})<V$, we have
\[
 \mu(\{|F|>t\})
 \leq\min\left\{V,A\left(\frac mt\right)^\theta\right\}.
\]
Let $
 t_0:=m\left(\frac AV\right)^{1/\theta}$,
so that $V=A(m/t_0)^\theta$.  The layer-cake formula gives
\begin{align*}
 \|F\|_{L^r}^r
 &=r\int_0^\infty
       t^{r-1}\mu(\{|F|>t\})\,\mathrm{d}t\leq rV\int_0^{t_0}t^{r-1}\,\mathrm{d}t
   +rAm^\theta\int_{t_0}^\infty t^{r-\theta-1}\,\mathrm{d}t\\
 &=Vt_0^r+\frac{r}{\theta-r}Am^\theta t_0^{r-\theta}=\frac{\theta}{\theta-r}
   A^{r/\theta}V^{1-r/\theta}m^r.
\end{align*}
Taking the $r$-th root proves \eqref{eq:weak-to-strong}, as desired.
\end{proof}
Let us now assume $n>2$.
Set $
 2^*:=\frac{2n}{n-2}$, $\alpha:=\frac{n}{n-2}$, $\beta:=\frac{n}{n-1}$.
Since $\overline{B}_{\frac{3}{2}R}(y)\neq B_{2R}(y)$, let $C_S>0$ be such that
\[
 \|v\|_{L^{2^*}(B_{\frac{4}{3}R}(y))}^2
 \leq C_S\int_{B_{\frac{4}{3}R}(y)}|\nabla v|^2\,\mathrm{d}\mathcal{H}^n
 \qquad\text{for every }v\in W^{1,2}_0(B_{\frac{4}{3}R}(y)).
\]
Apply the latter to $T_s(P)$.  Therefore, using \eqref{eq:truncation-energy}
\[
\begin{aligned}
 s^2\mathcal{H}^n(\{|P|\geq s\})^{2/2^*}
 &\leq \|T_s(P)\|_{L^{2^*}(B_{\frac{4}{3}R}(y))}^2\leq C_S\int_{B_{\frac{4}{3}R}(y)}|\nabla T_s(P)|^2\,\mathrm{d}\mathcal{H}^n\leq C_SsM.
\end{aligned}
\]
It follows that
\begin{equation}\label{eq:P-tail}
 \mathcal{H}^n(\{|P|\geq s\})
 \leq C_S^{\alpha}\left(\frac{M}{s}\right)^\alpha.
\end{equation}
Since $
 1\leq q<\beta<\alpha$,
the Claim above with $r=q$, yields
\begin{equation}\label{eq:P-Lq}
 \|P\|_{L^q(B_{\frac{4}{3}R}(y))}
 \leq C(\alpha,q)C_S
       V^{1/q-1/\alpha}M,
\end{equation}
where $V:=\mathcal{H}^n(B_{\frac{4}{3}R}(y))$.

Let us now deal with the gradient estimate. Let $s,t>0$.  By splitting according to whether
$|P|\geq s$ and then using
\eqref{eq:truncation-energy},
\begin{align}
 \mathcal{H}^n(\{|\nabla P|>t\})
 &\leq \mathcal{H}^n(\{|P|\geq s\})
   +t^{-2}\int_{\{|P|<s\}}|\nabla P|^2\,\mathrm{d}\mathcal{H}^n \leq C_S^{\alpha} M^\alpha s^{-\alpha}
       +Ms t^{-2}.
\label{eq:gradient-preoptimization}
\end{align}
Choose $
 s:=\left(C_S^{\alpha} M^{\alpha-1}t^2\right)^{1/(\alpha+1)}$.
The two terms on the right of
\eqref{eq:gradient-preoptimization} are then equal, and hence, doing the computations,
\begin{equation}\label{eq:gradient-tail}
 \mathcal{H}^n(\{|\nabla P|>t\})
 \leq B\left(\frac{M}{t}\right)^\gamma,
 \qquad \text{where} \qquad 
 B:=2C_S^{\alpha/(\alpha+1)},\qquad
 \gamma:=\frac{2\alpha}{\alpha+1}.
\end{equation}
Since $\alpha=n/(n-2)$, we have $
 \gamma=\frac{2n/(n-2)}{n/(n-2)+1}
       =\frac{n}{n-1}
       =\beta$.
Applying
the Claim to \eqref{eq:gradient-tail} gives,
for every $q<\beta$,
\begin{equation}\label{eq:gradient-Lq}
 \|\nabla P\|_{L^q(B_{\frac{4}{3}R}(y))}
 \leq C(\beta,q)B^{1/\beta}
       V^{1/q-1/\beta}M,
\end{equation}
where $V:=\mathcal{H}^n(B_{\frac{4}{3}R}(y))$.
Equations \eqref{eq:P-Lq} and \eqref{eq:gradient-Lq} finally give
\begin{equation}\label{eq:P-W1q-n-large}
 \|P\|_{W^{1,q}(B_{\frac{4}{3}R}(y))}\leq C M
 \qquad \forall {1\leq q<\frac{n}{n-1}}.
\end{equation}

When $n=2$, use the  embedding
$W^{1,2}_0(B_{\frac{4}{3}R}(y))\hookrightarrow L^p(B_{\frac{4}{3}R}(y))$ for every finite $p<\infty$.
Fix $1\leq q<2$.  Choose $p_0$ so large that $
 p_0>\frac{2q}{2-q}$.
The two-dimensional Dirichlet Sobolev inequality gives
\[
 \|v\|_{L^{p_0}(B_{\frac{4}{3}R}(y))}^2
 \leq C_{p_0}\int_{B_{\frac{4}{3}R}(y)}|\nabla v|^2\,\mathrm{d}\mathcal{H}^n
 \qquad\text{for every }v\in W^{1,2}_0(B_{\frac{4}{3}R}(y)).
\]
Repeating the same argument above gives $\|P\|_{W^{1,q}(B_{\frac{4}{3}R}(y))}\leq CM$ also when $n=2$. We skip the details.

Finally, $h=f-P$ is harmonic on $B_{\frac{4}{3}R}(y)$.  The $L^\infty$--$L^1$ estimate (see, e.g., \cite[Theorem A.4]{BenattiViolo}) gives
\begin{equation}\label{eqn:LinftyL1}
\|h\|_{L^\infty(B_{\frac{5}{4}R}(y))}\leq C\|h\|_{L^1(B_{\frac{4}{3}R}(y))}\leq C(\|f\|_{L^1(B_{2R}(y))}+M).
\end{equation}
Moreover the Caccioppoli inequality, and \eqref{eqn:LinftyL1} give (absorbing $R^{-2}$ in the constant $C$)
\begin{equation}\label{eqn:Caccioppoli}
\|\nabla h\|_{L^2(B_{R}(y))}^2\leq CR^{-2}\|h\|_{L^2(B_{\frac{5}{4}R}(y))}^2\leq C\|h\|_{L^1(B_{\frac{4}{3}R}(y))}^2.
\end{equation}
Thus, since $q<\frac{n}{n-1}\leq 2$ we have, using \eqref{eqn:LinftyL1}, and \eqref{eqn:Caccioppoli}:
\begin{equation}\label{eqn:Ebbasta}
\|\nabla h\|_{L^q(B_{R}(y))}\leq C\|\nabla h\|_{L^2(B_{R}(y))}\leq C\|h\|_{L^1(B_{\frac{4}{3}R}(y))}\leq   C(\|f\|_{L^1(B_{2R}(y))}+M).
\end{equation}
Combining \eqref{eqn:LinftyL1} and \eqref{eqn:Ebbasta} we get
\begin{equation}\label{eqn:Ebbasta2}
    \|h\|_{W^{1,q}(B_R(y))}\leq C(\|f\|_{L^1(B_{2R}(y))}+M).
\end{equation}
Finally, combining \eqref{eq:P-W1q-n-large}, \eqref{eqn:Ebbasta2}, and using the fact that $f=h+P$, we get the sought conclusion.
\end{proof}

\printbibliography

@article {CampanatoPI,
    AUTHOR = {G\'orka, P.},
     TITLE = {Campanato theorem on metric measure spaces},
   JOURNAL = {Ann. Acad. Sci. Fenn. Math.},
  FJOURNAL = {Annales Academi\ae\ Scientiarum Fennic\ae. Mathematica},
    VOLUME = {34},
      YEAR = {2009},
    NUMBER = {2},
     PAGES = {523--528},
      ISSN = {1239-629X,1798-2383},
   MRCLASS = {46E35 (26B35 28C99)},
  MRNUMBER = {2553810},
MRREVIEWER = {Jeremy\ T.\ Tyson},
       NOTE = {\url{https://afm.journal.fi/article/view/135282}},
}

@article {ColdingNaberReifenberg,
    AUTHOR = {Colding, T. H. and Naber, A.},
     TITLE = {Lower {R}icci curvature, branching and the bilipschitz
              structure of uniform {R}eifenberg spaces},
   JOURNAL = {Adv. Math.},
  FJOURNAL = {Advances in Mathematics},
    VOLUME = {249},
      YEAR = {2013},
     PAGES = {348--358},
      ISSN = {0001-8708,1090-2082},
  MRNUMBER = {3116575},
       DOI = {10.1016/j.aim.2013.09.005},
       URL = {https://doi.org/10.1016/j.aim.2013.09.005},
}

@article {JiangShengZhang,
    AUTHOR = {Jiang, W. and Sheng, W. and Zhang, H.},
     TITLE = {Removable singularity of positive mass theorem with continuous
              metrics},
   JOURNAL = {Math. Z.},
  FJOURNAL = {Mathematische Zeitschrift},
    VOLUME = {302},
      YEAR = {2022},
    NUMBER = {2},
     PAGES = {839--874},
      ISSN = {0025-5874,1432-1823},
   MRCLASS = {53C20},
  MRNUMBER = {4480213},
MRREVIEWER = {David\ J.\ Wraith},
       DOI = {10.1007/s00209-022-03081-w},
       URL = {https://doi.org/10.1007/s00209-022-03081-w},
}

@book {HKST15,
    AUTHOR = {Heinonen, J. and Koskela, P. and Shanmugalingam, N. and
              Tyson, J. T.},
     TITLE = {Sobolev spaces on metric measure spaces},
    SERIES = {New Mathematical Monographs},
    VOLUME = {27},
      NOTE = {An approach based on upper gradients},
 PUBLISHER = {Cambridge University Press, Cambridge},
      YEAR = {2015},
     PAGES = {xii+434},
      ISBN = {978-1-107-09234-1},
  MRNUMBER = {3363168},
       DOI = {10.1017/CBO9781316135914},
       URL = {https://doi.org/10.1017/CBO9781316135914},
}

@article {HK00,
    AUTHOR = {Haj\l asz, P. and Koskela, P.},
     TITLE = {Sobolev met {P}oincar\'e},
   JOURNAL = {Mem. Amer. Math. Soc.},
  FJOURNAL = {Memoirs of the American Mathematical Society},
    VOLUME = {145},
      YEAR = {2000},
    NUMBER = {688},
     PAGES = {x+101},
      ISSN = {0065-9266,1947-6221},
   MRCLASS = {46E35 (30C65 31C25 53C17 58J60)},
  MRNUMBER = {1683160},
MRREVIEWER = {Alexander\ D.\ Ukhlov},
       DOI = {10.1090/memo/0688},
       URL = {https://doi.org/10.1090/memo/0688},
}

@article {UhlenbeckYangMills,
    AUTHOR = {Uhlenbeck, K. K.},
     TITLE = {Removable singularities in {Y}ang-{M}ills fields},
   JOURNAL = {Comm. Math. Phys.},
  FJOURNAL = {Communications in Mathematical Physics},
    VOLUME = {83},
      YEAR = {1982},
    NUMBER = {1},
     PAGES = {11--29},
      ISSN = {0010-3616,1432-0916},
   MRCLASS = {53C05 (58E20 81E10)},
  MRNUMBER = {648355},
MRREVIEWER = {Wolfgang\ L\"ucke},
       DOI = {10.1007/BF01947068},
       URL = {https://doi.org/10.1007/BF01947068},
}

@article {ShiTamPacific,
    AUTHOR = {Shi, Y. and Tam, L.-F.},
     TITLE = {Scalar curvature and singular metrics},
   JOURNAL = {Pacific J. Math.},
  FJOURNAL = {Pacific Journal of Mathematics},
    VOLUME = {293},
      YEAR = {2018},
    NUMBER = {2},
     PAGES = {427--470},
      ISSN = {0030-8730,1945-5844},
   MRCLASS = {53C20 (53A30 53C25)},
  MRNUMBER = {3730743},
MRREVIEWER = {Wei\ Yuan},
       DOI = {10.2140/pjm.2018.293.427},
       URL = {https://doi.org/10.2140/pjm.2018.293.427},
}

@article {JiangLiZhang,
    AUTHOR = {Jiang, R. and Li, H. and Zhang, H.},
     TITLE = {Heat kernel bounds on metric measure spaces and some
              applications},
   JOURNAL = {Potential Anal.},
  FJOURNAL = {Potential Analysis. An International Journal Devoted to the
              Interactions between Potential Theory, Probability Theory,
              Geometry and Functional Analysis},
    VOLUME = {44},
      YEAR = {2016},
    NUMBER = {3},
     PAGES = {601--627},
      ISSN = {0926-2601,1572-929X},
   MRCLASS = {53C23 (35K05 35K08 42B20 47B06 58J35)},
  MRNUMBER = {3489857},
MRREVIEWER = {Martin\ Kell},
       DOI = {10.1007/s11118-015-9521-2},
       URL = {https://doi.org/10.1007/s11118-015-9521-2},
}

@article {CNAnnMath,
    AUTHOR = {Cheeger, J. and Naber, A.},
     TITLE = {Regularity of {E}instein manifolds and the codimension 4
              conjecture},
   JOURNAL = {Ann. of Math. (2)},
  FJOURNAL = {Annals of Mathematics. Second Series},
    VOLUME = {182},
      YEAR = {2015},
    NUMBER = {3},
     PAGES = {1093--1165},
      ISSN = {0003-486X,1939-8980},
   MRCLASS = {53C25 (53C23)},
  MRNUMBER = {3418535},
MRREVIEWER = {Luis\ Guijarro},
       DOI = {10.4007/annals.2015.182.3.5},
       URL = {https://doi.org/10.4007/annals.2015.182.3.5},
}

@article {BPSJems,
    AUTHOR = {Bru\`e, E. and Pasqualetto, E. and Semola, D.},
     TITLE = {Rectifiability of the reduced boundary for sets of finite
              perimeter over {${\rm RCD}(K,N)$} spaces},
   JOURNAL = {J. Eur. Math. Soc. (JEMS)},
  FJOURNAL = {Journal of the European Mathematical Society (JEMS)},
    VOLUME = {25},
      YEAR = {2023},
    NUMBER = {2},
     PAGES = {413--465},
      ISSN = {1435-9855,1435-9863},
   MRCLASS = {49Q15 (26B30 49Q20 53C23)},
  MRNUMBER = {4556787},
       DOI = {10.4171/jems/1217},
       URL = {https://doi.org/10.4171/jems/1217},
}

@article{GigliSplitting,
  author  = {Gigli, N.},
  title   = {The splitting theorem in non-smooth context},
  journal = {Mem. Amer. Math. Soc.},
  volume  = {317},
  year    = {2026},
  number  = {1609},
  doi     = {10.1090/memo/1609},
}

@article {Bangxian,
    AUTHOR = {Han, B.-X.},
     TITLE = {Ricci tensor on {${\rm RCD}^*(K,N)$} spaces},
   JOURNAL = {J. Geom. Anal.},
  FJOURNAL = {Journal of Geometric Analysis},
    VOLUME = {28},
      YEAR = {2018},
    NUMBER = {2},
     PAGES = {1295--1314},
      ISSN = {1050-6926,1559-002X},
   MRCLASS = {53C23 (30L05 31E05 49J52 51F99 53C21)},
  MRNUMBER = {3790501},
MRREVIEWER = {Fernando\ Galaz-Garc\'ia},
       DOI = {10.1007/s12220-017-9863-7},
       URL = {https://doi.org/10.1007/s12220-017-9863-7},
}

@article {BNSInv,
    AUTHOR = {Bru\`e, E. and Naber, A. and Semola, D.},
     TITLE = {Boundary regularity and stability for spaces with {R}icci
              bounded below},
   JOURNAL = {Invent. Math.},
  FJOURNAL = {Inventiones Mathematicae},
    VOLUME = {228},
      YEAR = {2022},
    NUMBER = {2},
     PAGES = {777--891},
      ISSN = {0020-9910,1432-1297},
   MRCLASS = {53C21 (53C23)},
  MRNUMBER = {4411732},
MRREVIEWER = {Emil\ Saucan},
       DOI = {10.1007/s00222-021-01092-8},
       URL = {https://doi.org/10.1007/s00222-021-01092-8},
}

@article{Lin1999,
  author        = {Lin, F.-H.},
  title         = {Gradient estimates and blow-up analysis for
                   stationary harmonic maps},
  journal       = {Annals of Mathematics. Second Series},
  volume        = {149},
  number        = {3},
  year          = {1999},
  pages         = {785--829},
  doi           = {10.2307/121073},
  url           = {https://doi.org/10.2307/121073},
}

@incollection{BairdEells,
  author    = {Baird, P. and Eells, J.},
  title     = {A conservation law for harmonic maps},
  editor    = {Looijenga, Eduard and Siersma, Dirk and Takens, Floris},
  booktitle = {Geometry Symposium, Utrecht 1980},
  series    = {Lecture Notes in Mathematics},
  volume    = {894},
  publisher = {Springer-Verlag},
  address   = {Berlin--New York},
  year      = {1981},
  pages     = {1--25},
  doi       = {10.1007/BFb0096222},
  url       = {https://doi.org/10.1007/BFb0096222}
}

@article{Rajala,
  author  = {Rajala, T.},
  title   = {Local {Poincar{\'e}} inequalities from stable curvature
             conditions on metric spaces},
  journal = {Calculus of Variations and Partial Differential Equations},
  volume  = {44},
  number  = {3--4},
  year    = {2012},
  pages   = {477--494},
  doi     = {10.1007/s00526-011-0442-7},
  url     = {https://doi.org/10.1007/s00526-011-0442-7}
}

@article{GigliNDG,
  author  = {Gigli, N.},
  title   = {Nonsmooth differential geometry---an approach tailored
             for spaces with {Ricci} curvature bounded from below},
  journal = {Memoirs of the American Mathematical Society},
  volume  = {251},
  number  = {1196},
  year    = {2018},
  note    = {v+161 pp.},
  doi     = {10.1090/memo/1196},
  url     = {https://doi.org/10.1090/memo/1196}
}

@article{GigliViolo,
  author  = {Gigli, N. and Violo, I. Y.},
  title   = {Monotonicity formulas for harmonic functions in\\
             {${\rm RCD}(0,N)$} spaces},
  journal = {The Journal of Geometric Analysis},
  volume  = {33},
  number  = {3},
  year    = {2023},
  note    = {Article No.~100, 89 pp.},
  doi     = {10.1007/s12220-022-01131-7},
  url     = {https://doi.org/10.1007/s12220-022-01131-7}
}

@article{DePhilippisGigli,
  author  = {{De Philippis}, G. and Gigli, N.},
  title   = {Non-collapsed spaces with {Ricci} curvature bounded
             from below},
  journal = {Journal de l'{\'E}cole polytechnique---Math{\'e}matiques},
  volume  = {5},
  year    = {2018},
  pages   = {613--650},
  doi     = {10.5802/jep.80},
  url     = {https://doi.org/10.5802/jep.80}
}

@article {Anderson90,
    AUTHOR = {Anderson, M. T.},
     TITLE = {Convergence and rigidity of manifolds under {R}icci curvature
              bounds},
   JOURNAL = {Invent. Math.},
  FJOURNAL = {Inventiones Mathematicae},
    VOLUME = {102},
      YEAR = {1990},
    NUMBER = {2},
     PAGES = {429--445},
      ISSN = {0020-9910,1432-1297},
   MRCLASS = {53C23 (53C21 58D27)},
  MRNUMBER = {1074481},
MRREVIEWER = {Gudlaugur\ Thorbergsson},
       DOI = {10.1007/BF01233434},
       URL = {https://doi.org/10.1007/BF01233434},
}

@book {BesseBook,
    AUTHOR = {Besse, A. L.},
     TITLE = {Einstein manifolds},
    SERIES = {Ergebnisse der Mathematik und ihrer Grenzgebiete (3) [Results
              in Mathematics and Related Areas (3)]},
    VOLUME = {10},
 PUBLISHER = {Springer-Verlag, Berlin},
      YEAR = {1987},
     PAGES = {xii+510},
      ISBN = {3-540-15279-2},
   MRCLASS = {53C25 (53-02 53C21 53C30 53C55 58D17 58E11)},
  MRNUMBER = {867684},
MRREVIEWER = {S.\ M.\ Salamon},
       DOI = {10.1007/978-3-540-74311-8},
       URL = {https://doi.org/10.1007/978-3-540-74311-8},
}

@article{BenattiViolo,
  author  = {Benatti, L. and Violo, I. Y.},
  title   = {Second-order estimates for the {$p$}-Laplacian in
             {${\rm RCD}$} spaces},
  journal = {Journal of Differential Equations},
  volume  = {439},
  year    = {2025},
  note    = {Article No.~113398},
  doi     = {10.1016/j.jde.2025.113398},
  url     = {https://doi.org/10.1016/j.jde.2025.113398}
}

@article{AmbrosioHondaSpectral,
  author  = {Ambrosio, L. and Honda, S.},
  title   = {Local spectral convergence in
             {${\rm RCD}^*(K,N)$} spaces},
  journal = {Nonlinear Analysis},
  volume  = {177},
  year    = {2018},
  pages   = {1--23},
  note    = {Part A},
  doi     = {10.1016/j.na.2017.04.003},
  url     = {https://doi.org/10.1016/j.na.2017.04.003}
}

@article{LeeTam,
	author = {Lee, M.-C. and Tam, L.-F.},
	doi = {10.1090/tran/9379},
	issn = {1088-6850},
	journal = {Transactions of the American Mathematical Society},
	month = jan,
	number = {3},
	pages = {1531--1550},
	publisher = {American Mathematical Society (AMS)},
	title = {Continuous metrics and a conjecture of Schoen},
	url = {http://dx.doi.org/10.1090/tran/9379},
	volume = {378},
	year = {2025}}

@article{LiMantoulidis,
	author = {Li, C. and Mantoulidis, C.},
	doi = {10.1007/s00208-018-1753-1},
	issn = {1432-1807},
	journal = {Mathematische Annalen},
	month = sep,
	number = {1--2},
	pages = {99--131},
	publisher = {Springer Science and Business Media LLC},
	title = {Positive scalar curvature with skeleton singularities},
	url = {http://dx.doi.org/10.1007/s00208-018-1753-1},
	volume = {374},
	year = {2018}}

@article {LittmanStampacchiaWeinberger,
    AUTHOR = {Littman, W. and Stampacchia, G. and Weinberger, H. F.},
     TITLE = {Regular points for elliptic equations with discontinuous
              coefficients},
   JOURNAL = {Ann. Scuola Norm. Sup. Pisa Cl. Sci. (3)},
  FJOURNAL = {Annali della Scuola Normale Superiore di Pisa. Classe di
              Scienze. Serie III},
    VOLUME = {17},
      YEAR = {1963},
     PAGES = {43--77},
      ISSN = {0391-173X},
   MRCLASS = {35.42},
  MRNUMBER = {161019},
MRREVIEWER = {C.\ B.\ Morrey, Jr.},
}

@article{Bohm,
	author = {B{\"o}hm, C.},
	doi = {10.1007/s002220050261},
	issn = {1432-1297},
	journal = {Inventiones Mathematicae},
	month = sep,
	number = {1},
	pages = {145--176},
	publisher = {Springer Science and Business Media LLC},
	title = {Inhomogeneous Einstein metrics on low-dimensional spheres and other low-dimensional spaces},
	url = {http://dx.doi.org/10.1007/s002220050261},
	volume = {134},
	year = {1998}}

@article{BandoKasueNakajima,
	author = {Bando, S. and Kasue, A. and Nakajima, H.},
	doi = {10.1007/bf01389045},
	issn = {1432-1297},
	journal = {Inventiones Mathematicae},
	month = jun,
	number = {2},
	pages = {313--349},
	publisher = {Springer Science and Business Media LLC},
	title = {On a construction of coordinates at infinity on manifolds with fast curvature decay and maximal volume growth},
	url = {http://dx.doi.org/10.1007/BF01389045},
	volume = {97},
	year = {1989}}

@unpublished {CecchiniFrenckZeidler,
    AUTHOR = {Cecchini, S. and Frenck, G. and Zeidler, R.},
     TITLE = {Positive scalar curvature with point singularities},
      YEAR = {2024},
      NOTE = {To appear in Duke Math. J.; preprint arXiv:2407.20163},
       URL = {https://arxiv.org/abs/2407.20163},
}

@article{SmithYang,
	author = {Smith, P. D. and Yang, D.},
	doi = {10.1090/s0002-9947-1992-1052910-2},
	issn = {1088-6850},
	journal = {Transactions of the American Mathematical Society},
	number = {1},
	pages = {203--219},
	publisher = {American Mathematical Society (AMS)},
	title = {Removing point singularities of Riemannian manifolds},
	url = {http://dx.doi.org/10.1090/S0002-9947-1992-1052910-2},
	volume = {333},
	year = {1992}}

@article {AFNP,
    AUTHOR = {Antonelli, G. and Fogagnolo, M. and Nardulli,
              S. and Pozzetta, M.},
     TITLE = {Positive mass and isoperimetry for continuous metrics with
              nonnegative scalar curvature},
   JOURNAL = {Ann. Inst. H. Poincar\'e C Anal. Non Lin\'eaire},
  FJOURNAL = {Annales de l'Institut Henri Poincar\'e C. Analyse Non
              Lin\'eaire},
      YEAR = {2026},
      ISSN = {0294-1449,1873-1430},
       DOI = {10.4171/AIHPC/178},
       URL = {https://doi.org/10.4171/AIHPC/178},
}

@article{Kazaras,
	author = {Kazaras, D.},
	doi = {10.1007/s00208-024-02829-5},
	issn = {1432-1807},
	journal = {Mathematische Annalen},
	month = apr,
	number = {4},
	pages = {4951--4972},
	publisher = {Springer Science and Business Media LLC},
	title = {Desingularizing positive scalar curvature 4-manifolds},
	url = {http://dx.doi.org/10.1007/s00208-024-02829-5},
	volume = {390},
	year = {2024}}

@article {ChJiNa,
    AUTHOR = {Cheeger, J. and Jiang, W. and Naber, A.},
     TITLE = {Rectifiability of singular sets of noncollapsed limit spaces
              with {R}icci curvature bounded below},
   JOURNAL = {Ann. of Math. (2)},
  FJOURNAL = {Annals of Mathematics. Second Series},
    VOLUME = {193},
      YEAR = {2021},
    NUMBER = {2},
     PAGES = {407--538},
      ISSN = {0003-486X,1939-8980},
   MRCLASS = {53B20 (35A21 53C23)},
  MRNUMBER = {4226910},
MRREVIEWER = {Daniele\ Semola},
       DOI = {10.4007/annals.2021.193.2.2},
       URL = {https://doi.org/10.4007/annals.2021.193.2.2},
}

@article {ChCo0,
    AUTHOR = {Cheeger, J. and Colding, T. H.},
     TITLE = {Lower bounds on {R}icci curvature and the almost rigidity of
              warped products},
   JOURNAL = {Ann. of Math. (2)},
  FJOURNAL = {Annals of Mathematics. Second Series},
    VOLUME = {144},
      YEAR = {1996},
    NUMBER = {1},
     PAGES = {189--237},
      ISSN = {0003-486X,1939-8980},
   MRCLASS = {53C21 (53C20 53C23)},
  MRNUMBER = {1405949},
MRREVIEWER = {Joseph\ E.\ Borzellino},
       DOI = {10.2307/2118589},
       URL = {https://doi.org/10.2307/2118589},
}

@article{BruePasqualettoSemola,
  author  = {Bru{\`e}, E. and Pasqualetto, E. and
             Semola, D.},
  title   = {Rectifiability of {${\rm RCD}(K,N)$} spaces via
             {$\delta$}-splitting maps},
  journal = {Annales Fennici Mathematici},
  volume  = {46},
  number  = {1},
  year    = {2021},
  pages   = {465--482},
  doi     = {10.5186/aasfm.2021.4627},
  url     = {https://doi.org/10.5186/aasfm.2021.4627}
}

@misc{HondaZhang,
author = {Honda, S. and Zhang, R.},
archivePrefix = {arXiv},
doi = {10.48550/arXiv.2608.16021},
eprint = {2608.16021},
month = aug,
note = {Preprint, arXiv:2608.16021v1},
primaryClass = {math.DG},
title = {Regularity and structure of spaces with synthetic {Ricci} bounds and positive injectivity radius},
url = {https://arxiv.org/abs/2608.16021},
year = {2026}
}

@article{BertrandKettererMondelloRichard,
author = {Bertrand, J. and Ketterer, C. and Mondello, I. and Richard, T.},
doi = {10.5802/aif.3393},
issn = {1777-5310},
journal = {Annales de l'Institut Fourier},
month = jun,
number = {1},
pages = {123--173},
publisher = {Association des Annales de l'Institut Fourier},
title = {Stratified spaces and synthetic {Ricci} curvature bounds},
url = {https://doi.org/10.5802/aif.3393},
volume = {71},
year = {2021}
}

@article{Carron,
author = {Carron, G.},
doi = {10.2422/2036-2145.201201_002},
issn = {2036-2145},
journal = {Annali della Scuola Normale Superiore di Pisa, Classe di Scienze. Serie V},
month = dec,
number = {4},
pages = {1091--1113},
publisher = {Edizioni della Normale},
title = {Some old and new results about rigidity of critical metric},
url = {https://doi.org/10.2422/2036-2145.201201_002},
volume = {13},
year = {2014}
}

@article{ByunWang,
author = {Byun, S.-S. and Wang, L.},
doi = {10.1002/cpa.20037},
issn = {1097-0312},
journal = {Communications on Pure and Applied Mathematics},
month = oct,
number = {10},
pages = {1283--1310},
publisher = {Wiley},
title = {Elliptic equations with {BMO} coefficients in {Reifenberg} domains},
url = {https://doi.org/10.1002/cpa.20037},
volume = {57},
year = {2004}
}

@article{BurkhardtGuim,
author = {Burkhardt-Guim, P.},
doi = {10.1007/s00208-026-03489-3},
issn = {1432-1807},
journal = {Mathematische Annalen},
month = may,
number = {3},
pages = {57},
publisher = {Springer},
title = {Smoothing {$L^\infty$} {Riemannian} metrics with nonnegative scalar curvature outside of a singular set},
url = {https://doi.org/10.1007/s00208-026-03489-3},
volume = {395},
year = {2026}
}

@article{ChengYau,
	author = {Cheng, S. Y. and Yau, S. T.},
	doi = {10.1002/cpa.3160280303},
	issn = {1097-0312},
	journal = {Communications on Pure and Applied Mathematics},
	month = may,
	number = {3},
	pages = {333--354},
	publisher = {Wiley},
	title = {Differential equations on riemannian manifolds and their geometric applications},
	url = {http://dx.doi.org/10.1002/cpa.3160280303},
	volume = {28},
	year = {1975}}

@unpublished {CCHSTT,
    AUTHOR = {Chen, Y. and Chiu, S.-K. and Hallgren, M. and
              Sz{\'e}kelyhidi, G. and T{\^o}, T. D. and Tong,
              F.},
     TITLE = {On {K}\"ahler-{E}instein currents},
      YEAR = {2025},
      NOTE = {Preprint arXiv:2502.09825},
       URL = {https://arxiv.org/abs/2502.09825},
}

@article {HondaSun,
    AUTHOR = {Honda, S. and Sun, S.},
     TITLE = {From almost smooth spaces to {RCD} spaces},
   JOURNAL = {Calc. Var. Partial Differential Equations},
  FJOURNAL = {Calculus of Variations and Partial Differential Equations},
    VOLUME = {65},
      YEAR = {2026},
    NUMBER = {4},
     PAGES = {Paper No. 131, 45},
      ISSN = {0944-2669,1432-0835},
   MRCLASS = {53C21 (53C23)},
  MRNUMBER = {5045151},
       DOI = {10.1007/s00526-026-03297-2},
       URL = {https://doi.org/10.1007/s00526-026-03297-2},
}

@article{HondaPeng,
  author  = {Honda, S. and Peng, Y.},
  title   = {A note on the topological stability theorem from
             {${\rm RCD}$} spaces to {Riemannian} manifolds},
  journal = {Manuscripta Mathematica},
  volume  = {172},
  number  = {3--4},
  year    = {2023},
  pages   = {971--1007},
  doi     = {10.1007/s00229-022-01418-7},
  url     = {https://doi.org/10.1007/s00229-022-01418-7}
}

@article{AHPT,
  author  = {Ambrosio, L. and Honda, S. and
             Portegies, J. W. and Tewodrose, D.},
  title   = {Embedding of\\ {${\rm RCD}^*(K,N)$} spaces in {$L^2$}
             via eigenfunctions},
  journal = {Journal of Functional Analysis},
  volume  = {280},
  number  = {10},
  year    = {2021},
  note    = {Article No.~108968, 72 pp.},
  doi     = {10.1016/j.jfa.2021.108968},
  url     = {https://doi.org/10.1016/j.jfa.2021.108968}
}

@incollection{AmbrosioHonda,
  author    = {Ambrosio, L. and Honda, S.},
  title     = {New stability results for sequences of metric measure
               spaces with uniform {Ricci} bounds from below},
  editor    = {Gigli, Nicola},
  booktitle = {Measure Theory in Non-Smooth Spaces},
  series    = {Partial Differential Equations and Measure Theory},
  publisher = {De Gruyter Open},
  address   = {Warsaw},
  year      = {2017},
  pages     = {1--51},
  isbn      = {978-3-11-055083-2},
  doi       = {10.1515/9783110550832-001},
  url       = {https://doi.org/10.1515/9783110550832-001}
}

@article{MondinoNaber,
  author  = {Mondino, A. and Naber, A.},
  title   = {Structure theory of metric-measure spaces with lower
             {Ricci} curvature bounds},
  journal = {Journal of the European Mathematical Society},
  volume  = {21},
  number  = {6},
  year    = {2019},
  pages   = {1809--1854},
  doi     = {10.4171/JEMS/874},
  url     = {https://doi.org/10.4171/JEMS/874}
}

@article{HondaLp,
  author  = {Honda, S.},
  title   = {Ricci curvature and {$L^p$}-convergence},
  journal = {Journal f{\"u}r die reine und angewandte Mathematik},
  volume  = {705},
  year    = {2015},
  pages   = {85--154},
  doi     = {10.1515/crelle-2013-0061},
  url     = {https://doi.org/10.1515/crelle-2013-0061}
}

@article {Honda,
    AUTHOR = {Honda, S.},
     TITLE = {Bakry-\'Emery conditions on almost smooth metric measure
              spaces},
   JOURNAL = {Anal. Geom. Metr. Spaces},
  FJOURNAL = {Analysis and Geometry in Metric Spaces},
    VOLUME = {6},
      YEAR = {2018},
    NUMBER = {1},
     PAGES = {129--145},
      ISSN = {2299-3274},
   MRCLASS = {53C21 (53C23)},
  MRNUMBER = {3877323},
MRREVIEWER = {Luca\ Rizzi},
       DOI = {10.1515/agms-2018-0007},
       URL = {https://doi.org/10.1515/agms-2018-0007},
}

@unpublished {WWX26,
    AUTHOR = {Wang, J. and Wang, J. and Xie, Z.},
     TITLE = {{$L^\infty$}-metrics on tori and {S}choen's conjecture},
      YEAR = {2026},
      NOTE = {Preprint arXiv:2606.21325},
       URL = {https://arxiv.org/abs/2606.21325},
}

@article {Gabor,
    AUTHOR = {Sz{\'e}kelyhidi, G.},
     TITLE = {Singular {K}\"ahler--{E}instein metrics and {RCD} spaces},
   JOURNAL = {Forum Math. Pi},
  FJOURNAL = {Forum of Mathematics, Pi},
    VOLUME = {13},
      YEAR = {2025},
     PAGES = {Paper No. e24, 33},
      ISSN = {2050-5086},
   MRCLASS = {53C25 (53C21)},
       DOI = {10.1017/fmp.2025.10015},
       URL = {https://doi.org/10.1017/fmp.2025.10015},
}

@unpublished {KWW26,
    AUTHOR = {Khuri, M. and Wang, J. and Wang, J.},
     TITLE = {Riemannian positive mass theorem in all dimensions in the
              presence of low-codimension singularities},
      YEAR = {2026},
      NOTE = {Preprint arXiv:2606.23529},
       URL = {https://arxiv.org/abs/2606.23529},
}

@unpublished {BZ26,
    AUTHOR = {Bi, Y. and Zhu, J.},
     TITLE = {Positive scalar curvature obstructions via singular dimension
              descent},
      YEAR = {2026},
      NOTE = {Preprint arXiv:2606.20528},
       URL = {https://arxiv.org/abs/2606.20528},
}

@article{Stampacchia,
  author   = {Stampacchia, G.},
  title    = {Le probl{\`e}me de {Dirichlet} pour les {\'e}quations
              elliptiques du second ordre {\`a} coefficients discontinus},
  journal  = {Annales de l'Institut Fourier},
  volume   = {15},
  number   = {1},
  pages    = {189--257},
  year     = {1965},
  doi      = {10.5802/aif.204},
  url      = {https://doi.org/10.5802/aif.204},
  langid   = {french}
}

@misc{Ponce,
  author        = {Ponce, A. C.},
  title         = {Selected Problems on Elliptic Equations Involving Measures},
  year          = {2012},
  eprint        = {1204.0668},
  archivePrefix = {arXiv},
  primaryClass  = {math.AP},
  note          = {Revised version, arXiv:1204.0668v3, 16 May 2017},
}

@article {DWWW,
    AUTHOR = {Dai, X. and Wang, C. and Wang, L. and Wei, G.},
     TITLE = {Singular metrics with non-negative scalar curvature and {RCD}},
   JOURNAL = {Commun. Contemp. Math.},
  FJOURNAL = {Communications in Contemporary Mathematics},
      YEAR = {2026},
     PAGES = {Paper No. 2650034},
       DOI = {10.1142/S0219199726500343},
       URL = {https://doi.org/10.1142/S0219199726500343},
}

\end{document}